\let\oldvec\vec

\documentclass[invmat,numbook,final]{svjour}

\let\newvec\vec
\let\vec\oldvec
\usepackage{amsmath}
\usepackage{calc}
\usepackage[T1]{fontenc}
\usepackage[full]{textcomp}

\usepackage[utopia]{mathdesign}

\usepackage{mathtools}
%
\DeclareSymbolFont{cmcal}{OMS}{cmsy}{m}{n}
\SetSymbolFont{cmcal}{bold}{OMS}{cmsy}{b}{n}
\DeclareSymbolFontAlphabet{\mathcal}{cmcal}

\let\vec\newvec

\usepackage{pifont}

\usepackage[all,cmtip]{xy}
\usepackage{xcolor}
\usepackage{enumitem} 
\usepackage[parfill]{parskip}

%
%
\usepackage{mathtools}
%
%
\usepackage[colorlinks=true]{hyperref} 

\spnewtheorem{notation}[definition]{Notation}{\bfseries}{\itshape}

\usepackage[super]{nth}

\usepackage{etoolbox}
\tracingpatches
\makeatletter
\patchcmd{\@citeo}{\hskip0.1em}{\kern0.1em}{}{}
\patchcmd{\@citex}{\hskip0.1em}{\kern0.1em}{}{}
\makeatother

\newcommand{\eq}[1]{eq.\ (\ref{#1})}

\newcommand{\eqn}[2]{
\begin{equation}\label{#1}
#2
\end{equation}
}

\newcommand{\eqnalign}[2]{
	\begin{equation}
	\begin{aligned}\label{#1}
	#2
	\end{aligned}
	\end{equation}
}


\makeatletter
\newcommand*{\Xbar}[1]{}%
\DeclareRobustCommand*{\Xbar}[1]{%
  \mathpalette\@Xbar{#1}%
}
\newcommand*{\@Xbar}[2]{%
  \sbox0{$#1\mathrm{#2}\m@th$}%
  \sbox2{$#1#2\m@th$}%
  \rlap{%
    \hbox to\wd2{%
      \hfill
      $\overline{%
        \vrule width 0pt height\ht0 %
        \kern\wd0 %
      }$%
    }%
  }%
  \copy2 %
}
\newcommand*{\Frozenbar}[1]{}%
\DeclareRobustCommand*{\Frozenbar}[1]{%
  \mathpalette\@Frozenbar{#1}%
}
\newcommand*{\@Frozenbar}[2]{%
  \sbox0{$#1\mathrm{W}\m@th$}%
  \sbox2{$#1#2\m@th$}%
  \rlap{%
    \hbox to\wd2{%
      \hfill
      $\overline{%
        \vrule width 0pt height\ht0 %
        \kern\wd0 %
      }$%
    }%
  }%
  \copy2 %
}
\makeatother

\newcommand{\eqalign}[1]{
	\begin{align*}#1 \end{align*}
	}

\usepackage{isomath}  
\SetMathAlphabet{\mathbf}{normal}{OML}{mdput}{b}{n}

\def\mb#1{{\mathbf{#1}}}
\def\bm#1{{\boldsymbol{#1}}}


\usepackage{tocbibind}


\def\a{\alpha}
\def\b{\beta}

\def\d{\delta}  
\def\e{\varepsilon} \def\ep{\epsilon}
  
\def\g{\gamma}

\def\l{\lambda}  
\def\m{\mu}

\def\r{\rho}

\def\o{\omega}  \def\O{\Omega}
\def\p{\psi}  
\def\s{\sigma}  \def\vs{\varsigma} 
  
\def\t{\tau}
\def\w{\varphi}

\def\ups{\upsilon}

\def\CE{{\cal E}}

\def\CG{{\cal G}}

\def\CL{{\cal L}}

%


\def\Z{\mathbb{Z}}

\def\I{\mathbb{I}}

\def\mq{\mathfrak{q}}

\def\mp{\mathfrak{p}}

\def\mr{\mathfrak{r}}
\def\ms{\mathfrak{s}}

\def\ml{\mathfrak{l}}

\def\mg{\mathfrak{g}}
\def\mG{\mathfrak{G}}


\def\sN{\mathscr{N}}


\newcommand{\Fr}[2]{\dfrac{#1}{#2}}

\def\pr{\prime}
\def\ppr{{\prime\!\prime}}

\DeclareMathOperator{\End}{End}
\DeclareMathOperator{\Aut}{Aut}
\DeclareMathOperator{\Ker}{Ker}

\DeclareMathOperator{\Hom}{Hom}

\usepackage{graphicx}
\makeatletter
\DeclareRobustCommand*\uell{\mathpalette\@uell\relax}
\newcommand*\@uell[2]{
  \setbox0=\hbox{$#1\ell$}
  \setbox1=\hbox{\rotatebox{10}{$#1\ell$}}
  \dimen0=\wd0 \advance\dimen0 by -\wd1 \divide\dimen0 by 2
  \mathord{\lower 0.1ex \hbox{\kern\dimen0\unhbox1\kern\dimen0}}
}

\makeatletter
\def\widebreve{\mathpalette\wide@breve}
\def\wide@breve#1#2{\sbox\z@{$#1#2$}%
     \mathop{\vbox{\m@th\ialign{##\crcr
\kern0.06em\brevefill#1{0.6\wd\z@}\crcr\noalign{\nointerlineskip}%
                    $\hss#1#2\hss$\crcr}}}\limits}
\def\brevefill#1#2{$\m@th\sbox\tw@{$#1($}%
  \hss\resizebox{#2}{\wd\tw@}{\rotatebox[origin=c]{90}{\upshape(}}\hss$}
\makeatletter

\def\fieldk{\Bbbk}

\def\cstar{\hbox{\scriptsize\ding{75}}}


\voffset=.6in
\hoffset=.5in

\setcounter{tocdepth}{3}

\newcommand{\functor}[1]{\mathbf{#1}}
\newcommand{\dgcat}[1]{\overline{\category{#1}}\,}

\newcommand{\category}[1]{\emph{\textbf{#1}}\,}

\newcommand{\HOM}{\hbox{$\mathop{\bm{\Hom}}$}}

\DeclareMathOperator{\THom}{THom}
\newcommand{\THOM}{\hbox{$\mathop{\bm{\THom}}$}}

\usepackage{pifont}

\def\cp{{\mathrel\vartriangle}}

\def\cdga{{\category{cdgA}(\Bbbk)}}
\def\hcdga{{\mathit{ho}\category{cdgA}(\Bbbk)}}

\def\cdgh{{\category{cdgH}(\Bbbk)}}
\def\hcdgh{{\mathit{ho}\category{cdgH}(\Bbbk)}}

\DeclareMathAlphabet{\mathpzc}{OT1}{pzc}{m}{it}
\def\ide{\mathpzc{e}}

\usepackage{shuffle}


\begin{document}

\title{ Affine group dg-schemes and linear representations  I}
\subtitle{Basic theory and Tannakian reconstructions}


\author{
Jaehyeok Lee\thanks{jhlee1228lee@postech.ac.kr} \and
Jae-Suk Park\thanks{jaesuk@postech.ac.kr}
}

\institute{
Department of Mathematics, POSTECH, Pohang 37673, Republic of Korea
}

\institute{
Department of Mathematics, POSTECH, Pohang 37673, Republic of Korea
}
\maketitle
\begin{abstract}
We  develop a basic theory of affine group dg-schemes, their Lie algebraic counterparts and linear representations. 
We prove  Tannaka type reconstruction theorems that an affine group dg-scheme 
can be recovered from  the dg-tensor category of its linear representations 
as well as from the rigid dg-tensor category of its finite dimensional linear representations 
along with the forgetful functors to the underlying dg-tensor category of cochain complexes.
\end{abstract}

{\footnotesize
\tableofcontents
}

\section{Introduction}
The theory of  affine group schemes and their representations is a classic subject 
in algebraic geometry---we refer to  \cite{Waterhouse,Milne}  for a review. 
It has led to the notion of  Tannakian category  as envisioned by Grothendieck and established  
by  Saavedra Rivano \cite{Rivano} and Deligne \cite{Deligne90}
realizing the Tannaka-Klein duality for the group objects in the category of affine schemes---see \cite{DM,Szamuely} for a review.
Beside from its initially  intended  role in algebraic geometry (Tannakian fundamental group schemes and motives), 
 the ideas of Tannakian category find applications
in wider branches in mathematics--we refer to the book \cite{EGNO} and references therein.

We recall that an affine group scheme 
over  a field $\Bbbk$
is a representable functor 
$G:\category{cAlg}(\Bbbk)\rightsquigarrow \category{Grp}$ from the
category $\category{cAlg}(\Bbbk)$ of  commutative algebras over $\Bbbk$
to the category $\category{Grp}$ of groups, whose representing object is a commutative
Hopf algebra $B^0$.  The category $\category{Rep}(G)$ of  linear representations
of $G$ is isomorphic to the category $\category{Comod}_R(B^0)$ of right comodules over $B^0$.
Then a Tannakian reconstruction theorem is
that $G$ can be reconstructed from the category $\category{Rep}(G)_f$ 
of  its finite dimensional linear representations via the
forgetful functor $\o: \category{Rep}(G)_{\!f}\rightsquigarrow \category{Vec}(\Bbbk)_{\!f}$
to the category of the underlying vector spaces.  
The category $\category{Rep}(G)_{\!f}$ with the functor $\o$
is the prototype a of neutral Tannakian category, which is a rigid $\Bbbk$-linear abelian tensor category with
a fiber functor to the category $\category{Vec}(\Bbbk)_f$ satisfying certain conditions.
The other side of the duality is that
any neutral Tannakian category $\big(\category{T}, \o\big)$ is equivalent to 
the category of the finite dimensional linear representations 
of an affine group scheme $G$, which is  called the Tannakian fundamental group scheme of $\big(\category{T}, \o\big)$.

This paper is about a basic theory of affine group dg-schemes,
their Lie algebraic counterparts,  and their linear representations as well as  Tannakian reconstruction theorems. 
This paper is a companion to our recent paper on the similar study of 
representable presheaves of groups on the homotopy category of cocommutative dg-coalgebras \cite{JLJSP}.
The prefix "dg" stands for "differential graded" and
 a dg-category  is an enriched category 
whose morphism sets are endowed with the structure of a cochain complex, where
cochain complexes over $\Bbbk$ form the  prototypical dg-category 
denoted by
$\dgcat{CoCh}(\Bbbk)$.

In Section $3$, we define an affine group dg-scheme  over $\Bbbk$ as a representable  functor 
$\bm{\mG}:\mathit{ho}\category{cdgA}(\Bbbk)\rightsquigarrow \category{Grp}$ 
from the homotopy category $\mathit{ho}\category{cdgA}(\Bbbk)$ of commutative dg-algebras over $\Bbbk$
to the category $\category{Grp}$ of groups.  A representing object of $\bm{\mG}$  is
a commutative dg-Hopf algebra  $B$, and we use the notation $\bm{\mG}^B$ for it.  The category of affine group dg-schemes
is anti-equivalent to the homotopy category $\mathit{ho}\category{cdgH}(\Bbbk)$ of commutative dg-Hopf algebras.
For the Lie theoretic counterpart to affine group dg-schemes, 
we construct a functor $\category{T}\!\bm{\mG}^B:\mathit{ho}\category{cdgA}(\fieldk)\rightsquigarrow \category{Lie}(\Bbbk)$ 
to the category $\category{Lie}(\Bbbk)$ of Lie algebras over $\Bbbk$
so that $\category{T}\!\bm{\mG}^B$ and $\category{T}\!\bm{\mG}^{B^\pr}$ are naturally isomorphic whenever 
$\bm{\mG}^B$ and $\bm{\mG}^{B^\pr}$ are naturally isomorphic.  
For a pro-unipotent affine group dg-scheme   $\bm{\mG}^B$, where $B$ is  conilpotent,
 we show that the underlying set-valued functors 
 $\grave{\bm{\mG}}^{{}_B}:\mathit{ho}\category{cdgA}(\fieldk)\rightsquigarrow \category{Set}$ 
 and $\grave{\category{T}\!\bm{\mG}}^{{}_{B}}:\mathit{ho}\category{cdgA}(\fieldk)\rightsquigarrow \category{Set}$
are naturally isomorphic---we can recover the group ${\bm{\mG}}^B(A)$ from the Lie algebra ${\category{T}\!\bm{\mG}}^B(A)$
for every cdg-algebra $A$. 

In Sect. $4$,  we define a linear representation of $\bm{\mG}^B$ via a linear representation of
the associated representable functor 
$\bm{\CG}^B:\category{cdgA}(\Bbbk)\rightsquigarrow \category{Grp}$ 
from the category $\category{cdgA}(\Bbbk)$ of commutative dg-algebras over $\Bbbk$.
$\bm{\CG}^B$ is also represented by $B$
and induces $\bm{\mG}^B$ on the homotopy category $\mathit{ho}\category{cdgA}(\Bbbk)$.
Morevoer, a linear representation of $\bm{\CG}^B$ induces  a linear representation of $\bm{\mG}^B$.
We shall  form  a dg-tensor category $\dgcat{Rep}(\bm{\CG}^B)$ of  linear representations of  $\bm{\CG}^B$ 
and show that it is isomorphic to a dg-tensor category $\dgcat{dgComod}_R(B)$ formed by right dg-comodules over $B$.

In Sect. $5$, 
we reconstruct $\bm{\mG}^B$ via the forgetful functor $\bm{\o}:\dgcat{dgComod}_R(B)\rightsquigarrow \dgcat{CoCh}(\Bbbk)$
to the underlying  dg-tensor category $\dgcat{CoCh}(\Bbbk)$ of  cochain complexes.
Out of $\bm{\o}$,  we shall construct two functors 
$\bm{\CG}^{\bm{\o}}_{\!\otimes}:\category{cdgA}(\Bbbk) \rightsquigarrow \category{Grp}$
{and }
$\bm{\mG}^{\bm{\o}}_{\!\otimes}:\mathit{ho}\category{cdgA}(\Bbbk) \rightsquigarrow \category{Grp}$
and establish 
natural isomorphisms of functors
$$
\bm{\CG}^{\bm{\o}}_{\!\otimes}\cong \bm{\CG}^{\!\!B}:\category{cdgA}(\Bbbk) \rightsquigarrow \category{Grp}
\hbox{ and }
\bm{\mG}^{\bm{\o}}_{\!\otimes}\cong \bm{\mG}^{\!B}:\mathit{ho}\category{cdgA}(\Bbbk) \rightsquigarrow \category{Grp},
$$
which is our $1$st reconstruction theorem of  an affine group dg-scheme 
from the dg-tensor category of linear representations.
We also consider the dg-tensor category $\dgcat{Rep}(\bm{\CG}^B)_{\!f}$ of \emph{finite} dimensional linear representations of  $\bm{\CG}^B$ 
which is isomorphic to the dg-tensor category $\dgcat{dgComod}_R(B)_{\!f}$ of \emph{finite} dimensional  right dg-comodules over $B$.
From the forgetful  functor $\bm{\o}_{\!f}: \dgcat{dgComod}_R(B)_{\!f}\rightsquigarrow \dgcat{CoCh}(\Bbbk)_{\!f}$, we 
shall  construct two functors
$\bm{\CG}^{\bm{\o}_{\!f}}_{\!\otimes}:\category{cdgA}(\Bbbk) \rightsquigarrow \category{Grp}$ and
$\bm{\mG}^{\bm{\o}_{\!}}_{\!\otimes}:\mathit{ho}\category{cdgA}(\Bbbk) \rightsquigarrow \category{Grp}$
and  establish natural isomorphisms 
$\bm{\CG}^{\bm{\o}_{\!f}}_{\!\otimes}\cong \bm{\CG}^{\!\!B}$ and
$\bm{\mG}^{\bm{\o}_{\!f}}_{\!\otimes}\cong \bm{\mG}^{\!B}$,
which is our $2$nd reconstruction theorem of  an affine group dg-scheme 
from the dg-tensor category of finite dimensional linear representations.
The $2$nd reconstruction theorem is obtained by  the reductions of  $1$st reconstruction theorem to
the finite dimensional cases, based on the basic fact that every right dg-comodule over $B$ is a
filtered colimit of its finite dimensional subcomodules over $B$.\footnote{We do not have the similar reduction
to finite dimensional cases for the dg-tensor category of dg-modules over cocommutative dg-Hopf algebra studied in
\cite{JLJSP}.}
We shall also give an independent proof using the dg-version of rigidity  of  $\dgcat{dgComod}_R(B)_{\!f}$.

Our study of affine group dg-schemes is originally motivated from  de Rham side of the rational homotopy theory
of Sullivan \cite{Sullivan} and Chen \cite{Chen} as well as Deligne's rational de Rham fundamental group scheme of
algebraic varieties and their periods \cite{Deligne89}.  We like to have a natural extension of  
rational de Rham fundamental group scheme of a space  to a rational de Rham fundamental group dg-scheme 
encoding more general invariants of rational homotopy types of those spaces, including all higher rational homotopy groups---but these 
and other applications will be the subjects of a sequel to this paper \cite{JLJSP2}. 
Another motivation for this paper is to do some groundworks toward the theory of "Tannakian dg-tensor categories" 
as many  Tannakian categories come naturally with dg-tensor categories---see \cite{Simpson} for examples.  
We note that some progress along this line is reported in \cite{Pridam}.

\begin{acknowledgement}
The work of JL was supported by NRF(National Research Foundation of Korea) 
Grant funded by the Korean Government(NRF-2016-Global Ph.D. Fellowship Program).
JSP is grateful to Cheolhyun Cho and Gabriel Drummond Cole for useful discussions.
\end{acknowledgement}

\section{Notations}

We use the similar notations and conventions as in our previous paper \cite{JLJSP}, except
that we use the cohomological $\Z$-grading---every differential has degree $1$ 
and a dg-category is a category enriched in the category of cochain complexes.

Throughout this paper $\Bbbk$ is a ground field of characteristic $0$.
Unadorned tensor product $\otimes$ is over $\Bbbk$.
By an element in a $\Z$-graded vector space we shall usually mean a homogeneous element $x$
whose degree will be denoted $|x|$. 
Let $V=\bigoplus_{n\in \Z}V^n$ and $W=\bigoplus_{n\in \Z}W^n$ be $\Z$-graded vector spaces.
Then $V\!\otimes\! W = \bigoplus_{n\in \Z}(V\! \otimes\! W)^n$,
where $(V \!\otimes\! W)^n= \bigoplus_{i+j=n\in \Z}V^i\! \otimes\! W^j$,
is also a $\Z$-graded vector space.
Denote $\Hom(V,W)= \bigoplus_{n\in \Z}\Hom(V,W)^n$ as the $\Z$-graded vector space of 
$\Bbbk$-linear maps from $V$ to $W$.
A cochain complex $\big(V, d_V\big)$ is often denoted by $V$ for simplicity.
The ground field $\Bbbk$ is a cochain complex with the zero differential.
If $V$ and $W$ are cochain complexes  $V\otimes W$ 
and $\Hom(V, W)$ are also cochain complexes with the following differentials
\eqn{tshmdiff}{
\begin{cases}
d_{V\otimes  W} = d_V\otimes \I_W +\I_V\otimes d_W, &
\cr
d_{V\!,W} f= d_W\circ f -(-1)^{|f|}f\circ d_V, &\forall f\in \Hom(V,W)^{|f|}
.
\end{cases}
}
A \emph{cochain map} $f:\big(V, d_V\big)\rightarrow \big(W, d_W\big)$ is an $f \in \Hom(V,W)^0$
satisfying $d_{\mathit{V\!,W}} f =d_V\circ f -f\circ d_W =0$.  
Two cochain maps $f$ and $\tilde f$ are \emph{homotopic}, denoted by $f \sim \tilde f$,
or have the \emph{same homotopy type}, denoted by $[f]=[\tilde f]$, if there is a cochain homotopy  $\l \in \Hom(V,W)^{-1}$
such that $\tilde f - f = d_{\mathit{V\!,W}} \l$.  

The set of morphisms from an object $C$ to another object $C^\pr$ in a category 
$\category{C}$ is denoted by $\HOM_{\category{C}}(C, C^\pr)$. 
We denote the set of natural transformations of functors
$\functor{F}\Rightarrow\functor{G}:\category{C}\rightsquigarrow \category{D}$ by $\mathsf{Nat}(\functor{F},\functor{G})$.
For any functor $\functor{F}:\category{C}\rightsquigarrow\category{D}$, where $\category{D}$ is small, we use the notation
$\grave{\functor{F}}:\category{C}\to\category{Set}$ for the underlying set valued functor obtained by composing
$\functor{F}$ with the forgetful functor
$\functor{Forget}:\category{D}\rightsquigarrow\category{Set}$.
Such a functor $\functor{F}:{\category{C}} \rightsquigarrow \category{D}$
is called \emph{representable} if $\grave{\functor{F}}$ is representable.

Remark that the ground field $\Bbbk$ is an algebra $(\Bbbk, u_\Bbbk, m_\Bbbk)$
where $u_\Bbbk =\I_\Bbbk$ and $m_\Bbbk(a\otimes b)=a\cdot b$,
and a coalgebra $\Bbbk^\vee=(\Bbbk, \ep_\Bbbk,\cp_\Bbbk)$
with $\ep_\Bbbk =\I_\Bbbk$ and $\cp_\Bbbk(1)= 1\otimes 1$.
The canonical isomorphisms $\Bbbk\otimes V\cong V$ 
and $V\otimes \Bbbk \cong V$
will be denoted by $\imath_V: \Bbbk\otimes V\rightarrow  V$ and $\imath^{-1}_V:V\rightarrow \Bbbk\otimes V$,  
as well as  $\jmath_V: V\otimes\Bbbk\rightarrow V$ and $\jmath^{-1}_V:V\rightarrow   V\otimes \Bbbk$.

A \emph{commutative dg Hopf algebra (cdg-Hopf algebra)} is a
tuple 
$$B =\big(B, u_B, m_B, \ep_B, \cp_B, \vs_B, d_B\big),$$ 
where

\begin{itemize}

\item $(B,d_B)$ is a cochain complex;
\item $\big(B, u_B, m_B,d_B\big)$ is a \emph{commutative dg algebra (cdg-algebra)}
that both the \emph{unit}
$u_B:\Bbbk\to B$ and the \emph{product} $m_B:B\otimes B\to B$ are cochain maps
satisfying the unit axiom and the associativity axiom:
\eqnalign{dgalgebra}{
\begin{cases}
d_B\circ u_B=0
,\cr
m_B\circ d_{B\otimes B} = d_{B}\circ m_B
,
\end{cases}
\quad
\begin{cases}
m_B\circ (u_B\otimes \I_B)=\imath_B\cong  m_B\circ (\I_B\otimes u_B)=\jmath_B,
\cr
m_B\circ (m_B\otimes \I_B)= m_B\circ (\I_B\otimes m_B)
,
\end{cases}
}
and the commutativity $m_B = m_B\circ \t$, 
where $\t(x\otimes y) = (-1)^{|x||y|}y\otimes x$, $\forall x,y \in A$;

\item $(B, \ep_B, \cp_B, d_B)$ is a \emph{dg coalgebra (dg-coalgebra)}
that both the \emph{counit} $\ep_B:B\to \Bbbk$ and the \emph{coproduct} $\cp_B:B\to B\otimes B$
are cochain maps satisfying the counit axiom and the coassociativity axiom:
\eqnalign{dgcoalgebra}{
\begin{cases}
\ep_B\circ d_B=0
,\cr
\cp_B\circ d_{B} = d_{B\otimes B}\circ\cp_B
,
\end{cases}
\quad
\begin{cases}
(\ep_B\otimes \I_B)\circ \cp_B=\imath^{-1}_B\cong  (\I_B\otimes \ep_B)\circ \cp_B=\jmath^{-1}_B,
\cr
(\cp_B\otimes \I_B)\circ \cp_B=(\I_B\otimes \cp_B)\circ \cp_B
.
\end{cases}
}

\item $\vs_B: B \rightarrow B$ is a cochain map called the \emph{antipode},
\end{itemize}
such that  the counit $\ep_B:B\to \Bbbk$ and the coproduct $\cp_B:B\to B\otimes B$ are
algebra maps:
\eqnalign{bialgebra}{
\begin{cases}
\ep_B\circ u_B = u_\fieldk
,\cr
\ep_B\circ m_B=m_\fieldk\circ (\ep_B\otimes \ep_B)
,
\end{cases}
\quad
\begin{cases}
\cp_B\circ u_B=  u_{B\otimes B}\circ \cp_\fieldk
,\cr
\cp_B\circ m_B= m_{B\otimes B}\circ (\cp_B\otimes \cp_B)
,
\end{cases}
}
and  $\vs_B:B\rightarrow B$ satisfies the antipode axiom:
\eqn{antipodeaxiom}{
m_B\circ(\vs_B\otimes \I_B)\circ\cp_B = m_B\circ (\I_B\otimes \vs_B)\circ\cp_B = u_B\circ \ep_B.
}

\begin{remark}
We have used the structure 
$\big(B\otimes B, u_{B\otimes B}, m_{B\otimes B} , d_{B\otimes B}\big)$
of cdg-algebra on $B\otimes B$ induced from the cdg-algebra on $B$
where 
$$
u_{B\otimes B}=u_{B}\otimes u_B
,\qquad m_{B\otimes B} = (m_B\otimes m_B)\circ (\I_B\otimes\t\otimes \I_B).
$$
We also have the structure 
$\big(B\otimes B, \ep_{B\otimes B}, \cp_{B\otimes B} , d_{B\otimes B}\big)$
of dg-coalgebra on $B\otimes B$ induced from the dg-coalgebra on $B$, where
$$
\ep_{B\otimes B}=\ep_{B}\otimes \ep_B,
\qquad
\cp_{B\otimes B} = (\I_B\otimes\t\otimes \I_B)
\circ (\cp_B\otimes \cp_B).
$$
The conditions in \eq{bialgebra} is equivalent to the conditions that
the unit $u_B:\Bbbk\to B$ and the product $m_B:B\otimes B \to B$
are coalgebra maps.  A cdg-Hopf algebra, after forgetting the antipode,
is a \emph{cdg-bialgebra}.  In other words a cdg-Hopf algebra is a cdg-bialgebra with
an antipode. 
An antipode $\vs_B$ of a cdg-bialgebra $B$ is unique if exists,  and 
is a morphism of cdg-algebras and an anti-morphism of dg-coalgebras:
\eqnalign{antialgandcoalg}{
\begin{cases}
\vs_B\circ u_B=u_B
,\cr 
\vs_B \circ m_B = m_B\circ(\vs_B\otimes \vs_B)
,
\end{cases}
\qquad
\begin{cases}
\ep_B\circ \vs_B=\ep_B
,\cr
\cp_B \circ \vs_B = \t\circ(\vs_B\otimes \vs_B)\circ\cp_B
.
\end{cases}
}
\end{remark}

A \emph{morphism $f:B \rightarrow B^\pr$ of cdg-Hopf algebras} is
simultaneously 
\begin{itemize}
\item a morphism $f:\big(B, u_B, m_B, d_B\big)\to \big(B^\pr, u_{B^\pr}, m_{B^\pr}, d_{B^\pr}\big)$ of cdg-algebras:
\eqn{cdgalgebramap}{
f\circ d_B = d_{B^\pr}\circ f, \qquad
f\circ u_B = u_{B^\pr},\qquad
f\circ m_B = m_{B^\pr}\circ (f\otimes f)
,
}
\item and 
a morphism $f:\big(B, \ep_B, \cp_B, d_B\big)\to \big(B^\pr, \ep_{B^\pr}, \cp_{B^\pr}, d_{B^\pr}\big)$ of dg-coalgebras:
\eqn{dgcoalgebramap}{
f\circ d_B = d_{B^\pr}\circ f, \qquad
\ep_{B^\pr}\circ f=\ep_B,\qquad
\cp_{B^\pr}\circ f = (f\otimes f)\circ \cp_B
.
}
\end{itemize}
Then it is automatic that $f$ commutes with the antipodes: 
\eqn{antipodecommutes}{
f\circ \vs_B =\vs_{B^\pr}\circ f.
}
The composition of morphisms of cdg-Hopf algebras 
is defined by the composition as linear maps, which is
obviously a morphism of cdg-Hopf algebras.
The category of cdg-Hopf algebras, dg-algebras, cdg-algebras and dg-coalgebras are denoted by
$\category{cdgH}(\fieldk)$, $\category{dgA}(\Bbbk)$, $\category{cdgA}(\fieldk)$ and $\category{dgC}(\fieldk)$, respectively.

A \emph{homotopy pair} on $\HOM_{\category{cdgH}(\Bbbk)}(B, B^\pr)$
is a pair of one parameter families $\big(f(t), \xi(t)\big)\in \Hom(B, B^\pr)^0[t]\oplus \Hom(B, B^\pr)^{-1}[t]$, 
parametrized by the time variable $t$ with polynomial dependences, 
satisfying the \emph{homotopy flow equation} $\Fr{d}{dt}f(t)= d_{B, B^\pr} \xi(t)$ generated by $\xi(t)$,
subject to the following two types of conditions:
\begin{itemize}
\item \emph{infinitesimal algebra map}:  $f(0) \in  \HOM_{\category{cdgA}(\fieldk)}(B, B^\pr)$ and 
$$
\xi(t)\circ u_B=0, \qquad
\xi(t) \circ m_B =m_{B^\pr}\circ \big(f(t)\otimes \xi(t) +\xi(t)\otimes f(t)\big)
;
$$
\item\emph{infinitesimal coalgebra map}:  $f(0) \in  \HOM_{\category{dgC}(\fieldk)}(B, B^\pr)$
and
$$
\ep_{B^\pr}\circ\xi(t)=0,\qquad
\cp_{B^\pr}\circ\xi(t)=\big(f(t)\otimes \xi(t) +\xi(t)\otimes f(t)\big)\circ \cp_B
.
$$
\end{itemize}

%
%
%
%
Let $\big(f(t), \xi(t)\big)$ be a homotopy pair on $\HOM_{\category{cdgH}(\Bbbk)}(B, B^\pr)$.
By the homotopy flow equation, 
${f}(t)$ is uniquely determined by $\xi(t)$ modulo an initial condition $f(0)$ such that
${f}(t) = {f}(0) + d_{B, B^\pr}\int^t_0\xi(s)\mathit{ds}$, and we can check that $f(t)$ is a family of morphisms of cdg-Hopf algebras.
We say ${f}(1)$ is \emph{homotopic to} ${f}(0)$ by the homotopy $\int^1_0\xi(t)dt$,
and denote ${f}(0)\sim {f}(1)$, which is clearly an equivalence relation.
In other words,  two morphisms $f$ and $\tilde f$  of cdg-Hopf algebras are homotopic if there is a homotopy flow connecting 
them (by the time $1$ map).  Then, we also say that $f$ and $\tilde f$ have the \emph{same homotopy type}, denoted by $[f]=[\tilde f]$.
For any diagram $\xymatrix{B\ar@/^/[r]^-{f}\ar@/_/[r]_-{\tilde f}&B^\pr\ar@/^/[r]^-{f^\pr}\ar@/_/[r]_-{\tilde f^\pr}&B^\ppr}$ 
in the category $\category{cdgH}(\fieldk)$, where $f\sim \tilde f$ and $f^\pr\sim \tilde f^\pr$, 
it is straightforward to check that
$f^\pr\circ f \sim \tilde f^\pr\circ \tilde f$, and the homotopy type of $f^\pr\circ f$
only depends on the homotopy types of $f$ and $f^\pr$, so that we have the well-defined composition
$[f^\pr]\circ_h [f] :=[f^\pr\circ f]$ of homotopy types.
A morphism  $\xymatrix{B\ar[r]^f & B^\pr}$ of cdg-Hopf algebras is a \emph{homotopy equivalence} if there
is a morphism  $\xymatrix{B & \ar[l]_{h} B^\pr}$ of cdg-Hopf algebras from the opposite direction
such that $h\circ f \sim \I_B$ and $f\circ h \sim \I_{B^\pr}$. 

The homotopy category $\mathit{ho}\category{cdgH}(\fieldk)$ of cdg-Hopf algebras over $\Bbbk$ is defined such
that the objects are cdg-Hopf algebras and morphisms are homotopy types of morphisms of cdg-Hopf algebras.  
Note that a homotopy equivalence of cdg-Hopf algebras is an isomorphism in the homotopy category 
$\mathit{ho}\category{cdgH}(\fieldk)$.

We define a homotopy pair on the morphisms of cdg-algebras as the case of cdg-Hopf algebras
but without imposing the infinitesimal coalgebra map condition.
Then we have corresponding notions for homotopy types of morphisms of cdg-algebras and a homotopy equivalence of cdg-algebras.
Thus we can form the homotopy category $\mathit{ho}\category{cdgA}(\fieldk)$ of cdg-algebras,
whose morphisms are homotopy types of morphisms of cdg-algebras.

A \emph{dg-category} $\dgcat{C}$   over $\Bbbk$
is a category enriched in the category $\category{CoCh}(\fieldk)$ 
of cochain complexes over $\fieldk$. 
A dg-category shall be distinguished from an ordinary category by putting an "overline".
We follows \cite{Simpson} for the notion of dg-tensor categories.
Denoted by $\HOM_{\dgcat{C}}(X,Y)$ with  differential $d_{\HOM_{\dgcat{C}}(X,Y)}$
is the cochain complex of morphisms from an object $X$ to another object $Y$ in a dg-category $\dgcat{C}$.
A morphism $f \in \HOM_{\dgcat{C}}(X,Y)$ between two objects $X$ and $Y$ in $\dgcat{C}$  
is an isomorphism if  
$f\in \HOM_{\dgcat{C}}(X,Y)^0$  and satisfies $d_{\HOM_{\dgcat{C}}(X,Y)}f=0$, with its inverse
$g \in \HOM_{\dgcat{C}}(Y,X)^0$ satisfying $d_{\HOM_{\dgcat{C}}(Y,X)}g=0$.

A \emph{dg-functor} $\functor{F}:\dgcat{C} \rightsquigarrow \dgcat{D}$ 
is a functor which induces cochain maps  
$\HOM_{\dgcat{C}}(X,Y)\rightarrow \HOM_{\dgcat{D}}(\functor{F}(X),\functor{F}(Y))$ for every pair $(X,Y)$ of objects in $\dgcat{C}$.
The set ${\mathsf{Nat}}(\functor{F}, \functor{G})$ of natural transformations of dg-functors 
is a cochain complex $\big({\mathsf{Nat}}(\functor{F}, \functor{G}), \bm{\d}\big)$,
where
\begin{itemize}
\item
its degree $n$ element is a collection of  morphisms 
$\eta=\{\eta^X:\functor{F}(X)\to\functor{G}(X)|X\in \text{Ob}(\dgcat{C})\}$
of degree $n$,
where $\eta^X$ is called the \emph{component} of $\eta$ at $X$, 
with the super-commuting naturalness condition, i.e. $\functor{G}(f)\circ\eta^X=(-1)^{mn}\eta^Y\circ\functor{F}(f)$ 
for every morphism $f:X\to Y$ of degree $m$.

\item
for every $\eta \in {\mathsf{Nat}}(\functor{F}, \functor{G})$ of degree $n$ 
we have $\d\eta \in {\mathsf{Nat}}(\functor{F}, \functor{G})$ of degree $n+1$,
whose component at $X$ is defined by $\big(\d\eta\big)^X:=d_{\HOM_{\dgcat{C}}\big(\functor{F}(X),\functor{G}(X)\big)}\eta^X$,
and $\d\circ \d=0$.
\end{itemize}
The dg-functors from $\dgcat{C}$ to $\dgcat{D}$ form a dg-category, with morphisms 
as the above natural transformations. 
In particular, the set $\mathsf{End}(\functor{F}):=\mathsf{Nat}(\functor{F},\functor{F})$ of natural endomorphisms  has a canonical structure of dg-algebra.
A natural  transformation $\eta$ 
from a dg-functor $\functor{F}$ to another dg-functor
$\functor{G}$ is often indicated by a diagram $\eta: \functor{F}\Rightarrow \functor{G}: \dgcat{C}\rightsquigarrow \dgcat{D}$.
A natural transformation $\eta$ is an \emph{(natural) isomorphism} 
if the component morphism  $\eta^X:\functor{F}(X)\rightarrow \functor{G}(X)$ is an isomorphism in
$ \dgcat{D}$ for every object $X$ of  $\dgcat{C}$.

The notion of tensor categories \cite{Rivano,Deligne90} has a natural generalization to dg-tensor categories.
For a dg-category $\dgcat{C}$ we have a new dg-category $\dgcat{C}\boxtimes \dgcat{C}$, whose objects
are pairs denoted by $X\boxtimes Y$ and whose Hom complexes are the tensor products of
Hom complexes of  $\dgcat{C}$, i.e., $\HOM_{\dgcat{C}\boxtimes \dgcat{C}}(X\boxtimes Y, X^\pr\boxtimes Y^\pr)
=\HOM_{\dgcat{C}}(X, X^\pr)\otimes \HOM_{\dgcat{C}}(Y, Y^\pr)$ with the natural composition operation and 
differentials.  Then  we have a natural equivalence of dg-categories  
$(\dgcat{C}\boxtimes \dgcat{C})\boxtimes \dgcat{C} \cong \dgcat{C}\boxtimes (\dgcat{C}\boxtimes \dgcat{C})$.
A dg-category $\dgcat{C}$ is a \emph{dg-tensor category} if 
we have dg-functor $\bm{\otimes}: \dgcat{C}\boxtimes \dgcat{C}\rightsquigarrow \dgcat{C}$
and a unit object $\bm{1}_{\dgcat{C}}$ satisfying the associativity, the commutativity and the unit axioms
subject to coherence conditions.
(See pp $40$-$41$ in  \cite{Simpson} for the details.)

The fundamental example of dg-tensor 
categories over $\Bbbk$ is the dg-category $\dgcat{CoCh}(\Bbbk)$ of cochain complexes, whose 
set of morphisms $\HOM_{\dgcat{CoCh}(\Bbbk)}(V,W)$
from a cochain complex $V$ to a cochain complex $W$
is the $\Bbbk$-linear Hom complex $\Hom(V,W)$ with the differential $d_{\HOM_{\dgcat{CoCh}(\Bbbk)}(V,W)}=d_{V\!,W}$.  
The dg-functor $\bm{\otimes}: \dgcat{CoCh}(\Bbbk)\boxtimes \dgcat{CoCh}(\Bbbk)\rightsquigarrow \dgcat{CoCh}(\Bbbk)$
sends $(V, d_V)\boxtimes(W, d_W)$ to the  cochain complex $(V\otimes W, d_{V\otimes W})$ 
and the unit object is the ground field $\Bbbk$ as a cochain complex $(\Bbbk,0)$,
where all coherence isomorphisms  are the obvious ones.

A \emph{dg-tensor functor} $\functor{F}:\dgcat{C}\rightsquigarrow\dgcat{D}$ 
between dg-tensor categories is a dg-functor 
satisfying
$\functor{F}(X\bm{\otimes} Y)\cong\functor{F}(X)\bm{\otimes}\functor{F}(Y)$
and $\functor{F}(\bm{1}_{\dgcat{C}})\cong
\bm{1}_{\dgcat{D}}$.
A \emph{tensor natural transformation} $\eta:\functor{F}\Rightarrow\functor{G}$ 
of dg-tensor functors is a natural transformation  of degree $0$
satisfying $\eta^{X\bm{\otimes} Y}\cong\eta^X\otimes\eta^Y$ 
and $\eta^{\bm{1}_{\dgcat{C}}}\cong\I_{\bm{1}_{\dgcat{D}}}$.

We use the  notation $[\a]$ for the homotopy type of a morphism 
$\a$ as well as for the cohomology class of  a cocycle $\a$, depending on the context.

\section{Affine group dg-schemes}

An affine group dg-scheme  over $\Bbbk$
is a representable functor
$\bm{\mG}:\mathit{ho}\category{cdgA}(\fieldk)\rightsquigarrow \category{Grp}$ 
from 
the homotopy category $\mathit{ho}\category{cdgA}(\fieldk)$ of cdg-algebras
over $\fieldk$ to the category $\category{Grp}$ of groups.
A representing object of $\bm{\mG}$ is a 
cdg-Hopf algebra $B$, and we use the notation $\bm{\mG}^B$ for it. 
The category formed by affine  group dg-schemes over $\Bbbk$ is anti-equivalent to the
homotopy category $\mathit{ho}\category{cdgH}(\fieldk)$ of cdg-Hopf algebras over $\Bbbk$.

For each cdg-Hopf algebra $B$, we also construct a functor 
$\category{T}\!\bm{\mG}^B:\mathit{ho}\category{cdgA}(\fieldk)\rightsquigarrow \category{Lie}(\Bbbk)$ to the category $\category{Lie}(\Bbbk)$ of Lie algebras
so that $\category{T}\!\bm{\mG}^B$ and $\category{T}\!\bm{\mG}^{B^\pr}$ are naturally isomorphic whenever 
$\bm{\mG}^B$ and $\bm{\mG}^{B^\pr}$ are naturally isomorphic.

\subsection{Representable functors $\bm{\mG}:\mathit{ho}\category{cdgA}(\fieldk)\rightsquigarrow \category{Grp}$ 
and cdg-Hopf algebras}

The purpose of this subsection is to prove the following:

\begin{theorem}[Definition]\label{affinegrdgsch}
For every cdg-Hopf algebra  $B=\big(B, u_B, m_B, \ep_B, \cp_B,\vs_B, d_B\big)$ 
we have a  functor $\bm{\mG}^B:\mathit{ho}\category{cdgA}(\fieldk)\rightsquigarrow \category{Grp}$
represented by $B$ defined
as follows:

\begin{itemize}

\item for each  cdg-algebra $\big(A, u_A, m_A, d_A\big)$ we have a  group $\bm{\mG}^B(A)$ defined by
$$\bm{\mG}^B(A):=\Big(\HOM_\hcdga(B,A), \ide_{B\!,\,A}, \ast_{\!{B\!,\,A}} \Big)$$
with  the identity element $\ide_{B\!,\,A}:=\big[ u_A\circ \ep_B\big]$,  the group operation 
$[g_1]\ast_{B\!,\,A} [g_2] :=\big[m_A\circ (g_1\otimes g_2) \circ\cp_B\big]$,  and  the inverse $[g]^{-1}:=\big[g\circ \vs_B\big]$ of $[g]$,
where  $g\in \HOM_\cdga(B,A)$ is an arbitrary representative
of the homotopy type $[g]\in \HOM_\hcdga(B,A)$;

\item for each morphism $[f]\in \HOM_\hcdga(A,A^\pr)$ 
we have a group homomorphism $\bm{\mG}^B\big([f]\big): \bm{\mG}^B(A)\rightarrow \bm{\mG}^B(A^\pr)$ 
defined by, $\forall [g]\in \HOM_\hcdga(B,A)$,
$$
\bm{\mG}^B([f])\big([g]\big):=\big[f\circ g\big],
$$
where 
$f \in \HOM_{\cdga}\big(A, A^\pr\big)$ and
$g \in   \HOM_\cdga(B,A)$ 
are  arbitrary representatives
of the homotopy types $[f]$ and $[g]$, respectively,
such that $\bm{\mG}^B([f])$ is an isomorphism of groups
whenever $f: A \rightarrow A^\pr$ is a homotopy equivalence of cdg-algebras.

\end{itemize}

For each morphism $[\p] \in \HOM_{\hcdgh}(B,B^\pr)$ in the homotopy category 
of cdg-Hopf algebras,  we have a natural transformation 
$\sN_{[\p]}\in\mathsf{Nat}\big(\bm{\mG}^{B^\pr},\bm{\mG}^{B}\big)$ defined as follows:
for each cdg-algebra $A$ and for every $[g^\pr] \in \HOM_{\hcdga}\big(B^\pr, A\big)$ we have
$\sN_{[\p]}^{A}\big([g^\pr]\big):= \big[g^\pr\circ \p\big]$,
where 
$\p \in \HOM_{\cdgh}\big(B, B^\pr\big)$ and $g^\pr \in \HOM_{\cdga}\big(B^\pr, A\big)$ 
are 
arbitrary representative of the homotopy types $[\p]$ and $[g^\pr]$. 

We have $\HOM_{\hcdgh}(B,B^\pr)\cong \mathsf{Nat}\big(\bm{\mG}^{B^\pr},\bm{\mG}^{B}\big)$
such that $\sN_{[\p]}$ is a natural isomorphism  whenever $\p: B\rightarrow B^\pr$ is a homotopy equivalence of 
cdg-Hopf algebras.
\end{theorem}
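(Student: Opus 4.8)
The plan is to treat the statement in three stages, isolating the Hopf-theoretic content at the end. First I would fix a cdg-algebra $A$ and check that the set $\HOM_\cdga(B,A)$ of strict cdg-algebra morphisms is a group under the convolution $g_1\ast g_2:=m_A\circ(g_1\otimes g_2)\circ\cp_B$. Since $A$ is commutative and $\cp_B,\ep_B$ are algebra maps while $m_A$ is an algebra map, $g_1\ast g_2$ is again a cdg-algebra map; associativity comes from coassociativity of $\cp_B$ with associativity of $m_A$, the counit axiom makes $u_A\circ\ep_B$ a two-sided unit, and the antipode axiom \eqref{antipodeaxiom} makes $g\circ\vs_B$ a two-sided inverse (commutativity of $A$ is used here, exactly as classically). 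Because composition of cdg-algebra maps respects homotopy types (recorded just before the statement), all three operations descend to $\HOM_\hcdga(B,A)$, giving the group $\bm{\mG}^B(A)$. Functoriality in $[f]$ is then formal: post-composition $[g]\mapsto[f\circ g]$ is a group homomorphism because $f$ is an algebra map (whence $f\circ(g_1\ast g_2)=(f\circ g_1)\ast(f\circ g_2)$ and $f\circ u_A\circ\ep_B=u_{A^\pr}\circ\ep_B$), and since $\bm{\mG}^B$ is defined on the homotopy category it automatically sends homotopy equivalences to isomorphisms.

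Second, for $[\p]\in\HOM_\hcdgh(B,B^\pr)$ I would verify that $\sN_{[\p]}^A([g^\pr]):=[g^\pr\circ\p]$ is a well-defined natural transformation $\bm{\mG}^{B^\pr}\Rightarrow\bm{\mG}^B$ (note the contravariance). Well-definedness is again homotopy-invariance of composition; naturality in $A$ is the commutativity of pre- and post-composition; each component is a group homomorphism precisely because $\p$ is a cdg-coalgebra morphism, so $\cp_{B^\pr}\circ\p=(\p\otimes\p)\circ\cp_B$ yields $(g_1^\pr\ast g_2^\pr)\circ\p=(g_1^\pr\circ\p)\ast(g_2^\pr\circ\p)$ and $\ep_{B^\pr}\circ\p=\ep_B$ yields preservation of the unit. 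A short computation shows $\sN$ is contravariantly functorial, so it carries isomorphisms of $\hcdgh$ to natural isomorphisms; in particular a homotopy equivalence $\p$ gives a natural isomorphism $\sN_{[\p]}$.

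Third, I would set up mutually inverse maps and invoke Yoneda. Since $\grave{\bm{\mG}}^{B^\pr}=\HOM_\hcdga(B^\pr,-)$ and $\grave{\bm{\mG}}^{B}=\HOM_\hcdga(B,-)$ are the representable functors on $\hcdga$, the Yoneda lemma gives $\mathsf{Nat}\big(\grave{\bm{\mG}}^{B^\pr},\grave{\bm{\mG}}^B\big)\cong\HOM_\hcdga(B,B^\pr)$ via $\eta\mapsto\eta^{B^\pr}([\I_{B^\pr}])$ with inverse $[\p]\mapsto\sN_{[\p]}$. The backward assignment $\eta\mapsto\big[\eta^{B^\pr}([\I_{B^\pr}])\big]$ and the forward assignment $[\p]\mapsto\sN_{[\p]}$ are inverse to each other: forward-then-backward recovers $[\p]$ because $[\I_{B^\pr}\circ\p]=[\p]$, and backward-then-forward recovers $\eta$ because the naturality square of $\eta$ along any $[g^\pr]\colon B^\pr\to A$, evaluated on $[\I_{B^\pr}]$, gives $\eta^A([g^\pr])=[g^\pr\circ\p]=\sN_{[\p]}^A([g^\pr])$. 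Thus the whole statement reduces to one point: the backward map actually lands in $\HOM_\hcdgh(B,B^\pr)$, i.e. the cdg-algebra class $[\p]$ contains a strict cdg-Hopf representative and its cdg-Hopf homotopy class is well defined.

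\textbf{The main obstacle} is exactly this upgrade from an algebra class to a Hopf class, and I would attack it using that every component $\eta^A=\sN_{[\p]}^A$ is a group homomorphism, fed with universal elements. Taking $A=B^\pr\otimes B^\pr$ and the two cdg-algebra inclusions $\iota_1,\iota_2\colon B^\pr\to B^\pr\otimes B^\pr$, $\iota_1(x)=x\otimes 1$, $\iota_2(x)=1\otimes x$, a direct check gives $\iota_1\ast\iota_2=\cp_{B^\pr}$; the homomorphism identity $\sN^A_{[\p]}(\iota_1\ast\iota_2)=\sN^A_{[\p]}(\iota_1)\ast\sN^A_{[\p]}(\iota_2)$ then reads $\cp_{B^\pr}\circ\p\sim(\p\otimes\p)\circ\cp_B$, and testing preservation of the unit at $A=\fieldk$ gives $\ep_{B^\pr}\circ\p\sim\ep_B$. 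Hence the strict algebra map $\p$ is a coalgebra map \emph{up to homotopy}. The genuinely delicate remaining step is to rectify $\p$, within its homotopy class, to a strict cdg-bialgebra morphism; once that is done the antipode compatibility is automatic by \eqref{antipodecommutes}, so $[\p]\in\HOM_\hcdgh(B,B^\pr)$, and the same rectification shows two algebra-homotopic cdg-Hopf maps are cdg-Hopf homotopic, giving well-definedness of the Hopf class. I expect this rectification to be the crux: it must be carried out through the homotopy-pair/homotopy-flow formalism of the preceding section (using $\mathrm{char}\,\Bbbk=0$), producing homotopies that are simultaneously compatible with the unit, product, counit and coproduct. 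Granting it, the two maps above are mutually inverse, establishing $\HOM_\hcdgh(B,B^\pr)\cong\mathsf{Nat}\big(\bm{\mG}^{B^\pr},\bm{\mG}^B\big)$ with $\sN_{[\p]}$ a natural isomorphism whenever $\p$ is a homotopy equivalence.
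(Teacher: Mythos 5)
Your first two stages are sound and coincide with the paper's own route: the strict convolution group $\bm{\CG}^B(A)$ is Lemma \ref{grdgschemetwo}, and the descent of the unit, product and inverse to homotopy classes is exactly the homotopy-pair calculus of Lemma \ref{hprt}. One small imprecision there: ``composition of cdg-algebra maps respects homotopy types'' does not by itself make the convolution descend, since $m_A\circ(g_1\otimes g_2)\circ\cp_B$ is not a composition with a fixed morphism; what is needed is the homotopy pair $\big(g_1(t)\star_{B,A}g_2(t),\,\l_1(t)\star_{B,A}g_2(t)+g_1(t)\star_{B,A}\l_2(t)\big)$ of Lemma \ref{hprt}(a), while the inverse does descend by Lemma \ref{hprt}(b) because $\vs_B\in\HOM_{\cdga}(B,B)$. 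Your verification that the components of $\sN_{[\p]}$ are group homomorphisms matches Lemma \ref{grdgschemethree}.

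The genuine gap is the step you flag and then grant. Your inverse assignment $\eta\mapsto\big[\eta^{B^\pr}([\I_{B^\pr}])\big]$ produces, via Yoneda on the underlying set-valued functors, only a class of cdg-\emph{algebra} morphisms $[\p]\in\HOM_{\hcdga}(B,B^\pr)$ together with the up-to-homotopy relations $\cp_{B^\pr}\circ\p\sim(\p\otimes\p)\circ\cp_B$ and $\ep_{B^\pr}\circ\p\sim\ep_B$. To finish you need (i) a strict cdg-bialgebra representative inside the class $[\p]$, and (ii) that two cdg-Hopf morphisms homotopic as algebra maps are homotopic as Hopf maps (the homotopy-pair notion for $\HOM_{\cdgh}$ imposes the infinitesimal coalgebra-map condition on $\xi(t)$, which an algebra-level homotopy does not carry). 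You prove neither, and neither follows formally from the homotopy-flow formalism: there is no evident way to correct a given flow so that the algebra and coalgebra conditions hold simultaneously, and the paper supplies no such rectification lemma. The paper's architecture keeps this issue from becoming load-bearing: it first proves the strict isomorphism $\mathsf{Nat}\big(\bm{\CG}^{B^\pr},\bm{\CG}^B\big)\cong\HOM_{\cdgh}(B,B^\pr)$ (Lemmas \ref{grdgschemefour} and \ref{grdgschemethree}), where your same universal-element computation with $A=B^\pr\otimes B^\pr$ and $\iota_1\ast\iota_2=\cp_{B^\pr}$ yields the \emph{strict} equality $\cp_{B^\pr}\circ\p=(\p\otimes\p)\circ\cp_B$, because $B^\pr\otimes B^\pr$ is an honest coproduct in $\cdga$ and the hom-sets consist of strict morphisms, so there is nothing to rectify; it then descends the already-constructed assignment $[\p]\mapsto\sN_{[\p]}$ to the homotopy category via Lemma \ref{hprt}(d). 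Until you either prove the rectification or restructure along these strict-first lines, your argument does not establish the bijection $\HOM_{\hcdgh}(B,B^\pr)\cong\mathsf{Nat}\big(\bm{\mG}^{B^\pr},\bm{\mG}^B\big)$.
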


We need the forthcoming three lemmas for the proof.

\begin{lemma}\label{grdgschemeone}
For every cdg-Hopf algebra $B$
we have a functor 
$\bm{\CE}^{B}:{\category{cdgA}}(\Bbbk) \rightsquigarrow \category{dgA}(\Bbbk)$,
sending 
\begin{itemize}
\item each cdg-algebra $A$ to the  dg-algebra
$\bm{\CE}^{B}(A):=\big(\Hom\big(B, A\big), u_A\circ\ep_B, \star_{B\!,A}, d_{B\!,A}\big)$,
where $\a_1\star_{B\!, A}\a_2:= m_A\circ (\a_1\otimes \a_2)\circ \cp_B$ for all $\a_1,\a_2 \in \Hom(B, A)$,
and 

\item each morphism $A\xrightarrow{f} A^\pr$ of cdg-algebras
to a morphism $\bm{\CE}^B(f): \bm{\CE}^{B}(A)\rightarrow\bm{\CE}^{B}(A^\pr)$
of dg-algebras defined by $\bm{\CE}^{B}(f)(\a):= f\circ \a$ for all $\a \in \Hom\big(B, A\big)$.
\end{itemize}
\end{lemma}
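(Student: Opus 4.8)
The content of this lemma is that $\Hom(B,A)$ carries the \emph{convolution} dg-algebra structure assembled from the coproduct of $B$ and the product of $A$, and that post-composition $\a\mapsto f\circ\a$ is functorial in $A$. The plan is to verify, in order: (i) for each cdg-algebra $A$ the tuple $\bm{\CE}^B(A)$ is a dg-algebra; (ii) for each morphism $f:A\to A^\pr$ of cdg-algebras the post-composition map $\bm{\CE}^B(f)$ is a morphism of dg-algebras; and (iii) $\bm{\CE}^B$ preserves identities and composition. None of these is conceptually hard; the only genuinely technical point is the Koszul sign bookkeeping in the derivation property of the differential, which I single out below.

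For (i), the underlying pair $\big(\Hom(B,A),d_{B\!,A}\big)$ is already a cochain complex by \eq{tshmdiff}, so $d_{B\!,A}$ has degree $1$ and squares to zero for free. Associativity of $\star_{B\!,A}$ follows by unwinding $(\a_1\star_{B\!,A}\a_2)\star_{B\!,A}\a_3$ and $\a_1\star_{B\!,A}(\a_2\star_{B\!,A}\a_3)$ into $m_A\circ(m_A\otimes\I_A)\circ(\a_1\otimes\a_2\otimes\a_3)\circ(\cp_B\otimes\I_B)\circ\cp_B$ and its mirror, which agree by the associativity of $m_A$ and the coassociativity of $\cp_B$ from \eq{dgalgebra} and \eq{dgcoalgebra}. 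That $u_A\circ\ep_B$ is a two-sided unit reduces, after pulling $u_A$ through $m_A$ and $\ep_B$ through $\cp_B$, to the unit axiom $m_A\circ(u_A\otimes\I_A)=\imath_A$ composed with the counit axiom $(\ep_B\otimes\I_B)\circ\cp_B=\imath^{-1}_B$, together with naturality of the canonical isomorphisms $\imath,\jmath$; the right unit is symmetric via $\jmath$.

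The crux of (i) is the Leibniz rule $d_{B\!,A}(\a_1\star_{B\!,A}\a_2)=(d_{B\!,A}\a_1)\star_{B\!,A}\a_2+(-1)^{|\a_1|}\a_1\star_{B\!,A}(d_{B\!,A}\a_2)$ together with $d_{B\!,A}(u_A\circ\ep_B)=0$, the latter being immediate from $d_A\circ u_A=0$ and $\ep_B\circ d_B=0$. For the former I would begin from $d_{B\!,A}(\a_1\star_{B\!,A}\a_2)=d_A\circ m_A\circ(\a_1\otimes\a_2)\circ\cp_B-(-1)^{|\a_1|+|\a_2|}m_A\circ(\a_1\otimes\a_2)\circ\cp_B\circ d_B$, push $d_A$ inside using that $m_A$ is a cochain map ($d_A\circ m_A=m_A\circ d_{A\otimes A}$), pull $d_B$ outside using that $\cp_B$ is a cochain map ($\cp_B\circ d_B=d_{B\otimes B}\circ\cp_B$), and then expand $d_{A\otimes A}=d_A\otimes\I_A+\I_A\otimes d_A$ and $d_{B\otimes B}=d_B\otimes\I_B+\I_B\otimes d_B$ as in \eq{tshmdiff}. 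The two sides coincide term by term once the Koszul signs are tracked. This sign bookkeeping is the only real obstacle: the composition rule $(g_1\otimes g_2)\circ(h_1\otimes h_2)=(-1)^{|g_2||h_1|}(g_1\circ h_1)\otimes(g_2\circ h_2)$ must be applied consistently, and it is precisely the factor $(-1)^{|\a_1|}$ arising from $(\I_A\otimes d_A)\circ(\a_1\otimes\a_2)$ and the factor $(-1)^{|\a_2|}$ arising from $(\a_1\otimes\a_2)\circ(d_B\otimes\I_B)$ that reproduce the signs of the Leibniz rule.

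For (ii), since $f$ has degree $0$, the map $\bm{\CE}^B(f)(\a)=f\circ\a$ preserves degree. It is a cochain map because $f\circ d_{B\!,A}\a=f\circ d_A\circ\a-(-1)^{|\a|}f\circ\a\circ d_B$ equals $d_{B\!,A^\pr}(f\circ\a)$ once $f\circ d_A=d_{A^\pr}\circ f$ is used; it sends the unit $u_A\circ\ep_B$ to $u_{A^\pr}\circ\ep_B$ because $f\circ u_A=u_{A^\pr}$; and it is multiplicative because $f\circ m_A=m_{A^\pr}\circ(f\otimes f)$ gives $f\circ(\a_1\star_{B\!,A}\a_2)=(f\circ\a_1)\star_{B\!,A^\pr}(f\circ\a_2)$, with no extra sign since $|f|=0$. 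These three identities are exactly the defining equations \eq{cdgalgebramap} of a cdg-algebra morphism, applied to $f$. Finally, (iii) is immediate: $\bm{\CE}^B(\I_A)(\a)=\a$ and $\bm{\CE}^B(f^\pr\circ f)(\a)=f^\pr\circ f\circ\a=\bm{\CE}^B(f^\pr)\big(\bm{\CE}^B(f)(\a)\big)$, so $\bm{\CE}^B$ respects identities and composition and is therefore a functor.
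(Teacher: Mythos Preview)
Your proof is correct and follows the same approach as the paper. The paper's own proof is more terse: it declares that $\bm{\CE}^B(A)$ being a dg-algebra is ``a standard fact'' and that functoriality is ``obvious'', and only writes out the three verifications for part (ii) explicitly---exactly the unit, differential, and multiplicativity checks you give. Your additional detail on part (i), particularly the Koszul-sign bookkeeping for the Leibniz rule, is accurate and simply spells out what the paper takes for granted.
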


\begin{proof}
It is a standard  fact that $\bm{\CE}^{B}(A)$ is a dg-algebra.
We check that $\bm{\CE}^B(f)$ is a morphism of
dg-algebras, 
as follows:
\eqalign{
\bm{\CE}^B(f)(u_A\!\circ\!\ep_{B}) = & f\circ u_A\circ\ep_{B} = u_{A^\pr}\!\circ\!\ep_B
,\cr
\bm{\CE}^B(f)\left(d_{B\!,A}\a\right) 
= 
&
f\circ d_{A}\circ \a -(-1)^{|\a|} f\circ\a\circ d_{B}
\cr
=
&
d_{A^\pr}\circ f\circ \a -(-1)^{|\a|} f\circ\a\circ d_{B}
=
d_{B\!,A^\pr}\left(\bm{\CE}^B(f)(\a)\right)
,\cr
\bm{\CE}^B(f)\left(\a_1\star_{B\!, A}\a_2\right) 
= &f\circ  m_A\circ (\a_1\otimes \a_2)\circ \cp_{B}
= m_{A^\pr}\circ (f\circ\a_1 \otimes f\circ\a_2)\circ \cp_{B}
\cr
=&
\bm{\CE}^B(f)(\a_1)\star_{B\!,A^\pr}\bm{\CE}^B(f)(\a_2),
}
The functoriality of $\bm{\CE}^B$ is obvious.
\qed
\end{proof}

\begin{lemma}\label{grdgschemetwo}
For every cdg-Hopf algebra $B$
we have a functor $\bm{\CG}^B:{\category{cdgA}}(\Bbbk) \rightsquigarrow \category{Grp}$
represented by $B$,
sending 
\begin{itemize}
\item
each cdg-algebra $A$ to a
group  
$\bm{\CG}^B(A):=\big(\HOM_{\cdga}\big(B, A\big), u_A\circ\ep_B, \star_{B\!,A}\big)$,
where the inverse $g^{-1}$ of  $g\in \HOM_{\cdga}\big(B, A\big)$ is $g^{-1}:=g\circ\vs_B$, 
and 
\item
each morphism $A\xrightarrow{f} A^\pr$ of cdg-algebras
to a morphism $\bm{\CG}^B(f): \bm{\CG}^B(A)\rightarrow\bm{\CG}^B(A^\pr)$
of groups defined by $\bm{\CG}^B(f)(g):= f\circ g$ for all $g\in \HOM_{\cdga}\big(B, A\big)$.
\end{itemize}
\end{lemma}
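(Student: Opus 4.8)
The plan is to realize $\bm{\CG}^B(A)$ inside the dg-algebra $\bm{\CE}^B(A)=\big(\Hom(B,A),u_A\circ\ep_B,\star_{B\!,A},d_{B\!,A}\big)$ of Lemma \ref{grdgschemeone}, as the set of those degree-$0$ cocycles that are moreover cdg-algebra maps, and to verify that this subset is closed under $\star_{B\!,A}$, contains $u_A\circ\ep_B$, and has inverses given by precomposition with $\vs_B$. Since $\bm{\CE}^B(A)$ is already an associative unital dg-algebra, associativity of the group law and the fact that $u_A\circ\ep_B$ is a two-sided identity for $\star_{B\!,A}$ are inherited at once; only closure, the submonoid property, and the existence of inverses need genuine checking.

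First I would verify that $\HOM_\cdga(B,A)$ is a submonoid of $\big(\Hom(B,A),\star_{B\!,A}\big)$. The unit $u_A\circ\ep_B$ is a cdg-algebra map because $\ep_B$ and $u_A$ are algebra maps and cochain maps. For closure, given $g_1,g_2\in\HOM_\cdga(B,A)$, the convolution $g_1\star_{B\!,A}g_2=m_A\circ(g_1\otimes g_2)\circ\cp_B$ is a degree-$0$ cochain map because $d_{B\!,A}$ is a derivation of $\star_{B\!,A}$ and $d_{B\!,A}g_i=0$; that it preserves the unit uses the grouplikeness of the unit, $\cp_B\circ u_B=u_{B\otimes B}\circ\cp_\fieldk$ from \eq{bialgebra}; and that it preserves the product is obtained by rewriting $(g_1\star_{B\!,A}g_2)\circ m_B$ via the bialgebra identity $\cp_B\circ m_B=m_{B\otimes B}\circ(\cp_B\otimes\cp_B)$ together with $g_i\circ m_B=m_A\circ(g_i\otimes g_i)$, and comparing with $m_A\circ\big((g_1\star_{B\!,A}g_2)\otimes(g_1\star_{B\!,A}g_2)\big)$.

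The hard part will be this product-compatibility, because the two expressions differ by a reordering of the two middle factors produced by the interchange map $\I_B\otimes\t\otimes\I_B$ inside $m_{B\otimes B}$. The graded Koszul sign created by $\t$ is precisely cancelled by the graded commutativity $m_A=m_A\circ\t$ of $A$, so the two sides coincide; thus commutativity of $A$ is exactly the input making $\HOM_\cdga(B,A)$ closed under $\star_{B\!,A}$, and careful sign bookkeeping is the only delicate point.

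Finally I would produce inverses and settle functoriality and representability. For $g\in\HOM_\cdga(B,A)$ the element $g\circ\vs_B$ again lies in $\HOM_\cdga(B,A)$ since $\vs_B$ is a cdg-algebra map by \eq{antialgandcoalg}, and it is a two-sided inverse via $g\star_{B\!,A}(g\circ\vs_B)=g\circ m_B\circ(\I_B\otimes\vs_B)\circ\cp_B=g\circ u_B\circ\ep_B=u_A\circ\ep_B$, using that $g$ is an algebra map and the antipode axiom \eq{antipodeaxiom}, and symmetrically on the other side; hence $\bm{\CG}^B(A)$ is a group. That $\bm{\CG}^B(f)(g)=f\circ g$ lands in $\bm{\CG}^B(A^\pr)$ and is a group homomorphism uses only that $f$ is a cdg-algebra map, so that $f\circ(g_1\star_{B\!,A}g_2)=(f\circ g_1)\star_{B\!,A^\pr}(f\circ g_2)$ and $f\circ(u_A\circ\ep_B)=u_{A^\pr}\circ\ep_B$, while functoriality in $f$ is immediate from associativity of composition. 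Representability is then automatic, since the underlying set-valued functor $\grave{\bm{\CG}^B}$ is by construction the covariant Hom-functor $\HOM_{\category{cdgA}(\Bbbk)}(B,-)$, represented by $B$.
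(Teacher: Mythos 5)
Your proposal is correct and follows essentially the same route as the paper's proof: both realize $\bm{\CG}^B(A)$ inside the convolution dg-algebra $\bm{\CE}^B(A)$ of Lemma \ref{grdgschemeone}, verify closure of $\HOM_{\cdga}(B,A)$ under $\star_{B\!,A}$ using the commutativity of $m_A$ to absorb the Koszul sign from the interchange map, obtain inverses from $g\circ\vs_B$ via the antipode axiom \eq{antipodeaxiom} and the (c)dg-algebra-map property of $\vs_B$ from \eq{antialgandcoalg}, and check the homomorphism property of $\bm{\CG}^B(f)$ directly. You correctly isolate the product-compatibility of $g_1\star_{B\!,A}g_2$ as the one delicate step, which is exactly where the paper invokes $m_A\circ\t=m_A$.
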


\begin{proof}
1. We show that $\bm{\CG}^B(A)$ is a group for every cdg-algebra $A$.
By Lemma \ref{grdgschemeone} the tuple $\big(\Hom(B,A), u_A\circ\ep_B, \star_{B\!,\,A},  {d}_{B\!,\,A}\big)$ is
a dg-algebra, and
$$
\HOM_\cdga\big(B,A\big):= \left\{g\in \Hom\big(B, A\big)^0 \Big| {d}_{B\!,\,A}g=0,\;\a\circ u_B = u_A,\; g\circ m_B = m_A\circ (g\otimes g)\right\}.
$$
Therefore, 
we need to check that
$u_A\circ\ep_B, g_1\star_{B\!,\,A}g_2, g^{-1} \in \HOM_\cdga\big(B,A\big)$
and $g\star_{B\!,\,A} g^{-1} =g^{-1}\star_{B\!,\,A} g = u_A\circ\ep_B$
for all $g_1,g_2,g \in \HOM_\cdga \big(B,A\big)$.

\begin{itemize}
\item $u_A\circ\ep_B\in \HOM_\cdga(B,A)$: 
It is trivial that ${d}_{B\!,\,A} (u_A\circ\ep_B)=d_A\circ u_A\circ \ep_B -u_A\circ\ep_B\circ d_B=0$
and $(u_A\circ\ep_B)\circ u_B=u_A$.  We check that $u_A\circ\ep_B$ is an algebra map as follows:
\eqalign{
u_A\circ\ep_B\circ m_B &=u_A\circ m_\fieldk\circ (\ep_B\otimes \ep_B)
,\cr
m_A\circ\left( ( u_A\circ\ep_B)\otimes (u_A\circ\ep_B)\right)
&=m_A\circ (u_A\otimes u_A)\circ  (\ep_B\otimes \ep_B)= u_A\circ m_\fieldk\circ (\ep_B\otimes \ep_B)
.
}

\item ${g}_1\star_{B\!,\,A}{g}_2 \in \HOM_\cdga(B,A)$:
Note that
$d_{B\!,\,A}({g}_1\star_{B\!,\,A} {g}_2) = {d}_{B\!,\,A}{g}_1\star_{B,A} {g}_2 + {g}_1\star_{B\!,\,A} {d}_{B\!,\,A}{g}_2 =0$.
The property $({g}_1\star_{B\!,\,A}{g}_2)\circ u_B=u_A$ can be checked as follows:
\eqalign{
({g}_1\star_{B\!,\,A}{g}_2)\circ u_B = & m_A\circ ({g}_1\otimes {g}_2)\circ \cp_B\circ u_B = m_A\circ ({g}_1\otimes {g}_2)\circ (u_B\otimes u_B)\circ \cp_\fieldk
\cr
=&m_A\circ (u_A\otimes u_A)\circ\cp_\fieldk =u_A.
}
Now we   check that ${g}_1\star_{B\!,A}{g}_2$ is an algebra map. Consider
\eqalign{
({g}_1\star_{B\!,\,A} {g}_2)\circ m_B  =
&m_A\circ ({g}_1\otimes {g}_2)\circ \cp_B\circ m_B
\cr
=
&
m_A\circ ({g}_1\otimes {g}_2)\circ (m_B\otimes m_B)\circ (\I_B\otimes \t\otimes \I_B)\circ (\cp_B\otimes \cp_B)
\cr
=
&m_A\circ (m_A\otimes m_A)\circ ({g}_1\otimes {g}_1\otimes {g}_2\otimes {g}_2) \circ (\I_B\otimes \t\otimes \I_B)\circ (\cp_B\otimes \cp_B).
}
From the commutativity $m_A\circ\t =m_A$ of $m_A$ we have
$m_A\circ (m_A\otimes m_A)\circ ({g}_1\otimes {g}_1\otimes {g}_2\otimes {g}_2) \circ (\I_B\otimes \t\otimes \I_B)
=m_A\circ (m_A\otimes m_A)\circ ({g}_1\otimes {g}_2\otimes {g}_1\otimes {g}_2)$, so that
\eqalign{
({g}_1\star {g}_2)\circ m_B  =
&
m_A\circ (m_A\otimes m_A)\circ ({g}_1\otimes {g}_2\otimes {g}_1\otimes {g}_2)\circ (\cp_B\otimes \cp_B)
\cr
=
&m_A\circ\big( ({g}_1\star_{B\!,\,A}{g}_2)\otimes ({g}_1\star_{B\!,\,A}{g}_2)\big).
}

\item $g^{-1}\in \HOM_\cdga(B,A)$:
From the condition $d_B\circ\vs_B =\vs_B\circ d_B$, we have ${d}_{B\!,\,A}{g}^{-1}=0$.
It is trivial that ${g}^{-1}\circ u_B = {g}\circ\vs_B\circ u_B ={g}\circ u_B = u_A$.
We  check that ${g}^{-1}$ is an algebra map as follows: 
\eqalign{
{g}^{-1}\circ m_B &= {g}\circ\vs_B\circ m_B= {g}\circ m_B \circ (\vs_B\otimes \vs_B)\circ \t = {g}\circ m_B \circ (\vs_B\otimes \vs_B)
\cr
&=m_A\circ({g}\otimes {g})\circ  (\vs_B\otimes \vs_B)=m_A\circ ({g}^{-1}\otimes {g}^{-1})
,
}
where we have used the  commutativity $m_B\circ \t =m_B$ of $m_B$. 

\item
${g}\star_{B\!,\,A}{g}^{-1} = m_A\circ ({g}\otimes {g})\circ(\I_B\otimes\vs_B)\circ\cp_B= {g}\circ m_B\circ(\I_B\otimes\vs_B)\circ\cp_B
={g}\circ u_B \circ \ep_B =u_A\circ\ep_B$.

\item
${g}^{-1}\star_{B\!,\,A}{g} = m_A\circ ({g}\otimes {g})\circ(\vs_B\otimes\I_B)\circ\cp_B= {g}\circ m_B\circ(\vs_B\otimes\I_B)\circ\cp_B
={g}\circ u_B \circ \ep_B =u_A\circ\ep_B$.
\end{itemize}

2. We show that 
$\bm{\CG}^B(f): \bm{\CG}^B(A)\rightarrow\bm{\CG}^B(A^\pr)$ is a group homomorphism.
We first check that $\bm{\CG}^B(f)(g)\in \HOM_\cdga(B,A^\pr)$
whenever  $g \in \HOM_\cdga(B,A)$:
\eqalign{
{d}_{{B\!,\,A}^\pr}\left( \bm{\CG}^B(f)({g})\right)=&{d}_{{B\!,\,A}^\pr}\big(f\circ {g}\big) =d_{A^\pr}\circ f \circ {g} -  f \circ {g}\circ d_B
= \big(d_{A^\pr}\circ f - f \circ d_A\big)\circ {g} =0
,\cr
\bm{\CG}^B(f)({g})\circ u_B =& f\circ{g} \circ u_B = f\circ u_A = u_{A^\pr}
,\cr
\bm{\CG}^B(f)({g})\circ m_{B} = & f\circ{g}\circ m_B
= f\circ m_A\circ ({g}\otimes {g})=
m_{A^\pr}\circ (f\otimes f)\circ ({g}\otimes {g})
\cr
=&
m_{A^\pr} \circ \big(\bm{\CG}^B(f)({g})\otimes \bm{\CG}^B(f)({g})\big).
}
Now we check that $\bm{\CG}^B(f)$ is a group homomorphism:
\eqalign{
\bm{\CG}^B(f)(u_A\circ\ep_B)=&f\circ u_{A}\circ \ep_B= u_{A^\pr}\circ\ep_B
,\cr
\bm{\CG}^B(f)(g_1\star_{B\!,A}g_2)=& f\circ(g_1\star_{B,A} g_2) =f\circ m_A\circ(g_1\otimes g_2)\circ\cp_B
=m_{A^\pr}\circ (f\otimes f)\circ (g_1 \otimes g_2)\circ\cp_B 
\cr
=& (f\circ g_1)\star_{B\!,A^\pr} (f\circ g_2)=\bm{\CG}^B(f)(g_1)\star_{B\!,A^\pr} \bm{\CG}^B(f)(g_2).
}
The functoriality of $\bm{\CG}^B$ is obvious.
\qed
\end{proof}

\begin{lemma}\label{grdgschemefour}
Suppose $\bm{\CG}:\category{cdgA}(\Bbbk)\rightsquigarrow\category{Grp}$ is a representable functor. 
Then $\bm{\CG}\cong\bm{\CG}^B$ for some cdg-Hopf algebra $B$.
\end{lemma}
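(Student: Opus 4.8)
The plan is to recover the cdg-Hopf algebra structure on the representing object by transporting the group operations of $\bm{\CG}$ through the Yoneda correspondence, exactly as in the classical reconstruction of an affine group scheme from a representable group-valued functor on commutative algebras. First I would invoke representability: by definition $\grave{\bm{\CG}}$ is representable, so there is a cdg-algebra $B=(B,u_B,m_B,d_B)$ together with a natural isomorphism of set-valued functors $\Theta:\grave{\bm{\CG}}\xrightarrow{\cong}\HOM_{\cdga}(B,-)$. Such a $B$ is automatically commutative and already carries $u_B$, $m_B$, $d_B$; it remains to produce the counit $\ep_B$, the coproduct $\cp_B$ and the antipode $\vs_B$, and to verify the cdg-Hopf axioms.

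The structural input is that the tensor product is the coproduct in $\category{cdgA}(\Bbbk)$: for every cdg-algebra $A$ its universal property furnishes a natural bijection $\HOM_{\cdga}(B,A)\times\HOM_{\cdga}(B,A)\cong\HOM_{\cdga}(B\otimes B,A)$ sending a pair $(g_1,g_2)$ to $m_A\circ(g_1\otimes g_2)$, while $\HOM_{\cdga}(\Bbbk,A)$ is a single point since $\Bbbk$ is initial. Writing $h^C:=\HOM_{\cdga}(C,-)$, the group structure of $\bm{\CG}$ supplies, through $\Theta$, natural transformations $\mu:h^{B\otimes B}\Rightarrow h^B$, $e:h^{\Bbbk}\Rightarrow h^B$ and $\iota:h^B\Rightarrow h^B$ for multiplication, unit and inverse. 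By the (contravariant) Yoneda lemma these correspond respectively to morphisms $\cp_B:B\to B\otimes B$, $\ep_B:B\to\Bbbk$ and $\vs_B:B\to B$ in $\category{cdgA}(\Bbbk)$. In particular each is a morphism of cdg-algebras, hence a cochain map commuting with $d_B$ and simultaneously an algebra map; this last fact is precisely the bialgebra compatibility \eq{bialgebra}, so it comes for free. Unwinding the bijection, the transported binary operation on $\HOM_{\cdga}(B,A)$ is exactly $g_1\star_{B,A}g_2=m_A\circ(g_1\otimes g_2)\circ\cp_B$, matching Lemma \ref{grdgschemetwo}, and the transported identity is $u_A\circ\ep_B$.

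Next I would convert each group axiom into its Hopf-algebraic dual using the Yoneda lemma. Associativity of $\mu$ is an equality of natural transformations $h^{B\otimes B\otimes B}\Rightarrow h^B$, which corresponds under Yoneda to the coassociativity $(\cp_B\otimes\I_B)\circ\cp_B=(\I_B\otimes\cp_B)\circ\cp_B$; the left and right unit laws for $e$ become the counit axiom $(\ep_B\otimes\I_B)\circ\cp_B=\imath_B^{-1}$ and $(\I_B\otimes\ep_B)\circ\cp_B=\jmath_B^{-1}$; and the inverse law $g\star_{B,A}g^{-1}=g^{-1}\star_{B,A}g=u_A\circ\ep_B$ becomes the antipode axiom \eq{antipodeaxiom}. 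Since all of these are equalities of morphisms in $\category{cdgA}(\Bbbk)$, they hold as the required identities of cochain maps, so $B=(B,u_B,m_B,\ep_B,\cp_B,\vs_B,d_B)$ is a cdg-Hopf algebra.

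Finally, because $\cp_B$, $\ep_B$, $\vs_B$ were defined precisely so that the operations transported by $\Theta$ coincide with those of $\bm{\CG}^B$, the isomorphism $\Theta$ is componentwise a group isomorphism $\bm{\CG}(A)\cong\bm{\CG}^B(A)$, natural in $A$; hence $\bm{\CG}\cong\bm{\CG}^B$. The main obstacle is bookkeeping rather than conceptual: one must pin down the natural bijection $\HOM_{\cdga}(B,A)\times\HOM_{\cdga}(B,A)\cong\HOM_{\cdga}(B\otimes B,A)$ carefully enough to recognize the transported multiplication as literally $\star_{B,A}$, after which every group axiom converts mechanically into its dual by applying Yoneda. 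Because we work inside $\category{cdgA}(\Bbbk)$ itself rather than its homotopy category, there are no homotopy-coherence subtleties: compatibility with the differential is automatic, as all the structure maps are genuine morphisms of cdg-algebras.
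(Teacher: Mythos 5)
Your proposal is correct and takes essentially the same route as the paper's own proof: invoke representability, identify $B\otimes B$ and $\Bbbk$ as the binary coproduct and the initial object of $\category{cdgA}(\Bbbk)$ to get the natural isomorphisms $\grave{\bm{\CG}}\times\grave{\bm{\CG}}\cong\HOM_{\category{cdgA}(\Bbbk)}(B\otimes B,-)$ and $\underline{\{*\}}\cong\HOM_{\category{cdgA}(\Bbbk)}(\Bbbk,-)$, then apply the Yoneda lemma to transport $\m$, $e$, $i$ into $\cp_B$, $\ep_B$, $\vs_B$ and the group axioms into the cdg-Hopf axioms. The only difference is one of explicitness: the paper spells out the coproduct bijection and its inverse $\langle f,g\rangle:=m_T\circ(f\otimes g)$---where commutativity of $m_T$ is exactly what makes $\langle f,g\rangle$ an algebra map---while you cite this as the universal property, which is a legitimate abbreviation of the same argument.
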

\begin{proof}
Since  the functor $\grave{\bm{\CG}}:\category{cdgA}(\Bbbk) \rightsquigarrow \category{Set}$ is representable, 
we have an isomorphism $\grave{\bm{\CG}}\cong\HOM_{\category{cdgA}(\Bbbk)}(B,-)$ for some cdg-algebra $B$. 
%
%
We show that $B$ in fact a cdg-Hopf algebra.
We can restate the condition of $\grave{\bm{\CG}}$ factoring through $\category{Grp}$ as follows:
\begin{itemize}
\item For each cdg-algebra $A$ there is a structure of group on $\grave{\bm{\CG}}(A)$, i.e.,
there are three functions
$\m^A:\grave{\bm{\CG}}(A)\times\grave{\bm{\CG}}(A)\to\grave{\bm{\CG}}(A)$,
$e^A:\{*\}\to\grave{\bm{\CG}}(A)$ and $i^A:\grave{\bm{\CG}}(A)\to\grave{\bm{\CG}}(A)$ satisfying the group axioms.
\item For each morphism $f:A\to A^\pr$ of cdg-algebras, 
the function $\grave{\bm{\CG}}(f):\grave{\bm{\CG}}(A)\to\grave{\bm{\CG}}(A^\pr)$ is a homomorphism of groups.
\end{itemize}
This is equivalent to the existence of  three natural transformations 
$\m:\grave{\bm{\CG}}\times\grave{\bm{\CG}}\to\grave{\bm{\CG}}$,
$e:\underline{\{*\}}\to\grave{\bm{\CG}}$ and $i:\grave{\bm{\CG}}\to\grave{\bm{\CG}}$ satisfying the group axioms.
Here, $\underline{\{*\}}$ is a functor $\category{cdgA}(\Bbbk)\to\category{Set}$ 
sending every cdg-algebra $A$ to a one-point set $\{*\}$.

Let $B\otimes B$ be the cdg-algebra obtained
by the tensor product of the cdg-algebra $B$.  We claim that there are natural isomorphisms of functors
\eqn{clniso}{
\underline{\{*\}}\cong\HOM_{\category{cdgA}(\Bbbk)}(\Bbbk,-)
,\qquad
\grave{\bm{\CG}}\times\grave{\bm{\CG}}\cong\HOM_{\category{cdgA}(\Bbbk)}(B\otimes B,-)
.
}
Then, by the Yoneda lemma, the natural transformations $\m$, $e$, and $i$
are completely determined by morphisms $\cp_B:B\to B\otimes B$, $\ep_B:B\to \Bbbk$ 
and $\vs_B:B\to B$ of cdg-algebras, respectively. 
Applying the Yoneda lemma again, a plain calculation shows that
\begin{enumerate}
\item $\m\circ(\m\times\I_{\grave{\bm{\CG}}})=\m\circ(\I_{\grave{\bm{\CG}}}\times \m)$ implies the coassociativity of $\cp_B$.
\item $\m\circ(e\times\I_{\grave{\bm{\CG}}})=\m\circ(\I_{\grave{\bm{\CG}}}\times e)=\I_{\grave{\bm{\CG}}}$ 
implies the counit axiom of $\ep_B$.
\item inverse element axiom of $i$ implies the antipode axiom of $\vs_B$. 
\end{enumerate}
Therefore $B$ has a cdg-Hopf algebra structure $(B,u_B,m_B,\ep_B,\cp_B,\vs_B,d_B)$.

Now we check the claimed isomorphisms in \eq{clniso}. 
Note that $\Bbbk$ is an initial object in the category $\category{cdgA}(\Bbbk)$
since  any morphism $\Bbbk \to A$  of cdg-algebras 
has to be the unit $u_A$ by the unit axiom. 
Let $A\otimes A^\pr$ is be the cdg-algebra obtained by the tensor product of
cdg-algebras $A$ and $A^\pr$. Then we have the following inclusion morphisms of cdg-algebras: 
\[
i_A:=
\xymatrix{
A \ar[r]^-{\jmath^{-1}_A}& A\otimes \Bbbk \ar[r]^-{\I_A\otimes u_{\!A^\pr}}& A\otimes A^\pr
,
}
\quad\quad
i_{A^\pr}:=
\xymatrix{
A^\pr \ar[r]^-{\imath^{-1}_{A^\pr}}& \Bbbk\otimes A^\pr \ar[r]^-{u_{\!A}\otimes \I_{A^\pr}}& A\otimes A^\pr
.
}
\]
For each cdg-algebra $T$, we consider the function
$$
\HOM_{\category{cdgA}(\Bbbk)}(A\otimes A^\pr,T)\rightarrow\HOM_{\category{cdgA}(\Bbbk)}(A,T)\times \HOM_{\category{cdgA}(\Bbbk)}(A^\pr,T)
$$
defined by $h\mapsto \big(h\circ i_A, h\circ i_{A^\pr}\big)$. 
We show the function is a bijection for every $T$ by constructing its inverse.
Given morphisms $f:A\to T$ and $g:A^\pr \to T$ of cdg-algebras, define 
$\langle f, g\rangle:=m_T\circ (f\otimes g):A\otimes A^\pr\to T$,
which is a morphism of cdg-algebras since $m_T$ is commutative.
It is obvious that $\langle h\circ i_A,h\circ i_{A^\pr}\rangle=h$, $\langle f, g\rangle\circ i_A=f$ and $\langle f, g\rangle\circ i_{A^\pr}=g$. 
Moreover, the above bijection is natural in $T\in\category{cdgA}(\Bbbk)$.
Therefore we have constructed the claimed isomorphisms in \eq{clniso}.

Let $x_1,x_2$ be the elements in $\grave{\bm{\CG}}(A)$ that correspond
to morphisms $g_1,g_2:B\to A$ of cdg-algebras via the bijection 
$\grave{\bm{\CG}}(A)\cong\grave{\bm{\CG}}^{_B}(A)$. 
Then, by the Yoneda lemma, $\m^A(x_1,x_2)\in\grave{\bm{\CG}}(A)$ corresponds to 
$\langle g_1,g_2\rangle\circ \cp_B=g_1\star_{B,A}g_2\in\grave{\bm{\CG}}^{_B}(A)$, and
$e^A\in\grave{\bm{\CG}}(A)$ corresponds to $u_A\circ \ep_B\in\grave{\bm{\CG}}^{_B}(A)$.
This shows that the bijection $\grave{\bm{\CG}}(A)\cong\grave{\bm{\CG}}^{_B}(A)$ is an isomorphism of groups
for every cdg-algebra $A$.
Thus we have a natural isomorphism $\bm{\CG}\cong\bm{\CG}^B$ of functors 
from $\category{cdgA}(\Bbbk)$ to $\category{Grp}$.
\qed
\end{proof}

\begin{lemma}\label{grdgschemethree}
For every morphism $\p:B \rightarrow B^\pr$ of cdg-Hopf algebras we have a natural
transformation $\sN_{\p}: \bm{\CG}^{B^\pr} \Longrightarrow \bm{\CG}^{B}: \category{cdgA}(\Bbbk) \rightsquigarrow \category{Grp}$ functors,
whose component $\sN^A_{\psi}: \bm{\CG}^{B^\pr}(A) \rightarrow \bm{\CG}^{B} (A)$ 
at each cdg-algebra $A$ is defined by $\sN_{\p}^A(g^\pr) :=g^\pr\circ \p$ for all $g^\pr \in \HOM_{\cdga}\big(B^\pr, A\big)$. 
We also have $\HOM_{\cdgh}\big(B, B^\pr\big)\cong \mathsf{Nat}\big(\bm{\CG}^{B^\pr},\bm{\CG}^{B}\big)$.
\end{lemma}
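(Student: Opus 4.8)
The plan is to prove the two assertions separately: that $\sN_\p$ is a bona fide natural transformation of group-valued functors for each morphism $\p$ of cdg-Hopf algebras, and that $\p\mapsto\sN_\p$ furnishes the claimed bijection $\HOM_{\cdgh}(B,B^\pr)\cong\mathsf{Nat}\big(\bm{\CG}^{B^\pr},\bm{\CG}^{B}\big)$.

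For the first assertion I would first note that $\sN_\p^A$ takes values in $\bm{\CG}^{B}(A)=\HOM_{\cdga}(B,A)$, since the composite $g^\pr\circ\p$ of the cdg-algebra morphisms $\p:B\to B^\pr$ and $g^\pr:B^\pr\to A$ is again a morphism of cdg-algebras. Next I would verify that $\sN_\p^A$ is a group homomorphism: it sends the unit $u_A\circ\ep_{B^\pr}$ to $u_A\circ\ep_{B^\pr}\circ\p=u_A\circ\ep_B$ by the counit compatibility $\ep_{B^\pr}\circ\p=\ep_B$ of the dg-coalgebra morphism $\p$, and it respects $\star$ because the coproduct compatibility $\cp_{B^\pr}\circ\p=(\p\otimes\p)\circ\cp_B$ gives $(g_1^\pr\star_{B^\pr\!,A}g_2^\pr)\circ\p=(g_1^\pr\circ\p)\star_{B\!,A}(g_2^\pr\circ\p)$. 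Finally, naturality in $A$ is just the associativity identity $f\circ(g^\pr\circ\p)=(f\circ g^\pr)\circ\p$ for a cdg-algebra morphism $f:A\to A^\pr$. This places $\sN_\p$ in $\mathsf{Nat}\big(\bm{\CG}^{B^\pr},\bm{\CG}^{B}\big)$.

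For the bijection I would pass to the underlying set-valued functors. By Lemma~\ref{grdgschemetwo} the functors $\bm{\CG}^{B^\pr}$ and $\bm{\CG}^{B}$ are represented by $B^\pr$ and $B$, so $\grave{\bm{\CG}}^{B^\pr}\cong\HOM_{\cdga}(B^\pr,-)$ and $\grave{\bm{\CG}}^{B}\cong\HOM_{\cdga}(B,-)$. The Yoneda lemma then yields $\mathsf{Nat}\big(\grave{\bm{\CG}}^{B^\pr},\grave{\bm{\CG}}^{B}\big)\cong\HOM_{\cdga}(B,B^\pr)$, where a set-level natural transformation $\sN$ corresponds to the cdg-algebra morphism $\p:=\sN^{B^\pr}(\I_{B^\pr})$ and is recovered by $\sN^A(g^\pr)=g^\pr\circ\p$. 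In particular every element of the subset $\mathsf{Nat}\big(\bm{\CG}^{B^\pr},\bm{\CG}^{B}\big)\subseteq\mathsf{Nat}\big(\grave{\bm{\CG}}^{B^\pr},\grave{\bm{\CG}}^{B}\big)$ has the form $\sN_\p$ for a unique cdg-algebra morphism $\p$, and $\p\mapsto\sN_\p$ is injective because $\p=\sN_\p^{B^\pr}(\I_{B^\pr})$.

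It then remains to identify, under this Yoneda correspondence, the cdg-algebra morphisms $\p$ for which $\sN_\p$ has group-homomorphism components with those $\p$ that are morphisms of cdg-Hopf algebras; the first assertion already supplies one inclusion. For the converse I would assume that each $\sN_\p^A$ is a group homomorphism. Preservation of units gives $u_A\circ\ep_{B^\pr}\circ\p=u_A\circ\ep_B$ for all $A$, and testing at $A=\Bbbk$ (where $u_\Bbbk=\I_\Bbbk$) forces $\ep_{B^\pr}\circ\p=\ep_B$. Preservation of products gives $m_A\circ(g_1^\pr\otimes g_2^\pr)\circ\cp_{B^\pr}\circ\p=m_A\circ(g_1^\pr\otimes g_2^\pr)\circ(\p\otimes\p)\circ\cp_B$ for all $A$ and all $g_1^\pr,g_2^\pr\in\HOM_{\cdga}(B^\pr,A)$. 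To isolate the coproduct identity I would specialize to the universal test $A=B^\pr\otimes B^\pr$ with $g_1^\pr=i_1$ and $g_2^\pr=i_2$ the two canonical inclusions of Lemma~\ref{grdgschemefour}, for which $m_{B^\pr\otimes B^\pr}\circ(i_1\otimes i_2)=\I_{B^\pr\otimes B^\pr}$; both sides then collapse to the single equation $\cp_{B^\pr}\circ\p=(\p\otimes\p)\circ\cp_B$. Hence $\p$ is also a dg-coalgebra morphism, so a cdg-bialgebra morphism, and it automatically commutes with the antipodes by \eqref{antipodecommutes}, so $\p\in\HOM_{\cdgh}(B,B^\pr)$. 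Combining the two inclusions, the Yoneda bijection restricts to the desired $\HOM_{\cdgh}(B,B^\pr)\cong\mathsf{Nat}\big(\bm{\CG}^{B^\pr},\bm{\CG}^{B}\big)$. I expect the converse in this last step to be the main obstacle: the delicate point is to choose a test cdg-algebra and test morphisms that invert the multiplication $m_A$ and isolate the coproduct compatibility, and the inclusions of Lemma~\ref{grdgschemefour} provide exactly this since $m_{B^\pr\otimes B^\pr}\circ(i_1\otimes i_2)$ is the identity.
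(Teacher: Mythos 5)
Your proposal is correct and follows essentially the same route as the paper: a direct verification that $\sN_{\p}$ is a well-defined natural transformation of group-valued functors (landing in $\HOM_{\cdga}(B,A)$, preserving unit and $\star$ via the counit and coproduct compatibilities of $\p$, with naturality by associativity of composition), followed by the Yoneda lemma for the bijection $\HOM_{\cdgh}\big(B,B^\pr\big)\cong\mathsf{Nat}\big(\bm{\CG}^{B^\pr},\bm{\CG}^{B}\big)$. The only difference is that you spell out the converse inclusion which the paper leaves implicit in ``combined with the Yoneda lemma'': testing unit preservation at $A=\Bbbk$ and product preservation at $A=B^\pr\otimes B^\pr$ with the inclusions $i_1,i_2$ (for which $m_{B^\pr\otimes B^\pr}\circ(i_1\otimes i_2)=\I_{B^\pr\otimes B^\pr}$) to recover $\ep_{B^\pr}\circ\p=\ep_B$ and $\cp_{B^\pr}\circ\p=(\p\otimes\p)\circ\cp_B$, which is exactly the device the paper itself deploys in Lemma \ref{grdgschemefour} and again in the proof of Theorem \ref{repmod}.
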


\begin{proof}
We show that $\sN_{\p}$ is a natural transformation for every $\p \in \HOM_{\cdgh}\big(B, B^\pr)$,
which is a linear map $\p: B\rightarrow B^\pr$  satisfying the following relations
$$
d_{B^\pr}\circ \p = \p \circ d_B
,\quad
\begin{cases}
\p\circ u_B=u_{B^\pr}
,\cr
\p\circ m_B = m_{B^\pr}\circ (\p\otimes \p)
\end{cases}
,\quad
\begin{cases}
\ep_B = \ep_{B^\pr} \circ \p
,\cr
\cp_{B^\pr}\circ \p=
(\p\otimes \p)\circ \cp_B
,
\end{cases}
$$
as follows.
\begin{itemize}
\item $\sN_{\p}^A(g^\pr) \in \HOM_{\cdga}\big(B, A\big)$ for all $g^\pr \in \HOM_{\cdga}\big(B^\pr, A\big)$:
\begin{align*}
\sN_{\p}^A(g^\pr)\circ u_B
=
&
g^\pr\circ \p\circ u_B
=
g^\pr\circ u_{B^\pr}
=
u_A
,\cr
{d}_{B\!,\,A}\left( \sN_{\p}^A(g^\pr)\right)=
&
g^\pr\circ \p \circ d_B
-d_A\circ g^\pr\circ \p
=
g^\pr\circ d_{B^\pr}\circ\p - g^\pr \circ d_{B^\pr}\circ \p
=
0
,\cr
\sN_{\p}^A(g^\pr)\circ m_B 
=
&
g^\pr\circ\p\circ m_B
=g^\pr\circ m_{B^\pr}\circ (\p\otimes \p)
=m_{A}\circ (g^\pr\otimes g^\pr)\circ (\p\otimes \p)
\cr
=
&
m_A\circ \left(\sN_{\p}^A(g^\pr)\otimes\sN_{\p}^A(g^\pr)\right).
\end{align*}

\item 
$\sN^A_{\psi}: \bm{\CG}^{B^\pr}(A) \longrightarrow \bm{\CG}^{B} (A)$ is a group homomorphism: 
$\forall g_1^\pr, g_2^\pr \in \HOM_{\cdga}\big(B^\pr, A\big)$,
\eqalign{
\sN_{\p}^A\big(u_A\circ \ep_{B^\pr}\big)=& u_A\circ \ep_{B^\pr}\circ \p=u_A\circ \ep_B
,\cr
\sN_{\p}^A\big(g_1^\pr \star_{B^\pr\!,A} g_2^\pr\big)
=
&
m_A\circ (g_1^\pr \otimes g_2^\pr)\circ \cp_{B^\pr}\circ\p
=
m_A\circ \big(g_1^\pr\circ \p \otimes g_2^\pr\circ \p\big)\circ\cp_B
\cr
=
&
\sN_{\p}^A\big(g_1^\pr\big) \star_{B\!,A} \sN_{\p}^A(g_2^\pr\big).
}

\item 
$\sN_{\p}^{A^\pr}\circ \bm{\CG}^B\big(f\big)= \bm{\CG}^{B^\pr}\big(f\big)\circ\sN_{\p}^A$
for  every $f\in \HOM_{\cdga}\big(A, A^\pr\big)$:
This is trivial, since for all $g^\pr \in \HOM_{\cdga}(B^\pr, A)$ we have
\eqalign{
\sN_{\p}^{A^\pr}\circ \bm{\CG}^B\big(f\big)(g^\pr)=    \big(f\circ g^\pr\big)\circ \p
=f\circ\big(g^\pr\circ \p\big)=
\bm{\CG}^{B^\pr}\big(f\big)\circ\sN_{\p}^A(g^\pr).
}
\end{itemize}
Therefore $\sN_{\psi} \in \mathsf{Nat}\big(\bm{\CG}^{B^\pr}, \bm{\CG}^{B}\big)$ whenever 
${\psi} \in \HOM_{\cdgh}\big(B, B^\pr\big)$. Combined with the Yoneda lemma, we 
conclude that
$\mathsf{Nat}\big(\bm{\CG}^{B^\pr}, \bm{\CG}^{B}\big)\cong  \HOM_{\cdgh}\big(B, B^\pr\big)$.
\qed
\end{proof}

\begin{lemma}\label{hprt}
If we have homotopy pairs
\begin{itemize}
\item $\big(f(t),\s(t)\big)$ on $\HOM_{\cdga}(A,A^\pr)$;

\item $\big({g}(t),\l(t)\big)$,  $\big({g}_1(t),\l_1(t)\big)$ and  $\big({g}_2(t),\l_2(t)\big)$
 on $\HOM_{\cdga}(B,A)$;

\item
$\big(\p(t),\xi(t)\big)$ on $\HOM_{\cdgh}(B,B^\pr)$;

\item $\big({g}^\pr(t),\l^\pr(t)\big)$  on $\HOM_{\cdga}(B^\pr,A)$.
\end{itemize}

Then we also have  homotopy pairs

\begin{enumerate}[label=({\alph*}),leftmargin=0.8cm]

\item
$\big({g}_1(t)\star_{B\!,A}{g}_2(t), \l_1(t)\star_{B\!,A}{g}_2(t) + {g}_1(t)\star_{B\!,A}\l_2(t) \big)$
 on $\HOM_{\cdga}(B,A)$;

\item
$\big({g}(t)\circ\vs_B, \l(t)\circ\vs_B\big)$ on $\HOM_{\cdga}(B,A)$;

\item
$\big(f(t)\circ{g}(t), f(t)\circ \l(t) +\s(t)\circ {g}(t)\big)$  on $\HOM_{\cdga}(B,A^\pr)$;

\item
$\big({g}^\pr(t)\circ \p(t), \l^\pr(t)\circ \p(t)+ {g}^\pr(t)\circ \xi(t)\big)$  on $\HOM_{\cdga}(B,A)$.
\end{enumerate}
\end{lemma}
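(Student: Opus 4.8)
The plan is to check, in each of the four cases, the defining conditions of a homotopy pair on the indicated $\HOM_{\cdga}$. Writing $\big(G(t),\Lambda(t)\big)$ for the candidate pair in each case (so $G(t)$ is the degree-$0$ family and $\Lambda(t)$ the degree-$(-1)$ family), these conditions are: polynomial dependence on $t$ and the correct degrees, which are immediate; the homotopy flow equation $\fr{d}{dt}G(t)=d_{B\!,A}\Lambda(t)$; the initial condition that $G(0)$ is a morphism of cdg-algebras; the unit condition $\Lambda(t)\circ u_B=0$; and the infinitesimal algebra map condition $\Lambda(t)\circ m_B=m_A\circ\big(G(t)\otimes\Lambda(t)+\Lambda(t)\otimes G(t)\big)$ (with $A$ replaced by $A^\pr$ in case (c)). The initial conditions need no new work: in (a) the $\star$-product of two cdg-algebra maps is a cdg-algebra map and in (b) $g(0)\circ\vs_B=g(0)^{-1}$ is one, both by Lemma~\ref{grdgschemetwo}, while in (c) and (d) a composite of cdg-algebra maps is one. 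Throughout I use that the degree-$0$ families $f(t),g(t),g_i(t),\p(t),g^\pr(t)$ are genuine families of cochain (indeed algebra) maps, as recorded just after the definition of a homotopy pair, so that $d_{B\!,A}g_i(t)=0$ and the like.

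For the homotopy flow equations I would differentiate the relevant product or composite by the Leibniz rule and recognise the outcome as $d$ applied to the proposed $\Lambda(t)$. In (a) this rests on $d_{B\!,A}$ being a graded derivation of $\star$---precisely the dg-algebra structure of $\bm{\CE}^B(A)$ from Lemma~\ref{grdgschemeone}---together with $d_{B\!,A}g_i(t)=0$, so that $d_{B\!,A}\big(\l_1\star_{B\!,A}g_2+g_1\star_{B\!,A}\l_2\big)=(d_{B\!,A}\l_1)\star_{B\!,A}g_2+g_1\star_{B\!,A}(d_{B\!,A}\l_2)=\fr{d}{dt}\big(g_1\star_{B\!,A}g_2\big)$. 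In (b) it uses $\vs_B\circ d_B=d_B\circ\vs_B$, giving $d_{B\!,A}(\l\circ\vs_B)=(d_{B\!,A}\l)\circ\vs_B$. In (c) and (d) it uses that the inner and outer degree-$0$ families are cochain maps, so that $d_{B\!,A^\pr}(f\circ\l)=f\circ(d_{B\!,A}\l)$ and $d_{B\!,A^\pr}(\s\circ g)=(d_{A\!,A^\pr}\s)\circ g$, and the analogous identities with $\p,\xi,g^\pr,\l^\pr$; summing reproduces $\fr{d}{dt}(f\circ g)$ and $\fr{d}{dt}(g^\pr\circ\p)$ respectively.

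The unit conditions are short. In (a), $(\l_1\star_{B\!,A}g_2)\circ u_B=m_A\circ(\l_1\otimes g_2)\circ(u_B\otimes u_B)\circ\cp_\fieldk=0$ because $\l_1\circ u_B=0$, and likewise for the second summand; in (b), $\l\circ\vs_B\circ u_B=\l\circ u_B=0$ using $\vs_B\circ u_B=u_B$ from \eq{antialgandcoalg}; in (c), $\Lambda\circ u_B=f\circ(\l\circ u_B)+\s\circ(g\circ u_B)=\s\circ u_A=0$, and (d) is identical. For the infinitesimal algebra map condition, cases (c) and (d) are a direct computation: expanding $\Lambda\circ m_B$ into its two summands, one applies the infinitesimal condition and the algebra-map property of the inner maps, then pushes the outer degree-$0$ map through the target product via $f\circ m_A=m_{A^\pr}\circ(f\otimes f)$ (resp. $g^\pr\circ m_{B^\pr}=m_A\circ(g^\pr\otimes g^\pr)$) and applies the infinitesimal condition of the outer pair; the four resulting terms assemble into $m\circ(G\otimes\Lambda+\Lambda\otimes G)$ for the target product $m$. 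Cases (a) and (b) instead run parallel to the computations in Lemma~\ref{grdgschemetwo} that $g_1\star_{B\!,A}g_2$ and $g\circ\vs_B$ are algebra maps, with one factor now replaced by the degree-$(-1)$ homotopy: for (a) one substitutes $\cp_B\circ m_B=(m_B\otimes m_B)\circ(\I_B\otimes\t\otimes\I_B)\circ(\cp_B\otimes\cp_B)$ from \eq{bialgebra} and uses the commutativity $m_A\circ\t=m_A$, and for (b) one uses that $\vs_B$ is an anti-morphism of dg-coalgebras and a morphism of cdg-algebras from \eq{antialgandcoalg}.

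I expect the main obstacle to be the Koszul sign bookkeeping in the infinitesimal algebra map condition for (a) and (b). There the degree-$(-1)$ maps $\l_i$ must be commuted past degree-$0$ maps inside tensor factors and across the braiding $\t$, so individual terms arising from $\cp_B\circ m_B$ and from $m_A\circ\t=m_A$ can pick up signs; by contrast, in (c) and (d) the degree-$(-1)$ maps always sit in an outer position and are never moved past an odd-degree map, so those computations are sign-free. The point I would make precise is that the signs in (a) and (b) are exactly those prescribed by the graded-Leibniz shape of the infinitesimal algebra map condition, so that the manipulation is formally the degree-$0$ computation of Lemma~\ref{grdgschemetwo} once signs are tracked consistently; no cancellation beyond that is required, and the four terms collect into $m_A\circ\big(G\otimes\Lambda+\Lambda\otimes G\big)$ as needed.
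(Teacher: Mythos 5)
Your verification is correct and is precisely the routine computation the paper declares and omits in its proof of Lemma \ref{hprt}: you check polynomiality and degrees, the homotopy flow equation via the derivation property of $d_{B\!,A}$ on the convolution dg-algebra of Lemma \ref{grdgschemeone} (together with the degree-$0$ families being cochain maps), the initial conditions via Lemma \ref{grdgschemetwo} and closure under composition, and the unit and infinitesimal algebra map conditions via \eq{bialgebra}, commutativity of $m_A$, and \eq{antialgandcoalg}, with the Koszul signs correctly identified as the only delicate point in (a) and (b). One marginal remark: in case (b) only the cdg-algebra-morphism half of \eq{antialgandcoalg} (namely $\vs_B\circ u_B=u_B$, $\vs_B\circ m_B=m_B\circ(\vs_B\otimes\vs_B)$, and $\vs_B$ being a cochain map) is actually used, since $\vs_B$ is only ever precomposed; the anti-coalgebra property you cite is not needed there.
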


\begin{proof}
These can be checked by routine computations, which are omitted for the sake of space.
\qed
\end{proof}

Now we are ready for the proof of  Theorem \ref{affinegrdgsch}.

\begin{proof}[Theorem \ref{affinegrdgsch}]
After Lemmas \ref{grdgschemeone}, \ref{grdgschemetwo}, \ref{grdgschemefour}, \ref{grdgschemethree}
and \ref{hprt},  we just need to check  few things to finish the proof.

1.  We check that the group $\bm{\mG}^B(A)$ is well-defined for every cdg-algebra $A$.
\begin{itemize}
\item We have $g_1\star_{B\!,A} g_2 \sim \tilde g_1\star_{B\!,A} \tilde g_2 \in  \HOM_{\cdga}\big(B, A\big)$
whenever $g_1\sim \tilde g_1, g_2\sim \tilde g_2\in \HOM_{\cdga}\big(B, A\big)$: this follows from
Lemma \ref{hprt}$(a)$.
 
\item  We have $g^{-1}\sim \tilde{g}^{-1} \in  \HOM_{\cdga}\big(B, A\big)$
whenever $g\sim \tilde g \in \HOM_{\cdga}\big(B, A\big)$: this follows from
Lemma \ref{hprt}$(b)$.
\end{itemize}
Moreover, the homotopy type
$[g_1\star_{B\!,A} g_2]$ of $g_1\star_{B\!,A} g_2$ depends only on the homotopy types
$[g_1],[g_2]  \in \HOM_{\hcdga}\big(B, A\big)$ of $g_1$, $g_2$.
Therefore the group  $\bm{\mG}^B(A)$ is well-defined.

2. We check that the homomorphism $\bm{\mG}^B([f]):\bm{\mG}^B(A)\rightarrow \bm{\mG}^B(A^\pr)$ of groups
is well-defined for every $[f]\in \HOM_\hcdga(A,A^\pr)$.  Let $f\sim \tilde f \in \HOM_\cdga(A,A^\pr)$ and
$g\sim \tilde g \in   \HOM_\cdga(B,A)$. Then, by Lemma \ref{hprt}$(b)$, we have
$f\circ g \sim f\circ \tilde g\sim \tilde f\circ g\sim \tilde f\circ \tilde g \in \HOM_\cdga(B, A^\pr)$
so that $\bm{\mG}^B([f])([g])=[f\circ g]$ depends only on the homotopy types
$[f]$ and $[g]$.  Therefore $\bm{\mG}^B([f])$ is  a well-defined group homomorphism.
It is obvious that $\bm{\mG}^B([f])$ is an isomorphism of groups
whenever $f: A \rightarrow A^\pr$ is a homotopy equivalence of cdg-algebras.

3.  We check that the natural transformation 
$\sN_{[\p]}: \bm{\mG}^{B^\pr}\Longrightarrow \bm{\mG}^B:
\mathit{ho}\category{cdgA}(\fieldk)\rightsquigarrow \category{Grp}$ is well-defined
for every  $[{\psi}]\in \HOM_{\hcdgh}\big(B, B^\pr\big)$.
Let $\p\sim \tilde \p \in \HOM_\cdgh(B,B^\pr)$ and
$g^\pr\sim\tilde g^\pr \in   \HOM_\cdga(B^\pr,A)$. Then, by Lemma \ref{hprt}$(d)$, we have
$g^\pr\circ \p \sim g^\pr\circ \tilde \p\sim \tilde g^\pr\circ \p\sim \tilde g^\pr\circ \tilde \p \in \HOM_\cdga(B, A)$
so that $\sN^A_{[\p]}([g^\pr])=[g^\pr\circ \p]$ for every cdg-algebra $A$ depends only on the homotopy types
$[\p]$ and $[g^\pr]$.
Therefore the natural transformation 
$\sN_{[\p]}: \bm{\mG}^{B^\pr}\Longrightarrow \bm{\mG}^B$ is well-defined such that
$\sN_{[{\psi}]}\in \mathsf{Nat}\big(\bm{\mG}^{B^\pr}, \bm{\mG}^B\big)$ whenever $[{\psi}]\in \HOM_{\hcdgh}\big(B, B^\pr\big)$.
Combined with the Yoneda lemma, we have 
$$
\mathsf{Nat}\big(\bm{\mG}^{B^\pr}, \bm{\mG}^B\big)\cong \HOM_{\hcdgh}\big(B, B^\pr\big)
.
$$
That is, the category of affine group dg-schemes over $\Bbbk$ is anti-equivalent to the homotopy category $\mathit{ho}\category{cdgH}(\Bbbk)$ 
of cdg-Hopf algebras over $\Bbbk$.
It is obvious that $\sN^A_{[{\psi}]}: \bm{\mG}^{B^\pr}(A)\rightarrow\bm{\mG}^B(A)$  is an isomorphism of groups 
for every cdg-algebra $A$ whenever $\p:B\rightarrow B^\pr$ is a homotopy equivalence of cdg-Hopf algebras. Therefore
$\sN_{[{\psi}]}$  is a natural isomorphism whenever $\p$ is  a homotopy equivalence of cdg-Hopf algebras.
\qed
\end{proof}

%

\subsection{$\bm{\mG}(\Bbbk)$ action on the affine dg-scheme 
$\grave{\bm{\mG}}:\mathit{ho}\category{cdgA}(\fieldk)\rightsquigarrow \category{Set}$.}

Fix an affine group dg-scheme $\bm{\mG}:\mathit{ho}\category{cdgA}(\fieldk)\rightsquigarrow \category{Grp}$ over $\Bbbk$.
Note that the ground field $\Bbbk$ is an initial object in $\mathit{ho}\category{cdgA}(\fieldk)$.
Any cdg-algebra $A=(A, u_A, m_A, d_A)$ comes with the cdg-algebra map $u_A\in \HOM_{\cdga}\big( \Bbbk,A\big)$,
which induces a canonical group homomorphism $\bm{\mG}([u_A]):\bm{\mG}(\Bbbk)\rightarrow \bm{\mG}(A)$.

\begin{lemma}
For every affine group dg-scheme ${\bm{\mG}}$
the underlying representable functor  
$$
\grave{\bm{\mG}}
=\HOM_{\hcdga}\big(B, -\big): \mathit{ho}\category{cdgA}(\fieldk)\rightsquigarrow \category{Set},
$$
is  $\bm{\mG}(\Bbbk)$-set valued,
such that
for each cdg-algebra $A$,  $\grave{\bm{\mG}}(A)$ is a $\bm{\mG}(\Bbbk)$-set with the action
$r^A:  \bm{\mG}(\Bbbk)\times\grave{\bm{\mG}}(A)\rightarrow \grave{\bm{\mG}}(A)$,
defined by, for all $[g] \in  \bm{\mG}(\Bbbk)$ and $[x]  \in \grave{\bm{\mG}}(A)$,
$$
r^A\big([g], [x]\big):=\bm{\mG}^B\big([u_A]\big)([g])\ast_{B\!,A}[x]
=[g.x],\qquad g.x = (u_A\circ g)\star_{B\!,A} x,
$$
where $g$ and $x$ are representatives of $[g]$ and $[x]$, respectively, 
and for every morphism $[f]\in \HOM_{\hcdga}\big(A, A^\pr\big)$ the following diagram commutes:
$$
\xymatrix{
\ar[d]_-{\I_{\bm{\mG}(\Bbbk)}\times \grave{\bm{\mG}}([f])}
\bm{\mG}(\Bbbk)\times\grave{\bm{\mG}}\big(A\big)\ar[rr]^-{r^A} & &\grave{\bm{\mG}}\big(A\big)\ar[d]^-{\grave{\bm{\mG}}([f])}
\cr
\bm{\mG}(\Bbbk)\times \grave{\bm{\mG}}\big(A^\pr\big)\ar[rr]^-{r^{A^\pr}} & &\grave{\bm{\mG}}\big(A^\pr\big)
.
}
$$
\end{lemma}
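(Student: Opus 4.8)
The plan is to recognize the asserted $\bm{\mG}(\Bbbk)$-action as left translation in the group $\bm{\mG}^B(A)$ along the homomorphism induced by the canonical morphism out of the initial object. Since $\Bbbk$ is initial in $\mathit{ho}\category{cdgA}(\fieldk)$, every cdg-algebra $A$ carries a unique morphism $[u_A]:\Bbbk\to A$, and Theorem~\ref{affinegrdgsch} furnishes the group homomorphism $\bm{\mG}^B\big([u_A]\big):\bm{\mG}(\Bbbk)\rightarrow\bm{\mG}^B(A)$. I would set $r^A\big([g],[x]\big):=\bm{\mG}^B\big([u_A]\big)([g])\ast_{B\!,A}[x]$ and observe that this is precisely left multiplication on the underlying set $\grave{\bm{\mG}}(A)$ of $\bm{\mG}^B(A)$ by the image of this homomorphism. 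Well-definedness is then automatic: both $\bm{\mG}^B\big([u_A]\big)$ and the operation $\ast_{B\!,A}$ were shown in Theorem~\ref{affinegrdgsch} to descend to homotopy types, so $r^A$ depends only on $[g]$ and $[x]$; on representatives it reads $g.x=(u_A\circ g)\star_{B\!,A}x$.

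I would then verify the two action axioms by reducing them to the group structure of $\bm{\mG}^B(A)$. Since $\bm{\mG}^B\big([u_A]\big)$ is a homomorphism it carries the identity $\ide_{B,\Bbbk}$ of $\bm{\mG}(\Bbbk)$ to the identity $\ide_{B,A}$ of $\bm{\mG}^B(A)$, so $r^A\big(\ide_{B,\Bbbk},[x]\big)=\ide_{B,A}\ast_{B\!,A}[x]=[x]$. For compatibility with the group law $\ast_{B,\Bbbk}$ of $\bm{\mG}(\Bbbk)$, the homomorphism property gives $\bm{\mG}^B\big([u_A]\big)\big([g_1]\ast_{B,\Bbbk}[g_2]\big)=\bm{\mG}^B\big([u_A]\big)([g_1])\ast_{B\!,A}\bm{\mG}^B\big([u_A]\big)([g_2])$, and the associativity of $\ast_{B\!,A}$ (it is the group law of $\bm{\mG}^B(A)$) then yields $r^A\big([g_1]\ast_{B,\Bbbk}[g_2],[x]\big)=r^A\big([g_1],r^A\big([g_2],[x]\big)\big)$. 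Hence each $\grave{\bm{\mG}}(A)$ is a genuine $\bm{\mG}(\Bbbk)$-set.

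For the equivariance of $\grave{\bm{\mG}}([f])$ I would combine functoriality with initiality of $\Bbbk$. For any $[f]\in\HOM_{\hcdga}(A,A^\pr)$ the composite $[f]\circ_h[u_A]$ is a morphism $\Bbbk\to A^\pr$, hence coincides with the unique such morphism $[u_{A^\pr}]$; functoriality of $\bm{\mG}^B$ then gives $\bm{\mG}^B([f])\circ\bm{\mG}^B\big([u_A]\big)=\bm{\mG}^B\big([u_{A^\pr}]\big)$. Writing $\grave{\bm{\mG}}([f])=\bm{\mG}^B([f])$ on underlying sets and using that it is a group homomorphism, I would apply it to $r^A([g],[x])=\bm{\mG}^B([u_A])([g])\ast_{B\!,A}[x]$, distribute over $\ast_{B\!,A}$, and rewrite $\bm{\mG}^B([f])\big(\bm{\mG}^B([u_A])([g])\big)$ as $\bm{\mG}^B([u_{A^\pr}])([g])$, obtaining
\[
\begin{aligned}
\grave{\bm{\mG}}([f])\big(r^A([g],[x])\big)
&=\bm{\mG}^B\big([u_{A^\pr}]\big)([g])\ast_{B\!,A^\pr}\grave{\bm{\mG}}([f])([x])\\
&=r^{A^\pr}\big([g],\grave{\bm{\mG}}([f])([x])\big),
\end{aligned}
\]
which is exactly the commutativity of the displayed square. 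On representatives the same identity is visible from $f\circ u_A=u_{A^\pr}$.

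As for difficulty, there is essentially no serious obstacle: the entire content is the standard fact that a group homomorphism turns its target into a set acted upon by its source via left translation, and every ingredient---the homomorphism $\bm{\mG}^B([u_A])$, its well-definedness on homotopy types, the associativity of $\ast_{B\!,A}$, and functoriality of $\bm{\mG}^B$---has already been established in Theorem~\ref{affinegrdgsch}. The only point requiring a moment of care is the equivariance step, where one uses that $\Bbbk$ is initial in $\mathit{ho}\category{cdgA}(\fieldk)$ to identify $[f]\circ_h[u_A]$ with $[u_{A^\pr}]$; this is immediate from uniqueness of morphisms out of the initial object.
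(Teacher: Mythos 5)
Your proof is correct and takes exactly the route the paper intends: the paper states this lemma as a corollary of Theorem \ref{affinegrdgsch} with details omitted, and your argument supplies precisely those details---left translation along the homomorphism $\bm{\mG}^B\big([u_A]\big)$ induced by initiality of $\Bbbk$, with the action axioms and equivariance all reduced to the group structure, well-definedness on homotopy types, and functoriality already established there. Nothing is missing; the identification of $[f]\circ_h[u_A]$ with $[u_{A^\pr}]$ via initiality is the one point needing care and you handle it correctly.
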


\begin{proof}This is a corollary of Theorem \ref{affinegrdgsch}, whose details are omitted.
\qed
\end{proof}

%
%
%

\subsection{Functor $\mb{T}\!\bm{\mG}: \mathit{ho}\category{cdgA}(\Bbbk) \rightsquigarrow \category{Lie}(\Bbbk)$}

For each cdg-Hopf algebra $B$ we construct a functor 
$\category{T}\bm{\mG}^B: \mathit{ho}\category{cdgA}(\Bbbk) \rightsquigarrow \category{Lie}(\Bbbk)$ 
 to the category $\category{Lie}(\Bbbk)$ 
of Lie algebras over $\Bbbk$, so that we have
a natural isomorphism $\category{T}\bm{\mG}^B\cong \category{T}\bm{\mG}^{B^\pr}$
of functors whenever $\bm{\mG}^B\cong \bm{\mG}^{B^\pr}$.
Moreover, the assignment  $\bm{\mG}^B \rightsquigarrow \category{T}\!\bm{\mG}^B$ 
for every cdg-Hopf algebra $B$  is functorial
-- there is a functor $\category{T}$ from the category of affine group dg-schemes
to the category of functors from  $\mathit{ho}\category{cdgA}(\Bbbk)$ to $\category{Lie}(\Bbbk)$.

For each cdg-algebra $A$, 
we define $\THOM_{\cdga}(B, A)$  as the set of all \emph{tangential} morphisms of cdg-algebras  about the identity
$e_{B\!,A}:=u_A\circ \ep_B$:
$$
\THOM_{\cdga}(B,A):=
\left\{\ups \in \Hom(B, A)^0\left|
\begin{aligned}
& {d}_{B\!,A}\ups =0
\cr
&\ups\circ u_B =0
\cr
& \ups\circ m_B = m_A\circ \big(e_{B\!,A}\otimes \ups +\ups\otimes e_{B\!,A}\big)
\end{aligned}
 \right.
\right\}.
$$
A \emph{homotopy pair} on $\THOM_{\cdga}(B, A)$
is a pair of families
$$
\big(\ups(t),\s(t)\big) \in \Hom(B, A)^0[t]\oplus \Hom(B, A)^{-1}[t]
,
$$
satisfying
the homotopy flow equation
$\Fr{d}{dt}\ups(t)= d_{B,A}\s(t)$ generated by $\s(t)$ subject to the following conditions:
\eqalign{
\ups(0)\in \THOM_{\cdga}(B, A),
\qquad
\left\{
\begin{aligned}
&\s(t)\circ u_B =0
\cr
& \s(t)\circ m_B = m_A\circ \big(e_{B\!,A}\otimes \s(t) +\s(t)\otimes e_{B\!,A}\big).
\end{aligned}\right.
}
Then, $\ups(t)$ is a family of tangential morphisms of cdg-algebras about the identity.
We say $\ups,\tilde \ups \in \THOM_{\cdga}(B, A)$  are \emph{homotopic}, $\ups\sim \tilde\ups$, 
or having the \emph{same homotopy type}, $[\ups]=[\tilde\ups]$, if there is a homotopy flow connecting them.  
The set of homotopy types of all tangential morphisms of  cdg-algebras about the identity
is denoted by
$\THOM_{\hcdga}(B, A)$.

\begin{theorem}\label{Lieversion}
For every cdg-Hopf algebra $B$
we have a functor
$\bm{T\!\mG}^{B}:{\mathit{ho}\category{cdgA}}(\fieldk)\rightsquigarrow \category{Lie}(\Bbbk)$,
sending

\begin{itemize}
\item
each  cdg-algebra $A$ to the Lie algebra 
$$
\bm{T\!\mG}^{B}(A):=\Big(\THOM_{\hcdga}(B,A), [-,-]_{\ast_{B\!,A}} \Big)
$$
with the Lie bracket defined by, $\forall [\ups_1],[\ups_2] \in \THOM_{\hcdga}(B, {A})$,
$$
\big[[{\ups}_1],[{\ups}_2]\big]_{\ast_{B\!,A}}
:=  \big[m_A\circ ({\ups}_1\otimes {\ups}_2-{\ups}_2\otimes {\ups}_1\big)\circ \cp_B\big]
,
$$
where ${\ups}_1,{\ups}_2 \in \THOM_{\cdga}\big(B,{A}\big)$ are arbitrary representatives
of the homotopy types $[{\ups}_1],[{\ups}_2]$, respectively.

\item 
each  morphism $[f] \in \HOM_{\hcdga}\big(A, A^\pr\big)$ to
the morphism
$\bm{T\mG}^{B}\big([f]\big):\bm{T\mG}^{B}\big(A\big)\rightarrow \bm{T\mG}^{B}\big(A^\pr\big)$ 
of Lie algebras
defined by, $\forall [{\ups}] \in   \THOM_{\hcdga}\big(B,{A}\big)$,
$$
\bm{T\mG}^{B}([f])\big([{\ups}]\big):=\big[f\circ {\ups}\big]
,$$
where  $f$
and ${\ups}$
are arbitrary representatives of the homotopy types $[f]$ and $[{\ups}]$, respectively.
\end{itemize}

For every  $[\p] \in \HOM_{\hcdgh}\big({B},{B}^\pr\big)$ 
we have a natural transformation 
$${T\!\!\sN}_{\![\p]}: \bm{T\!\mG}^{{B^\pr}}\Longrightarrow \bm{T\!\mG}^{{B}}:
{\mathit{ho}\category{cdgA}}(\fieldk)\rightsquigarrow \category{Lie}(\Bbbk),
$$
whose component
${T\!\!\sN}_{\![\p]}^{A}: \bm{T\!\mG}^{B^\pr}(A)\rightarrow  \bm{T\!\mG}^{B}(A)$
at  each cdg-algebra $A$ is defined by,   
for all $[\ups^\pr] \in \THOM_{\hcdga}\big(B^\pr, {A}\big)$,
$$
{T\!\!\sN}_{\![\p]}^{A}\big([\ups^\pr]\big):= \big[\ups^\pr\circ\psi\big],
$$
where 
 $\p \in \HOM_{\cdgh}\big(B, B^\pr\big)$ 
and $\ups^\pr \in \THOM_{\cdga}\big(B^\pr, {A})$ 
are  arbitrary representatives of the homotopy types $[\p]$
and $[\ups^\pr]$, respectively.

The natural transformation
${T\!\!\sN}_{\![\p]}: \bm{T\!\mG}^{B^\pr}\Longrightarrow \bm{T\!\mG}^{B}$ 
is an isomorphism whenever $\p:{B}\rightarrow {B}^\pr$ is a homotopy equivalence of cdg-Hopf algebras.
\end{theorem}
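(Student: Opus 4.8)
The plan is to follow the proof of Theorem \ref{affinegrdgsch} almost verbatim, replacing the convolution \emph{group} structure by the commutator \emph{Lie} structure on the very same convolution dg-algebra. Lemma \ref{grdgschemeone} already provides, for each cdg-algebra $A$, the dg-algebra $\big(\Hom(B,A),\star_{B\!,A},d_{B\!,A}\big)$; its graded commutator turns $\Hom(B,A)$ into a dg-Lie algebra, and on degree-zero cocycles the bracket is $[\ups_1,\ups_2]=\ups_1\star_{B\!,A}\ups_2-\ups_2\star_{B\!,A}\ups_1=m_A\circ(\ups_1\otimes\ups_2-\ups_2\otimes\ups_1)\circ\cp_B$, exactly the bracket in the statement. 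First I would check that $\THOM_{\cdga}(B,A)$ is closed under this bracket. The cocycle condition $d_{B\!,A}[\ups_1,\ups_2]=0$ is immediate since $d_{B\!,A}$ is a derivation of $\star_{B\!,A}$ and the $\ups_i$ are degree-zero cocycles, and $[\ups_1,\ups_2]\circ u_B=0$ follows from $\cp_B\circ u_B=(u_B\otimes u_B)\circ\cp_{\fieldk}$ together with $\ups_i\circ u_B=0$.

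The essential point is that the \emph{primitivity} condition $\ups\circ m_B=m_A\circ(e_{B\!,A}\otimes\ups+\ups\otimes e_{B\!,A})$ is preserved by the bracket. Expanding $(\ups_1\star_{B\!,A}\ups_2)\circ m_B$ with the bialgebra identity $\cp_B\circ m_B=(m_B\otimes m_B)\circ(\I_B\otimes\t\otimes\I_B)\circ(\cp_B\otimes\cp_B)$ and inserting the primitivity of each $\ups_i$ produces the two genuinely primitive terms $m_A\circ\big(e_{B\!,A}\otimes(\ups_1\star_{B\!,A}\ups_2)+(\ups_1\star_{B\!,A}\ups_2)\otimes e_{B\!,A}\big)$ plus two cross-terms; by the commutativity $m_A\circ\t=m_A$ these cross-terms are symmetric under $\ups_1\leftrightarrow\ups_2$ and therefore cancel when one subtracts $(\ups_2\star_{B\!,A}\ups_1)\circ m_B$, leaving precisely the primitivity condition for $[\ups_1,\ups_2]$. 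This cancellation---the cochain-level analogue of the classical statement that the primitive elements of a bialgebra form a Lie algebra---is the step I expect to be the main obstacle, as it is the only place where the bialgebra compatibility, the commutativity of $m_A$, and the antisymmetrization must all be used together. Antisymmetry of the bracket is evident, and the Jacobi identity is inherited from the associativity of $\star_{B\!,A}$.

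Next I would descend to homotopy types, which requires the tangential analogue of Lemma \ref{hprt}: if $\big(\ups_1(t),\s_1(t)\big)$ and $\big(\ups_2(t),\s_2(t)\big)$ are homotopy pairs on $\THOM_{\cdga}(B,A)$, then $\big(\ups_1(t)\star_{B\!,A}\ups_2(t)-\ups_2(t)\star_{B\!,A}\ups_1(t),\;\s_1(t)\star_{B\!,A}\ups_2(t)-\ups_2(t)\star_{B\!,A}\s_1(t)+\ups_1(t)\star_{B\!,A}\s_2(t)-\s_2(t)\star_{B\!,A}\ups_1(t)\big)$ is again a homotopy pair on $\THOM_{\cdga}(B,A)$; likewise post-composition with a homotopy pair $\big(f(t),\s(t)\big)$ on $\HOM_{\cdga}(A,A^\pr)$ and pre-composition with a homotopy pair $\big(\p(t),\xi(t)\big)$ on $\HOM_{\cdgh}(B,B^\pr)$ carry homotopy pairs on $\THOM$ to homotopy pairs on $\THOM$. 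These are the counterparts of Lemma \ref{hprt}(a),(c),(d) and follow by the same routine manipulations. Granting them, the bracket is well defined on $\THOM_{\hcdga}(B,A)$, so $\bm{T\!\mG}^B(A)$ is a Lie algebra; moreover $\bm{T\!\mG}^B([f])([\ups]):=[f\circ\ups]$ is well defined because $f\circ\ups\in\THOM_{\cdga}(B,A^\pr)$ (a direct check using $f\circ u_A=u_{A^\pr}$, whence $f\circ e_{B\!,A}=e_{B\!,A^\pr}$) and is a Lie-algebra morphism because $f\circ(\ups_1\star_{B\!,A}\ups_2)=(f\circ\ups_1)\star_{B\!,A^\pr}(f\circ\ups_2)$, exactly as in Lemma \ref{grdgschemetwo}. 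Functoriality of $\bm{T\!\mG}^B$ is then clear.

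Finally, for the natural transformation I would set ${T\!\!\sN}_{[\p]}^A([\ups^\pr]):=[\ups^\pr\circ\p]$. That $\ups^\pr\circ\p\in\THOM_{\cdga}(B,A)$ follows from $\p$ being a morphism of cdg-Hopf algebras: the relations $\p\circ u_B=u_{B^\pr}$, $\cp_{B^\pr}\circ\p=(\p\otimes\p)\circ\cp_B$ and $\ep_{B^\pr}\circ\p=\ep_B$ give the unit and primitivity conditions, the last of these ensuring $e_{B^\pr\!,A}\circ\p=u_A\circ\ep_{B^\pr}\circ\p=u_A\circ\ep_B=e_{B\!,A}$. The pre-composition homotopy-pair statement makes ${T\!\!\sN}_{[\p]}^A$ depend only on $[\p]$ and $[\ups^\pr]$, the computation $(\ups^\pr_1\star_{B^\pr\!,A}\ups^\pr_2)\circ\p=(\ups^\pr_1\circ\p)\star_{B\!,A}(\ups^\pr_2\circ\p)$ shows it is a Lie-algebra morphism, and the naturality square follows from associativity of composition exactly as for $\sN_\p$ in Lemma \ref{grdgschemethree}. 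For the last assertion I would observe that $[\p]\mapsto{T\!\!\sN}_{[\p]}$ is contravariantly functorial, i.e. ${T\!\!\sN}_{[\I_B]}=\I_{\bm{T\!\mG}^B}$ and ${T\!\!\sN}_{[\p_1]}\circ{T\!\!\sN}_{[\p_2]}={T\!\!\sN}_{[\p_2]\circ_h[\p_1]}$; hence if $\p$ is a homotopy equivalence with homotopy inverse $h$, then ${T\!\!\sN}_{[h]}$ is a two-sided inverse of ${T\!\!\sN}_{[\p]}$, so ${T\!\!\sN}_{[\p]}$ is a natural isomorphism.
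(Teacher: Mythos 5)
Your proposal is correct and follows essentially the same route as the paper: you reconstruct the paper's Lemmas \ref{Lieone}--\ref{Liethree} (the cochain-level Lie-algebra-valued functor $\bm{T\!\CG}^{B}$ via the commutator of the convolution product, the pre-composition natural transformation, and the homotopy-pair stability statements) and then descend to homotopy types exactly as in the paper's proof of Theorem \ref{Lieversion}. In fact you supply details the paper leaves as unproved routine checks—notably the cross-term cancellation in the primitivity verification, which correctly uses the commutativity of $m_A$, and the explicit two-sided inverse ${T\!\!\sN}_{[h]}$ for the final isomorphism claim.
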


The basic idea of proof is to define a functor
$\bm{T\!\CG}^{B}:{\category{cdgA}}(\fieldk)\rightsquigarrow \category{Lie}(\Bbbk)$
from ${\category{cdgA}}(\fieldk)$  and show that  $\bm{T\!\CG}^{B}$ induces the functor
$\bm{T\!\mG}^{B}:{\mathit{ho}\category{cdgA}}(\fieldk)\rightsquigarrow \category{Lie}(\Bbbk)$
on  ${\mathit{ho}\category{cdgA}}(\fieldk)$.
Our proof shall be a consequence of three lemmas, which will be
stated without proofs.

\begin{lemma}\label{Lieone}
For every cdg-Hopf algebra ${B}$
we have  a functor
$\bm{T\!\CG}^{B}:{\category{cdgA}}(\fieldk)\rightsquigarrow \category{Lie}(\Bbbk)$,
sending
each  cdg-algebra $A$ to the Lie algebra 
$$
\bm{T\!\CG}^{B}(A):=\Big(\THOM_{\cdga}(B, {A}), [-,-]_{\star_{B\!,A}} \Big)
,
$$
where
$[\ups_1, \ups_2]_{\star_{B\!,A}}: =  \ups_1{\star_{B\!,A}}\ups_2 -\ups_2{\star_{B\!,A}}\ups_1$ 
for all $\ups_1, \ups_2 \in \THOM_{\cdga}(B, {A})$,
and each morphism $f:A\rightarrow A^\pr$ of cdg-algebras to
the Lie algebra homomorphism 
$$
\bm{T\!\CG}^{B}(f):\bm{T\!\CG}^{B}(A)\rightarrow \bm{T\!\CG}^{B}(A^\pr)
$$
defined by $\bm{T\!\CG}^{B}(f)(\ups): =f\circ \ups$ for all $\ups \in \THOM_{\cdga}(B, {A})$.
\end{lemma}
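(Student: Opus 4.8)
The plan is to build $\bm{T\!\CG}^B$ directly from the associative convolution dg-algebra functor $\bm{\CE}^B$ of Lemma \ref{grdgschemeone}. Recall that $\bm{\CE}^B(A)=\big(\Hom(B,A),e_{B\!,A},\star_{B\!,A},d_{B\!,A}\big)$ is an associative dg-algebra with unit $e_{B\!,A}=u_A\circ\ep_B$, and that $\bm{\CE}^B(f)(\a)=f\circ\a$ is a morphism of dg-algebras. Since the commutator $[\a_1,\a_2]_{\star_{B\!,A}}=\a_1\star_{B\!,A}\a_2-\a_2\star_{B\!,A}\a_1$ of \emph{any} associative algebra automatically satisfies antisymmetry and the Jacobi identity, the whole statement reduces to verifying that (i) $\THOM_{\cdga}(B,A)$ is a linear subspace of $\Hom(B,A)^0$ closed under $[-,-]_{\star_{B\!,A}}$, and (ii) $\bm{\CE}^B(f)$ restricts to a map $\THOM_{\cdga}(B,A)\to\THOM_{\cdga}(B,A^\pr)$. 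Functoriality of $\bm{T\!\CG}^B$ is then inherited verbatim from that of $\bm{\CE}^B$.

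For step (i), linearity is immediate, so the work is to show that $[\ups_1,\ups_2]_{\star_{B\!,A}}$ satisfies the three defining conditions whenever $\ups_1,\ups_2$ do. The cocycle condition $d_{B\!,A}[\ups_1,\ups_2]_{\star_{B\!,A}}=0$ holds because $d_{B\!,A}$ is a derivation of $\star_{B\!,A}$ and each $\ups_i$ is $d_{B\!,A}$-closed. The vanishing $[\ups_1,\ups_2]_{\star_{B\!,A}}\circ u_B=0$ follows from $\cp_B\circ u_B=(u_B\otimes u_B)\circ\cp_{\fieldk}$ together with $\ups_i\circ u_B=0$.

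The main obstacle is the tangential (derivation) condition for the bracket. I would compute $(\ups_1\star_{B\!,A}\ups_2)\circ m_B$ by inserting the bialgebra compatibility $\cp_B\circ m_B=(m_B\otimes m_B)\circ(\I_B\otimes\t\otimes\I_B)\circ(\cp_B\otimes\cp_B)$ and then applying $\ups_i\circ m_B=m_A\circ(e_{B\!,A}\otimes\ups_i+\ups_i\otimes e_{B\!,A})$ to each factor; since the $\ups_i$ have degree $0$, the only signs that appear come from the graded flip $\t$. The counit axioms collapse the four resulting terms into the two expected derivation terms $m_A\circ\big(e_{B\!,A}\otimes(\ups_1\star_{B\!,A}\ups_2)+(\ups_1\star_{B\!,A}\ups_2)\otimes e_{B\!,A}\big)$ plus two cross terms which, evaluated schematically on $a,b\in B$, read $\ups_1(a)\ups_2(b)+(-1)^{|a||b|}\ups_1(b)\ups_2(a)$. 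These do not vanish for $\ups_1\star_{B\!,A}\ups_2$ on its own; the point is that after antisymmetrizing and invoking the graded-commutativity $m_A\circ\t=m_A$ of $A$, the cross terms of $\ups_1\star_{B\!,A}\ups_2$ cancel exactly against those of $\ups_2\star_{B\!,A}\ups_1$. This cancellation is the crux of the argument and is the one place where commutativity of the target cdg-algebra is genuinely used; it is precisely what forces the commutator of two tangential maps back into $\THOM_{\cdga}(B,A)$.

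For step (ii), I would note that $\bm{\CE}^B(f)(\ups)=f\circ\ups$ preserves each defining condition: the cocycle condition because $f$ is a cochain map, the vanishing on $u_B$ trivially, and the derivation condition because $f\circ m_A=m_{A^\pr}\circ(f\otimes f)$ combined with $f\circ e_{B\!,A}=u_{A^\pr}\circ\ep_B=e_{B\!,A^\pr}$. Because $\bm{\CE}^B(f)$ is an algebra map it preserves commutators, so its restriction $\bm{T\!\CG}^B(f)$ is a morphism of Lie algebras, completing the verification.
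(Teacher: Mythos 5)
Your proof is correct, and it is essentially the argument the paper intends: Lemma \ref{Lieone} is explicitly stated without proof (``which will be stated without proofs''), with the direct verification left to the reader in parallel with the paper's written-out proof of the group analogue, Lemma \ref{grdgschemetwo}. Your reduction to the convolution dg-algebra $\bm{\CE}^B(A)$ of Lemma \ref{grdgschemeone} and your use of the graded commutativity $m_A\circ\t=m_A$ to cancel the cross terms $\ups_1(x)\ups_2(y)+(-1)^{|x||y|}\ups_1(y)\ups_2(x)$ against those of $\ups_2\star_{B\!,A}\ups_1$ is exactly the tangential counterpart of the step in Lemma \ref{grdgschemetwo} where commutativity of $m_A$ closes $g_1\star_{B\!,A}g_2$ under algebra maps, so your identification of that cancellation as the crux is on target.
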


\begin{lemma}\label{Lietwo}
For each morphism  $\p:{B}\rightarrow {B}^\pr$ of cdg-Hopf algebras 
we have a natural transformation 
${T\!\!\sN}_{\!\p}: \bm{T\!\CG}^{B^\pr}\Longrightarrow 
\bm{T\!\CG}^B: {\category{cdgA}}(\fieldk)\rightsquigarrow \category{Lie}(\Bbbk)$,
whose component
${T\!\!\sN}_{\!\p}^{A}: \bm{T\!\CG}^{B^\pr}(A)\Longrightarrow 
\bm{T\!\CG}^B(A)$
at  each cdg-algebra $A$ is defined by 
${T\!\!\sN}_{\!\p}^{A}(\ups^\pr):=\ups^\pr\circ \psi$ for all  $\ups^\pr \in \THOM_{\cdga}\big(B^\pr, {A}\big)$.
\end{lemma}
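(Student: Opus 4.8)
The plan is to verify directly that the prescribed components $T\!\!\sN_{\!\p}^{A}$ assemble into a natural transformation of Lie-algebra-valued functors, mirroring the group-level argument of Lemma \ref{grdgschemethree}. Everything will be driven by the three structural properties of a morphism $\p:B\to B^\pr$ of cdg-Hopf algebras: it is a cochain map ($d_{B^\pr}\circ\p=\p\circ d_B$), an algebra map ($\p\circ u_B=u_{B^\pr}$ and $\p\circ m_B=m_{B^\pr}\circ(\p\otimes\p)$), and a coalgebra map ($\ep_{B^\pr}\circ\p=\ep_B$ and $\cp_{B^\pr}\circ\p=(\p\otimes\p)\circ\cp_B$).

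First I would check that $\ups^\pr\mapsto\ups^\pr\circ\p$ lands in $\THOM_{\cdga}(B,A)$ whenever $\ups^\pr\in\THOM_{\cdga}(B^\pr,A)$, so that the component is well-defined. The vanishing $d_{B\!,A}(\ups^\pr\circ\p)=0$ follows by threading $d_A\circ\ups^\pr=\ups^\pr\circ d_{B^\pr}$ and $d_{B^\pr}\circ\p=\p\circ d_B$ through the definition of $d_{B\!,A}$; the unit condition $(\ups^\pr\circ\p)\circ u_B=\ups^\pr\circ u_{B^\pr}=0$ is immediate. The tangential (Leibniz-type) identity is the one place needing a little care: substituting $\p\circ m_B=m_{B^\pr}\circ(\p\otimes\p)$, applying the defining identity of $\ups^\pr$ to $\ups^\pr\circ m_{B^\pr}$, and finally invoking $e_{B^\pr\!,A}\circ\p=u_A\circ\ep_{B^\pr}\circ\p=u_A\circ\ep_B=e_{B\!,A}$ converts the identity governing the basepoint $e_{B^\pr\!,A}$ into the required one governing $e_{B\!,A}$.

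Next I would show each $T\!\!\sN_{\!\p}^{A}$ is a homomorphism of Lie algebras. As in the multiplicativity step of Lemma \ref{grdgschemethree}, the coalgebra-map relation $\cp_{B^\pr}\circ\p=(\p\otimes\p)\circ\cp_B$ yields $(\ups_1^\pr\star_{B^\pr\!,A}\ups_2^\pr)\circ\p=(\ups_1^\pr\circ\p)\star_{B\!,A}(\ups_2^\pr\circ\p)$, so precomposition by $\p$ intertwines the two $\star$-products; subtracting the transposed term then shows it intertwines the commutator brackets $[-,-]_{\star_{B^\pr\!,A}}$ and $[-,-]_{\star_{B\!,A}}$. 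Naturality in $A$ is pure associativity of composition: for $f:A\to A^\pr$ one has $(f\circ\ups^\pr)\circ\p=f\circ(\ups^\pr\circ\p)$, which is exactly $T\!\!\sN_{\!\p}^{A^\pr}\circ\bm{T\!\CG}^{B^\pr}(f)=\bm{T\!\CG}^B(f)\circ T\!\!\sN_{\!\p}^{A}$.

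I expect no genuine obstacle: the statement is the infinitesimal shadow of Lemma \ref{grdgschemethree}, and the only computation requiring attention is the tangentiality check, where the counit relation $\ep_{B^\pr}\circ\p=\ep_B$ makes the basepoint $e_{B\!,A}$ transform correctly under precomposition. The bracket-preservation and naturality steps are then formal consequences of the coalgebra-map relation and of associativity of composition, respectively.
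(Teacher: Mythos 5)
Your proposal is correct, and it is exactly the verification the paper intends: the paper states Lemmas \ref{Lieone}--\ref{Liethree} without proof, expecting precisely the routine computation you carry out, in direct analogy with the proved group-level Lemma \ref{grdgschemethree}. Your three steps (tangentiality of $\ups^\pr\circ\p$ via $e_{B^\pr\!,A}\circ\p=e_{B\!,A}$, compatibility of $\star$-products via the coalgebra-map relation, and naturality via associativity of composition) are all sound, with no sign subtleties since every map involved has degree $0$.
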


\begin{lemma}\label{Liethree}
Assume that we have following homotopy pairs:
\begin{itemize}

\item $\big({\ups}(t),\s(t)\big)$ , $\big({\ups}_1(t),\s_1(t)\big)$ and  $\big({\ups}_2(t),\s_2(t)\big)$
on $\THOM_{\cdga}(B,{A})$;

\item $\big(f(t),\l(t)\big)$  on $\HOM_{\cdga}(A,A^\pr)$;

\item
$\big(\p(t),\xi(t)\big)$on $\HOM_{\cdgh}({B},{B}^\pr)$;

\item
$\big({\ups}^\pr(t),\s^\pr(t)\big)$  on $\THOM_{\cdga}(B^\pr,{A})$.
\end{itemize}

Then we have following homotopy pairs
\begin{enumerate}[label=({\alph*}),leftmargin=.8cm]

\item
$\Big(\left[\big({\ups}_1(t), {\ups}_2(t)\right]_{\star_{B\!,A}},  \left[\s_1(t), {\ups}_2(t)\right]_{\star_{B\!,A}} 
+ \left[{\ups}_1(t),\s_2(t)\right]_{\star_{B\!,A}} \Big)$
on $\THOM_{\cdga}(B,{A})$.

\item
$\Big(f(t)\circ\ups(t), f(t)\circ \s(t) +\l(t)\circ \ups(t)\Big)$ 
on $\THOM_{\cdga}\big(B,{A}^\pr\big)$.

\item
$\Big({\ups}^\pr(t)\circ \p(t), \ups^\pr(t)\circ \xi(t)+ {\s}^\pr(t)\circ \p(t)\Big)$ 
 on $\HOM_{\cdga}(B,{A})$.
\end{enumerate}
\end{lemma}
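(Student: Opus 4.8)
The plan is to verify, for each of the three constructions $(a)$--$(c)$, the four conditions defining a homotopy pair on the relevant $\THOM$-space: the homotopy flow equation $\Fr{d}{dt}\Upsilon(t)=d_{B\!,A}\Sigma(t)$ (with source and target read off from the statement), the initial condition $\Upsilon(0)\in\THOM_{\cdga}$, and the two infinitesimal conditions on the generator, namely $\Sigma(t)\circ u_B=0$ and $\Sigma(t)\circ m_B=m_A\circ\big(e_{B\!,A}\otimes\Sigma(t)+\Sigma(t)\otimes e_{B\!,A}\big)$, where $\big(\Upsilon(t),\Sigma(t)\big)$ denotes the candidate pair in each case. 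Throughout I would use two Leibniz rules: that $d_{B\!,A}$ is a graded derivation of the convolution product $\star_{B\!,A}$, which is exactly the assertion that $\bm{\CE}^{B}(A)$ is a dg-algebra (Lemma \ref{grdgschemeone}), and the composition rule $d_{X\!,Z}(g\circ h)=(d_{Y\!,Z}g)\circ h+(-1)^{|g|}g\circ (d_{X\!,Y}h)$ for composable $h\colon X\to Y$ and $g\colon Y\to Z$, which follows directly from \eq{tshmdiff}.

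First I would record the standing fact that each of the families $\ups(t)$, $f(t)$, $\p(t)$ is a cocycle for every $t$: differentiating its flow equation gives $\Fr{d}{dt}\big(d\,\ups(t)\big)=d\,d\,\s(t)=0$, so $d\,\ups(t)=d\,\ups(0)=0$, and likewise for $f(t)$ and $\p(t)$; this is the content of the remarks made just after the definitions of the respective homotopy pairs. With this in hand the flow equations are routine. In $(a)$ the derivation property of $d_{B\!,A}$ over $\star_{B\!,A}$ turns $d_{B\!,A}\big([\s_1,\ups_2]_{\star_{B\!,A}}+[\ups_1,\s_2]_{\star_{B\!,A}}\big)$ into $[d_{B\!,A}\s_1,\ups_2]_{\star_{B\!,A}}+[\ups_1,d_{B\!,A}\s_2]_{\star_{B\!,A}}$ (the $d\,\ups_i$-terms drop out because $\ups_i(t)$ is a cocycle), and this equals $\Fr{d}{dt}[\ups_1,\ups_2]_{\star_{B\!,A}}$ by bilinearity of the bracket and $\Fr{d}{dt}\ups_i=d_{B\!,A}\s_i$. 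In $(b)$ and $(c)$ the composition rule together with $d\,f=0$, $d\,\ups=0$, $d\,\p=0$, $d\,\ups^\pr=0$ collapses $d\,\Sigma(t)$ to exactly $\big(\Fr{d}{dt}f\big)\circ\ups+f\circ\big(\Fr{d}{dt}\ups\big)$ and $\big(\Fr{d}{dt}\ups^\pr\big)\circ\p+\ups^\pr\circ\big(\Fr{d}{dt}\p\big)$, respectively.

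The initial conditions $\Upsilon(0)\in\THOM_{\cdga}$ are not fresh computations. In $(a)$, $[\ups_1(0),\ups_2(0)]_{\star_{B\!,A}}\in\THOM_{\cdga}(B,A)$ is precisely the closure of $\THOM_{\cdga}(B,A)$ under the Lie bracket recorded in Lemma \ref{Lieone}; in $(b)$, $f(0)\circ\ups(0)\in\THOM_{\cdga}(B,A^\pr)$ is the well-definedness of $\bm{T\!\CG}^{B}(f)$ (Lemma \ref{Lieone}); and in $(c)$, $\ups^\pr(0)\circ\p(0)\in\THOM_{\cdga}(B,A)$ is the well-definedness of the component ${T\!\!\sN}_{\!\p}^{A}$ (Lemma \ref{Lietwo}). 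The unit conditions $\Sigma(t)\circ u_B=0$ are immediate: in $(a)$ one expands $\star_{B\!,A}$ and uses $\cp_B\circ u_B=(u_B\otimes u_B)\circ\cp_\fieldk$ with $\s_i\circ u_B=\ups_i\circ u_B=0$; in $(b)$ it reduces to $f\circ(\s\circ u_B)+\l\circ(\ups\circ u_B)=0$; and in $(c)$ to $\ups^\pr\circ(\xi\circ u_B)+\s^\pr\circ(\p\circ u_B)=\s^\pr\circ u_{B^\pr}=0$, using $\xi\circ u_B=0$, $\p\circ u_B=u_{B^\pr}$ and $\s^\pr\circ u_{B^\pr}=0$.

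I expect the main obstacle to be the remaining condition, the compatibility of $\Sigma(t)$ with $m_B$, as it is the one place where the full bilinear ``derivation over the product'' bookkeeping and the Koszul signs must be tracked. In $(a)$ I would substitute the defining infinitesimal identities for $\s_1,\s_2$ and the tangential identities for $\ups_1,\ups_2$ into $\Sigma(t)\circ m_B$ and then repeatedly invoke the commutativity $m_A\circ\t=m_A$ and the coassociativity of $\cp_B$ to reassemble the terms into the required form $m_A\circ(e_{B\!,A}\otimes\Sigma+\Sigma\otimes e_{B\!,A})$; this is the first-order analogue of the product computation for $\bm{\CG}^B$ in Lemma \ref{grdgschemetwo}. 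In $(b)$ and $(c)$ the same condition follows more mechanically from the infinitesimal-algebra identities for $f,\l$ (respectively for $\p,\xi$ together with the tangential identities for $\ups^\pr,\s^\pr$) and from the fact that $f$ and $\p$ are genuine algebra maps, which is what transports the $e_{B\!,A}$-factors correctly across the composition. Since in every case all four checks are linear in the data and use only identities already assembled in Lemmas \ref{grdgschemeone}, \ref{grdgschemetwo}, \ref{Lieone} and \ref{Lietwo}, the verifications are routine but lengthy, and I would present them in compact form rather than term by term.
\qed
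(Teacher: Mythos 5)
Your proposal is correct and coincides with the (omitted) routine verification the paper intends---Lemma \ref{Liethree} is stated without proof---and each of your four checks goes through exactly as you predict: the flow equations follow from the two Leibniz rules plus the cocycle property $d_{B\!,A}\ups_i(t)=d_{A,A^\pr}f(t)=d_{B,B^\pr}\p(t)=0$, the initial conditions are Lemmas \ref{Lieone} and \ref{Lietwo}, and in the $m_B$-compatibility the dangerous cross terms of type $\s_i\otimes\ups_j$ cancel precisely in the commutator in $(a)$ (via the bialgebra axiom $\cp_B\circ m_B=m_{B\otimes B}\circ(\cp_B\otimes\cp_B)$ and commutativity of $m_A$, as in Lemma \ref{grdgschemetwo}), while in $(b)$ and $(c)$ they are killed by $\l(t)\circ u_A=0$ and $\ep_{B^\pr}\circ\xi(t)=0$ together with $f(t)\circ e_{B\!,A}=e_{B\!,A^\pr}$ and $e_{B^\pr\!,A}\circ\p(t)=e_{B\!,A}$. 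One small point: in part $(c)$ the paper's ``$\HOM_{\cdga}(B,A)$'' is evidently a misprint for $\THOM_{\cdga}(B,A)$ (this is what the proof of Theorem \ref{Lieversion} requires), and your reading---verifying the tangential conditions---is the intended one.
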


Now we are ready for the proof of  Theorem \ref{Lieversion}

\begin{proof}[Theorem \ref{Lieversion}]
From Lemmas \ref{Lieone}, \ref{Liethree}$(a)$ and  \ref{Liethree}$(b)$,
it is trivial to
check that the Lie algebra $\bm{T\mG}^{B}(A)$ is well-defined,
and $\bm{T\!\mG}^{B}([f]): \bm{T\!\mG}^{B}(A)\rightarrow \bm{T\!\mG}^{B}(A^\pr)$ 
is a well-defined Lie algebra homomorphism.
Therefore $\bm{T\!\mG}^{B}$ is a functor from the homotopy category $\mathit{ho}\category{cdgA}(\Bbbk)$ of cdg-algebras
as claimed,  where the functoriality of $\bm{T\!\mG}^{B}$ is obvious.
It is also obvious that $\bm{T\!\mG}^{B}([f])$ is an isomorphism of Lie algebras whenever $f:A\rightarrow A^\pr$ is
a homotopy equivalence of cdg-algebras. 

From Lemmas \ref{Lietwo} and  \ref{Liethree}$(c)$,
it is also trivial to check that the natural transformation 
${T\!\!\sN}_{\![\p]}: \bm{T\!\mG}^{B^\pr}\Longrightarrow \bm{T\!\mG}^{B}:
{\mathit{ho}\category{cdgA}}(\fieldk)\rightsquigarrow \category{Lie}(\Bbbk)$ is well-defined
for every $[{\psi}]\in \HOM_{\hcdgh}\big({B}, {B}^\pr\big)$. Finally it is obvious that
${T\!\!\sN}_{\![{\psi}]}$  is a natural isomorphism whenever $\p:B \rightarrow B^\pr$ is  a homotopy equivalence of cdg-Hopf algebras.
\qed
\end{proof}

Remind that
the category of affine group dg-schemes is anti-equivalent 
to the homotopy category $\mathit{ho}\category{cdgH}(\Bbbk)$
of cdg-Hopf algebras---Theorem \ref{affinegrdgsch}.  Therefore,
the assignments
$\bm{\mG}^{B}\mapsto \bm{T\!\mG}^{B}$
and $\sN_{\![{\psi}]}\mapsto  {T\!\!\sN}_{\![{\psi}]}$
define a contravariant functor $\category{T}$ from the category of affine group dg-schemes
to the category of functors from $\mathit{ho}\category{cdgA}(\Bbbk)$ to $\category{Lie}(\Bbbk)$,
where  ${T\!\!\sN}_{\![{\psi}]}$ is a natural isomorphism
whenever $\sN_{\![{\psi}]}$ is a natural isomorphism.

\subsection{Reduction to affine group schemes}

Let $H(B)$ be the cohomology of a cdg-Hopf algebra $B$. Then, $H(B)$ has a uniquely induced structure 
of cdg-Hopf algebra with the zero-differential, since every structure in $B$ is compatible with the differential $d_B$. 
In particular
the $0$-th cohomology $H^0(B)$ is a commutative Hopf algebra.  
In this subsection, we  compare the affine group dg-scheme
$\bm{\mG}^B:\mathit{ho}\category{cdgA}(\Bbbk)\rightsquigarrow \category{Grp}$ with the affine group scheme
$\bm{G}^{H^0(B)}:\category{cAlg}(\Bbbk)\rightsquigarrow \category{Grp}$ represented by $H^0(B)$, 
where $\category{cAlg}(\Bbbk)$ is the category of commutative algebras over $\Bbbk$.
Note that $\category{cAlg}(\Bbbk)$ is a full subcategory of $\mathit{ho}\category{cdgA}(\Bbbk)$.

The cochain complex $(B, d_B)$
is over a  field $\Bbbk$ so that it splits.  A choice of a splitting, which is an $\chi\in \Hom(B, B)^{-1}$ satisfying $d_B =d_B\circ \chi\circ d_B$,
provides us with a strong deformation retract:
$$
\xymatrix{\big(H(B),0\big) \ar@/^/[r]^{p} & \ar@/^/[l]^{q} \big(B, d_B\big)}
,\qquad
\left\{
\begin{aligned}
{q}\circ {p} &=\I_{H(B)},\cr
{p}\circ {q} &=\I_B - d_B\circ \chi- \chi\circ d_B,
\end{aligned}\right.
$$
where ${p}$ and ${q}$ are homotopy equivalences.  We can always choose $\chi$ such that $\chi\circ u_B =0$.
Then, $H(B)$ has a unique structure of $\Z$-graded commutative Hopf algebra, independent to the choice of splitting,  
with the unit $u_{H(B)}$, the counit $\ep_{H(B)}$,
the product $m_{H(B)}$, the coproduct $\cp_{H(B)}$ and the antipode $\vs_{H(B)}$ defined as follows:
$$
\begin{aligned}
u_{H(B)}&:={q}\circ u_B
,\cr
\ep_{H(B)}&:=\ep_B\circ {p}
,
\end{aligned}
\qquad
\begin{aligned}
m_{H(B)} &:= {q}\circ m_B\circ ({p}\otimes {p})
,\cr
\cp_{H(B)} &:= ({q}\otimes {q})\circ \cp_B\circ {p}
,
\end{aligned}\qquad
\vs_{H(B)} := {q}\circ \vs_B\circ {p}.
$$
On the other hand the cochain maps ${p}:H(B)\rightarrow B$
and ${q}:B\rightarrow H(B)$ are  morphisms of cdg-Hopf algebras
only up to homotopy; it may preserve unit and counit but is an algebra map only up to homotopy, coalgebra map
only up homotopy and commutes with antipodes only up to homotopy.

Let $A$ is a commutative algebra.
Note that $\a \in \Hom(B,A)^0$  is a zero-map on $B^k$ unless $k=0$ since the degree of $A$ is concentrated to zero.
We  compare $\HOM_{\hcdga}(B, A)$ with $\HOM_{\hcdga}(H(B), A)= \HOM_{\category{cAlg}(\Bbbk)}(H^0(B), A)$.
Consider $g \in \HOM_{\cdga}(B, A)$. It is apparent that $g$ induces a morphism of algebra
$H(g) \in \HOM_{\category{cAlg}(\Bbbk)}(H^0(B), A)$ and $H(\tilde{g})=H(g)$, 
whenever $g\sim \tilde g \in \HOM_{\cdga}(B, A)$.
However, we do not know if every element of $\HOM_{\category{cAlg}(\Bbbk)}\big(H^0(B), A\big)$ is obtained in the above manner:
for any $a \in \HOM_{\category{cAlg}(\Bbbk)}\big(H^0(B), A\big)$ we have  $\a = a\circ {q} \in \Hom(B, A)^0$
such that $H(\a)=a$, $d_{B\!,A}\a=0$, $\a\circ u_B = u_A$, but  $\a$ is an algebra map only up to homotopy
 since ${q}_0:B^0\rightarrow H^0(B)$ 
is an algebra map only up to homotopy. 
%
Nevertheless, the following limited statement is valid. 
\begin{lemma}
For every cdg-Hopf algebra $B$ whose  degrees are concentrated to non-negative integers,  the group
$\bm{\mG}^B(A)$   is isomorphic to the group  $\bm{G}^{H^0(B)}(A)$
 for every commutative algebra $A$.
\end{lemma}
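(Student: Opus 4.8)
The plan is to identify both groups with algebra homomorphisms out of a single commutative Hopf algebra and to analyze the homotopy relation explicitly. First I would reduce the morphism sets. Since $A$ is concentrated in degree $0$ with zero differential and $B$ is non-negatively graded, any cochain map $g\colon B\to A$ of degree $0$ vanishes on $B^{n}$ for $n\neq 0$, and the cocycle condition $d_{B,A}g=-g\circ d_B=0$ is automatic because $d_B(B^0)\subseteq B^1$ while $B^{-1}=0$. Hence restriction to degree $0$ gives a bijection $\HOM_{\cdga}(B,A)\cong\HOM_{\category{cAlg}(\Bbbk)}(B^0,A)$, where $B^0$ carries the restricted structure maps and is itself a commutative Hopf algebra (the Hopf ideal $B^{\geq 1}$ realizes $B^0=B/B^{\geq 1}$). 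Because $B^{-1}=0$ one has $H^0(B)=Z^0(B)=\ker\!\big(d_B|_{B^0}\big)$, and since $d_B|_{B^0}$ is a derivation that is also a coderivation, $Z^0(B)$ is a sub-Hopf-algebra of $B^0$; by the Künneth isomorphism $Z^0(B\otimes B)=Z^0(B)\otimes Z^0(B)$, so $\cp_B$ restricts to $Z^0(B)\to Z^0(B)\otimes Z^0(B)$. In particular $\bm{G}^{H^0(B)}(A)=\HOM_{\category{cAlg}(\Bbbk)}\big(Z^0(B),A\big)$. I would then define $\Phi_A\colon\bm{\mG}^B(A)\to\bm{G}^{H^0(B)}(A)$ by $[g]\mapsto g|_{Z^0(B)}$.

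Next I would check that $\Phi_A$ is a well-defined group homomorphism. Any homotopy between cdg-algebra maps has the form $\tilde g-g=d_{B,A}\lambda=\lambda\circ d_B$ with $\lambda\in\Hom(B,A)^{-1}$ supported on $B^1$, so $\tilde g-g$ vanishes on $Z^0(B)=\ker d_B$; thus $\Phi_A$ descends to homotopy classes (this is the direction already noted in the text). That $\Phi_A$ is multiplicative follows from the restriction of $\cp_B$ above: for representatives $g_1,g_2$ the convolution $m_A\circ(g_1\otimes g_2)\circ\cp_B$ restricted to $Z^0(B)$ equals $m_A\circ(g_1|_{Z^0}\otimes g_2|_{Z^0})\circ\cp_B|_{Z^0}$, which is the product in $\bm{G}^{H^0(B)}(A)$; compatibility with units and inverses (via $\vs_B$) is similar.

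For injectivity I would use the group structure and show the kernel is trivial. If $g|_{Z^0(B)}=(u_A\circ\ep_B)|_{Z^0(B)}$, then $g-u_A\circ\ep_B$ vanishes on $\ker\big(d_B|_{B^0}\big)$ and hence factors as $\lambda_0\circ d_B$ for some linear $\lambda_0$ on the space of degree-$1$ coboundaries. Extending $\lambda_0$ to $B^1$ and, crucially, correcting it so that it satisfies the infinitesimal-algebra-map condition (the degree-$(-1)$ Leibniz identity with respect to $u_A\circ\ep_B$), I would integrate the resulting generator to a homotopy flow from $u_A\circ\ep_B$ to $g$. Characteristic $0$ and the non-negative grading let this correction be carried out, so $[g]=\ide_{B,A}$ and $\Phi_A$ is injective.

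The main obstacle is surjectivity, which is exactly the point flagged before the lemma: the linear retraction $q^0=\I_{B^0}-\chi\circ d_B\colon B^0\to Z^0(B)$ is only an algebra map up to homotopy, so the candidate lift $\alpha=a\circ q$ of $a\in\HOM_{\category{cAlg}(\Bbbk)}\big(Z^0(B),A\big)$ is a unital cocycle with $\alpha|_{Z^0}=a$ but is multiplicative only up to homotopy. I would rectify $\alpha$ to an honest algebra map in its homotopy class by an inductive obstruction argument along the non-negative grading: the multiplicativity defect $\alpha\circ m_B-m_A\circ(\alpha\otimes\alpha)$ is, by the homotopy-multiplicativity of $q$, a coboundary in the convolution complex $\bm{\CE}^B(A)$, and because $A$ sits in degree $0$ and $B^{<0}=0$ this defect can be absorbed stage by stage by modifying $\alpha$ through the homotopy flow without changing its restriction to $Z^0(B)$. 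Equivalently, this produces an algebra retraction $B^0\to Z^0(B)$ splitting the inclusion, and $b\circ(\text{retraction})$ lifts $b$. Establishing this rectification — showing the obstructions vanish and the induction converges — is where non-negativity of the grading and characteristic $0$ are essential, and it is the technical heart of the proof.
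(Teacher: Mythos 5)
Your reduction is sound, and its first half coincides with the paper's setup done more carefully: since $B$ is concentrated in non-negative degrees and $A$ in degree $0$, one indeed has $\HOM_{\cdga}(B,A)\cong\HOM_{\category{cAlg}(\Bbbk)}(B^0,A)$, the identification $H^0(B)=Z^0(B)=\Ker\big(d_B|_{B^0}\big)$ as a sub-Hopf algebra of $B^0$ (your Künneth argument is fine because $(B\otimes B)^{-1}=0$), and the restriction map $\Phi_A$ is a well-defined group homomorphism on homotopy classes. The genuine gap is that the two steps carrying all the content, injectivity and surjectivity, are announced rather than proved. For injectivity, a homotopy in this category is a homotopy \emph{pair}: the generator $\xi(t)\in\Hom(B,A)^{-1}[t]$ must satisfy $\xi(t)\circ m_B=m_A\circ\big(f(t)\otimes\xi(t)+\xi(t)\otimes f(t)\big)$, where $f(t)$ is the flow that $\xi(t)$ itself generates; ``correcting $\lambda_0$'' so that this coupled condition holds is precisely the problem, and you give neither a construction nor a reason the obstruction vanishes. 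For surjectivity, your proposed induction ``along the non-negative grading'' has no content as stated: since $A$ sits in degree $0$, the multiplicativity defect $\alpha\circ m_B-m_A\circ(\alpha\otimes\alpha)$ is supported entirely on $B^0\otimes B^0$, so the grading of $B$ offers nothing to induct on there, no substitute filtration (coradical, powers of an ideal) is specified, and ``the induction converges'' is an assertion --- you say yourself it is the technical heart, which means the proposal stops exactly where the proof should begin.

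What you correctly identify as the equivalent key claim --- an algebra retraction $B^0\to Z^0(B)$ splitting the inclusion --- is exactly what the paper produces, by a short degree mechanism you did not find: choose a splitting $\chi\in\Hom(B,B)^{-1}$ with $d_B=d_B\circ\chi\circ d_B$ and form the strong deformation retract $(p,q)$; since $\chi$ has degree $-1$ and $B^{-1}=0$, one has $\chi_0=0$, hence $q_0\circ p_0=\I_{H^0(B)}$ and $p_0\circ q_0=\I_{B^0}-\chi_1\circ d_{B^0}$, and the paper concludes that $p_0$ and $q_0$ are honest Hopf algebra maps in degree $0$, so $a\mapsto a\circ q_0$ lifts characters on the nose, with no obstruction theory at all. (Even on this route one must still check --- or arrange by the choice of $\chi_1$ --- that $\I_{B^0}-\chi_1\circ d_{B^0}$ is multiplicative, and the injectivity direction $[g]=[g\circ p_0\circ q_0]$ still requires exhibiting a homotopy pair; but this replaces your open-ended rectification by a concrete finite verification.) As it stands, your proposal establishes well-definedness and the homomorphism property of $\Phi_A$ but not its bijectivity, so it does not yet prove the lemma.
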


\begin{proof} 
Due to the degree reason $\chi$ is the zero map on $B^0$, i.e, $\chi_0=0$. 
Therefore we have ${q}_0\circ {p}_0 =\I_{H^0(B)}$ and ${p}_0\circ {q}_0 =\I_{B^0} - \chi_1\circ d_{B^0}$,
which imply that both ${p}_0:H^0(B)\rightarrow B^0$ and ${q}_0:B^0\rightarrow H^0(B)$ are Hopf algebra maps.
\qed
\end{proof}

\begin{corollary}
For every cdg-Hopf algebra $B$ with the degrees concentrated to non-negative integers,  
the Lie algebra $\category{T}\bm{\mG}^B(A)$  is isomorphic to the Lie algebra $\category{T}\bm{\mG}^{H^0(B)}(A)$ 
 for every commutative algebra $A$.
\end{corollary}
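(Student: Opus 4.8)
The plan is to mirror, at the tangential (Lie) level, the two-line argument behind the preceding Lemma, and then to promote the resulting bijection to a Lie algebra isomorphism. Fix a splitting $\chi$ of $(B,d_B)$ with $\chi\circ u_B=0$, and write $p_0\colon H^0(B)\to B$ and $q_0\colon B\to H^0(B)$ for the degree-$0$ parts of the retraction data. Since the degrees of $B$ are non-negative we have $\chi_0=0$, whence $q_0\circ p_0=\I_{H^0(B)}$ and $p_0\circ q_0=\I_{B^0}-\chi_1\circ d_{B^0}$, and the Lemma already guarantees that $p_0,q_0$ are genuine Hopf algebra maps. First I would upgrade these to honest morphisms of cdg-Hopf algebras $p_0\colon H^0(B)\to B$ and $q_0\colon B\to H^0(B)$: the required cochain-map identities hold for degree reasons once $q_0$ is extended by zero on $B^k$ for $k\neq 0$, and multiplicativity/comultiplicativity off degree $0$ is vacuous because $B$ and $H^0(B)$ are both concentrated away from negative degrees.

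Applying Lemma \ref{Lietwo} to these genuine maps yields Lie algebra homomorphisms $T\!\!\sN_{p_0}\colon\bm{T\!\CG}^B(A)\to\bm{T\!\CG}^{H^0(B)}(A)$, $\ups\mapsto\ups\circ p_0$, and $T\!\!\sN_{q_0}\colon\bm{T\!\CG}^{H^0(B)}(A)\to\bm{T\!\CG}^B(A)$, $\ups^\pr\mapsto\ups^\pr\circ q_0$, which by Theorem \ref{Lieversion} descend to $\bm{T\!\mG}^B(A)$ and $\bm{T\!\mG}^{H^0(B)}(A)$. Because $A$ is concentrated in degree $0$ every homotopy pair on $\THOM_{\cdga}(H^0(B),A)$ is constant, so $\bm{T\!\mG}^{H^0(B)}(A)=\THOM_{\cdga}(H^0(B),A)=\category{T}\bm{\mG}^{H^0(B)}(A)$, and it remains only to see that the two maps are mutually inverse. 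The relation $q_0\circ p_0=\I$ gives $T\!\!\sN_{p_0}\circ T\!\!\sN_{q_0}=\I$ at once, and since $q_0,p_0$ intertwine the convolution products $\star_{B,A}$ and $\star_{H^0(B),A}$ they respect the brackets $[-,-]_{\star}$, so the bijection, once obtained, is automatically one of Lie algebras.

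The only substantive point is that the reverse composite is the identity on $\bm{T\!\mG}^B(A)$: for $\ups\in\THOM_{\cdga}(B,A)$ one has $\ups\circ p_0\circ q_0=\ups-\ups\circ\chi_1\circ d_{B^0}$, so I must prove $[\ups\circ p_0\circ q_0]=[\ups]$, i.e. that the defect $\ups\circ\chi_1\circ d_{B^0}$ is homotopy-trivial inside $\THOM_{\cdga}(B,A)$. The obvious candidate is the straight-line homotopy flow generated by $\s:=\ups\circ\chi_1\in\Hom(B,A)^{-1}$, for which $d_{B,A}\s=\s\circ d_B=\ups\circ\chi_1\circ d_{B^0}$ (using $B^{-1}=0$) and $\s\circ u_B=0$ (using $\chi\circ u_B=0$). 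This is precisely where the difficulty sits: the infinitesimal-algebra-map condition $\s\circ m_B=m_A\circ(e_{B,A}\otimes\s+\s\otimes e_{B,A})$ is \emph{not} automatic, since $\chi$ is only a contraction of $d_B$ and need not be multiplicative, so $\chi_1\circ m_B$ is a priori uncontrolled.

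The hard part is therefore to manufacture a genuinely tangential generator. Because $A$ lives in degree $0$, the generator need only be a degree-$(-1)$ tangential map $\s'\colon B^1\to A$, which is the same as a linear functional on $B^1/(I\!\cdot\! B^1)$ with $I=\ker(\ep_B|_{B^0})$, constrained only by $\s'\circ d_{B^0}=\ups\circ\chi_1\circ d_{B^0}$, i.e. $\s'$ must agree with $\ups\circ\chi_1$ on $\im d_{B^0}$. My plan is to exploit that the defect $\ups\circ\chi_1\circ d_{B^0}$ is the difference $\ups-\ups\circ p_0\circ q_0$ of two $\ep_B$-derivations, hence already tangential, and then to correct $\ups\circ\chi_1$ away from $\im d_{B^0}$ so that the generator becomes tangential while its restriction to $\im d_{B^0}$ is unchanged; this correction step is the one I expect to demand real care, and it is the exact dg-counterpart of the degree count that trivialized the group-level Lemma. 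As an independent consistency check I would re-obtain the underlying bijection from the Lemma via dual numbers, identifying $\category{T}\bm{\mG}^B(A)$ and $\category{T}\bm{\mG}^{H^0(B)}(A)$ with the fibers over the identity of $\bm{\mG}^B(A[\varepsilon])\to\bm{\mG}^B(A)$ and $\bm{G}^{H^0(B)}(A[\varepsilon])\to\bm{G}^{H^0(B)}(A)$ and transporting the group isomorphism of the Lemma along $\varepsilon\mapsto 0$.
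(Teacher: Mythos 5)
Your overall scaffolding is sound and is the same route the paper implicitly takes: once $p_0$ and $q_0$ are upgraded to honest morphisms of cdg-Hopf algebras (which works, granting the Lemma), Lemma \ref{Lietwo} and Theorem \ref{Lieversion} supply the two Lie algebra maps, $q_0\circ p_0=\I_{H^0(B)}$ disposes of one composite, and the homotopies on the $H^0(B)$ side are indeed trivial for degree reasons. But your proposal stops precisely at the one step that carries all the mathematical content beyond the Lemma: showing $[\ups\circ p_0\circ q_0]=[\ups]$ in $\THOM_{\hcdga}(B,A)$, i.e.\ that the defect $\ups\circ\chi_1\circ d_{B^0}$ is generated by a \emph{tangential} homotopy. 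You correctly observe that the straight-line generator $\s=\ups\circ\chi_1$ need not satisfy the infinitesimal algebra condition, but the announced ``correction away from $\im d_{B^0}$'' does not have the freedom you ascribe to it. A tangential $\s\in\Hom(B,A)^{-1}$ is forced to kill $\bar{B}^0\cdot B^1$ (from $\s(by)=\ep_B(b)\s(y)$, since $\s$ vanishes on $B^0$ and $\ep_B$ on $B^1$), so on the subspace $\im d_{B^0}\cap\big(\bar{B}^0\cdot B^1\big)$ its values are pinned to zero, while the flow equation pins them to $\ups(x)-\ups(p_0q_0x)$ for $v=dx$. Hence your correction exists if and only if every $x\in B^0$ with $dx\in\bar{B}^0\cdot B^1$ lies in $\ker d_{B^0}+\Bbbk\,u_B(1)+\bar{B}^0\cdot\bar{B}^0$ --- equivalently, the kernel of the induced differential on degree-zero indecomposables is exhausted by classes of cocycles. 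This is a genuine structural assertion about non-negatively graded cdg-Hopf algebras; your proposal neither states nor proves it, and the remark that the defect ``is the difference of two $\ep_B$-derivations, hence already tangential'' is true but beside the point: tangentiality of the defect on $B^0$ makes the extension problem well posed, it does not solve it.

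For what it is worth, the paper itself offers no proof of this corollary --- it is presented as immediate from the Lemma, whose written proof likewise ends at ``$p_0$ and $q_0$ are Hopf algebra maps'' and leaves the analogous injectivity step (that $p_0\circ q_0=\I_{B^0}-\chi_1\circ d_{B^0}$ acts as the identity up to the relevant homotopies) unaddressed; so you have accurately located the hidden difficulty rather than invented one. Your closing dual-number remark is, as you say, only a consistency check in its current form: it would become an actual proof --- and the cleanest repair --- if you first established the identification $\category{T}\bm{\mG}^{B}(A)\cong\ker\big(\bm{\mG}^{B}(A[\e])\to\bm{\mG}^{B}(A)\big)$ with $\e^2=0$, including that a homotopy over $A[\e]$ splits into a homotopy of the reduction plus a tangential homotopy of the $\e$-component, and then matched brackets via commutators over $A[\e_1,\e_2]$; the Lemma applied to the commutative algebras $A[\e]$ and $A[\e_1,\e_2]$ would then transport everything, bypassing the correction step entirely. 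As it stands, the proposal is an accurate reduction with the decisive step missing.
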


\subsection{Pro-unipotent affine group dg-schemes}

We say an affine  group dg-scheme $\bm{\mG}^B$ \emph{pro-unipotent} if the cdg-Hopf algebra $B$ is conilpotent.
Then, we shall show that the underlying set-valued functors 
 $$
 \grave{\bm{\mG}}^B:\mathit{ho}\category{cdgA}(\fieldk)\rightsquigarrow \category{Set}
 \hbox{ and } \grave{\category{T}\!\bm{\mG}}^B:\mathit{ho}\category{cdgA}(\fieldk)\rightsquigarrow \category{Set}
 $$
are naturally isomorphic. Thus we can recover the group ${\bm{\mG}}^B(A)$ from the Lie algebra ${\category{T}\!\bm{\mG}}^B(A)$
for every cdg-algebra $A$.

Consider a cdg-Hopf algebra $B=(B, u_B, m_B,\ep_B, \cp_B, \vs_B, d_B)$. 
Let $\bar{B} :=\Ker \ep_B$ be the kernel of the counit $\ep_B:B \rightarrow \Bbbk$. 
 Then we have the splitting
$B =\Bbbk\cdot u_B(1)\oplus \bar B$ since  $\ep_B\circ u_B =\I_\Bbbk$.
Define the linear map $\bar\cp_B:{B}\rightarrow {B}\otimes {B}$ such that, $\forall x \in B$,
$$
\bar\cp_B (x) := \cp_B(x) - u_B(1)\otimes x - x\otimes u_B(1).
$$
From the properties of counit $\ep_B$ we have, $\forall x \in B$,
\eqalign{
(\ep_B\otimes \ep_B)\circ\bar\cp_B (x) := \cp_B\big(\ep_B(x)\big) - 1\otimes \ep_B(x) - \ep_B(x)\otimes 1
,\qquad
\ep_B\big(d_B(x)\big) = d_B\big(\ep_B(x)\big).
}
Therefore we have $\bar{\cp}_B\left(\bar{B}\right) \subset \bar{B}\otimes \bar{B}$ 
and $d_B\left(\bar{B}\right) \subset \bar{B}$ so that $\big(\bar{B}, \bar\cp_B, d_B\big)$ is a non-counital dg-coalgebra.

Consider the $n$-fold iterated reduced coproduct
$\bar\cp^{(n)}_B : \bar B \rightarrow \bar B^{\otimes n}$, $n\geq 1$,
generated by $\bar{\cp}_B$, where 
$\bar\cp^{(1)}_B:=\I_B$ and $\bar\cp^{(n+1)}_B 
:= (\bar\cp^{(n)}_B\otimes \I_B)\circ \bar\cp_B$.
Let $\bar B_n \subset \bar B$ be the kernel of $\bar \cp^{(n+1)}_B$. Then, we have the  filtration
$$
0=\bar B_0 \subset \bar B_1 \subset \bar B_2 \subset \bar B_3 \subset \cdots,
$$
satisfying
$\cp_B\big( \bar B_n \big)\subset \sum_{r=0}^n \bar B_r \otimes \bar B_{n-r}$ and $d_B\big( \bar B_n) \subset \bar B_n$.

\begin{definition}
A cdg-Hopf algebra $B$ is conilpotent  if $\bar{B}$ is the union $\cup_{n\geq 0} \bar{B}_n$, i.e.,  for any $x \in \bar{B}$
there is some positive integer $n$ such that $\bar{\cp}_{B}^{(n+1)} (x)=0$.
An affine group dg-scheme $\bm{\mG}^B$ is pro-unipotent if the cdg-Hopf algebra $B$ is conilpotent.
\end{definition}
The notion of conilpotent cdg-Hopf algebras is the straightforward dg-version of conilpotent commutative Hopf algebra \cite{Cartier}.

We introduce some new notations.
Let $\cp^{(n)}_B :  B \rightarrow  B^{\otimes n}$, $n\geq 1$, 
be the $n$-fold iterated  coproduct generated by ${\cp}_B$, where  $\cp^{(1)}_B:=\I_B$ and 
$\cp^{(n+1)}_B := \big(\cp^{(n)}_B\otimes \I_B\big)\circ\cp_B$.
Let $\cp^{(n)}_{B\otimes B} :  B\otimes B \rightarrow  (B\otimes B)^{\otimes n}$, $n\geq 1$, 
be the $n$-fold iterated  coproduct generated by ${\cp}_{B\otimes B} =(\I_B\otimes \t\otimes \I_B)\circ (\cp_B\otimes \cp_B):B\otimes B
\rightarrow (B\otimes B)^{\otimes 2}$, where  $\cp^{(1)}_{B\otimes B}:=\I_{B\otimes B}$ and 
$\cp^{(n+1)}_{B\otimes B} := \big(\cp^{(n)}_{B\otimes B}\otimes \I_{B\otimes B}\big)\circ\cp_{B\otimes B}$.
Then it is trivial to show that, $\forall n\geq 1$,
\eqn{hsqa}{
\cp^{(n)}_{B}\circ m_B
=
\overbrace{( m_B {\otimes} \ldots {\otimes} m_B)}^n\circ \cp^{(n)}_{B\otimes B}.
}

Let $B$ be a conilpotent cdg-Hopf algebra,  $\bm{\mG}^B:\mathit{ho}\category{cdgA}(\Bbbk)\rightsquigarrow \category{Grp}$
be the pro-unipotent affine group dg-scheme represented by $B$ and $\bm{T\!\mG}^B:\mathit{ho}\category{cdgA}(\Bbbk)\rightsquigarrow \category{Lie}(\Bbbk)$
be the associated Lie algebra valued functor.  
We shall show that one can recover $\bm{\mG}^B$ from $\bm{T\!\mG}^B$.

For the precise statement we need some notations.
Let $A=(A, u_A, m_A, d_A)$ be a cdg-algebra and
$m^{(n)}_A:A^{\otimes n}\rightarrow A$, $n\geq 1$, 
be
the $n$-fold iterated product generated by $m_A:A\otimes A \rightarrow A$
such that $m^{(1)}_A:=\I_A$ and 
$m^{(n+1)}_A:= m_A\circ (m^{(n)}_A\otimes \I_A)=  m^{(n)}_A\circ (\I_{A^{\otimes n-1}}\otimes m_A\big)$.
Consider the underlying set-valued functors of the functors ${\bm{\mG}}^B$ and ${\bm{T\!\mG}}^B$:
\eqalign{
\grave{\bm{\mG}}^B=\HOM_{\hcdga}(B, -)&:\mathit{ho}\category{cdgA}(\Bbbk)\rightsquigarrow \category{Set}
,\cr
\grave{\category{T}\!\bm{\mG}}^B=\THOM_{\hcdga}(B, -)
&:\mathit{ho}\category{cdgA}(\Bbbk)\rightsquigarrow \category{Set}.
}

\begin{theorem}\label{conilth}
For every conilpotent cdg-Hopf algebra $B$
we have a natural isomorphism 
$
\xymatrixrowsep{.5pc}
\xymatrixcolsep{3pc}
\xymatrix{ \grave{\bm{T\!\mG}}^B\ar@/^/@{=>}[r]^{\bm{\exp}}  & \ar@/^/@{=>}[l]^{\bm{\ln}} \grave{\bm{\mG}}^B}
$
of functors
whose components
$\xymatrix{ \grave{\bm{T\!\mG}}^B(A)\ar@/^/@{->}[r]^{\bm{\exp}_A}  & \ar@/^/@{->}[l]^{\bm{\ln}_A} \grave{\bm{\mG}}^B(A)}$
for each cdg-algebra $A$ is given by
\eqalign{
\bm{\exp}_{A}\big([\ups]\big) &:= [u_A\circ \ep_B]+\sum_{n=1}^\infty\Fr{1}{n!}\left[m^{(n)}_A\circ \big(\ups\otimes \ldots \otimes\ups\big)\circ \cp^{(n)}_B\right]
,\cr
\bm{\ln}_{A}\big([g]\big) &:= -\sum_{n=1}^\infty\Fr{(-1)^n}{n}\left[m^{(n)}_A\circ \big(\bar g\otimes \ldots \otimes \bar g\big)\circ \cp^{(n)}_B\right],
}
where $g\in  \HOM_{\cdga}(B, A)$ and $\ups\in  \THOM_{\cdga}(B, A)$ are arbitrary representatives of the homotopy types
$[g]$ and $[\ups]$, respectively; and   $\bar g := g -u_A\circ \ep_B$.
\end{theorem}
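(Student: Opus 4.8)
The plan is to carry out the entire argument inside the convolution dg-algebra $\bm{\CE}^B(A)=\big(\Hom(B,A),u_A\circ\ep_B,\star_{B\!,A},d_{B\!,A}\big)$ of Lemma \ref{grdgschemeone}, writing $e:=u_A\circ\ep_B$ for its unit and $\ups^{\star n}$ for the $n$-fold convolution power $m^{(n)}_A\circ(\ups\otimes\cdots\otimes\ups)\circ\cp^{(n)}_B$, so that $\bm{\exp}_A$ and $\bm{\ln}_A$ are literally the exponential $\sum_{n\ge 0}\tfrac1{n!}\ups^{\star n}$ and the logarithm $\sum_{n\ge 1}\tfrac{(-1)^{n-1}}{n}\bar g^{\star n}$ in this algebra. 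First I would record that $I:=\{\alpha\mid \alpha\circ u_B=0\}$ is a two-sided $\star_{B\!,A}$-ideal and that, because $B$ is conilpotent, $\ups^{\star n}(x)=m^{(n)}_A\circ\ups^{\otimes n}\circ\bar\cp^{(n)}_B(x)$ for $\ups\in I$ and $x\in\bar B$; since $x\in\bar B_N$ forces $\bar\cp^{(n)}_B(x)=0$ for $n>N$, every series above terminates on each element. Hence $\bm{\exp}_A\colon I^0\to e+I^0$ and $\bm{\ln}_A\colon e+I^0\to I^0$ are well-defined maps of sets, mutually inverse by the usual formal power series identities for $\exp$ and $\ln$ valid in any pro-nilpotent algebra; and since $d_{B\!,A}$ is a derivation of $\star_{B\!,A}$ they carry degree-$0$ cocycles to degree-$0$ cocycles.

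The algebraic heart is to show that $\bm{\exp}_A$ carries tangential maps to algebra maps and $\bm{\ln}_A$ does the reverse. For this I would introduce three unital $\star$-algebra homomorphisms $\mu,\lambda,\rho\colon\Hom(B,A)\to\Hom(B\otimes B,A)$ into the convolution algebra of $B\otimes B$, given by $\mu(\alpha)=\alpha\circ m_B$, $\lambda(\alpha)=m_A\circ(\alpha\otimes e)$ and $\rho(\alpha)=m_A\circ(e\otimes\alpha)$; that these are algebra maps is the bialgebra/counit bookkeeping, and a direct computation with the counit and bialgebra axioms gives the key identities $m_A\circ(\alpha\otimes\beta)=\lambda(\alpha)\star_{B\!,A}\rho(\beta)$ and the fact that every image of $\lambda$ $\star_{B\!,A}$-commutes with every image of $\rho$ (up to the Koszul sign). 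In this language $\HOM_{\cdga}(B,A)$ (Lemma \ref{grdgschemetwo}) is exactly the degree-$0$ cocycles $g$ with $g\circ u_B=u_A$ and $\mu(g)=\lambda(g)\star\rho(g)$, while $\THOM_{\cdga}(B,A)$ (Lemma \ref{Lieone}) is exactly the degree-$0$ cocycles $\ups\in I$ with $\mu(\ups)=\lambda(\ups)+\rho(\ups)$. Applying $\mu,\lambda,\rho$ termwise (they intertwine $\exp$ and $\ln$), the tangential condition together with the commuting of $\lambda(\ups),\rho(\ups)$ yields $\mu(\bm{\exp}_A\ups)=\exp\!\big(\lambda(\ups)+\rho(\ups)\big)=\exp(\lambda(\ups))\star\exp(\rho(\ups))=\lambda(\bm{\exp}_A\ups)\star\rho(\bm{\exp}_A\ups)$, so $\bm{\exp}_A\ups$ is an algebra map; symmetrically, $\mu(\bm{\ln}_A g)=\ln(\lambda(g)\star\rho(g))=\ln\lambda(g)+\ln\rho(g)=\lambda(\bm{\ln}_A g)+\rho(\bm{\ln}_A g)$ shows $\bm{\ln}_A g$ is tangential. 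This establishes that $\bm{\exp}_A,\bm{\ln}_A$ restrict to mutually inverse bijections $\THOM_{\cdga}(B,A)\leftrightarrow\HOM_{\cdga}(B,A)$ before passing to homotopy.

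Next I would promote these to the homotopy level. Given a homotopy pair $\big(\ups(t),\s(t)\big)$ on $\THOM_{\cdga}(B,A)$, each $\ups(t)$ is a cocycle, so setting $g(t):=\bm{\exp}_A\big(\ups(t)\big)$ and
\[
\Xi(t):=\sum_{n\ge 1}\frac{1}{n!}\sum_{i+j=n-1}\ups(t)^{\star i}\star_{B\!,A}\s(t)\star_{B\!,A}\ups(t)^{\star j},
\]
one computes, using $d_{B\!,A}\ups(t)=0$ and the derivation property, that $\frac{d}{dt}g(t)=d_{B\!,A}\Xi(t)$. I would then check that $\big(g(t),\Xi(t)\big)$ is a homotopy pair on $\HOM_{\cdga}(B,A)$: the condition $\Xi(t)\circ u_B=0$ is immediate from $\s(t)\circ u_B=0$, and the infinitesimal algebra-map condition $\mu(\Xi)=\lambda(g)\star\rho(\Xi)+\lambda(\Xi)\star\rho(g)$ follows by applying $\mu,\lambda,\rho$ to the defining series and reorganizing, the combinatorics collapsing via the Vandermonde identity $\sum_{q=0}^{k}\binom{i+q}{i}\binom{j+k-q}{j}=\binom{i+j+k+1}{k}$ (here one crucially uses that $\rho(\ups)$ is central in the subalgebra generated by $\lambda(\ups),\lambda(\s),\rho(\ups)$, and the mirror statement for the $\rho(\s)$-part). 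Hence $\bm{\exp}_A$ sends homotopic tangential maps to homotopic algebra maps, and symmetrically for $\bm{\ln}_A$; combined with the previous paragraph, $\bm{\exp}_A$ and $\bm{\ln}_A$ descend to mutually inverse bijections $\grave{\bm{T\!\mG}}^B(A)=\THOM_{\hcdga}(B,A)\leftrightarrow\HOM_{\hcdga}(B,A)=\grave{\bm{\mG}}^B(A)$.

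Finally, naturality in $A$ is routine: for a morphism $f$ of cdg-algebras, $f$ being an algebra map gives $f\circ m^{(n)}_A=m^{(n)}_{A^\pr}\circ f^{\otimes n}$, whence $f\circ\ups^{\star n}=(f\circ\ups)^{\star n}$ and therefore $\grave{\bm{\mG}}^B([f])\circ\bm{\exp}_A=\bm{\exp}_{A^\pr}\circ\grave{\bm{T\!\mG}}^B([f])$, and likewise for $\bm{\ln}$. I expect the main obstacle to be the homotopy-invariance step: producing the correct generator $\Xi(t)$ and verifying that it satisfies the infinitesimal algebra-map condition on the nose rather than merely up to a coboundary, which is precisely where the exact binomial bookkeeping above is needed; the verifications underlying Lemma \ref{hprt} provide the template for the remaining routine identities.
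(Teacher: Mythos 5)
Your proposal is correct, and it reorganizes rather than merely reproduces the paper's argument, so a comparison is in order. The paper (Proposition \ref{quslx}) proves that $\exp_A(\ups)$ is an algebra map and $\ln_A(g)$ is tangential by hand inside the auxiliary convolution algebra $\big(\Hom(B\otimes B, A\otimes A), e\otimes e, \cstar\big)$: it expands $(\ups\otimes e+e\otimes\ups)^{\cstar n}$ binomially, matches the two sides of \eq{checthis} using \eq{hsqa} and Lemma \ref{gusl}(b), and handles $\ln$ through the ad hoc series $\ln^{\cstar}_A$. Your three unital convolution-algebra homomorphisms $\mu,\lambda,\rho\colon\Hom(B,A)\to\Hom(B\otimes B,A)$ are precisely the image of that picture under post-composition with $m_A$ (an algebra map exactly because $A$ is commutative, the same point the paper exploits in Lemma \ref{gusl}(b)): $\lambda(\a)=m_A\circ(\a\otimes e)$, $\rho(\a)=m_A\circ(e\otimes\a)$ and $\mu(\a)=\a\circ m_B$ correspond to the paper's $\a\otimes e$, $e\otimes\a$ and its use of \eq{hsqa}. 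What your packaging buys is that both implications collapse to the formal identities $\exp(x+y)=\exp(x)\star\exp(y)$ and $\ln(x\star y)=\ln(x)+\ln(y)$ for commuting elements, transported along algebra maps, in place of two separate explicit computations. Two remarks. First, you should say explicitly that the $\exp$ and $\ln$ series terminate pointwise in $\Hom(B\otimes B,A)$: this needs that $\mu(\ups)$, $\lambda(\ups)$, $\rho(\ups)$ annihilate $u_{B\otimes B}$ together with conilpotency of the coalgebra $B\otimes B$, which follows from conilpotency of $B$; the paper's $\ln^{\cstar}_A$ manipulations have the same implicit need, so this is a shared elision, not an error in your route. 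Second, your homotopy step is in fact more complete than the paper's: your generator $\Xi(t)$ is exactly the paper's $\l(t)$, but where the paper declares the homotopy-pair verification ``trivial to check,'' you correctly identify the real content — the infinitesimal algebra-map condition $\mu(\Xi)=\lambda(g)\star\rho(\Xi)+\lambda(\Xi)\star\rho(g)$ must hold on the nose — and your Vandermonde convolution is the right tool: writing $L=\lambda(\ups(t))$, $R=\rho(\ups(t))$, after commuting all $R$-factors to the right (legitimate since $|\ups|=0$), the coefficients match because $\sum_{i+j=n-1}\binom{i}{a}\binom{j}{b}=\binom{n}{a+b+1}$ with $n=a+b+c+1$, which I have verified. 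Your naturality and descent steps coincide with Proposition \ref{quslz} and the paper's concluding argument.
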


The above theorem implies that the affine group dg-scheme $\bm{\mG}^B$ can be recovered from the Lie algebra valued functor 
$\mb{T}\!\bm{\mG}^B$ by the Baker-Campbell-Hausdorf formula.
We divide the proof into pieces, and begin with two technical lemmas.

\begin{lemma}\label{gusl} 
For every cdg-algebra $A$ we have
\begin{enumerate}[label=({\alph*})]
\item
$\a_1\star_{B\!,A}\cdots\star_{B\!,A} \a_n 
= m^{(n)}_A\circ(\a_1{\otimes}\ldots{\otimes} \a_n)\circ \cp^{(n)}_B$,
$\forall \a_1,\ldots,\a_n \in \Hom(B,A)$ and $n\geq 1$;
\item 
$m_A\circ m^{(n)}_{A{\otimes}A} = m^{(n)}_{A}\circ \overbrace{(m_A{\otimes} \ldots {\otimes} m_A)}^n$
 for all $n\geq 1$.
\end{enumerate}

\end{lemma}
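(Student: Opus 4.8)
The plan is to establish both identities by induction on $n$, invoking only the associativity and commutativity of $m_A$ together with the coassociativity of $\cp_B$; in particular neither the unit, the counit, nor the antipode of $B$ plays any role, since these are purely structural identities among graded linear maps and the differentials never intervene.

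For part (a) I would first note that, by Lemma~\ref{grdgschemeone}, $\bm{\CE}^B(A)=\big(\Hom(B,A),u_A\circ\ep_B,\star_{B\!,A},d_{B\!,A}\big)$ is a dg-algebra, so $\star_{B\!,A}$ is associative and the $n$-fold product $\a_1\star_{B\!,A}\cdots\star_{B\!,A}\a_n$ is unambiguous; I read it with the left-nested bracketing that matches the recursions defining $m^{(n)}_A$ and $\cp^{(n)}_B$. The base case $n=1$ is immediate from $m^{(1)}_A=\I_A$ and $\cp^{(1)}_B=\I_B$. For the step I write $\a_1\star_{B\!,A}\cdots\star_{B\!,A}\a_{n+1}=\big(\a_1\star_{B\!,A}\cdots\star_{B\!,A}\a_n\big)\star_{B\!,A}\a_{n+1}$, substitute the inductive hypothesis into the first factor, and expand the outer $\star_{B\!,A}$ by its definition $\a\star_{B\!,A}\b=m_A\circ(\a\otimes\b)\circ\cp_B$. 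Since $m^{(n)}_A$, $\cp^{(n)}_B$, $\I_A$ and $\I_B$ are all of degree $0$, the graded interchange law produces no net sign and yields the factorization
$$
\big(m^{(n)}_A\circ(\a_1\otimes\cdots\otimes\a_n)\circ\cp^{(n)}_B\big)\otimes\a_{n+1}
=(m^{(n)}_A\otimes\I_A)\circ(\a_1\otimes\cdots\otimes\a_{n+1})\circ(\cp^{(n)}_B\otimes\I_B).
$$
Composing with $m_A$ on the left and $\cp_B$ on the right and regrouping via $m_A\circ(m^{(n)}_A\otimes\I_A)=m^{(n+1)}_A$ and $(\cp^{(n)}_B\otimes\I_B)\circ\cp_B=\cp^{(n+1)}_B$ then gives precisely the asserted formula for $n+1$.

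For part (b) the induction is again on $n$, the case $n=1$ being trivial. The decisive step is the case $n=2$: unfolding $m_{A\otimes A}=(m_A\otimes m_A)\circ(\I_A\otimes\t\otimes\I_A)$, the transposition $\t$ is absorbed by the commutativity $m_A\circ\t=m_A$, so that $m_A\circ m_{A\otimes A}=m^{(2)}_A\circ(m_A\otimes m_A)$. For the passage from $n$ to $n+1$ I would use the recursion $m^{(n+1)}_{A\otimes A}=m_{A\otimes A}\circ\big(m^{(n)}_{A\otimes A}\otimes\I_{A\otimes A}\big)$, apply the $n=2$ identity to the outer $m_A\circ m_{A\otimes A}$, factor the resulting tensor of maps, insert the inductive hypothesis $m_A\circ m^{(n)}_{A\otimes A}=m^{(n)}_A\circ(m_A\otimes\cdots\otimes m_A)$ into the first slot, and close with $m_A\circ(m^{(n)}_A\otimes\I_A)=m^{(n+1)}_A$. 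Here every map apart from the internal $\t$ has degree $0$, so once the single transposition is handled by commutativity no further signs intervene.

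I expect no genuine obstacle: both arguments are routine inductions. The only non-formal ingredient is the use of commutativity of $m_A$ in the base case $n=2$ of part (b); everywhere else the identities follow from associativity and coassociativity together with the left-nested recursions defining the iterated products and coproducts. It is worth remarking that (b) is the product-side counterpart of the coproduct identity \eqref{hsqa} recorded earlier, and that (a) is the graded incarnation of the classical expression of an $n$-fold convolution product through $m^{(n)}_A$ and $\cp^{(n)}_B$.
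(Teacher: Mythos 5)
Your proof is correct and follows essentially the same route as the paper's: both parts go by induction on $n$, with (a) resting on the associativity of $m_A$ and the coassociativity of $\cp_B$ via the left-nested recursions $m^{(n+1)}_A=m_A\circ(m^{(n)}_A\otimes\I_A)$ and $\cp^{(n+1)}_B=(\cp^{(n)}_B\otimes\I_B)\circ\cp_B$, and with (b) reduced to the $n=2$ identity $m_A\circ(m_A\otimes m_A)\circ(\I_A\otimes\t\otimes\I_A)=m_A\circ(m_A\otimes m_A)$, which the commutativity $m_A\circ\t=m_A$ supplies, exactly as in the paper. The paper merely sketches the inductions, and your write-up fills in the details correctly, including the observation that the graded interchange law introduces no signs because one factor in each relevant composition has degree $0$.
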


\begin{proof}
The property
(a)  is trivial for $n=1$. It is the definition of the convolution product $\star_{B\!,A}$ for $n=2$ 
and the rest can be checked easily by an induction using the associativity of $m_A$ and the coassociativity of $\cp_B$.
The property
(b) is trivial for $n=1$, since $m^{(1)}_{A{\otimes} A}:=\I_{A\otimes A}$ and $m^{(1)}_{A}:=\I_{A}$.
For $n=2$, from 
$m^{(2)}_{A\otimes A} := m_{A\otimes A}= (m_A\otimes m_A)\circ (\I_A\otimes \t \otimes \I_A)$ 
and $m^{(2)}_A :=m_A$
it becomes the identity
$$
m_A\circ (m_A\otimes m_A)\circ (\I_A\otimes \t \otimes \I_A)=m_A\circ (m_A\otimes m_A)
,
$$
which is due to the commutativity of $m_A$.
The rest can be easily checked by an induction.
\qed
\end{proof}

\begin{lemma}\label{gusla} 
For every $\b \in \Hom(B,A)^0$ satisfying $\b\circ u_B=0$ and for all $n\geq 1$, we have
$$
m^{(n)}_A\circ(\b{\otimes}\ldots{\otimes} \b)\circ \cp^{(n)}_B = m^{(n)}_A\circ(\b{\otimes}\ldots{\otimes} \b)\circ \bar\cp^{(n)}_B.
$$
\end{lemma}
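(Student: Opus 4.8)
Since $\beta\circ u_B=0$, the map $\beta$ annihilates the unit line $\Bbbk\cdot u_B(1)$. The two iterated coproducts $\cp^{(n)}_B$ and $\bar\cp^{(n)}_B$ differ only by terms in which $u_B(1)$ has been inserted into at least one tensor slot, and applying $\beta$ to such a slot contributes a factor $\beta\circ u_B=0$ that kills the term. My plan is to turn this cancellation into an induction on $n$ organised around the recursive definitions of the iterated (co)products, rather than expanding $\cp^{(n)}_B$ explicitly.

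Write $L_n:=m^{(n)}_A\circ(\beta\otimes\cdots\otimes\beta)\circ\cp^{(n)}_B$ and $R_n:=m^{(n)}_A\circ(\beta\otimes\cdots\otimes\beta)\circ\bar\cp^{(n)}_B$ (with $n$ copies of $\beta$) for the two sides of the asserted identity. Using $\cp^{(n+1)}_B=(\cp^{(n)}_B\otimes\I_B)\circ\cp_B$ and $m^{(n+1)}_A=m_A\circ(m^{(n)}_A\otimes\I_A)$ together with functoriality of $\otimes$, I would peel off the last tensor factor to obtain the recursions
$$
L_{n+1}=m_A\circ(L_n\otimes\beta)\circ\cp_B,\qquad
R_{n+1}=m_A\circ(R_n\otimes\beta)\circ\bar\cp_B .
$$
The base case is immediate: $\cp^{(1)}_B=\bar\cp^{(1)}_B=\I_B$ gives $L_1=R_1=\beta$.

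Before the inductive step I would isolate the auxiliary identity $L_n\circ u_B=0$. This holds because $\cp_B$ sends $u_B(1)$ to $u_B(1)\otimes u_B(1)$, so iterating gives $\cp^{(n)}_B(u_B(1))=u_B(1)^{\otimes n}$, whence $(\beta\otimes\cdots\otimes\beta)$ returns $\beta(u_B(1))^{\otimes n}=0$. Now assume $L_n=R_n$; in particular $R_n\circ u_B=0$. Subtracting the two recursions and writing $\cp_B-\bar\cp_B=j_L+j_R$, where $j_L\colon x\mapsto u_B(1)\otimes x$ and $j_R\colon x\mapsto x\otimes u_B(1)$, yields
$$
L_{n+1}-R_{n+1}=m_A\circ(R_n\otimes\beta)\circ(j_L+j_R).
$$
The $j_R$-term vanishes because its right leg feeds $u_B(1)$ into $\beta$, and the $j_L$-term vanishes because its left leg feeds $u_B(1)$ into $R_n$; hence $L_{n+1}=R_{n+1}$ and the induction closes.

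The single non-formal point --- the ``obstacle'', such as it is for so elementary a statement --- is that making both unit-insertion terms disappear uses the hypothesis twice: once directly on the outer $\beta$ (for $j_R$), and once through the partial composite $R_n$ (for $j_L$). This is exactly why the auxiliary identity $L_n\circ u_B=0$ has to be established separately before running the induction. Everything else is the routine bookkeeping of iterated (co)products and would be omitted. Equivalently, one can repackage the whole computation as the single operator identity $\pi^{\otimes n}\circ\cp^{(n)}_B=\bar\cp^{(n)}_B\circ\pi$ for the projection $\pi:=\I_B-u_B\circ\ep_B$ onto $\bar B$, and then use $\beta=\beta\circ\pi$; the induction underlying it is the same.
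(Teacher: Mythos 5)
Your proof is correct and takes essentially the same route as the paper: your auxiliary identity $L_n\circ u_B=0$ is exactly the identity $m^{(n)}_A\circ(\b\otimes\cdots\otimes\b)\circ\cp^{(n)}_B\circ u_B=0$ that the paper cites (derived, as you do, from $\cp^{(n)}_B\circ u_B=(u_B\otimes\cdots\otimes u_B)\circ\cp^{(n)}_\Bbbk$ and $\b\circ u_B=0$), while your decomposition $\cp_B-\bar\cp_B=j_L+j_R$ is the concrete use of the splitting $B=\Bbbk\cdot u_B(1)\oplus\bar B$. You merely spell out, as an induction on $n$ via the recursive definitions of $m^{(n)}_A$ and the iterated coproducts, the cancellation the paper declares an immediate consequence of these two facts.
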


\begin{proof}
This is an immediate consequence of the splitting
$B =\Bbbk\cdot u_B(1)\oplus \bar B$ and the identity  $m^{(n)}_A\circ(\b{\otimes}\ldots{\otimes} \b)\circ \cp^{(n)}_B\circ u_B=0$ for all $n\geq 1$,
which follows from the properties that $ \cp^{(n)}_B\circ u_B= (u_B\otimes \ldots\otimes u_B)\circ \cp^{(n)}_\Bbbk$ and $\b\circ u_B=0$.
\qed
\end{proof}

We introduce the main propositions for the proof of Theorem \ref{conilth}.

\begin{proposition}\label{quslx}
We have 
an isomorphism 
$\xymatrix{\THOM_{\cdga}(B, A)\ar@/^/[r]^-{\exp_A}&\ar@/^/[l]^-{\ln_A} \HOM_{\cdga}(B, A)}$
for every cdg-algebra $A$, where  $\forall \ups \in \THOM_{\cdga}(B, A)$ and  $\forall g \in \HOM_{\cdga}(B, A)$
\eqalign{
\exp_A(\ups)
:=&u_A\circ \ep_B
+ \sum_{n=1}^\infty\Fr{1}{n!}m^{(n)}_{A}\circ \big({\ups}{\otimes} \ldots {\otimes}{\ups}\big)\circ \cp^{(n)}_B
,\cr
\ln_A(g)
:=&
- \sum_{n=1}^\infty\Fr{(-1)^n}{n}
m^{(n)}_{A}\circ \big(\bar g {\otimes} \ldots {\otimes} \bar g \big)\circ \cp^{(n)}_B
}
such that $\exp_A(\ups)\sim \exp_A(\tilde\ups) \in \HOM_{\cdga}(B, A)$
whenever $\ups\sim \tilde\ups  \in \THOM_{\cdga}(B, A)$, and  
$\ln_A(g)\sim \ln_A(\tilde g) \in \THOM_{\cdga}(B, A)$
whenever $g\sim \tilde g  \in \HOM_{\cdga}(B, A)$.

\end{proposition}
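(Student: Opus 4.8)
The plan is to recognise both formulae as the exponential and the logarithm in the convolution dg-algebra $\bm{\CE}^B(A)=\big(\Hom(B,A), e_{B\!,A}, \star_{B\!,A}, d_{B\!,A}\big)$ of Lemma~\ref{grdgschemeone}, where $e_{B\!,A}=u_A\circ\ep_B$. By Lemma~\ref{gusl}$(a)$ the iterated expressions occurring in $\exp_A$ and $\ln_A$ are precisely convolution powers, so that $\exp_A(\ups)=\sum_{n\ge 0}\tfrac{1}{n!}\ups^{\star n}$ and $\ln_A(g)=\sum_{n\ge 1}\tfrac{(-1)^{n-1}}{n}\bar g^{\star n}$ with $\ups^{\star 0}:=e_{B\!,A}$ and $\bar g=g-e_{B\!,A}$. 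First I would settle convergence. Since $\ups\circ u_B=0$ for tangential $\ups$ and $\bar g\circ u_B=0$ for $g\in\HOM_\cdga(B,A)$, Lemma~\ref{gusla} replaces every $\cp^{(n)}_B$ by the reduced iterated coproduct $\bar\cp^{(n)}_B$. Conilpotency of $B$ then makes these series pointwise finite: for $x\in\bar B_N$ all but finitely many terms annihilate $x$, so both series are well defined elements of $\Hom(B,A)^0$.

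The substantive step is structure preservation, i.e. $\exp_A(\ups)\in\HOM_\cdga(B,A)$ and $\ln_A(g)\in\THOM_\cdga(B,A)$. Closedness and the (co)unit conditions are immediate: $d_{B\!,A}$ is a $\star_{B\!,A}$-derivation and $\ups,\bar g$ are $d_{B\!,A}$-closed, so every convolution power is closed, while $\ups\circ u_B=0=\bar g\circ u_B$ forces $\exp_A(\ups)\circ u_B=u_A$ and $\ln_A(g)\circ u_B=0$. The heart is the multiplicativity of $\exp_A(\ups)$ and the Leibniz identity for $\ln_A(g)$. For these I would pass to the second convolution algebra on $\Hom(B\otimes B,A)$ built from $\cp_{B\otimes B}$ and $m_A$, with product $\bullet$, together with the two leg maps $L(\a):=m_A\circ(\a\otimes e_{B\!,A})$ and $R(\a):=m_A\circ(e_{B\!,A}\otimes\a)$. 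Using the counit axioms one checks that $L$ and $R$ are algebra maps $\big(\Hom(B,A),\star_{B\!,A}\big)\to\big(\Hom(B\otimes B,A),\bullet\big)$ with pointwise commuting images, that $L(g)\bullet R(g)=m_A\circ(g\otimes g)$, and that $L(\ups)+R(\ups)=\ups\circ m_B$. The bialgebra identity~\eqref{hsqa} gives $\a^{\star n}\circ m_B=(\a\circ m_B)^{\bullet n}$, so that
\[
\begin{aligned}
\exp_A(\ups)\circ m_B
&=\exp_\bullet(\ups\circ m_B)=\exp_\bullet\big(L(\ups)+R(\ups)\big)\\
&=\exp_\bullet\big(L(\ups)\big)\bullet\exp_\bullet\big(R(\ups)\big)
 =L\big(\exp_A(\ups)\big)\bullet R\big(\exp_A(\ups)\big)\\
&=m_A\circ\big(\exp_A(\ups)\otimes\exp_A(\ups)\big),
\end{aligned}
\]
the middle equality using that $L(\ups)$ and $R(\ups)$ commute. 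The dual computation with $\log_\bullet$ yields $\ln_A(g)\circ m_B=m_A\circ\big(\ln_A(g)\otimes e_{B\!,A}+e_{B\!,A}\otimes\ln_A(g)\big)$, the tangential condition. I expect the only real nuisance to be the Koszul signs produced by the flip $\t$ inside $\cp_{B\otimes B}$; these are harmless because $\ups$, $g$ and $\bar g$ all have degree $0$.

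With the codomains fixed, mutual invertibility is formal: the convolution powers of a single element commute, so $\ups$ (resp. $\bar g$) generates a commutative, pro-nilpotent subalgebra, and the scalar identities $\ln(\exp x)=x$ and $\exp(\ln(1+x))=1+x$ apply term by term to give $\ln_A\circ\exp_A=\I$ and $\exp_A\circ\ln_A=\I$. Finally, homotopy invariance is obtained by transporting homotopy pairs through the series in the style of Lemmas~\ref{hprt} and~\ref{Liethree}: for a homotopy pair $\big(\ups(t),\s(t)\big)$ on $\THOM_\cdga(B,A)$ the pair $\big(\exp_A(\ups(t)),\l(t)\big)$ with $\l(t):=\sum_{n\ge 1}\tfrac{1}{n!}\sum_{i=0}^{n-1}\ups(t)^{\star i}\star_{B\!,A}\s(t)\star_{B\!,A}\ups(t)^{\star(n-1-i)}$ is a homotopy pair on $\HOM_\cdga(B,A)$—the flow equation holds because $d_{B\!,A}$ is a derivation and $\ups(t)$ is closed, while the infinitesimal-algebra-map conditions on $\l(t)$ follow from the same leg identities—and the analogous construction handles $\ln_A$. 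The main obstacle throughout is the multiplicativity step, where the bialgebra identity~\eqref{hsqa} and the commuting-leg decomposition do the work.
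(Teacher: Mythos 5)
Your proposal is correct and takes essentially the same route as the paper's proof: convergence via Lemma \ref{gusla} and conilpotency, the multiplicative/tangential identities via an auxiliary convolution algebra built on the coalgebra $B\otimes B$ together with \eq{hsqa}, the binomial/exponential identities for commuting elements and the commutativity of $m_A$, the formal power series identities for mutual inversion, and homotopy invariance via exactly the homotopy pairs $\big(\exp_A(\ups(t)),\l(t)\big)$ that the paper writes down. The only (cosmetic) difference is that you compose with $m_A$ at the outset, working with the legs $L(\ups)=m_A\circ(\ups\otimes e)$ and $R(\ups)=m_A\circ(e\otimes\ups)$ in $\Hom(B\otimes B,A)$, whereas the paper works with $\ups\otimes e$ and $e\otimes\ups$ in $\big(\Hom(B\otimes B,A\otimes A),\cstar\big)$ and applies $m_A$ only at the end via Lemma \ref{gusl}(b); the two packagings are intertwined by the algebra map $m_A\circ(-)$, whose multiplicativity is precisely that lemma.
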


\begin{proof}
We use some shorthand notations. 
We set $e =u_A\circ \ep_B$.
We also set $\star =\star_{B\!,A}$, $\a^{\star 0}=e$ and
$\a^{\star n} = \overbrace{\a\star\ldots\star \a}^n$, $n\geq 1$, for all $\a \in \Hom(B,A)^0$.
Then, by Lemma \ref{gusl}(a),  we have
\eqnalign{sexpln}{
\exp_A(\ups)= e + \sum_{n=1}^\infty\Fr{1}{n!}\ups^{\star n}= \sum_{n=0}^\infty\Fr{1}{n!}\ups^{\star n},
\qquad
\ln_A(g)=- \sum_{n=1}^\infty\Fr{(-1)^n}{n} \bar g^{\star n}.
}
Remind that $e \in  \HOM_{\cdga}(B, A)$, i.e.,
$d_{B\!, A}e=0$,  $e \circ u_B=u_A$ and
$ e \circ m_B =m_A\circ (e{\otimes} e\big)$.

1. We have to justify that the infinite sums in the definition of $\exp_A$ and $\ln_A$ make sense. 
Note the $\ups\circ u_B=0$ by definition. We also have $\bar{g}\circ u_B=0$ 
since   $g\circ u_B =u_A$ and $\bar{g}=g-e$. Then,
by Lemma \ref{gusla} and the conilpotency of $\cp_B$, 
both $\exp_A(\ups)$ and $\ln_A(\a)$ are finite sums.

2.  We check that $\exp_A (\ups)\in  \HOM_{\cdga}(B, A)$ for every $\ups \in \THOM_{\cdga}(B, A)$:
\eqalign{
{d}_{B\!,A}\exp_A (\ups)=0
,\quad
\exp_A(\ups)\circ u_B= u_A
,\quad
\exp_A(\ups)\circ m_B =m_A\circ \big(\exp_A(\ups){\otimes} \exp_A(\ups)\big).
}
The $1$st relation is  trivial since ${d}_{B\!,A}$ is a derivation of $\star$ and ${d}_{B\!,A}e={d}_{B\!,A}\ups=0$.
The $2$nd relation is also  trivial since $u_A \circ \ep_B \circ u_B = u_A$ 
while $\ups^{\star n}\circ u_B= (\ups\circ u_B)^{\star n}=0$ for all $n\geq 1$.

It remains to check the $3$rd relation, which is equivalent to the following relations; $\forall n\geq 0$,
\eqn{checthis}{
\ups^{\star n}\circ m_B= \sum_{k=0}^n \Fr{n!}{(n-k)!k!}  m_A\circ\big(\ups^{\star n-k}{\otimes} \ups^{\star k}\big).
}
For $n=0$ the above becomes $e\circ m_B =m_A\circ (e{\otimes} e\big)$, which is trivial.

For cases with $n \geq 1$, we adopt an additional notation.
Remind that $\big(B{\otimes} B, \ep_{B{\otimes} B}, \cp_{B{\otimes} B}\big)$ 
is a $\Z$-graded coassociative coalgebra
and  $\big(A{\otimes} A,  u_{A{\otimes} A}, m_{A\otimes A}\big)$ 
is a $\Z$-graded super-commutative associative algebra.
Therefore we have  a $\Z$-graded associative algebra 
$\big( \Hom(B\otimes B, A{\otimes}A), e{\otimes} e,  \cstar \big)$,
where $\chi_1{\cstar}\chi_2 := m_{A{\otimes} A}\circ (\chi_1{\otimes} \chi_2)\circ \cp_{B\otimes B}$,  
$\forall \chi_1,\chi_2\in \Hom(B\otimes B, A{\otimes}A)$.
We also have, for all $n\geq 1$ and $\chi_1,\ldots,\chi_n\in \Hom(B\otimes B, A{\otimes}A)$,
\eqn{quslc}{
\chi_1{\cstar}\ldots \cstar\chi_n 
=  m^{(n)}_{\O{\otimes} \O}\circ (\chi_1{\otimes}\ldots{\otimes} \chi_n)\circ \cp^{(n)}_{C\otimes C}.
}
For example, consider $\a_1,\a_2,\b_1,\b_2 \in \Hom(B,A)^0$ 
so that $\a_1{\otimes} \a_2, \b_1{\otimes} \b_2\in \Hom(B\otimes B, A{\otimes}A)^0$.
Then we have 
$(\a_1{\otimes} \b_1){\cstar}(\a_2{\otimes} \b_2) = \a_1\star\a_2 {\otimes} \b_1\star \b_2$.

It follows that 
$(\ups{\otimes} e) {\cstar}(e{\otimes} \ups) =(e{\otimes} \ups){\cstar}(\ups{\otimes} e)$ 
since the both terms are $\ups{\otimes} \ups$.
We also have $(\ups{\otimes} e)^{{\cstar}n} = \ups^{\star n}{\otimes} e$ 
and $(e{\otimes} \ups)^{{\cstar}n} = e{\otimes} \ups^{\star n}$.
Combined with the binomial identity, we obtain that, $\forall n\geq 1$,
$$
(\ups{\otimes} e + e{\otimes} \ups)^{{\cstar}n} 
=\sum_{k=0}^n \Fr{n!}{(n-k)!k!} \big(\ups^{\star n-k}{\otimes} \ups^{\star k}\big)
.
$$
Therefore the RHS of \eq{checthis} becomes
\eqnalign{elrhs}{
\mathit{RHS}:=
&
m_A\circ (\ups{\otimes} e + e{\otimes} \ups)^{{\cstar}n} 
= 
m_A\circ m^{(n)}_{A{\otimes}A}\circ \big(\ups{\otimes} e + e{\otimes} \ups\big)^{{\otimes}  n}
\circ \cp^{(n)}_{B\otimes B}
\cr
=
&m^{(n)}_{A}\circ (m_A{\otimes} \ldots {\otimes} m_A)\circ \big(\ups{\otimes} e + e{\otimes} \ups\big)^{{\otimes}  n}
\circ \cp^{(n)}_{B\otimes B},
}
where we  use Lemma \ref{gusl}(b) for the last equality.
Consider the LHS of \eq{checthis}:
\eqnalign{ellhs}{
\mathit{LHS}:=
&
\ups^{\star n}\circ m_B
= 
m^{(n)}_{A}\circ (\ups{\otimes}\ldots{\otimes} \ups)\circ \cp^{(n)}_{B}\circ m_B
\cr
=
&
m^{(n)}_{A}\circ (\ups\circ m_B {\otimes} \ldots {\otimes} \ups\circ m_B)\circ \cp^{(n)}_{B\otimes B}
\cr
=
&m^{(n)}_{A}\circ (m_A{\otimes} \ldots {\otimes} m_A)\circ \big(\ups{\otimes} e + e{\otimes} \ups\big)^{{\otimes}  n}
\circ \cp^{(n)}_{B\otimes B}
,
}
where we use \eq{hsqa} for the $3$rd equality and
the property $\ups \circ m_B=m_A\circ (\ups{\otimes} e+e{\otimes}\ups)$ for the last equality.
Therefore we have $\exp_A(\ups)\circ m_B =m_A\circ \big(\exp_A(\ups){\otimes} \exp_A(\ups)\big)$.

2.  We check that $\ln_A(g)\in   \THOM_{\cdga}(B, A)$ for every  $g\in\HOM_{\cdga}(B, A)$:
\eqalign{
{d}_{B\!,A}\ln_A (g)=0
,\quad
\ln_A(g)\circ u_B= 0
,\quad
\ln_A (g)\circ m_B=m_A\circ \big(\ln_A(g){\otimes} e+e{\otimes} \ln_A(g)\big).
}
The $1$st relation is obvious
since ${d}_{B\!,A}(\bar{g})={d}_{B\!,A}g -{d}_{B\!,A}e=0$ and ${d}_{B\!,A}$ is a derivation of $\star$. 
The $2$nd relation is also obvious  
since $\bar g^{\star n}\circ u_B=m^{(n)}_A\circ(\bar g\circ u_B)^{\otimes n}\circ \cp^{(n)}_\Bbbk=0$ for all $n\geq 1$.
Therefore it remains to check the $3$rd relation.

Define 
$\ln^{\cstar}_A(\chi) :=-\sum_{n=1}^\infty\Fr{(-1)^n}{n} (\chi -e{\otimes} e)^{{\cstar}n}$ 
for all $\chi \in \Hom(B{\otimes}B, A{\otimes} A)$ satisfying
$ \chi \circ (u_B{\otimes} u_B)=u_A{\otimes} u_A$.  Then, we have
\eqalign{
\ln_A^{\cstar}(g{\otimes} e) =-\sum_{n=1}^\infty\Fr{(-1)^n}{n} ( g{\otimes} e -e{\otimes} e)^{{\cstar}n}
=-\sum_{n=1}^\infty\Fr{(-1)^n}{n} (\bar g{\otimes} e)^{{\cstar}n}=\ln_A (g){\otimes} e,
}
and, similarly,  $e{\otimes} \ln_A(g)=\ln_A^{\cstar}( g{\otimes} e)$.
Therefore, we have 
\eqalign{
\ln_A (g){\otimes} e+e{\otimes} \ln_A(g)
=
& \ln^{\cstar}_A(g{\otimes} e) +\ln^{\cstar}_A(e{\otimes} g)
=  
\ln^{\cstar}_A \big((g {\otimes} e) {\cstar}(e{\otimes} g)\big)
=\ln^{\cstar}_A(g{\otimes} g).
}
On the other hand,
we have
\eqalign{
\bar g^{\star n}\circ m_B
=
&m_A^{(n)}\circ \bar g^{{\otimes} n}\circ \cp^{(n)}_B\circ m_B
\cr
= 
&
m^{(n)}_{A}
\circ  \big(\bar{g}\circ m_B{\otimes} \ldots{\otimes}\bar{g}\circ m_B\big)\circ\cp^{(n)}_{B\otimes B}
\cr
=
&
m^{(n)}_{A}
\circ (m_A\otimes \ldots\otimes m_A)\circ(g{\otimes} g -e{\otimes} e)^{{\otimes} n} 
\circ\cp^{(n)}_{B\otimes B}
\cr
=
& 
m_A\circ m^{(n)}_{A{\otimes}A}
\circ(g{\otimes} g -e{\otimes} e)^{{\otimes} n} 
\circ\cp^{(n)}_{B\otimes B} 
\cr
=
&m_A\circ  (g {\otimes} g -e{\otimes} e)^{{\cstar}n}
,
}
where we have used \eq{hsqa} for the $2$nd equality,
the condition
$\bar g\circ m_B =  g \circ m_B  -  e\circ m_B=m_A\circ (g{\otimes} g -e{\otimes} e)$ 
for the $3$rd equality, Lemma \ref{gusl}(b) for the $4$th equality, and \eq{quslc} for the last equality.
Therefore, we obtain that
\eqalign{
\ln_A (g)\circ m_B = 
&
- \sum_{n=1}^\infty\Fr{(-1)^n}{n} m_A\circ (g {\otimes} g -e{\otimes} e)^{{\cstar}n} 
=
m_A\circ \ln^{\cstar}_A(g{\otimes} g) 
\cr
= 
&
m_A\circ \Big(\ln_A (g){\otimes} e+e{\otimes} \ln_A(g)\Big). 
}

3.  It is obvious now that 
$\ln_A \big(\exp_A(\ups)\big) =\ups$ and $\exp_A \big(\ln_A(g)\big) =g$. Hence $(\exp_A ,\ln_A)$ is
an isomorphism.

4. 
Let $\ups\sim \tilde\ups \in \THOM_{\cdga}(B, A)$. 
Then we have a corresponding  homotopy pair $\big(\ups(t),\s(t)\big)$  on $\THOM_{\cdga}(B, A)$
such that $\ups(0)=\ups$ and $\ups(1)=\tilde\ups$. Let
\eqalign{
g(t) :=& \exp_A \big(\ups(t)\big)
,\cr
\l(t) :=& 
\sum_{n=1}^\infty\sum_{j=1}^n\Fr{1}{n!} \ups(t)^{\star j-1}\star \s(t) \star \ups(t)^{\star n-j}
.
}
Then it is trivial to check that $\big(g(t),\l(t)\big)$ is a homotopy pair on $\HOM_{\cdga}(B, A)$,
so that $\exp_A (\ups)=g(0)\sim g(1)= \exp_A (\tilde\ups) \in \HOM_{\cdga}(B, A)$.

5. 
Let $g\sim \tilde g \in \HOM_{\cdga}(B, A)$ 
and $\big(g(t),\l(t)\big)$  be the corresponding  homotopy pair on $\HOM_{\cdga}(B, A)$
such that $g(0)=g$ and $g(1)=\tilde g$. Let
\eqalign{
v(t) :=& \ln_A \big(g(t)\big)
,\cr
\s(t) :=& - \sum_{n=1}^\infty\sum_{j=1}^n\Fr{(-1)^n}{n} \bar g(t)^{\star j-1}\star \l(t) \star \bar g(t)^{\star n-j}
.
}
Then it is trivial to check that $\big(\ups(t),\s(t)\big)$ is a homotopy pair on $\THOM_{\cdga}(B, A)$,
so that $ \ln_A (g)=\ups(0) \sim \ups(1)= \ln_A (\tilde g) \in \THOM_{\cdga}(B, A)$.
\qed
\end{proof}

\begin{proposition}\label{quslz}
Let $B$ be a conilpotent cdg-Hopf algebra.  Then
we have a natural isomorphism
$
\xymatrixcolsep{3pc}
\xymatrix{ \grave{\bm{T\!\CG}}^{B}\ar@/^/@{=>}[r]^{{\exp}}  
& \ar@/^/@{=>}[l]^{{\ln}} \grave{\bm{\CG}}^{B}}
:{\category{cdgA}}(\Bbbk)\rightsquigarrow \category{Set}
$ 
of functors on ${\category{cdgA}}(\Bbbk)$,
whose  component at each cdg-algebra $A$ is $(\exp_A, \ln_A)$
defined in Proposition \ref{quslx}.
\end{proposition}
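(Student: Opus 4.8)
The statement to be proved is now purely about naturality, since Proposition~\ref{quslx} has already supplied, for each fixed cdg-algebra $A$, a pair of mutually inverse bijections $\exp_A:\THOM_{\cdga}(B,A)\to\HOM_{\cdga}(B,A)$ and $\ln_A$ in the opposite direction. The underlying set-valued functors are $\grave{\bm{T\!\CG}}^{B}(A)=\THOM_{\cdga}(B,A)$ and $\grave{\bm{\CG}}^{B}(A)=\HOM_{\cdga}(B,A)$, and by Lemmas~\ref{Lieone} and~\ref{grdgschemetwo} the morphism parts $\grave{\bm{T\!\CG}}^{B}(f)$ and $\grave{\bm{\CG}}^{B}(f)$ induced by a morphism $f:A\to A^\pr$ of cdg-algebras are both given by post-composition, $\ups\mapsto f\circ\ups$ and $g\mapsto f\circ g$. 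So the only thing left to establish is that these post-composition maps intertwine $\exp$ and $\ln$.

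The plan is to verify the naturality square for $\exp$, namely $\exp_{A^\pr}(f\circ \ups)=f\circ \exp_A(\ups)$ for every $\ups\in\THOM_{\cdga}(B,A)$; naturality of $\ln$ will then follow for free. The two ingredients are that $f$ is a morphism of cdg-algebras and that the defining series are actually finite. From $f\circ u_A=u_{A^\pr}$ and $f\circ m_A=m_{A^\pr}\circ(f\otimes f)$, a short induction gives $f\circ m^{(n)}_A=m^{(n)}_{A^\pr}\circ(f\otimes\cdots\otimes f)$ for all $n\geq 1$. Post-composing the defining series of $\exp_A(\ups)$ with $f$ and pushing $f$ through each summand, I would then use $(f\otimes\cdots\otimes f)\circ(\ups\otimes\cdots\otimes\ups)=(f\circ\ups)\otimes\cdots\otimes(f\circ\ups)$ to rewrite the $n$-th term as $\tfrac{1}{n!}\,m^{(n)}_{A^\pr}\circ\big((f\circ\ups)\otimes\cdots\otimes(f\circ\ups)\big)\circ\cp^{(n)}_B$, together with $f\circ u_A\circ\ep_B=u_{A^\pr}\circ\ep_B$ for the constant term. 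Summing back up recognizes exactly $\exp_{A^\pr}(f\circ\ups)$, where $f\circ\ups\in\THOM_{\cdga}(B,A^\pr)$ by Lemma~\ref{Lieone}.

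Once $\exp$ is shown to be a natural transformation whose every component is a bijection by Proposition~\ref{quslx}, the family $\{\ln_A\}$ of componentwise inverses is automatically natural, since the inverse of a natural isomorphism is again a natural transformation; hence $\exp$ and $\ln$ assemble into the asserted natural isomorphism $\grave{\bm{T\!\CG}}^{B}\cong\grave{\bm{\CG}}^{B}$ of set-valued functors on $\cdga$. There is essentially no genuine obstacle here---the real content has all been absorbed into Proposition~\ref{quslx}. The one point needing a word of care is the interchange of $f$ with the infinite sums; this is legitimate because the conilpotency of $B$, via Lemma~\ref{gusla}, makes every series a finite sum when evaluated on a given element, so the post-composition can be carried out term by term.
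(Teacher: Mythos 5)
Your proposal is correct and follows the paper's own proof: the content is exactly the naturality computation $f\circ \exp_A(\ups)=\exp_{A^\pr}(f\circ\ups)$, carried out by pushing $f$ through each term of the (finite, by conilpotency) series using $f\circ u_A=u_{A^\pr}$ and $f\circ m^{(n)}_A=m^{(n)}_{A^\pr}\circ(f\otimes\cdots\otimes f)$, which is precisely the paper's argument. The only (harmless, slightly slicker) deviation is that you deduce naturality of $\ln$ formally as the componentwise inverse of a natural isomorphism, where the paper simply says it is checked by a similar direct computation.
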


\begin{proof}
Remind that $\grave{\bm{T\!\CG}}^B=\THOM_{\cdga}(B,-)$ and $\grave{\bm{\CG}}^B=\HOM_{\cdga}(B,-)$.
In Proposition \ref{quslx},
we have shown that  
$\xymatrix{ \grave{\bm{T\!\CG}}^B(A) \ar@/^/[r]^-{\exp_A}&\ar@/^/[l]^-{\ln_A}  \grave{\bm{\CG}}^B(A)}$ 
is an isomorphism for every cdg-algebra $A$. It remains to check
the naturalness of $\exp$ and $\ln$  
that for every morphism $f:A \rightarrow A^\pr$ of cdg-algebras
the following diagrams are commutative:
\eqalign{
\xymatrixrowsep{1.5pc}
\xymatrixcolsep{3pc}
\xymatrix{
\ar[d]_-{\exp_A}
\grave{\bm{T\!\CG}^{B}}(A)\ar[r]^-{\grave{\bm{T\!\CG}}^{B}(f)} & \grave{\bm{T\!\CG}}^{B}(A^\pr) 
\ar[d]^-{\exp_{A^\pr}}
\cr
\grave{\bm{\CG}}^{B}(A)\ar[r]^-{\grave{\bm{\CG}}^{B}(f)} & \grave{\bm{\CG}}^{B}(A^\pr) 
}
,\qquad
\xymatrix{
\ar[d]_-{\ln_{A}}
\grave{\bm{\CG}}^{B}(A) \ar[r]^-{\grave{\bm{\CG}}^{B}(f)} & \grave{\bm{\CG}}^{B}(A^\pr) 
\ar[d]^-{\ln_{A^\pr}}
\cr
\grave{\bm{T\!\CG}}^{B}(A)\ar[r]^-{\grave{\bm{T\!\CG}}^{B}(f)} & \grave{\bm{T\!\CG}}^{B}(A^\pr) 
}.
}
That   is,  
$\grave{\bm{\CG}}^{B}(f)\circ\exp_A =\exp_{A^\pr}\circ \grave{\bm{T\!\CG}}^{B}(f)$ and
$\grave{\bm{T\!\CG}}^{B}(f)\circ\ln_A =\ln_{A^\pr}\circ \grave{\bm{\CG}}^{B}(f)$.  These are
straightforward since for every $\ups \in \grave{\bm{T\!\CG}}^{B}(A)$ we have
\eqalign{
\grave{\bm{\CG}}^{B}(f)\left(\exp_A (\ups)\right) 
&=f\circ \exp_A(\ups) 
=f\circ u_A\circ \ep_{B} 
+ \sum_{n=1}^\infty\Fr{1}{n!}
f\circ m^{(n)}_{A}\circ \big({\ups}{\otimes} \ldots {\otimes}{\ups}\big)\circ \cp^{(n)}_{B}
\cr
&=u_{A^\pr}\circ \ep_{B} 
+ \sum_{n=1}^\infty\Fr{1}{n!}
m^{(n)}_{A^\pr}\circ \big(f\circ \ups{\otimes} \ldots {\otimes} f\circ \ups\big)\circ \cp^{(n)}_B
=  \exp_{A^\pr}( f\circ\ups) 
\cr
&= \exp_{A^\pr}\left( \grave{\bm{T\!\CG}}^{B}(f)(\ups)\right).
}
The naturalness of $\ln$ can be checked similarly.
\qed
\end{proof}

Now we can finish our proof of Theorem \ref{conilth}.

\begin{proof}[Theorem \ref{conilth}]
We note that
the components 
$\xymatrixcolsep{1.5pc}
\xymatrix{ \grave{\bm{T\mG}}^{B}(A)\ar@/^0.5pc/[r]^{\bm{\exp}_A}  
& \ar@/^0.5pc/[l]^{\bm{\ln}_A} \grave{\bm{\mG}}^{B}(A)}$
of  $\bm{\exp}$ and $\bm{\ln}$ at every cdg-algebra $A$ 
are defined such that $\bm{\exp}_{A}\big([\ups]\big) =\left[ \exp_A(\ups) \right]$
and $\bm{\ln}_{A}\big([g]\big) =\left[ \ln_A(g) \right]$.  Due to Proposition \ref{quslx},
they are well-defined, depending only on the homotopy types of $\ups$ and $g$,
and are
isomorphisms for every cdg-algebra $A$.

It remains to check the naturalness of $\bm{\exp}$ and $\bm{\ln}$.
For every $[f] \in \HOM_{\hcdga}(A, A^\pr)$, we show that the following diagrams commute
\eqalign{
\xymatrixrowsep{1.5pc}
\xymatrixcolsep{3pc}
\xymatrix{
\ar[d]_-{\bm{\exp}_{A}}
\grave{\bm{T\!\mG}^{B}}(A)\ar[r]^-{\grave{\bm{T\!\mG}}^{B}([f])} & \grave{\bm{T\!\mG}}^{B}(A^\pr) 
\ar[d]^-{\bm{\exp}_{A^\pr}}
\cr
\grave{\bm{\mG}}^{B}(A)\ar[r]^-{\grave{\bm{\mG}}^{B}([f])} & \grave{\bm{\mG}}^{B}(A^\pr) 
}
,\qquad
\xymatrix{
\ar[d]_-{\bm{\ln}_{A}}
\grave{\bm{\mG}}^{B}(A) \ar[r]^-{\grave{\bm{\CG}}^{B}([f])} & \grave{\bm{\mG}}^{B}(A^\pr) 
\ar[d]^-{\bm{\ln}_{A^\pr}}
\cr
\grave{\bm{T\!\mG}}^{B}(A)\ar[r]^-{\grave{\bm{T\!\mG}}^{B}([f])} & \grave{\bm{T\!\mG}}^{B}(A^\pr) 
}.
}
We will only check the naturalness of $\bm{\exp}$, since the proof is similar in the case of $\bm{\ln}$.

Let $f \in \HOM_{\cdga}(A, A^\pr)$ be an arbitrary representative of $[f]$. 
Consider any $[\ups] \in \THOM_{\hcdga}(B,A)$ and
let  $\ups\in \THOM_{\cdga}(B,A)$ be  an arbitrary representative of $[\ups]$.
Then it is straightforward to check that the homotopy type 
$[f\circ\ups]$ of $f\circ\ups \in \THOM_{\cdga}(B,A^\pr)$ 
depends only on $[f]$ and $[\ups]$. From Proposition \ref{quslz}, it also follow that 
 the homotopy type $\big[\exp_{A}(f\circ\ups)\big]$ of $\exp_{A}(f\circ \ups) \in \HOM_{\cdga}(B,A^\pr)$
depends only on $[f]$ and $[\ups^\pr]$.  It is also obvious that the homotopy type
$\left[f\circ\exp_A(\ups) \right]$ of  $f\circ \exp_{A}(\ups)\in \HOM_{\cdga}(B,A^\pr)$
depends only on $[f]$ and $[\ups]$. 
Combined with the identity
$f\circ\exp_A(\ups)= \exp_{A^\pr}(f\circ\ups)$ in the proof of  
Proposition \ref{quslz}, we have
\eqalign{
\grave{\bm{\mG}}^{B}([f])\left(\bm{\exp}_A ([\ups])\right) 
&= \left[f\circ\exp_{A}(\ups) \right]
=\left[ \exp_{A^\pr}(f\circ \ups) \right]
=
\bm{\exp}_A\left( \grave{\bm{T\!\mG}}^{B}([f])([\ups])\right).
}
Hence 
$\bm{\exp}:\grave{\bm{T\mG}}^B \Rightarrow \grave{\bm{\mG}}^B
: {\mathit{ho}\category{cdgA}}(\Bbbk)\rightsquigarrow \category{Set}$
is a natural isomorphism. 
\qed
\end{proof}

\section{Representations of an affine group dg-scheme}

Throughout this section we fix a  cdg-Hopf algebra $B=(B, u_B, m_B, \ep_B, \cp_B, \vs_B, d_B)$.
We define a linear representation of 
an affine group dg-scheme
$\bm{\mG}^B:\mathit{ho}\category{cdgA}(\Bbbk)\rightsquigarrow \category{Grp}$
via a linear representation of
the associated functor
$\bm{\CG}^B: \category{cdgA}(\Bbbk)\rightsquigarrow \category{Grp}$
on the category $\category{cdgA}(\Bbbk)$ of cdg-algebras.
Note that $\bm{\CG}^B$ is represented by $B$
and induces $\bm{\mG}^B$ on the homotopy category $\mathit{ho}\category{cdgA}(\fieldk)$.

The linear representations of $\bm{\CG}^B$ form a dg-tensor category
$\dgcat{Rep}(\bm{\CG}^B)$, which is isomorphic 
to the dg-tensor category $\dgcat{dgComod}_R(B)$ of 
right dg-comodules over $B$.  Working with the linear representations of 
$\bm{\CG}^B$ instead of the linear representations of $\bm{\mG}^B$ 
will be a crucial step for a Tannakian reconstruction of  $\bm{\mG}^B$.

\subsection{Preliminary}

In this subsection, we introduce the dg-tensor category formed by free right dg-modules
over a  cdg-algebra $A=(A,u_A, m_A, d_A)$.
We begin with the following basic lemma.

\begin{lemma}\label{basicl} 
For every pair $(M,N)$ of cochain complexes we have an exact sequence of cochain complexes
\eqalign{
\xymatrixcolsep{2pc}
\xymatrixrowsep{0pc}
\xymatrix{
0\ar[r]  & \Hom\big(M, N\otimes A\big)\ar[r]^-{\mp} & \ar@/^1pc/@{..>}[l]^{\mq} 
\Hom\big(M\otimes A, N\otimes A\big)\ar[r]^-{\mr} &   \ar@/^1pc/@{..>}[l]^{\ms} 
\Hom\big(M\otimes A\otimes A, N\otimes A\big)
,
}}
where $\forall \a_{i+1} \in \Hom\big(M\!\otimes\! A^{\otimes i}, N\otimes A\big)$, $i=0,1,2$,
\eqn{coalgext}{
\begin{aligned}
{\mp}(\a_1)&:=(\I_N\otimes m_A)\circ (\a_1\otimes \I_A)
,\cr
{\mq}(\a_2)&:=\a_2\circ (\I_M\otimes u_A)\circ \jmath^{-1}_M
,
\end{aligned}
\quad
\begin{aligned}
\mr(\a_2)&:=
\a_2\circ (\I_M\otimes m_A) - (\I_N\otimes m_{A}) \circ (\a_2\otimes \I_A)
,\cr
{\ms}(\a_3)&:=\a_3\circ (\I_M\otimes u_A\otimes\I_A)\circ \jmath^{-1}_M
.
\end{aligned}
}
such that 
\eqn{coalgextx}{
\mr\circ\mp=0
,\qquad
\mq\circ \mp =\I_{\Hom(M, N\otimes A)}
,\qquad
\mp\circ\mq+\ms\circ\mr =\I_{\Hom(M\otimes A,N\otimes A)}
.
}
\end{lemma}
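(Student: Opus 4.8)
The plan is to treat the three identities in \eq{coalgextx} as the heart of the matter: once they are established, together with the fact that all four maps are morphisms of cochain complexes, both the cochain-complex structure of the displayed sequence and its exactness follow at once. Indeed \eq{coalgextx} exhibits $\mq$ and $\ms$ as a contracting homotopy splitting the sequence at the cochain level, so I would organize the argument in three stages: first, verify that $\mp,\mq,\mr,\ms$ are cochain maps; second, verify the three identities of \eq{coalgextx} by direct computation; third, deduce exactness formally from those identities.

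For the first stage I would observe that each of the four maps is assembled from three elementary operations, all of which preserve the Hom-differential of \eq{tshmdiff}: post-composition with a fixed cochain map, pre-composition with a fixed cochain map, and the operation $\a\mapsto\a\otimes\I_A$. The maps $u_A$ and $m_A$ are cochain maps by the axioms \eq{dgalgebra}, and the canonical isomorphisms $\jmath^{-1}_M$ are cochain isomorphisms; since $\I_A$ has degree $0$ and zero differential, $\a\mapsto\a\otimes\I_A$ is a degree-$0$ chain map and no Koszul signs ever intervene. Hence each of $\mp,\mq,\mr,\ms$ is a degree-$0$ map commuting with the Hom-differentials, so the whole diagram is a diagram of cochain complexes.

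For the second stage I would carry out a short diagram chase for each identity. The relation $\mr\circ\mp=0$ reduces, after cancelling common factors, to the associativity $m_A\circ(m_A\otimes\I_A)=m_A\circ(\I_A\otimes m_A)$ of \eq{dgalgebra}. The relation $\mq\circ\mp=\I$ reduces to the right unit axiom $m_A\circ(\I_A\otimes u_A)=\jmath_A$, since precomposing $\mp(\a_1)$ with $m\mapsto m\otimes u_A(1)$ multiplies the $A$-component of $\a_1$ on the right by the unit. The splitting identity $\mp\circ\mq+\ms\circ\mr=\I$ is the one genuine computation: evaluating both composites on a general $\a_2$, the term $\mp\circ\mq$ produces the piece of $\a_2$ obtained by feeding $u_A(1)$ into the first $A$-slot and multiplying back in, while $\ms\circ\mr$ produces $\a_2$ minus exactly that same piece, the left unit axiom $m_A\circ(u_A\otimes\I_A)=\imath_A$ being used to collapse $(\I_M\otimes m_A)(m\otimes u_A(1)\otimes a)=m\otimes a$. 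The two contributions therefore sum to $\a_2$.

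For the third stage exactness follows purely formally. From $\mq\circ\mp=\I$ the map $\mp$ has a left inverse and is thus injective, which is exactness at $\Hom(M,N\otimes A)$. From $\mr\circ\mp=0$ we get $\im\mp\subseteq\Ker\mr$, and conversely any $\a_2\in\Ker\mr$ satisfies $\a_2=(\mp\circ\mq)(\a_2)+(\ms\circ\mr)(\a_2)=\mp\big(\mq(\a_2)\big)\in\im\mp$ by \eq{coalgextx}, giving $\Ker\mr=\im\mp$ and hence exactness in the middle. I expect the main obstacle to be purely notational: keeping track, in the splitting identity, of which tensor slot each occurrence of $u_A(1)$, $m_A$, and $\I_A$ acts on, since a misplaced slot silently breaks the cancellation. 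Everything else is bookkeeping once the slots are fixed.
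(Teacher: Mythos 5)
Your proof is correct and is exactly the routine verification the paper leaves implicit (the lemma is stated there without proof): the three identities $\mr\circ\mp=0$, $\mq\circ\mp=\I$ and $\mp\circ\mq+\ms\circ\mr=\I$ reduce, as you say, to the associativity, right-unit and left-unit axioms of the cdg-algebra $A$, the absence of Koszul signs is justified since $u_A$, $m_A$ and $\I_A$ all have degree $0$, and left-exactness then follows formally from the splitting. The one notational point you handle correctly but should state explicitly is that in the definition of $\ms$ (and in the corresponding step of your third identity) the symbol $\jmath^{-1}_M$ must be read as $\jmath^{-1}_M\otimes\I_A$ on the domain $M\otimes A$, since $\jmath^{-1}_M$ alone has source $M$.
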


A \emph{right dg-module over $A$} is a tuple $(M,\g_M)$, where
$M=(M,d_M)$ is a cochain complex and  $\g_M:M\otimes A\to M$ is a cochain map,
called the \emph{action}, satisfying the axioms:
$\g_M\circ(\I_M\otimes m_A)=\g_M\circ(\g_M\otimes \I_A)$ and $\g_M\circ(\I_M\otimes u_A)=\jmath_M$ that the following
diagrams commutes.
\[
\xymatrixcolsep{3.5pc}
\xymatrix{
M\otimes A\otimes A \ar[r]^-{\I_M\otimes m_A} \ar[d]_-{\g_M\otimes \I_A}&
M\otimes A \ar[d]^-{\g_M}\\
M\otimes A \ar[r]^-{\g_M}&
M
}\qquad\qquad
\xymatrix{
M\otimes \Bbbk \ar[r]^-{\jmath_M} \ar[d]_-{\I_M\otimes u_A}&
M\\
M\otimes A\ar[ur]_-{\g_M}
}
\]
For every cochain complex $M$ we have 
a \emph{free} right dg-module  $(M\otimes A, \I_M\otimes m_A)$ over $A$ with 
the free action $\xymatrixcolsep{3pc}\xymatrix{M\otimes A \otimes A\ar[r]^-{\I_M\otimes m_A}& M\otimes A}$.

We 
can form a dg-category   $\dgcat{dgMod}^{\mathit{fr}}_R(A)$ of free right dg-modules over $A$
whose  set of morphisms from $(M\!\otimes\! A,\I_M\!\otimes\! m_A)$ to   $(N\!\otimes\! A,\I_{N}\!\otimes\! m_A)$
is  $\Hom_{m_A\!}\big(M\!\otimes\! A, N\!\otimes\! A\big)$ with the differential $d_{M\!\otimes\! A, N\!\otimes\! A}$,
where 
$\Hom_{m_A\!}\big(M\!\otimes\! A, N\!\otimes\! A\big)$
denotes the set of $\Bbbk$-linear maps $\w:M\otimes A \rightarrow N\otimes A$ making the following diagram commutative:
$$
\xymatrixcolsep{3pc}
\xymatrix{
\ar[d]_-{\w\otimes \I_A}
M\otimes A\otimes A \ar[r]^-{\I_M\otimes m_A}& M\otimes A
\ar[d]^-{\w}
\cr
N\otimes A\otimes A \ar[r]_-{\I_{N}\otimes m_A}& N\otimes A
}
,\quad\hbox{i.e.,}\quad
\w\circ (\I_{M}\otimes m_A)= (\I_{N}\otimes m_A)\circ (\w\otimes \I_A) \Longleftrightarrow \mr(\w)=0.
$$
\begin{corollary} 
We have a bijection 
$\xymatrix{
{\mq}:
\Hom_{m_A}(M\otimes A, N\otimes A) \ar@<3pt>[r]&
\ar@<3pt>[l] \Hom(M, N\otimes A) :{\mp}  
.}$
\end{corollary}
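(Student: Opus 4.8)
The plan is to read the corollary directly off the three identities collected in \eq{coalgextx} of Lemma~\ref{basicl}, once one records that the subset $\Hom_{m_A}(M\otimes A, N\otimes A)$ is exactly $\Ker\mr$. This identification is immediate from the definition recalled just above the corollary: a $\Bbbk$-linear map $\w:M\otimes A\to N\otimes A$ lies in $\Hom_{m_A}(M\otimes A, N\otimes A)$ precisely when it intertwines the free actions, i.e. when $\mr(\w)=0$.

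First I would check that $\mp$ restricts to a map into this kernel. For any $\a_1\in\Hom(M, N\otimes A)$ the relation $\mr\circ\mp=0$ gives $\mr\big(\mp(\a_1)\big)=0$, so $\mp(\a_1)\in\Ker\mr=\Hom_{m_A}(M\otimes A, N\otimes A)$; thus $\mp$ lands in the required subset, whereas $\mq$ is already defined on all of $\Hom(M\otimes A, N\otimes A)$ and in particular on $\Ker\mr$. This is the only bookkeeping that needs to be made explicit, since the corollary asserts a bijection between $\Hom(M, N\otimes A)$ and the subspace $\Hom_{m_A}(M\otimes A, N\otimes A)$ rather than all of $\Hom(M\otimes A, N\otimes A)$.

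Next I would verify that the two composites are identities. One direction is free: $\mq\circ\mp=\I_{\Hom(M, N\otimes A)}$ is the second identity of \eq{coalgextx}. For the reverse I would evaluate the third identity $\mp\circ\mq+\ms\circ\mr=\I_{\Hom(M\otimes A, N\otimes A)}$ on an element $\w\in\Ker\mr$; since $\mr(\w)=0$ the correction term $\ms\big(\mr(\w)\big)$ drops out, leaving $\mp\big(\mq(\w)\big)=\w$. Hence $\mq$ and $\mp$ are mutually inverse on the indicated sets, and the claimed bijection follows.

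There is essentially no obstacle in this argument---the whole content is already packaged into Lemma~\ref{basicl}, so the corollary is a formal consequence of the splitting relations in \eq{coalgextx}. I would add, as a remark not strictly needed for the stated bijection, that because $\mp$ and $\mq$ appear as morphisms in the exact sequence of cochain complexes of Lemma~\ref{basicl}, this bijection is in fact an isomorphism of cochain complexes between $\Hom(M, N\otimes A)$ and $\Hom_{m_A}(M\otimes A, N\otimes A)$.
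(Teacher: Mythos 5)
Your proposal is correct and matches the paper's intent exactly: the corollary is stated without a separate proof precisely because, as you observe, it is a formal consequence of the identities \eq{coalgextx} in Lemma \ref{basicl}, with $\Hom_{m_A}(M\otimes A, N\otimes A)=\Ker\mr$, the relation $\mr\circ\mp=0$ guaranteeing that $\mp$ lands in this kernel, $\mq\circ\mp=\I$ giving one composite, and $\mp\circ\mq=\I$ on $\Ker\mr$ following from $\mp\circ\mq+\ms\circ\mr=\I$. Your closing remark that the bijection is in fact an isomorphism of cochain complexes is also consistent with how the paper uses it (e.g.\ in the proof of Theorem \ref{repmod}, where $\mp$ and $\mq$ are used as cochain maps).
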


Using Lemma \ref{basicl},  it is straightforward to check that $\dgcat{dgMod}^{\mathit{fr}}_R(A)$ is indeed a dg-category.

\begin{lemma}[Definition]\label{tensorfrmod}
The dg-category  $\dgcat{dgMod}^{\mathit{fr}}_R(A)$
is a dg-tensor category with the following tensor structure.

\begin{enumerate}
\item
The tensor product of free right dg-modules 
$(M\otimes A,\I_M\otimes m_A)$ 
and  $(M^\pr\otimes A,\I_{M^\pr}\otimes m_A)$ 
over $A$ is 
$$
(M\otimes A,\I_M\otimes m_A)\otimes_{m_A}(M^\pr\otimes A,\I_{M^\pr}\otimes m_A):=(M\otimes M^\pr\otimes A,\I_{M\otimes M^\pr}\otimes m_A).
$$
The unit object for the tensor product is the free right dg-module $(\Bbbk\otimes A, \I_\Bbbk\otimes m_A)$ over $A$

\item
If $\w \in \Hom_{m_A}(M\otimes A, N\otimes A)$ and  $\w^\pr \in \Hom_{m_A}(M^\pr\otimes A, N^\pr\otimes A)$,
then we have the tensor product
$\w\otimes_{m_A}\!\w^\pr \in \Hom_{m_A}\big(M\otimes M^\pr\otimes A,  N\otimes N^\pr\otimes A\big)$,
where
$$
\w\otimes_{m_A}\w^\pr
:=(\I_{N\otimes N^\pr}\otimes m_A)\circ(\I_{N}\otimes\t\otimes\I_A)\circ\big({\mq}(\w)\otimes \w^\pr\big).
$$
Equivalently, $\w\otimes_{m_A} \w^\pr$ is determined by the following equality:
$$
{\mq}(\w\otimes_{m_A} \w^\pr)
=(\I_{N\otimes N^\pr}\otimes m_A)\circ(\I_{N}\otimes\t\otimes\I_A)\circ\big({\mq}(\w)\otimes{\mq}(\w^\pr)\big).
$$

\item
$d_{M\otimes M^\pr\otimes A,N\otimes N^\pr\otimes A}
\big(\w\otimes_{m_A}\!\w^\pr\big)
=d_{M\otimes A\!, N\otimes A} \w\otimes_{m_A}\!\w^\pr 
+(-1)^{|\w|} \w\otimes_{m_A}\!d_{M^\pr\otimes A\!, N^\pr\otimes A}\w^\pr$.
\end{enumerate}

\end{lemma}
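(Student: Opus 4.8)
The plan is to push every computation through the natural bijection $\mq:\Hom_{m_A}(M\otimes A,N\otimes A)\to\Hom(M,N\otimes A)$, with inverse $\mp$, supplied by the Corollary to Lemma~\ref{basicl}. Since the exact sequence of Lemma~\ref{basicl} is one of cochain complexes, $\mp$ and $\mq$ are cochain maps, so degrees and differentials transport faithfully through this identification; concretely, $\w=\mp(\mq(\w))=(\I_N\otimes m_A)\circ(\mq(\w)\otimes\I_A)$ and $\mq(\w)=\w\circ(\I_M\otimes u_A)\circ\jmath^{-1}_M$. Under this reduction $\dgcat{dgMod}^{\mathit{fr}}_R(A)$ is the dg-analogue of the Kleisli category of the monad $(-)\otimes A$ on $\dgcat{CoCh}(\Bbbk)$: composition becomes $\mq(\w_2\circ\w_1)=(\I_P\otimes m_A)\circ(\mq(\w_2)\otimes\I_A)\circ\mq(\w_1)$, and the displayed $\mq$-equation defining $\w\otimes_{m_A}\w^\pr$ is precisely the tensor product this Kleisli category inherits from the commutativity of $m_A$.

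First I would dispatch item~(1): the tuple $(M\otimes M^\pr\otimes A,\I_{M\otimes M^\pr}\otimes m_A)$ is a free right dg-module by construction, and $(\Bbbk\otimes A,\I_\Bbbk\otimes m_A)$ is its two-sided unit up to the canonical isomorphisms $\imath,\jmath$. For item~(2) I would \emph{take} the $\mq$-equation as the definition, setting $\w\otimes_{m_A}\w^\pr:=\mp\big((\I_{N\otimes N^\pr}\otimes m_A)\circ(\I_N\otimes\t\otimes\I_A)\circ(\mq(\w)\otimes\mq(\w^\pr))\big)$; this lies in $\Hom_{m_A}$ automatically because $\mr\circ\mp=0$, so no separate check of $A$-linearity is needed. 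Expanding $\mp$ and substituting $\mq(\w)=\w\circ(\I_M\otimes u_A)\circ\jmath^{-1}_M$, the associativity and commutativity of $m_A$ then collapse the expression to the explicit direct formula stated in the lemma.

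The substantive step is bifunctoriality of $\otimes_{m_A}:\dgcat{dgMod}^{\mathit{fr}}_R(A)\boxtimes\dgcat{dgMod}^{\mathit{fr}}_R(A)\rightsquigarrow\dgcat{dgMod}^{\mathit{fr}}_R(A)$, namely preservation of identities (immediate from the $\mq$-formula together with $\mq(\I_{M\otimes A})=(\I_M\otimes u_A)\circ\jmath^{-1}_M$) and of composition. For the latter I would apply $\mq$ to both sides of the interchange identity and compare: the left side unfolds, via the Kleisli composition and tensor formulas above, into a string of $m_A$'s, $\t$'s and the $\mq(\w_i),\mq(\w_i^\pr)$, and the right side into another such string; matching them is a diagram chase whose only inputs are the associativity and commutativity of $m_A$ together with the naturality and hexagon identities of the symmetry $\t$. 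I expect this to be the main obstacle, not for any conceptual reason but because $\w,\w^\pr$ are allowed to have nonzero degree: every transposition of $\t$ past a homogeneous map contributes a Koszul sign $(-1)^{|\cdot|\,|\cdot|}$, and the interchange law in $\dgcat{C}\boxtimes\dgcat{C}$ itself carries the sign $(-1)^{|\w_2^\pr|\,|\w_1|}$, so the bookkeeping of signs is where the proof can go wrong.

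Item~(3), the graded Leibniz rule for $d$, then follows formally: $\mq$, $\mp$, $m_A$ and $\t$ are all cochain maps, so $d$ commutes with the $\mq$-definition of $\w\otimes_{m_A}\w^\pr$, and differentiating the factor $\mq(\w)\otimes\mq(\w^\pr)$ by the tensor differential on $\dgcat{CoCh}(\Bbbk)$ reproduces exactly the signed sum $d\w\otimes_{m_A}\w^\pr+(-1)^{|\w|}\w\otimes_{m_A}d\w^\pr$. Finally I would verify the coherence conditions. After applying $\mq$, the associativity, unit and symmetry constraints, together with the pentagon, triangle and hexagon axioms, reduce to the corresponding coherences of $\dgcat{CoCh}(\Bbbk)$ with the single tensor factor $A$ acted on by the associative, commutative $m_A$; since every coherence morphism is the image under $\mp$ of an evident isomorphism of cochain complexes, each is an isomorphism in $\dgcat{dgMod}^{\mathit{fr}}_R(A)$ and the axioms hold, which completes the proof that $\dgcat{dgMod}^{\mathit{fr}}_R(A)$ is a dg-tensor category.
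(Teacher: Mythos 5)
Your proposal is correct, and it is in fact more complete than the paper's own argument, which is only a three-line sketch: the paper declares property $1$ obvious, verifies property $2$ by directly checking that $\mr(\w\otimes_{m_A}\!\w^\pr)=0$ whenever $\mr(\w)=\mr(\w^\pr)=0$, and dismisses property $3$ as a straightforward computation. Where you genuinely differ is the order of operations in property $2$: by taking the $\mq$-equation as the \emph{definition}, $\w\otimes_{m_A}\!\w^\pr:=\mp\big((\I_{N\otimes N^\pr}\otimes m_A)\circ(\I_N\otimes\t\otimes\I_A)\circ(\mq(\w)\otimes\mq(\w^\pr))\big)$, the identity $\mr\circ\mp=0$ of Lemma \ref{basicl} makes membership in $\Hom_{m_A}$ automatic, and the work shifts to matching this with the explicit formula --- which, as your substitution $\w^\pr=\mp(\mq(\w^\pr))$ shows, uses only the associativity of $m_A$ (your mention of commutativity at this step is a harmless over-attribution; commutativity is needed later, for the interchange law and the symmetry constraint, and it is what the paper's direct check of $\mr(\w\otimes_{m_A}\!\w^\pr)=0$ consumes). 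Your derivation of property $3$ from the facts that $\mp$ and $\mq$ are cochain maps and that the tensor differential obeys the Koszul--Leibniz rule is exactly the ``straightforward computation'' the paper leaves unstated. Finally, the bifunctoriality with the interchange sign $(-1)^{|\w_2^\pr||\w_1|}$ and the pentagon/triangle/hexagon coherences, which you rightly flag as the sign-sensitive part, are not addressed in the paper's proof at all --- they are silently absorbed into the assertion that one obtains a dg-tensor category --- so on these points your plan supplies detail the paper omits rather than deviating from it; your reduction through $\mq$ (identities are preserved since $\mq(\I_{M\otimes A})=(\I_M\otimes u_A)\circ\jmath^{-1}_M$, and composition becomes the Kleisli-style formula $\mq(\w_2\circ\w_1)=(\I_P\otimes m_A)\circ(\mq(\w_2)\otimes\I_A)\circ\mq(\w_1)$) is a sound framework for carrying out those checks.
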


\begin{proof} Property $1$ is obvious. For property $2$, 
we can check that ${\mr}(\w\otimes_{m_A} \!\w^\pr)=0$ 
whenever ${\mr}(\w)={\mr}(\w^\pr)=0$.
Property $3$ can be checked by a straightforward computation.
\qed
\end{proof}

\begin{lemma}[Definition]
\label{atensoring}
 We have  a tensor dg-functor 
$\otimes A:\dgcat{CoCh}(\Bbbk) \rightsquigarrow \dgcat{dgMod}^{\mathit{fr}}_R(A)$
for  every cdg-algebra $A$, sending
\begin{itemize}
\item each cochain complex $M$ to the free right dg-module $(M\otimes A,\I_M\otimes m_A)$, and
\item each $\Bbbk$-linear map $M\xrightarrow{\p} M^\pr$ to the morphism
$$
\xymatrixcolsep{3pc}
\xymatrix{
(M\otimes A,\I_M\otimes m_A)\ar[r]^-{\p\otimes\I_A} &(M^\pr\otimes A,\I_{M^\pr}\otimes m_A)
}
$$
of right dg-modules over $A$.
\end{itemize}
\end{lemma}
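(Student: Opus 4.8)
The plan is to check, in order, that $\otimes A$ is well defined on morphisms, that it is functorial, that it is a dg-functor, and finally that it respects the tensor structure. First I would verify that for a $\Bbbk$-linear map $\p:M\to N$ the map $\p\otimes \I_A$ really lies in $\Hom_{m_A}(M\otimes A, N\otimes A)$, i.e. that $\mr(\p\otimes \I_A)=0$ in the notation of Lemma \ref{basicl}. This is immediate, since both $(\p\otimes \I_A)\circ(\I_M\otimes m_A)$ and $(\I_{N}\otimes m_A)\circ\big((\p\otimes \I_A)\otimes \I_A\big)$ equal $\p\otimes m_A$, so their difference vanishes. Functoriality is then automatic from $\I_M\otimes \I_A=\I_{M\otimes A}$ and $(\p_2\circ\p_1)\otimes \I_A=(\p_2\otimes \I_A)\circ(\p_1\otimes \I_A)$.

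Next I would show that $\otimes A$ is a dg-functor, which reduces to checking that the assignment $\p\mapsto \p\otimes \I_A$ is a cochain map $\Hom(M,N)\to \Hom_{m_A}(M\otimes A, N\otimes A)$, that is
$$
d_{M\otimes A, N\otimes A}(\p\otimes \I_A)=(d_{M,N}\p)\otimes \I_A.
$$
Expanding the left-hand side with $d_{M\otimes A}=d_M\otimes \I_A+\I_M\otimes d_A$ and the analogous formula for $N\otimes A$, and tracking the Koszul signs (recall $|\I_A|=0$ and $|d_A|=1$), the two terms containing $d_A$ cancel, leaving exactly $\big(d_{N}\circ\p-(-1)^{|\p|}\p\circ d_M\big)\otimes \I_A$, which is $(d_{M,N}\p)\otimes \I_A$ by \eq{tshmdiff}. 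This sign bookkeeping is the only place demanding genuine care, so I would regard it as the main (though entirely mechanical) obstacle.

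Finally I would verify that $\otimes A$ is a tensor dg-functor in the sense recalled in Section $2$. On the unit object, $\otimes A$ sends $\Bbbk=(\Bbbk,0)$ to $(\Bbbk\otimes A,\I_\Bbbk\otimes m_A)$, which is precisely the unit object of $\dgcat{dgMod}^{\mathit{fr}}_R(A)$ by part $1$ of Lemma \ref{tensorfrmod}. On objects, $\otimes A$ sends $M\otimes M^\pr$ to $\big((M\otimes M^\pr)\otimes A,\I_{M\otimes M^\pr}\otimes m_A\big)$, which coincides with the tensor product $(M\otimes A,\I_M\otimes m_A)\otimes_{m_A}(M^\pr\otimes A,\I_{M^\pr}\otimes m_A)$ of the images, again by part $1$ of Lemma \ref{tensorfrmod}. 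The remaining point is compatibility on morphisms, namely $(\p\otimes \p^\pr)\otimes \I_A=(\p\otimes \I_A)\otimes_{m_A}(\p^\pr\otimes \I_A)$ for $\p:M\to N$ and $\p^\pr:M^\pr\to N^\pr$; here I would unwind the defining formula for $\otimes_{m_A}$ from Lemma \ref{tensorfrmod}, using $\mq(\p\otimes \I_A)=(\p\otimes u_A)\circ\jmath^{-1}_M$, and evaluate the composite
$$
(\I_{N\otimes N^\pr}\otimes m_A)\circ(\I_N\otimes\t\otimes\I_A)\circ\big(\mq(\p\otimes \I_A)\otimes(\p^\pr\otimes \I_A)\big)
$$
on a generator $m\otimes m^\pr\otimes a$: the inserted unit $u_A(1)$ is absorbed by $m_A$, and the result is $\p(m)\otimes\p^\pr(m^\pr)\otimes a$, which is exactly $\big((\p\otimes\p^\pr)\otimes \I_A\big)(m\otimes m^\pr\otimes a)$. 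This establishes all the properties required of a tensor dg-functor.
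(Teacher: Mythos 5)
Your proposal is correct and fills in precisely the routine verifications the paper omits — its own proof of this lemma is just the word ``Trivial,'' so direct checking of $\mr(\p\otimes\I_A)=0$, the cochain-map identity $d_{M\otimes A,N\otimes A}(\p\otimes\I_A)=(d_{M,N}\p)\otimes\I_A$, and the tensor compatibility via $\mq(\p\otimes\I_A)=(\p\otimes u_A)\circ\jmath^{-1}_M$ is exactly the intended argument. One cosmetic point: when you evaluate on a generator $m\otimes m^\pr\otimes a$, both sides carry the Koszul sign $(-1)^{|\p^\pr||m|}$ from moving $\p^\pr$ past $m$, which you suppress; since it appears identically on both sides the asserted equality is unaffected.
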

\begin{proof}
Trivial.
\end{proof}

For every cdg-algebra  $A$, we have the following constructions.
\begin{enumerate}
\item
Let
$\End_{m_A\!}(M\otimes A):=\Hom_{m_A\!}(M\otimes A,M\otimes A)$, 
which is the $\Z$-graded vector space of linear maps 
$\w:M\otimes A \rightarrow M\otimes A$ satisfying 
$\w\circ (\I_{M}\otimes m_A)= (\I_{M}\otimes m_A)\circ (\w\otimes \I_A)$.
Then we have a dg-algebra 
\eqn{EMC}{
\bm{\CE}^M(A)= \big(\End_{m_A\!}(M\otimes A), \I_{M\otimes A}, \circ , d_{M\otimes A, M\otimes A}\big).
}

\item
Let $\operatorname{Z^0Aut}_{m_A\!}(M\otimes A)$ be the subset of $\End_{m_A\!}(M\otimes A)$ 
consisting of every degree zero element $\w$ which has a composition inverse 
$\w^{-1}$ and satisfies $d_{M\otimes A,M\otimes A}\w=0$. 
Then we have a group 
\eqn{GLMC}{
\bm{\CG\!\ell}^M(A) :=\big(\operatorname{Z^0Aut}_{m_A\!}(M\otimes A), \I_{M\otimes A}, \circ\big).
}

\item 
Let $\operatorname{H^0Aut}_{m_A\!}(M\otimes A)$ be the set of  cohomology classes of elements in 
$\operatorname{Z^0Aut}_{m_A\!}(M\otimes A)$.
Recall that $\w, \tilde \w \in \operatorname{Z^0Aut}_{m_A\!}(M\otimes A)$
belongs to the same cohomology class 
$\w\sim \tilde \w$, i.e., $[\w]=[\tilde \w]\in \operatorname{H^0Aut}_{m_A\!}(M\otimes A)$, 
if $\tilde \w - \w =d_{M\otimes A, M\otimes A}\l$ for some $\l \in \End_{m_A\!}(M\otimes A)^{-1}$. 
We can check that $\w_1\circ \w_2\sim \tilde \w_1\circ \tilde \w_2\in\operatorname{Z^0Aut}_{m_A\!}(M\otimes A)$
whenever $\w_1\sim \tilde \w_1,  \w_2\sim \tilde \w_2\in\operatorname{Z^0Aut}_{m_A\!}(M\otimes A)$,
and
the $\w^{-1}\sim \tilde \w^{-1}\in\operatorname{Z^0Aut}_{m_A\!}(M\otimes A)$
whenever $\w\sim \tilde \w \in\operatorname{Z^0Aut}_{m_A\!}(M\otimes A)$. 
Let $[\w_1]\diamond [\w_2]:=[\w_1\circ \w_2]$ and $[\w]^{-1}:= [\w^{-1}]$. Then we have a group
\eqn{MGLMC}{
\bm{\mG\!\ml}^M(A) :=\big(\operatorname{H^0Aut}_{m_A\!}(M\otimes A), [\I_{M\otimes A}], \diamond\big).
}
\end{enumerate}

The above  three constructions are functorial as described in the forthcoming
three lemmas. We shall omit the proofs.

\begin{lemma}\label{dullpain}
For every cochain complex $M$
we have a functor
$\bm{\CE}^M:{\category{cdgA}}(\Bbbk) \rightsquigarrow \category{dgA}(\Bbbk)$, sending
\begin{itemize}
\item
each cdg-algebra $A$ to
the dg-algebra $\bm{\CE}^M(A)= \big(\End_{m_A\!}(M\otimes A), \I_{M\otimes A}, \circ , d_{M\otimes A, M\otimes A}\big)$;
\item
each $f \in \HOM_{\cdga}(A, A^\pr)$  to
a morphism  $\bm{\CE}^M(f):\bm{\CE}^M(A)\rightarrow \bm{\CE}^M(A^\pr)$ of dg-algebras
defined by, $\forall \w \in\End_{m_A\!}(M\otimes A)$,
\eqalign{
\bm{\CE}^{M}(f)(\w):= &\mp\Big((\I_M\otimes f)\circ \mq(\w)\Big)
\cr
=
&\big(\I_M\otimes m_{A^\pr}\big)\circ \big(\I_M\otimes f\otimes \I_{A^\pr}\big)\circ \big( \mq(\w)\otimes \I_{A^\pr}\big)
\cr
=
&\big(\I_M\otimes m_{A^\pr}\big)\circ \big(\I_M\otimes f\otimes \I_{A^\pr}\big)\circ \big(\w\circ (\I_M\otimes u_A)\circ \jmath^{-1}_M\otimes \I_{A^\pr}\big).
}
(See 
$\xymatrixcolsep{3pc}
\bm{\CE}^{M}(f)(\w):\xymatrix{
M\otimes A^\pr \ar[r]^-{\mq(\w)\otimes \I_{A^\pr}} & M\otimes A\otimes A^\pr\ar[r]^{\I_M\otimes f\otimes \I_{A^\pr}}& M\otimes A^\pr\otimes A^\pr\ar[r]^-{\I_M\otimes m_{A^\pr}} & M\otimes A^\pr
.}$)
\end{itemize}
That is, we have $\bm{\CE}^M(f)(\w) \in  \End_{m_{A^\pr}\!}(M\otimes A^\pr)$, and
\begin{enumerate}[label=({\alph*}),leftmargin=0.8cm]

\item $\bm{\CE}^M(f)(\I_{M\otimes A})= \I_{M\otimes A}$;

\item $\bm{\CE}^M(f)\big(\w_1\circ \w_2\big)
=\bm{\CE}^M(f)\big(\w_1\big)\circ \bm{\CE}^M(f)\big(\w_2\big)$;

\item $\bm{\CE}^M(f)\circ d_{M\otimes A, M\otimes A} 
= d_{M\otimes A, M\otimes A}\circ \bm{\CE}^M(f)$;

\item $\bm{\CE}^M(f^\pr)\circ \bm{\CE}^M(f)=\bm{\CE}^M(f^\pr\circ f)$
holds for another morphism $f^\pr:A^\pr\to A^{\pr\pr}$ of cdg-algebras.

\end{enumerate}
\end{lemma}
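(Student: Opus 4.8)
The plan is to avoid a head-on computation with the three explicit formulas for $\bm{\CE}^M(f)(\w)$ and instead exploit the correspondence of Lemma \ref{basicl}. Taking $N=M$, the relations \eq{coalgextx} give $\mq_A\circ\mp_A=\I$ and, on $\Ker\mr_A=\End_{m_A}(M\otimes A)$, also $\mp_A\circ\mq_A=\I$ together with $\mr_A\circ\mp_A=0$; hence $\mq_A$ and $\mp_A$ restrict to mutually inverse \emph{cochain} isomorphisms between $\End_{m_A}(M\otimes A)$ and $\Hom(M, M\otimes A)$. First I would transport the composition product $\circ$ through this isomorphism: a short computation using $\mq_A\circ\mp_A=\I$ shows that the transported product is $\phi\bullet_A\psi:=(\I_M\otimes m_A)\circ(\phi\otimes\I_A)\circ\psi$ on $\Hom(M, M\otimes A)$, with unit $\mathbf{1}_A:=\mq_A(\I_{M\otimes A})=(\I_M\otimes u_A)\circ\jmath^{-1}_M$. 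Thus $\big(\Hom(M, M\otimes A),\bullet_A,\mathbf{1}_A,d_{M, M\otimes A}\big)$ is a dg-algebra and $\mp_A,\mq_A$ are inverse dg-algebra isomorphisms to $\bm{\CE}^M(A)$.

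The key observation is that under this identification $\bm{\CE}^M(f)=\mp_{A^\pr}\circ P_f\circ\mq_A$, where $P_f:=(\I_M\otimes f)\circ(-)$ is post-composition $\Hom(M, M\otimes A)\to\Hom(M, M\otimes A^\pr)$. Since $\im\mp_{A^\pr}\subseteq\Ker\mr_{A^\pr}=\End_{m_{A^\pr}}(M\otimes A^\pr)$, this immediately yields $\bm{\CE}^M(f)(\w)\in\End_{m_{A^\pr}}(M\otimes A^\pr)$. The substance of the lemma then reduces to showing that $P_f$ is a morphism of dg-algebras. I would check the three requirements in turn: $P_f(\mathbf{1}_A)=\mathbf{1}_{A^\pr}$ follows from $f\circ u_A=u_{A^\pr}$; preservation of the product $P_f(\phi\bullet_A\psi)=P_f(\phi)\bullet_{A^\pr}P_f(\psi)$ follows by pushing $\I_M\otimes f$ through $\I_M\otimes m_A$ via the algebra-map identity $f\circ m_A=m_{A^\pr}\circ(f\otimes f)$; and $P_f$ commutes with the differential because $\I_M\otimes f$ is a cochain map (as $f$ is) and post-composition with a degree-$0$ cochain map is a cochain map.

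With $P_f$ established as a dg-algebra morphism, properties (a), (b), (c) are exactly the statements that the composite $\mp_{A^\pr}\circ P_f\circ\mq_A$ preserves the unit, the product, and the differential, each inherited from $\mp_{A^\pr}$, $P_f$, $\mq_A$ having these properties. For functoriality (d), I would compute $\bm{\CE}^M(f^\pr)\circ\bm{\CE}^M(f)=\mp_{A^{\pr\pr}}\circ P_{f^\pr}\circ(\mq_{A^\pr}\circ\mp_{A^\pr})\circ P_f\circ\mq_A$, collapse $\mq_{A^\pr}\circ\mp_{A^\pr}=\I$, and use $P_{f^\pr}\circ P_f=P_{f^\pr\circ f}$ (immediate from $(\I_M\otimes f^\pr)\circ(\I_M\otimes f)=\I_M\otimes(f^\pr\circ f)$) to obtain $\mp_{A^{\pr\pr}}\circ P_{f^\pr\circ f}\circ\mq_A=\bm{\CE}^M(f^\pr\circ f)$; the identity case $\bm{\CE}^M(\I_A)=\I$ is the specialization $f=\I_A$.

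The main obstacle is essentially bookkeeping rather than conceptual: correctly identifying the transported product $\bullet_A$ and verifying the product-preservation of $P_f$, which is the only step where the algebra-map hypothesis on $f$ (not merely its being a cochain map) is used. Because every map in sight, namely $f$, $m_A$, $u_A$, and the various identities, has degree zero, no Koszul signs intervene, so this verification is routine; one could equally well carry out (a)--(d) directly from the three displayed formulas for $\bm{\CE}^M(f)(\w)$ together with \eq{coalgextx}, the reformulation via $\bullet_A$ merely organizing the computation.
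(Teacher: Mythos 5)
Your proof is correct. Note that the paper itself omits the proofs of Lemmas \ref{dullpain}--\ref{dullpainax} (``we shall omit the proofs''), the intended argument evidently being a direct verification from the three displayed formulas together with \eq{coalgextx}, in the style of the written-out proof of Lemma \ref{grdgschemeone}. Your route is a genuinely different and cleaner organization of that computation: you transport the dg-algebra structure of $\bm{\CE}^M(A)$ across the isomorphism $(\mp_A,\mq_A)$ of Lemma \ref{basicl}, obtaining the product $\phi\bullet_A\psi=(\I_M\otimes m_A)\circ(\phi\otimes\I_A)\circ\psi=\mp_A(\phi)\circ\psi$ with unit $(\I_M\otimes u_A)\circ\jmath^{-1}_M$ on $\Hom(M,M\otimes A)$, and then factor $\bm{\CE}^M(f)=\mp_{A^\pr}\circ P_f\circ\mq_A$ with $P_f=(\I_M\otimes f)\circ(-)$. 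This buys three things at once: well-definedness of $\bm{\CE}^M(f)(\w)\in\End_{m_{A^\pr}\!}(M\otimes A^\pr)$ falls out of $\mr\circ\mp=0$ rather than a separate check; properties (a)--(c) reduce to the single verification that $P_f$ is a morphism of dg-algebras, which cleanly isolates exactly where each hypothesis on $f$ enters ($f\circ u_A=u_{A^\pr}$ for the unit, $f\circ m_A=m_{A^\pr}\circ(f\otimes f)$ for the product, $f$ a cochain map for the differential); and (d) collapses via $\mq_{A^\pr}\circ\mp_{A^\pr}=\I_{\Hom(M,M\otimes A^\pr)}$, which by \eq{coalgextx} holds on all of $\Hom(M,M\otimes A^\pr)$ with no restriction to $\Ker\mr$, together with the trivial identity $P_{f^\pr}\circ P_f=P_{f^\pr\circ f}$. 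Your sign bookkeeping is also right: since $f$, $m_A$, $u_A$ all have degree $0$, the Koszul sign in $(\I_M\otimes f\otimes f)\circ(\phi\otimes\I_A)=\big((\I_M\otimes f)\circ\phi\big)\otimes f$ is $(-1)^{|f||\phi|}=1$, so the product-preservation step goes through for $\w$ of arbitrary degree. Incidentally, your $\mq$/$\mp$ calculus is precisely the technique the paper deploys later (e.g.\ in the proofs of Theorem \ref{repmod} and Lemma \ref{costpda}, via identities of the form \eq{univnat}), so your argument is a natural systematization of the paper's own toolkit applied to the omitted proof.
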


\begin{lemma}\label{dullpaina}
For every cochain complex $M$ we have 
a  functor $\bm{\CG\!\ell}^M:{\category{cdgA}}(\Bbbk) \rightsquigarrow \category{Grp}$,
sending 
\begin{itemize}
\item
each cdg-algebra $A$ to the  group 
$\bm{\CG\!\ell}^M(A)=\big(\operatorname{Z^0Aut}_{m_A\!}(M\otimes A), \I_{M\otimes A}, \circ\big)$;
\item
each $f \in \HOM_{\cdga}(A, A^\pr)$ 
to a homomorphism
$\bm{\CG\!\ell}^M(f):\bm{\CG\!\ell}^M(A)\rightarrow \bm{\CG\!\ell}^M(A^\pr)$ of groups
defined by $\bm{\CG\!\ell}^M(f) := \bm{\CE}^M(f)$, 
\end{itemize}
such that
\begin{enumerate}[label=({\alph*}),leftmargin=0.8cm]
\item
$\bm{\CG\!\ell}^M(\tilde f)(\w) \sim \bm{\CG\!\ell}^M(f)(\w)  
\in \operatorname{Z^0Aut}_{m_{A^\pr}\!}(M\otimes A^\pr)$
for all $\w \in \operatorname{Z^0Aut}_{m_{A}\!}(M\otimes A)$ 
whenever $f \sim \tilde f\in \HOM_{\cdga}(A, A^\pr)$, and

\item $\bm{\CG\!\ell}^M( f)(\tilde\w) \sim \bm{\CG\!\ell}^M( f)(\w)
\in \operatorname{Z^0Aut}_{m_{A^\pr}\!}(M\otimes A^\pr)$
for all $f \in \HOM_{\cdga}(A, A^\pr)$ 
whenever $\w \sim \tilde\w \in \operatorname{Z^0Aut}_{m_{A}\!}(M\otimes A)$.
\end{enumerate}

\end{lemma}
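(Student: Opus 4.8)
The plan is to derive almost all of the functoriality directly from Lemma \ref{dullpain}, and to isolate the two homotopy-invariance statements (a) and (b) as the only genuinely new content. First I would observe that $\operatorname{Z^0Aut}_{m_A\!}(M\otimes A)$ is nothing but the group of units of the unital subalgebra of degree-zero cocycles of the dg-algebra $\bm{\CE}^M(A)$: the composite of two degree-zero cocycles is again such, since $d_{M\otimes A,M\otimes A}$ is a derivation of $\circ$, and if $d_{M\otimes A,M\otimes A}\w=0$ with $\w$ invertible then $0=d_{M\otimes A,M\otimes A}(\w\circ\w^{-1})=\w\circ d_{M\otimes A,M\otimes A}\w^{-1}$ forces $d_{M\otimes A,M\otimes A}\w^{-1}=0$; hence $\bm{\CG\!\ell}^M(A)$ is a group. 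For a morphism $f\in\HOM_{\cdga}(A,A^\pr)$, Lemma \ref{dullpain} already gives that $\bm{\CE}^M(f)$ is a morphism of dg-algebras, so it preserves degrees, sends cocycles to cocycles by property (c), and sends the identity and composites to the same by properties (a) and (b); in particular it carries an invertible degree-zero cocycle $\w$ to $\bm{\CE}^M(f)(\w)$, whose inverse is $\bm{\CE}^M(f)(\w^{-1})$. Thus $\bm{\CG\!\ell}^M(f):=\bm{\CE}^M(f)$ restricts to a group homomorphism, and functoriality is immediate from property (d).

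Next I would dispatch property (b), the easier statement, since it reduces to $\Bbbk$-linearity of $\bm{\CE}^M(f)$ together with property (c) of Lemma \ref{dullpain}. If $\w\sim\tilde\w$, say $\tilde\w-\w=d_{M\otimes A,M\otimes A}\l$ with $\l\in\End_{m_A\!}(M\otimes A)^{-1}$, then
$$
\bm{\CG\!\ell}^M(f)(\tilde\w)-\bm{\CG\!\ell}^M(f)(\w)=\bm{\CE}^M(f)\big(d_{M\otimes A,M\otimes A}\l\big)=d_{M\otimes A^\pr\!,M\otimes A^\pr}\big(\bm{\CE}^M(f)(\l)\big),
$$
with $\bm{\CE}^M(f)(\l)\in\End_{m_{A^\pr}\!}(M\otimes A^\pr)^{-1}$, which is exactly the relation $\bm{\CG\!\ell}^M(f)(\tilde\w)\sim\bm{\CG\!\ell}^M(f)(\w)$.

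For property (a), which I expect to be the main obstacle, I would exploit the explicit formula $\bm{\CE}^M(g)(\w)=\mp\big((\I_M\otimes g)\circ\mq(\w)\big)$ of Lemma \ref{dullpain}, noting that its right-hand side is defined and $\Bbbk$-linear for an \emph{arbitrary} linear map $g:A\to A^\pr$, not only for algebra maps. Given a homotopy pair $\big(f(t),\s(t)\big)$ on $\HOM_{\cdga}(A,A^\pr)$ connecting $f=f(0)$ to $\tilde f=f(1)$ with $\Fr{d}{dt}f(t)=d_{A,A^\pr}\s(t)$, I would set $F(t):=\mp\big((\I_M\otimes f(t))\circ\mq(\w)\big)$, a path through $\operatorname{Z^0Aut}_{m_{A^\pr}\!}(M\otimes A^\pr)$. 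The crux is the intertwining identity
$$
\mp\big((\I_M\otimes d_{A,A^\pr}g)\circ\mq(\w)\big)=d_{M\otimes A^\pr\!,M\otimes A^\pr}\,\mp\big((\I_M\otimes g)\circ\mq(\w)\big),
$$
valid for every cocycle $\w$, which I would establish from the facts that $\mp$ and $\mq$ are cochain maps (Lemma \ref{basicl}), that $d_{M,M\otimes A}\mq(\w)=\mq(d_{M\otimes A,M\otimes A}\w)=0$, and the elementary Leibniz computation $d(\I_M\otimes g)=\I_M\otimes d_{A,A^\pr}g$. Combining this with linearity gives $\Fr{d}{dt}F(t)=d_{M\otimes A^\pr\!,M\otimes A^\pr}\Lambda(t)$ for $\Lambda(t):=\mp\big((\I_M\otimes\s(t))\circ\mq(\w)\big)\in\End_{m_{A^\pr}\!}(M\otimes A^\pr)^{-1}$, and integrating over $[0,1]$ yields $\bm{\CE}^M(\tilde f)(\w)-\bm{\CE}^M(f)(\w)=d_{M\otimes A^\pr\!,M\otimes A^\pr}\int_0^1\Lambda(t)\,dt$, i.e.\ $\bm{\CG\!\ell}^M(\tilde f)(\w)\sim\bm{\CG\!\ell}^M(f)(\w)$. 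The hard part will be the careful Koszul-sign bookkeeping in that intertwining identity; by contrast, the fact that each $f(t)$ remains an algebra map (so that the whole path $F(t)$ lies in the group) is a routine consequence of the homotopy-pair axioms and is not strictly needed for the coboundary conclusion.
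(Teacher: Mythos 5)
Your proposal is correct, and there is nothing in the paper to measure it against: the authors explicitly omit the proofs of Lemmas \ref{dullpain}--\ref{dullpainax} (``we shall omit the proofs''), so your argument supplies the missing one, and it does so by exactly the intended route---the group structure and the homomorphism property fall out of restricting the dg-algebra morphism $\bm{\CE}^M(f)$ of Lemma \ref{dullpain}, part $(b)$ is just $\Bbbk$-linearity plus the cochain-map property of $\bm{\CE}^M(f)$, and part $(a)$ is the same homotopy-flow integration the paper uses everywhere else (cf.\ Lemma \ref{hprt}). Your identified crux for $(a)$ is also handled correctly: since $\mp$ and $\mq$ are cochain maps (Lemma \ref{basicl}) and $d_{M,M\otimes A}\,\mq(\w)=\mq\big(d_{M\otimes A,M\otimes A}\w\big)=0$, one gets
$\mp\big((\I_M\otimes d_{A,A^\pr}\s(t))\circ\mq(\w)\big)
=d_{M\otimes A^\pr\!,\,M\otimes A^\pr}\,\mp\big((\I_M\otimes\s(t))\circ\mq(\w)\big)$
with no sign surprises (all composed factors other than $\s(t)$ are degree-$0$ cocycles), and $\mr\circ\mp=0$ guarantees your homotopy $\int_0^1\Lambda(t)\,dt$ lies in $\End_{m_{A^\pr}\!}(M\otimes A^\pr)^{-1}$ as the equivalence relation on $\operatorname{Z^0Aut}_{m_{A^\pr}\!}(M\otimes A^\pr)$ requires.
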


\begin{lemma}\label{dullpainax}
For every cochain complex $M$ we have a  functor
$\bm{\mG\!\ml}^{M}:{\mathit{ho}\category{cdgA}}(\Bbbk) \rightsquigarrow \category{Grp}$,
sending
\begin{itemize}
\item
each cdg-algebra $A$ to the  group
$\bm{\mG\!\ml}^{M}(A)=\big(\operatorname{H^0Aut}_{m_A\!}(M\otimes A), [\I_{M\otimes A}], \diamond\big)$;

\item
each $[f]\in \HOM_{\mathit{ho}\cdga}(A, A^\pr)$ to
a homomorphism
$\bm{\mG\!\ml}^{M}([f]) :\bm{\mG\!\ml}^{M}(A)\rightarrow \bm{\mG\!\ml}^{M}(A^\pr)$ of groups
defined by, for all 
$[\w]\in \operatorname{H^0Aut}_{m_{A}\!}(M\otimes A)$,
$$
\bm{\mG\!\ml}^{M}([f])([\w])
:=\big[\bm{\CG\!\ell}^M(f)(\w)\big],
$$
where $f \in  \HOM_{\cdga}(A, A^\pr)$ and $\w\in \operatorname{Z^0Aut}_{m_{A}\!}(M\otimes A)$ 
are arbitrary representatives of $[f]$
and $[\w]$, respectively.
\end{itemize}
\end{lemma}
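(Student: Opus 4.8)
The plan is to obtain $\bm{\mG\!\ml}^M$ as the descent to the homotopy category of the already-constructed functor $\bm{\CG\!\ell}^M:\category{cdgA}(\Bbbk)\rightsquigarrow\category{Grp}$ of Lemma \ref{dullpaina}. The group structure on the target $\bm{\mG\!\ml}^M(A)=\big(\operatorname{H^0Aut}_{m_A\!}(M\otimes A),[\I_{M\otimes A}],\diamond\big)$ has already been verified in the construction preceding this lemma: the compatibilities $\w_1\circ\w_2\sim\tilde\w_1\circ\tilde\w_2$ and $\w^{-1}\sim\tilde\w^{-1}$ on $\operatorname{Z^0Aut}_{m_A\!}(M\otimes A)$ make $\diamond$ and $[\,\cdot\,]^{-1}$ well-defined on cohomology classes. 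Thus the remaining work is entirely about the assignment on morphisms: I must show that $\bm{\mG\!\ml}^M([f])$ is a well-defined group homomorphism depending only on the homotopy type $[f]$, and that the resulting assignment is functorial.

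First I would settle well-definedness, which I regard as the heart of the matter. Fix $[\w]\in\operatorname{H^0Aut}_{m_A\!}(M\otimes A)$ and $[f]\in\HOM_{\hcdga}(A,A^\pr)$, and suppose $f\sim\tilde f\in\HOM_{\cdga}(A,A^\pr)$ and $\w\sim\tilde\w\in\operatorname{Z^0Aut}_{m_A\!}(M\otimes A)$ are two choices of representatives. Lemma \ref{dullpaina}(b) (applied with the fixed map $\tilde f$) gives $\bm{\CG\!\ell}^M(\tilde f)(\tilde\w)\sim\bm{\CG\!\ell}^M(\tilde f)(\w)$, and Lemma \ref{dullpaina}(a) (applied at $\w$) gives $\bm{\CG\!\ell}^M(\tilde f)(\w)\sim\bm{\CG\!\ell}^M(f)(\w)$; since $\sim$ is an equivalence relation, transitivity yields $\bm{\CG\!\ell}^M(f)(\w)\sim\bm{\CG\!\ell}^M(\tilde f)(\tilde\w)$. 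Hence the class $\big[\bm{\CG\!\ell}^M(f)(\w)\big]$ is independent of both chosen representatives, so $\bm{\mG\!\ml}^M([f])([\w]):=\big[\bm{\CG\!\ell}^M(f)(\w)\big]$ is unambiguous. I want to stress that this step genuinely needs \emph{both} (a) and (b): invariance in the $\w$-variable and invariance in the $f$-variable are logically independent, and a failure of either would break the descent. No new computation is required here, since the technical content is already packaged into Lemma \ref{dullpaina}(a),(b).

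Next I would check that $\bm{\mG\!\ml}^M([f])$ is a group homomorphism. Since $\bm{\CG\!\ell}^M(f)$ is a homomorphism by Lemma \ref{dullpaina}, I compute
\[
\bm{\mG\!\ml}^M([f])([\w_1]\diamond[\w_2])=\big[\bm{\CG\!\ell}^M(f)(\w_1\circ\w_2)\big]=\big[\bm{\CG\!\ell}^M(f)(\w_1)\circ\bm{\CG\!\ell}^M(f)(\w_2)\big]=\bm{\mG\!\ml}^M([f])([\w_1])\diamond\bm{\mG\!\ml}^M([f])([\w_2]),
\]
and likewise $\bm{\mG\!\ml}^M([f])([\I_{M\otimes A}])=[\bm{\CG\!\ell}^M(f)(\I_{M\otimes A})]=[\I_{M\otimes A^\pr}]$.

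Finally I would verify functoriality. Since $\bm{\CG\!\ell}^M(\I_A)=\I$, we get $\bm{\mG\!\ml}^M([\I_A])=\I$; and using that $\bm{\CG\!\ell}^M$ is a functor—which rests on Lemma \ref{dullpain}(d), $\bm{\CE}^M(f^\pr)\circ\bm{\CE}^M(f)=\bm{\CE}^M(f^\pr\circ f)$, together with $[f^\pr]\circ_h[f]=[f^\pr\circ f]$ in $\hcdga$—I obtain
\[
\bm{\mG\!\ml}^M([f^\pr]\circ_h[f])([\w])=\big[\bm{\CG\!\ell}^M(f^\pr\circ f)(\w)\big]=\big[\bm{\CG\!\ell}^M(f^\pr)\big(\bm{\CG\!\ell}^M(f)(\w)\big)\big]=\bm{\mG\!\ml}^M([f^\pr])\big(\bm{\mG\!\ml}^M([f])([\w])\big),
\]
where in the last equality $\bm{\CG\!\ell}^M(f)(\w)$ serves as a legitimate representative of $\bm{\mG\!\ml}^M([f])([\w])$. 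All of these verifications are routine once Lemmas \ref{dullpain} and \ref{dullpaina} are in hand, so the only place demanding real care is the representative-independence handled in the second paragraph.
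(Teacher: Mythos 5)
Your proposal is correct and is exactly the routine descent argument the paper intends: the paper omits the proof of this lemma altogether, having packaged the needed invariances into Lemma \ref{dullpaina}(a),(b) and the composition law into Lemma \ref{dullpain}(d), and your two-step transitivity argument ($\bm{\CG\!\ell}^M(\tilde f)(\tilde\w)\sim\bm{\CG\!\ell}^M(\tilde f)(\w)\sim\bm{\CG\!\ell}^M(f)(\w)$) together with the descent of the homomorphism and functoriality properties is precisely how these pieces are meant to be assembled. Nothing is missing.
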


As variants of the above three lemmas, we also have the following associated corollaries.

Remind that a dg-Lie algebra is both a cochain complex and a $\Z$-graded Lie algebra,
whose differential is a derivation with respect to the bracket.   For any dg-algebra $(A, u_A, m_A, d_A)$ we have 
a dg-Lie algebra $(A, [-,-]_{m_A}, d_A)$ where $[x, y]_{m_A} := m_A(x\otimes y) -(-1)^{|x||y|}m_A(y\otimes x)$
for all $x,y \in A$.   A morphism of dg-Lie algebras is simultaneously a cochain map and a Lie algebra map.

\begin{corollary}\label{ldullpain}
For every cochain complex $M$
we have a functor
$\bm{\CL}^M:{\category{cdgA}}(\Bbbk) \rightsquigarrow \category{dgL}(\Bbbk)$, sending
\begin{itemize}
\item
each cdg-algebra $A$ to
the dg-Lie algebra $\bm{\CL}^M(A):= \big(\End_{m_A\!}(M\otimes A), [-,-]_\circ , d_{M\otimes A, M\otimes A}\big)$;
\item
each $f \in \HOM_{\cdga}(A, A^\pr)$  to
a morphism  $\bm{\CL}^M(f):\bm{\CL}^M(A)\rightarrow \bm{\CL}^M(A^\pr)$ of dg-Lie algebra
defined by $\bm{\CL}^{M}(f):=\bm{\CE}^{M}(f)$.
\end{itemize}

\end{corollary}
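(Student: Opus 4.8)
This is a corollary of Lemma~\ref{dullpain}. The plan is to realize $\bm{\CL}^M$ as the composite of the functor $\bm{\CE}^M:\category{cdgA}(\Bbbk)\rightsquigarrow\category{dgA}(\Bbbk)$ of Lemma~\ref{dullpain} with the standard \emph{underlying dg-Lie algebra} functor $\category{dgA}(\Bbbk)\rightsquigarrow\category{dgL}(\Bbbk)$ recalled just before the statement, which sends a dg-algebra $(R,u_R,m_R,d_R)$ to $(R,[-,-]_{m_R},d_R)$ with the graded commutator $[x,y]_{m_R}:=m_R(x\otimes y)-(-1)^{|x||y|}m_R(y\otimes x)$ and sends a morphism of dg-algebras to itself.

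First I would record that this underlying construction is genuinely a functor into $\category{dgL}(\Bbbk)$. For any dg-algebra $R$ the bracket $[-,-]_{m_R}$ is graded antisymmetric and satisfies the graded Jacobi identity as a purely formal consequence of the associativity of $m_R$, while the condition that $d_R$ be a derivation of $m_R$ (part of the dg-algebra datum) forces $d_R$ to be a derivation of $[-,-]_{m_R}$; hence $(R,[-,-]_{m_R},d_R)$ is a dg-Lie algebra. Likewise any morphism $\phi:R\rightarrow R^\pr$ of dg-algebras is a cochain map that is multiplicative, so it preserves the graded commutator and is therefore a morphism of dg-Lie algebras. These are the routine graded analogues of the classical construction of the commutator Lie algebra of an associative algebra.

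Applying this to $\bm{\CE}^M(A)=\big(\End_{m_A}(M\otimes A),\I_{M\otimes A},\circ,d_{M\otimes A,M\otimes A}\big)$, whose product is composition $\circ$, the induced bracket is precisely $[-,-]_\circ$, so $\bm{\CL}^M(A)=\big(\End_{m_A}(M\otimes A),[-,-]_\circ,d_{M\otimes A,M\otimes A}\big)$ is a dg-Lie algebra. For a morphism $f\in\HOM_{\cdga}(A,A^\pr)$, parts (b) and (c) of Lemma~\ref{dullpain} say that $\bm{\CE}^M(f)$ is both multiplicative for $\circ$ and commutes with the differentials, so it is a morphism of dg-Lie algebras and the assignment $\bm{\CL}^M(f):=\bm{\CE}^M(f)$ is legitimate. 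Functoriality --- preservation of identities and the composition law $\bm{\CL}^M(f^\pr)\circ\bm{\CL}^M(f)=\bm{\CL}^M(f^\pr\circ f)$ --- transfers verbatim from the functoriality of $\bm{\CE}^M$, in particular part (d) of the same lemma. There is essentially no obstacle beyond this bookkeeping: all the content was already extracted in Lemma~\ref{dullpain}, and the only point worth a line of care is that $d_{M\otimes A,M\otimes A}$ is a graded derivation of $[-,-]_\circ$, which is immediate from its being a derivation of $\circ$.
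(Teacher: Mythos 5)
Your proposal is correct and matches the paper's intended argument exactly: the paper states this as a corollary (with proof omitted) immediately after recalling the commutator dg-Lie algebra $(A,[-,-]_{m_A},d_A)$ of a dg-algebra, so the intended proof is precisely your composition of $\bm{\CE}^M$ from Lemma~\ref{dullpain} with the underlying dg-Lie algebra functor. Your extra bookkeeping --- graded antisymmetry and Jacobi from associativity, $d_{M\otimes A,M\otimes A}$ a derivation of $[-,-]_\circ$, and morphism/functoriality properties transferring from Lemma~\ref{dullpain}(a)--(d) --- is exactly what the paper leaves to the reader.
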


\begin{corollary}\label{ldullpaina}
For every cochain complex $M$ we have 
a  functor $\bm{g\!\ell}^M:{\category{cdgA}}(\Bbbk) \rightsquigarrow  \category{Lie}(\Bbbk)$,
sending 
\begin{itemize}
\item
each cdg-algebra $A$ to the  Lie algebra 
$\bm{g\!\ell}^M(A):=\big(\operatorname{Z^0End}_{m_A\!}(M\otimes A), [-,-]_\circ\big)$;
\item
each $f \in \HOM_{\cdga}(A, A^\pr)$ 
to a morphism
$\bm{g\!\ell}^M(f):\bm{g\!\ell}^M(A)\rightarrow \bm{g\!\ell}^M(A^\pr)$ of Lie algebras
defined by $\bm{g\!\ell}^M(f) := \bm{\CL}^M(f)$, 
\end{itemize}
such that
\begin{enumerate}[label=({\alph*}),leftmargin=0.8cm]
\item
$\bm{g\!\ell}^M(\tilde f)(\w) \sim\bm{g\!\ell}^M(f)(\w)  
\in \operatorname{Z^0End}_{m_{A^\pr}\!}(M\otimes A^\pr)$
for all $\w \in \operatorname{Z^0End}_{m_{A}\!}(M\otimes A)$ 
whenever $f \sim \tilde f\in \HOM_{\cdga}(A, A^\pr)$, and

\item $\bm{g\!\ell}^M( f)(\tilde\w) \sim \bm{g\!\ell}^M( f)(\w)
\in \operatorname{Z^0End}_{m_{A^\pr}\!}(M\otimes A^\pr)$
for all $f \in \HOM_{\cdga}(A, A^\pr)$ 
whenever $\w \sim \tilde\w \in \operatorname{Z^0End}_{m_{A}\!}(M\otimes A)$.
\end{enumerate}

\end{corollary}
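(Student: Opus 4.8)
The statement is the Lie-theoretic counterpart of Lemma~\ref{dullpaina}, and the plan is to obtain it by restricting the dg-Lie algebra functor $\bm{\CL}^M$ of Corollary~\ref{ldullpain} to its degree-zero cocycles, exactly as $\bm{\CG\!\ell}^M$ was carved out of $\bm{\CE}^M$ in Lemma~\ref{dullpaina}. First I would check that $\bm{g\!\ell}^M(A)=\big(\operatorname{Z^0End}_{m_A}(M\otimes A),[-,-]_\circ\big)$ is an honest Lie algebra over $\Bbbk$. By Corollary~\ref{ldullpain} the triple $\bm{\CL}^M(A)=\big(\End_{m_A}(M\otimes A),[-,-]_\circ,d_{M\otimes A, M\otimes A}\big)$ is a dg-Lie algebra, so its differential is a derivation of the bracket; consequently the bracket of two $d$-closed degree-zero elements is again $d$-closed and of degree zero, and $\operatorname{Z^0End}_{m_A}(M\otimes A)$ is a Lie subalgebra. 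Since the bracket is ungraded on degree-zero elements, $\bm{g\!\ell}^M(A)$ lands in $\category{Lie}(\Bbbk)$.

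Next I would verify that $\bm{g\!\ell}^M(f):=\bm{\CL}^M(f)=\bm{\CE}^M(f)$ restricts to a Lie algebra homomorphism. Because $\bm{\CE}^M(f)$ commutes with the differential by Lemma~\ref{dullpain}(c), it carries degree-zero cocycles to degree-zero cocycles, i.e. $\bm{\CE}^M(f)\big(\operatorname{Z^0End}_{m_A}(M\otimes A)\big)\subset \operatorname{Z^0End}_{m_{A^\pr}}(M\otimes A^\pr)$; and by Corollary~\ref{ldullpain} it respects $[-,-]_\circ$. Functoriality $\bm{g\!\ell}^M(f^\pr)\circ\bm{g\!\ell}^M(f)=\bm{g\!\ell}^M(f^\pr\circ f)$ is then immediate from Lemma~\ref{dullpain}(d). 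This makes $\bm{g\!\ell}^M:\category{cdgA}(\Bbbk)\rightsquigarrow\category{Lie}(\Bbbk)$ a functor.

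It remains to establish the two homotopy-invariance properties, and here the key observation is that $\bm{g\!\ell}^M(f)$ is the \emph{same} underlying linear map $\bm{\CE}^M(f)$ used in the group case. Property~(b) is then immediate: if $\tilde\w-\w=d_{M\otimes A, M\otimes A}\l$, then by Lemma~\ref{dullpain}(c) and linearity $\bm{\CE}^M(f)(\tilde\w)-\bm{\CE}^M(f)(\w)=\bm{\CE}^M(f)\big(d_{M\otimes A, M\otimes A}\l\big)=d_{M\otimes A^\pr\!, M\otimes A^\pr}\big(\bm{\CE}^M(f)(\l)\big)$, whence $\bm{g\!\ell}^M(f)(\tilde\w)\sim\bm{g\!\ell}^M(f)(\w)$. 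For property~(a) I would invoke the computation already carried out in Lemma~\ref{dullpaina}(a): for $f\sim\tilde f$ that argument produces an explicit cochain homotopy exhibiting $\bm{\CE}^M(\tilde f)(\w)-\bm{\CE}^M(f)(\w)$ as a coboundary, and it nowhere uses the invertibility of $\w$, so it applies verbatim to arbitrary $\w\in\operatorname{Z^0End}_{m_A}(M\otimes A)$ rather than only to $\w\in\operatorname{Z^0Aut}_{m_A}(M\otimes A)$.

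The hard part is thus property~(a)—unwinding the homotopy flow of $f(t)$ on $\HOM_{\cdga}(A,A^\pr)$ and integrating the induced flow on $\End_{m_A}(M\otimes A)$ to exhibit the required homotopy—but since this is precisely the content of the already-proven Lemma~\ref{dullpaina}(a), no genuinely new obstacle arises, and the corollary follows formally from Lemma~\ref{dullpain} and Corollaries~\ref{ldullpain} by restriction to degree-zero cocycles.
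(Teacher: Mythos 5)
Your proposal is correct and follows essentially the same route the paper intends: the paper states this corollary without proof, as a formal variant of Lemmas \ref{dullpain}--\ref{dullpainax} obtained by passing from the dg-algebra $\bm{\CE}^M(A)$ to its commutator dg-Lie algebra $\bm{\CL}^M(A)$ and restricting to degree-zero cocycles, which is exactly your derivation. The one point needing care---that the homotopy computation behind Lemma \ref{dullpaina}(a) never uses invertibility of $\w$, since the homotopy $\mp\big((\I_M\otimes\s(t))\circ\mq(\w)\big)$ lies in $\End_{m_{A^\pr}\!}(M\otimes A^\pr)^{-1}$ automatically because $\mr\circ\mp=0$ and only $d_{M\otimes A,M\otimes A}\w=0$ is used---is correctly identified and handled in your argument.
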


\begin{corollary}\label{ldullpainax}
For every cochain complex $M$ we have a  functor
$\bm{\mg\!\ml}^{M}:{\mathit{ho}\category{cdgA}}(\Bbbk) \rightsquigarrow \category{Lie}(\Bbbk)$,
sending
\begin{itemize}
\item
each cdg-algebra $A$ to the  group $\bm{\mg\!\ml}^{M}(A)=\big(\operatorname{H^0End}_{m_A\!}(M\otimes A), [-,-]_\diamond\big)$;

\item
each $[f]\in \HOM_{\mathit{ho}\cdga}(A, A^\pr)$ to
a morphism
$\bm{\mg\!\ml}^{M}([f]) :\bm{\mg\!\ml}^{M}(A)\rightarrow \bm{\mg\!\ml}^{M}(A^\pr)$ of Lie algebras
defined by
$\bm{\mg\!\ml}^{M}([f])([\w]):=\big[\bm{g\!\ell}^M(f)(\w)\big]$,
for all 
$[\w]\in \operatorname{H^0End}_{m_{A}\!}(M\otimes A)$,
where $f \in  \HOM_{\cdga}(A, A^\pr)$ and $\w\in \operatorname{Z^0End}_{m_{A}\!}(M\otimes A)$ 
are arbitrary representatives of $[f]$
and $[\w]$, respectively.
\end{itemize}
\end{corollary}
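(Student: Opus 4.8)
The plan is to mirror the argument for the group-valued functor $\bm{\mG\!\ml}^M$ of Lemma~\ref{dullpainax}, replacing the group $\bm{\CG\!\ell}^M$ and its composition by the Lie algebra $\bm{g\!\ell}^M$ and its commutator bracket, and to deduce everything from Corollary~\ref{ldullpaina} together with the strict functoriality of $\bm{\CE}^M$ recorded in Lemma~\ref{dullpain}. First I would fix a cdg-algebra $A$ and note that $\operatorname{H^0End}_{m_A\!}(M\otimes A)$ is by definition the degree-$0$ cohomology of the dg-algebra $\bm{\CE}^M(A)$ of Lemma~\ref{dullpain}. Since $d_{M\otimes A,M\otimes A}$ is a derivation of the composition product $\circ$, the composite of two degree-$0$ cocycles is again a cocycle, while the composite of a coboundary with a cocycle on either side is a coboundary; hence $\circ$ descends to an associative product on $\operatorname{H^0End}_{m_A\!}(M\otimes A)$, and $[-,-]_\diamond$ is precisely the commutator of this induced product. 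This shows that $\bm{\mg\!\ml}^M(A)=\big(\operatorname{H^0End}_{m_A\!}(M\otimes A),[-,-]_\diamond\big)$ is a well-defined Lie algebra. Equivalently, one may simply invoke Corollary~\ref{ldullpaina}, by which $\bm{g\!\ell}^M(A)$ is a Lie algebra under $[-,-]_\circ$, and observe that passing to cohomology classes sends this bracket to a well-defined bracket $[-,-]_\diamond$ on $\operatorname{H^0End}_{m_A\!}(M\otimes A)$.

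Next I would verify that $\bm{\mg\!\ml}^M([f])$ is a well-defined Lie algebra homomorphism for each $[f]\in\HOM_{\mathit{ho}\cdga}(A,A^\pr)$. Independence of the prescription $\bm{\mg\!\ml}^M([f])([\w]):=\big[\bm{g\!\ell}^M(f)(\w)\big]$ from the chosen representatives is immediate from Corollary~\ref{ldullpaina}: part~(a) gives $\bm{g\!\ell}^M(\tilde f)(\w)\sim\bm{g\!\ell}^M(f)(\w)$ whenever $f\sim\tilde f$, and part~(b) gives $\bm{g\!\ell}^M(f)(\tilde\w)\sim\bm{g\!\ell}^M(f)(\w)$ whenever $\w\sim\tilde\w$, so the cohomology class on the right depends only on $[f]$ and $[\w]$. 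That the induced map respects brackets follows by applying the cohomology projection to the identity $\bm{g\!\ell}^M(f)\big([\w_1,\w_2]_\circ\big)=\big[\bm{g\!\ell}^M(f)(\w_1),\bm{g\!\ell}^M(f)(\w_2)\big]_\circ$, which holds because $\bm{g\!\ell}^M(f)$ is a morphism of Lie algebras by Corollary~\ref{ldullpaina}.

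Finally I would check functoriality of $\bm{\mg\!\ml}^M$ on $\mathit{ho}\category{cdgA}(\Bbbk)$. Preservation of identities is clear, and for composites I would use that $\bm{\CE}^M(f^\pr)\circ\bm{\CE}^M(f)=\bm{\CE}^M(f^\pr\circ f)$ from Lemma~\ref{dullpain}(d), so that $\bm{g\!\ell}^M$ is strictly functorial on $\category{cdgA}(\Bbbk)$; combined with the well-definedness of the composition $[f^\pr]\circ_h[f]:=[f^\pr\circ f]$ of homotopy types, this yields $\bm{\mg\!\ml}^M([f^\pr]\circ_h[f])=\bm{\mg\!\ml}^M([f^\pr])\circ\bm{\mg\!\ml}^M([f])$. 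I expect no genuine obstacle: the one substantive ingredient is the homotopy invariance of the cohomology class of $\bm{g\!\ell}^M(f)(\w)$ under changes of $f$ and $\w$, and this is exactly what Corollary~\ref{ldullpaina}(a)--(b) supplies; everything else reduces to the standard fact that $H^0$ of a dg-algebra is an associative algebra, hence a Lie algebra under commutator, together with the bookkeeping already performed for the group-valued analogue in Lemma~\ref{dullpainax}.
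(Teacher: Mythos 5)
Your proposal is correct and takes essentially the approach the paper intends: the paper states this corollary without proof, as a routine variant of Lemma \ref{dullpainax}, and your argument supplies exactly the omitted details---the Lie algebra structure on $\operatorname{H^0End}_{m_A\!}(M\otimes A)$ as the commutator on $H^0$ of the dg-algebra $\bm{\CE}^M(A)$, well-definedness on homotopy/cohomology classes via Corollary \ref{ldullpaina}(a)--(b), and functoriality via Lemma \ref{dullpain}(d) combined with the well-defined composition $[f^\pr]\circ_h[f]=[f^\pr\circ f]$ of homotopy types. You also correctly treat the statement's word ``group'' as a slip for ``Lie algebra,'' and no step in your outline would fail.
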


\subsection{Linear representations of affine group dg-scheme}


We are ready to define  a linear representation of 
the functor $\bm{\CG}^{B}:\category{cdgA}(\Bbbk)\rightsquigarrow \category{Grp}$ represented by $B$.

\begin{definition}
A linear presentation of the functor
$\bm{\CG}^{B}:\category{cdgA}(\Bbbk)\rightsquigarrow \category{Grp}$
is a pair $\big(M,\bm{\r}^M\big)$, where $M$ is a cochain complex and  
${\bm{\r}^M: \bm{\CG}^{B}\Rightarrow\bm{\CG\!\ell}^{M}
:\category{cdgA}(\Bbbk)\rightsquigarrow \category{Grp}}$
is a natural transformation of the functors.
\end{definition}

\begin{remark}\label{repyoneda}
Since a linear representation $\bm{\r}^M: \bm{\CG}^{B}\Rightarrow\bm{\CG\!\ell}^{M}$ 
is a natural transformation of covariant functors,
\begin{itemize}
\item  the component $\bm{\r}^M_{\!A}:\bm{\CG}^{B}(A)\rightarrow \bm{\CG\!\ell}^{M}(A)$ of $\bm{\r}^M$ at each cdg-algebra $A$
is a homomorphism of groups, 
and  
\item
the following diagram commutes for every morphism $f:A\rightarrow A^\pr$ of cdg-algebras;
\eqnalign{repnatural}{
\xymatrixrowsep{1.5pc}
\xymatrixcolsep{4pc}
\xymatrix{
\ar[d]_-{\bm{\r}^M_{\!A}}
\bm{\CG}^{B}(A)\ar[r]^-{\bm{\CG}^{B}(f)}&\bm{\CG}^{B}(A^\pr)
\ar[d]^-{\bm{\r}^M_{\!A^\pr}}
\cr
\bm{\CG\!\ell}^{M}(A)\ar[r]^-{\bm{\CG\!\ell}^{M}(f)}&\bm{\CG\!\ell}^{M}(A^\pr)
},\quad{i.e.,}\quad
\bm{\r}^M_{\!A^\pr}\circ\bm{\CG}^{B}(f) =\bm{\CG\!\ell}^{M}(f)\circ \bm{\r}^M_{\!A}
.
}
\end{itemize}
Since $\bm{\CG}^{B}$ is representable, the Yoneda lemma implies that a natural transformation
$\bm{\r}^M:\bm{\CG}^{B}\Rightarrow \bm{\CG\!\ell}^{M}$
is completely determined  by the universal element 
$$
\bm{\r}^M_B(\I_{B}):M\otimes B\to M\otimes B.
$$
Indeed,  the naturalness  \eq{repnatural} of $\bm{\r}^M$ imposes that for every  morphism ${B\xrightarrow{g}  A}$ of cdg-algebras
we have
$\bm{\r}^M_{\!A}\big(g\big)=\bm{\r}^M_{\!A}\left(\bm{\CG}^{B}(g)(\I_{B})\right)
=\bm{\CG\!\ell}^{M}(g)\left( \bm{\r}^M_{B}(\I_{B})\right)$.
Explicitly, we obtain that
\eqnalign{univnat}{
\bm{\r}^M_{\!A}\big(g\big)
&={\mp}\Big((\I_M\otimes g)\circ {\mq}\big(\bm{\r}^M_{B}(\I_{B})\big)  \Big)
\quad\Longleftrightarrow \quad
{\mq}\Big(\bm{\r}^M_{\!A}\big(g\big)\Big) =(\I_M\otimes g)\circ {\mq}\big(\bm{\r}^M_{B}(\I_{B})\big),
}
where ${\mp}$ and ${\mq}$ are defined in Lemma \ref{basicl}.
\end{remark}

\begin{lemma}
A linear representation 
$\bm{\r}^{M}: \bm{\CG}^{B}\Rightarrow\bm{\CG\!\ell}^{M}$
of $\bm{\CG}^{B}$ induces a natural transformation
$[\bm{\r}]^{M}: \bm{\mG}^{B}\Rightarrow\bm{\mG\!\ell}^{M}
:{\mathit{ho}\category{cdgA}}(\Bbbk)\rightsquigarrow \category{Grp}$,
whose component $[\bm{\r}]^{M}_{\!A}$ at each cdg-algebra
$A$ is the  homomorphism  $[\bm{\r}]^{M}_{\!A}:\bm{\mG}^{B}(A)\rightarrow \bm{\mG\!\ell}^{M}(A)$ of groups 
defined by 
$[\bm{\r}]^{M}_{\!A}([g]):=\left[\bm{\r}^{M}_{\!A}(g)\right] \in \operatorname{H_0Aut}_{m_A\!}(M\otimes A)$ 
for all  $[g]\in \HOM_{\hcdga}(B,A)$,
where $g\in \HOM_{\cdga}(B,A)$ is an arbitrary representative of $[g]$.
\end{lemma}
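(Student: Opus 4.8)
The plan is to verify, in turn, the three things a natural transformation must satisfy: that each component $[\bm{\r}]^M_{\!A}$ is well-defined on homotopy types, that it is a group homomorphism, and that the naturality squares commute. Each of these will be obtained by assembling results already in hand, so the work is one of correctly aligning two layers of equivalence---homotopy of cdg-algebra maps below and cohomology of module automorphisms above---rather than a new computation.

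First I would settle well-definedness. The crucial input is the Yoneda identity recorded in Remark \ref{repyoneda}: regarding a cdg-algebra map $g:B\to A$ as a morphism in $\category{cdgA}(\Bbbk)$ and using $\bm{\CG}^B(g)(\I_B)=g$, one has $\bm{\r}^M_{\!A}(g)=\bm{\CG\!\ell}^M(g)\big(\bm{\r}^M_B(\I_B)\big)$, where $\bm{\r}^M_B(\I_B)\in\operatorname{Z^0Aut}_{m_B}(M\otimes B)$. Now if $g\sim\tilde g\in\HOM_{\cdga}(B,A)$, then Lemma \ref{dullpaina}(a), applied with source $B$, target $A$ and the fixed automorphism $\w=\bm{\r}^M_B(\I_B)$, gives $\bm{\CG\!\ell}^M(g)(\bm{\r}^M_B(\I_B))\sim\bm{\CG\!\ell}^M(\tilde g)(\bm{\r}^M_B(\I_B))$, that is $\bm{\r}^M_{\!A}(g)\sim\bm{\r}^M_{\!A}(\tilde g)$ in $\operatorname{Z^0Aut}_{m_A}(M\otimes A)$. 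Hence $[\bm{\r}^M_{\!A}(g)]=[\bm{\r}^M_{\!A}(\tilde g)]\in\operatorname{H^0Aut}_{m_A}(M\otimes A)$, and $[\bm{\r}]^M_{\!A}$ is independent of the chosen representative.

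Next I would check the homomorphism property. Because $\bm{\r}^M_{\!A}:\bm{\CG}^B(A)\to\bm{\CG\!\ell}^M(A)$ is already a group homomorphism (Remark \ref{repyoneda}), it carries the convolution product to composition, $\bm{\r}^M_{\!A}(g_1\star_{B\!,A}g_2)=\bm{\r}^M_{\!A}(g_1)\circ\bm{\r}^M_{\!A}(g_2)$, and the unit to $\I_{M\otimes A}$. Passing to classes, using $[g_1]\ast_{B\!,A}[g_2]=[g_1\star_{B\!,A}g_2]$ from Theorem \ref{affinegrdgsch} and the definition $[\w_1]\diamond[\w_2]=[\w_1\circ\w_2]$ of the law on $\bm{\mG\!\ml}^M(A)$, this yields $[\bm{\r}]^M_{\!A}([g_1]\ast_{B\!,A}[g_2])=[\bm{\r}]^M_{\!A}([g_1])\diamond[\bm{\r}]^M_{\!A}([g_2])$ together with $[\bm{\r}]^M_{\!A}(\ide_{B\!,A})=[\I_{M\otimes A}]$.

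Finally I would verify naturality: for $[f]\in\HOM_{\hcdga}(A,A^\pr)$ one needs $[\bm{\r}]^M_{\!A^\pr}\circ\bm{\mG}^B([f])=\bm{\mG\!\ml}^M([f])\circ[\bm{\r}]^M_{\!A}$. Evaluating at $[g]$ with chosen representatives $f,g$, the left-hand side is $[\bm{\r}^M_{\!A^\pr}(f\circ g)]$ and, by the definition of $\bm{\mG\!\ml}^M([f])$ in Lemma \ref{dullpainax}, the right-hand side is $[\bm{\CG\!\ell}^M(f)(\bm{\r}^M_{\!A}(g))]$. The strict naturality \eq{repnatural} of $\bm{\r}^M$ on $\category{cdgA}(\Bbbk)$, applied to $f$ and evaluated at $g$, reads exactly $\bm{\r}^M_{\!A^\pr}(f\circ g)=\bm{\CG\!\ell}^M(f)(\bm{\r}^M_{\!A}(g))$, so the two classes coincide; independence of the choices of $f$ and $g$ is guaranteed by the well-definedness established above and by Lemma \ref{dullpainax}. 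The only point requiring genuine attention is the one flagged at the outset---recognizing that the homotopy invariance needed downstairs is precisely Lemma \ref{dullpaina}(a) specialized to $\w=\bm{\r}^M_B(\I_B)$---after which every step is formal.
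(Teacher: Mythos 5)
Your proposal is correct and follows essentially the same route as the paper: both arguments pivot on the Yoneda identity $\bm{\r}^{M}_{\!A}(g)=\bm{\CG\!\ell}^{M}(g)\big(\bm{\r}^{M}_{B}(\I_B)\big)$ from Remark \ref{repyoneda}, deduce well-definedness from homotopy invariance of post-composition with the universal element, and get naturality formally from the strict naturality \eq{repnatural}. The only cosmetic difference is that you cite Lemma \ref{dullpaina}(a) (with source $B$) for the homotopy invariance, whereas the paper verifies the same fact in-line from a homotopy pair $\big(g(t),\chi(t)\big)$ using that ${\mp}$ and ${\mq}$ are cochain maps---which is precisely the computation underlying that lemma.
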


\begin{proof}
Let  $g \sim \tilde g \in \HOM_{\cdga}(B,A)$.
Then 
we have a homotopy pair $\big(g(t),\chi(t)\big)$ on $\HOM_{\cdga}(B,A)$ 
where $g(t) = g + d_{B\!, A} \int^t_0 \chi(s) d\!s$  is a family of morphism of cdg-algebras 
such that  $g(0) = g$ and $g(1)=\tilde g$.
From \eq{univnat} it is straightforward to check that 
$\bm{\r}^{M}_{\!A}(\tilde g) \sim \bm{\r}^{M}_{\!A}(g)  \in \operatorname{Z_0Aut}_{m_A\!}(M\otimes A)$,
since both ${\mp}$ and ${\mq}$ are cochain maps 
and $\bm{\r}^{M}_{B}(\I_B) \in \operatorname{Z_0Aut}_{m_B\!}(M\otimes B)$.
Therefore we have 
$\big[\bm{\r}^{M}_{\!A}(\tilde g)\big] 
=\big[\bm{\r}^{M}_{\!A}(g)\big]  \in \operatorname{H_0Aut}_{m_A\!}(M\otimes A)$,
so that $[\bm{\r}]^{M}_{\!A}:\bm{\mG}^{B}(A)\rightarrow \bm{\mG\!\ell}^{M}(A)$  
is well-defined homomorphism of groups for every $A$.
The naturalness  of $[\bm{\r}]^{M}_{\!A}$, i.e.,
for every $[f] \in \HOM_{\hcdga}(A, A^\pr)$ we have 
$[\bm{\r}]^M_{\!A^\pr}\circ\bm{\mG}^{B}([f]) =\bm{\mG\!\ell}^{M}([f])\circ [\bm{\r}]^M_A$
follows from the naturalness  \eq{repnatural} of $\bm{\r}^{M}_{\!A}$ and the definitions of 
$[\bm{\r}]^{M}_{\!A}$,  $\bm{\mG}^{B}([f])$ and $\bm{\mG\!\ell}^{M}([f])$.
\qed
\end{proof}

\begin{definition}
A linear representation of an affine group dg-scheme $\bm{\mG}^{B}$ 
is a pair $\big(M, [\bm{\r}]^{M}\big)$ of cochain complex $M$ and 
a natural transformation 
${[\bm{\r}]^{M}: \bm{\mG}^{B}\Rightarrow\bm{\mG\!\ell}^{M}}$,
which is
\emph{induced} from a linear representation 
$\bm{\r}^{M}: \bm{\CG}^{B}\Rightarrow\bm{\CG\!\ell}^{M}$
of  $\bm{\CG}^{B}$. 
\end{definition}

\begin{remark}
Despite of the above definition we will  work with  linear representations of 
$\bm{\CG}^{B}$ rather than those of $\bm{\mG}^{B}$.
 Working with the dg-tensor category of linear representations of 
$\bm{\CG}^{B}$  will be a crucial step for  our Tannakian reconstructions 
of both $\bm{\CG}^{B}$ and $\bm{\mG}^{B}$
in the next section.
The linear representations   of $\bm{\CG}^{B}$ shall form a dg-tensor category
$\dgcat{Rep}(\bm{\CG}^{B})$.
We regard $\dgcat{Rep}(\bm{\CG}^{B})$ as the  dg-tensor category
of "linear representations of $\bm{\mG}^{B}$".
\end{remark}

Here are two basic examples of linear representations of $\bm{\CG}^{B}$.

\begin{example}[The trivial representation] \label{trivalrepsh}
The ground field $\Bbbk$ as a cochain complex $\Bbbk=(\Bbbk, 0)$ with zero differential
defines the trivial representation  $\big(\Bbbk, \bm{\r}^{\Bbbk}\big)$, where the component  
$\bm{\r}^{\Bbbk}_{\!A}$ 
of $\bm{\r}^{\Bbbk}:\bm{\CG}^{B}\Rightarrow \bm{\CG\!\ell}^{\Bbbk}$
at every cdg-algebra $A$ is the 
trivial homomorphism $\bm{\r}^{\Bbbk}_{\!A}:\bm{\CG}^{B}(A)\rightarrow\bm{\CG\!\ell}^{\Bbbk}(A)$ of groups:
$\forall g \in \HOM_{\cdga}(B, A)$,
\eqn{trivialrep}{
\bm{\r}^{\Bbbk}_{\!A}(g) := \I_\Bbbk \otimes \I_A: \Bbbk\otimes A\rightarrow \Bbbk\otimes A
.
}
\end{example}

\begin{example}[The regular representation] \label{regularrepsh}
Associated to the cdg-Hopf algebra $B$ as a cochain complex
we have the regular representation  $\big(B, \bm{\r}^{B}\big)$, where the component  
$\bm{\r}^{B}_{\!A}$ 
of $\bm{\r}^{B}:\bm{\CG}^{B}\Rightarrow \bm{\CG\!\ell}^{B}$
at each cdg-algebra $A$ is the 
homomorphism $\bm{\r}^{B}_{\!A}:\bm{\CG}^{B}(A)\rightarrow\bm{\CG\!\ell}^{A}$ of groups
defined by, $\forall g \in \HOM_{\cdga}(B,A)$,
\eqnalign{regularrep}{
\bm{\r}^B_{\!A} (g)
=
&
(\I_B\otimes m_A)\circ (\I_B\otimes g\otimes \I_A)\circ (\cp_B\otimes \I_A)
={\mp}\big((g\otimes\I_A)\circ \cp_B\big)
\cr
&
\xymatrixcolsep{3pc}
\xymatrix{
B\otimes A \ar[r]^-{\cp_B\otimes \I_A} & B\otimes B \otimes A \ar[r]^-{\I_B\otimes g\otimes \I_A} 
& B\otimes A\otimes A \ar[r]^-{\I_B\otimes m_A}& B\otimes A
}
}
We can check that $\big(B, \bm{\r}^{B}\big)$ is a linear representation as follows
\begin{itemize}
\item 
We have $d_{M\otimes A,M\otimes A}\bm{\r}^B_A (g)
= {\mp}\big((d_{B\!,A}g\otimes\I_A)\circ \cp_B\big)=0$
for  all $g \in \HOM_{\cdga}(B,A)$,
since both ${\mp}$ and $\cp_B$ are cochain maps;

\item
We have $\bm{\r}^B_A (u_A\circ \ep_B)
=  (\I_B\otimes m_A)\circ \big(\I_B\otimes( u_A\circ \ep_B) \otimes \I_A\big)\circ (\cp_B\otimes \I_A)
=\I_{B\otimes A}
.
$

\item
For all $g_1,g_2 \in \HOM_\cdga(B,A)$:
\eqalign{
\bm{\r}^B_A \big(g_1 &\star_{B\!,A} g_2\big)
:=
(\I_B\otimes m_A)\circ \Big(\I_B\otimes \big(m_A\circ (g_1\otimes g_2)
\circ \cp_B\big)\otimes \I_A\Big)\circ (\cp_B\otimes \I_A)
\cr
=
&
(\I_B\otimes m_A)\circ (\I_B\otimes g_1\otimes \I_A)\circ (\cp_B\otimes \I_A)
\circ (\I_B\otimes m_A)\circ (\I_B\otimes g_2\otimes \I_A)\circ (\cp_B\otimes \I_A)
\cr
=
&\bm{\r}^B_A (g_1) \circ \bm{\r}^B_A (g_2) 
}
The 2nd equality is due to coassociativity of $\cp_B$ and the associativity of $m_A$.
\qed
\end{itemize}

\end{example}

\begin{definition}[Lemma]
\label{repofpshgdcat}
The dg-tensor category 
$\Big(\dgcat{{Rep}}\big(\bm{\CG}^B\big), \bm{\otimes},  \big(\Bbbk, \bm{\r}^\Bbbk\big)\Big)$
of linear representations
of $\bm{\CG}^B$ 
is
defined as follows.

\begin{enumerate}[label=({\alph*})]
\item
An object is a linear representation $\big(M,\bm{\r}^{M}\big)$ of 
$\bm{\CG}^B$.

\item
A morphism $\p: \big(M,\bm{\r}^{M}\big)\rightarrow \big(M^\pr,\bm{\r}^{M^\pr}\big)$ 
of linear representations  of
$\bm{\CG}^B$ is a linear map $\p:M \rightarrow M^\pr$
making the following diagram commutative
for every cdg-algebra $A$
and every $g \in \bm{\CG}^B(A)$  
$$
\xymatrixcolsep{3pc}
\xymatrix{
 \ar[d]_-{\p\otimes \I_A}
M\otimes A \ar[r]^-{\bm{\r}^{M}_{\!A}(g)} & M\otimes A
 \ar[d]^-{\p\otimes \I_A}
\cr
M^\pr\otimes A \ar[r]^-{\bm{\r}^{M^\pr}_{A}(g)} & M^\pr\otimes A
,}
\quad\hbox{i.e.,}\quad
(\p\otimes \I_A)\circ\bm{\r}^{M}_{\!A}(g)
=\bm{\r}^{M^\pr}_{\!A}(g)\circ (\p\otimes \I_A)
.
$$

\item 
The differential of a morphism 
$\p: \big(M,\bm{\r}^M\big)\rightarrow \big(M^\pr,\bm{\r}^{M^\pr}\big)$ of linear representations 
is the morphism  
$d_{M\!,M^\pr}\p: \big(M,\bm{\r}^M\big)\rightarrow \big(M^\pr,\bm{\r}^{M^\pr}\big)$ of linear representations.

\item The tensor product $\big(M,\bm{\r}^{M}\big)\bm{\otimes} \big(M^\pr,\bm{\r}^{M^\pr}\big)$ of two objects
is the linear representation $(M\otimes M^\pr, \bm{\r}^{M\otimes M^\pr})$, where
$M\otimes M^\pr=(M\otimes M^\pr,  d_{M\otimes M^\pr})$ is the tensor product of cochain complexes
and  $ \bm{\r}^{M\otimes M^\pr} : \bm{\CG}^B\Rightarrow\bm{\CG\!\ell}^{M\otimes M^\pr}$ is the natural transformation
whose component $\bm{\r}^{M\otimes M^\pr}_A: \bm{\CG}^B(A)\rightarrow\bm{\CG\!\ell}^{M\otimes M^\pr}(A)$ at each cdg-algebra $A$ 
is the group homomorphism defined by, $\forall g \in \HOM_{\cdga}(B,A)$,
\eqalign{
\bm{\r}^{M\otimes M^\pr}_A(g)
:=& \bm{\r}^{M}_{\!A}(g)\otimes_{m_A}\!\bm{\r}^{M^\pr}_A(g)
}
The unit object for the tensor product is the trivial representation 
$(\Bbbk, \bm{\r}^\Bbbk)$ in Example \ref{trivalrepsh}.
\end{enumerate}
\end{definition}
\begin{proof}
We check that $\dgcat{{Rep}}(\bm{\CG}^B)$ is a dg-category as follows.
Let $\psi: \big(M, \bm{\r}^{M}\big) \rightarrow \big(M^\pr, \bm{\r}^{M^\pr}\big)$ be  a morphism of linear representations.
Then,  for every cdg-algebra $A$ we have 
\eqn{borea}{
(\p\otimes \I_A)\circ\bm{\r}^{M}_{\!A}(g)
= \bm{\r}^{M^\pr}_{\!A}(g)\circ (\p\otimes \I_A)
,\qquad
\begin{cases}
d_{M\otimes A}\circ \bm{\r}^M_A(g) =  \bm{\r}_M^A(g) \circ d_{M\otimes A}
,\cr
d_{M^\pr\otimes A}\circ \bm{\r}^{M^\pr}_A(g) =  \bm{\r}^{M^\pr}_A(g) \circ d_{M^\pr\otimes A}
,
\end{cases}
}
where  the $2$nd set of relations is due to the conditions 
that
$\bm{\r}^{M}_{\!A}(g) \in \operatorname{Z^0Aut}_{m_A}(M\otimes A)$ and 
$\bm{\r}^{M^\pr}_{\!A}(g) \in \operatorname{Z^0Aut}_{m_A}(M^\pr\otimes A)$.
Combining the relations in \eq{borea} we can deduce that
$$
(d_{M\!,\, M^\pr}\p\otimes \I_A)\circ\bm{\r}^{M}_{\!A}(g)
= \bm{\r}^{M^\pr}_{\!A}(g)\circ (d_{M\!,\, M^\pr}\p\otimes \I_A)
.
$$
Therefore $d_{M\!,\, M^\pr}\p:\big(M, \bm{\r}^{M}\big) \rightarrow \big(M^\pr, \bm{\r}^{\!M^\pr}\big)$
is also a morphism of linear representations. It is obvious that $d_{M\!,\, M^\pr}\circ d_{M\!,\, M^\pr}=0$.
Hence $\HOM_{\dgcat{{Rep}}(\bm{\CG}^B)}\big((M, \bm{\r}^{M}), (M^\pr, \bm{\r}^{\!M^\pr})\big)$
is a cochain complex with the differential $d_{M\!,\, M^\pr}$.
For another morphism
$\p^\pr: \big(M^\pr, \bm{\r}^{M^\pr}\big)\rightarrow \big(M^\ppr, \bm{\r}^{M^\ppr}\big)$ 
of linear representations, the composition
$\p^\pr\circ \p: \big(M, \bm{\r}^M\big)  \rightarrow \big(M^\ppr,\bm{\r}^{M^\ppr}\big)$ is
also a morphism of linear representations and we have
$d_{M\!,\,M^\ppr}\big(\p^\pr\circ \p\big)
=d_{M^\pr\!,\,M^\ppr}\p^\pr\circ \p+(-1)^{|\p^\pr|}\p^\pr\circ d_{M\!,\,M^\pr} \p$.
Moreover, one can easily check that the tensor product
and the unit object in $(d)$ endow 
$\dgcat{{Rep}}\big(\bm{\CG}^B\big)$ with a structure of dg-tensor category. 
\qed
\end{proof}

\begin{definition}
A linear presentation of the functor
$\bm{T\!\CG}^{B}:\category{cdgA}(\Bbbk)\rightsquigarrow \category{Lie}(\Bbbk)$
is a pair $\big(M,\bm{\e}^M\big)$, where $M$ is a cochain complex and  
$\bm{\e}^M: \bm{T\!\CG}^{B}\Rightarrow\bm{g\!\ell}^{M}
:\category{cdgA}(\Bbbk)\rightsquigarrow \category{Lie}(\Bbbk)$
is a natural transformation of the functors.
\end{definition}

We can check that
a linear representation 
$\bm{\ep}^{M}: \bm{T\!\CG}^{B}\Rightarrow\bm{g\!\ell}^{M}$
of $\bm{T\!\CG}^{B}$ induces a natural transformation
$[\bm{\e}]^{M}: \bm{T\!\mG}^{B}\Rightarrow\bm{\mg\!\ml}^{M}
:{\mathit{ho}\category{cdgA}}(\Bbbk)\rightsquigarrow \category{Lie}(\Bbbk)$,
whose component $[\bm{\e}]^{M}_{\!A}$ at each cdg-algebra
$A$ is the  homomorphism  $[\bm{\e}]^{M}_{\!A}:\bm{T\!\mG}^{B}(A)\rightarrow \bm{\mg\!\ml}^{M}(A)$ of groups 
defined by 
$[\bm{\e}]^{M}_{\!A}([\ups])=\left[\bm{\e}^{M}_{\!A}(\ups)\right] \in \operatorname{H_0End}_{m_A\!}(M\otimes A)$ 
for all  $[\ups]\in \THOM_{\hcdga}(B,A)$,
where $\ups\in \THOM_{\cdga}(B,A)$ is an arbitrary representative of $[\ups]$.

\begin{definition}
A linear presentation of the functor
$\bm{T\!\mG}^{B}$
is a pair $\big(M,[\bm{\e}]^M\big)$, where $M$ is a cochain complex and  
$[\bm{\e}]^M: \bm{T\!\mG}^{B}\Rightarrow\bm{\mg\!\ml}^{M}$ is
a natural transformation induced from a linear representation 
$\bm{\e}^M: \bm{T\!\CG}^{B}\Rightarrow\bm{g\!\ell}^{M}$ of $\bm{T\!\CG}^{B}$.
\end{definition}

\begin{remark}
We can also form a dg-tensor category of linear representations of 
$\bm{T\!\CG}^{B}$.  It is not difficult to show that the dg-tensor categories
formed by linear presentations of $\bm{T\!\CG}^{B}$ and linear representations of 
$\bm{\CG}^{B}$ are isomorphic  if $\bm{\CG}^B$ is pro-unipotent, or equivalently, $B$ is conilpotent.
\end{remark}

\subsection{The dg-tensor category of dg-comodules over a cdg-Hopf algebra}

A \emph{right dg-comodule} over a cdg-Hopf algebra $B$
is a pair $\big(M,\g^M\big)$ of a cochain complex $M=\big(M, d_M\big)$
and  a right coaction $\g^M:M\rightarrow M\otimes B$, which is a cochain map
making the following diagrams commutative:
$$
\xymatrix{
\ar[dr]_-{\jmath_M^{-1}}
M\ar[r]^-{\g^M} & M\otimes B
\ar[d]^-{\I_M\otimes \ep_B}
\cr
& M\otimes \Bbbk
}\qquad
\xymatrix{
\ar[d]_{\g^M}
M\ar[r]^{\g^M} & M\otimes B \ar[d]^-{\I_M\otimes \cp_B}
\cr
M\otimes B \ar[r]^-{\g^M\otimes \I_B}& M\otimes B\otimes B
.}
$$
That is, $\g^M \in \Hom(M, M\otimes B)^0$ and satisfies
\eqnalign{raction}{
\g^M\circ d_M =d_{M\otimes B}\circ \g^M
,\quad
\begin{cases}
(\g^M\otimes \I_B)\circ \g^M = (\I_M\otimes \cp_B)\circ \g^M
,\cr
(\I_M\otimes \ep_B)\circ \g^M = \jmath^{-1}_M
.
\end{cases}
}
\begin{example} Here are some standard examples.
\begin{enumerate}

\item
The ground field $\Bbbk$ as a cochain complex $(\Bbbk,0)$ is
a right dg-comodule $\big(\Bbbk, \g^{\Bbbk}\big)$ over $B$ with
the coaction $\g^\Bbbk:=\imath^{-1}_B\circ u_B:\xymatrix{\Bbbk \ar[r]^-{u_B}& B \ar[r]^-{\imath^{-1}_B}& \Bbbk\otimes B}$.

\item
The cdg-Hopf algebra $B$ as a cochain complex $(B,d_B)$ is
a right dg-comodule $\big(B, \cp_B\big)$ over $B$ with
the coaction $\cp_B: B \rightarrow B\otimes B$. Note that $\big(B, \cp_B\big)$ is also a left dg-comodule over $B$.

\item
For every cochain complex $M=(M, d_M)$ we have 
a right dg-comodule  $\big(M\otimes B, \I_M\otimes \cp_B\big)$ with the coaction $\I_M\otimes \cp_B: M\otimes B \rightarrow M\otimes B\otimes B$,
called the \emph{cofree} right dg-comodule over $B$ cogenerated by $M$.

\end{enumerate}
\end{example}

A morphism $\xymatrix{(M,\g^M)\ar[r]^{\p}& (M^\pr,\g^{M^\pr})}$ 
of right dg-comodules over $B$ is a  linear map $\p:M\rightarrow M^\pr$ 
making the following diagram commutes:
\eqn{cdgomor}{
\xymatrixcolsep{3pc}
\xymatrix{
\ar[d]_-{\p}M\ar[r]^-{\g^M} & M\otimes B \ar[d]^-{\p\otimes \I_B}
\cr
M^\pr \ar[r]^-{\g^{M^\pr}} &M^\pr\otimes B
}
,\qquad  \hbox{i.e.,}\qquad \g^{M^\pr}\circ \p = (\p\otimes \I_B)\otimes \g^M.
}
We can check that 
$d_{M\!,\, M^\pr}\p: M \rightarrow M^\pr$ is a morphism
of right dg-comodules over $B$ whenever $\p: M \rightarrow M^\pr$ is a morphism of right dg-comodules over $B$:
\eqalign{
\g^{M^\pr}\circ \p = (\p\otimes \I_B)\otimes \g^M
\quad\Longrightarrow\quad
\g^{M^\pr}\circ d_{M\!,\, M^\pr}\p = \big(d_{M\!,\, M^\pr}\p\otimes \I_B\big)\otimes \g^M
 .
 }

The tensor product  of right dg-comodules $\big(M,\g^M\big)$ and $\big(N,\g^N\big)$ 
over $B$ is the  right dg-comodule $\big(M,\g^M\big)\otimes_{m_B}\!\big(N,\g^N\big):=\big(M\otimes N, \g^{M\otimes_{m_B}\! N}\big)$ over $B$,
where $M\otimes N=(M\otimes N, d_{M\otimes N})$ is the tensor product of
the underlying cochain complexes and the  coaction $\g^{M\otimes_{m_B}\! N}: M\otimes N \rightarrow M\otimes N\otimes B$ is
defined by
\eqalign{
\g^{M\otimes_{m_B}\! N} &:= 
( \I_{M}\otimes \I_{N}\otimes m_B)\circ (\I_B\otimes \t \otimes \I_N)\circ (\g^M\otimes \g^N)
,\cr
&\xymatrixcolsep{3pc}
\xymatrix{ M\otimes N \ar[r]^-{\g^M\otimes \g^N} & M\otimes B\otimes N \otimes B
\ar[r]^-{\I_M\otimes \t\otimes \I_B}&M \otimes N\otimes B\otimes B \ar[r]^-{ \I_{M}\otimes \I_{N}\otimes m_B}
&
M\otimes N\otimes B.
}
}
Let
$\xymatrix{(M, \g^{M})\ar[r]^-{\p} & (M^\pr, \g^{M^\pr})}$ and $\xymatrix{(N, \g^{N})\ar[r]^-{\phi} & (N^\pr, \g^{N^\pr})}$
be morphisms of  right dg-comodules over $B$. Then 
the linear map
$\p\otimes\phi: M\otimes N\rightarrow M^\pr\otimes N^\pr$ is
a morphism 
$\xymatrix{(M\otimes N, \g_{M\otimes N})\ar[r]^-{\p\otimes \phi} & (M^\pr\otimes N^\pr, \g_{M^\pr\otimes N^\pr})}$
of right dg-comodules  over $B$, and we have
$$
d_{M\otimes N, M^\pr\otimes N^\pr}\big(\p\otimes\phi\big)
= d_{M, M^\pr}\p\otimes\phi +(-1)^{|\p|}\p\otimes d_{N, N^\pr} \phi.
$$

\begin{definition}[Lemma]
The right dg-comodules over $B$ form a dg-tensor category
$\Big({\dgcat{{dgComod}}_R(B)},\otimes_{m_B}, \big(\Bbbk, \g^{\Bbbk}\big)\Big)$.
\end{definition}
\begin{proof}
Exercise.
\qed
\end{proof}

Consider the functor $\bm{\CG}^B: \category{cdgA}(\Bbbk)\rightsquigarrow \category{Grp}$
represented by cdg-Hopf algebra $B$.

\begin{theorem}\label{repmod}
The dg-category $\dgcat{Rep}\big(\bm{\CG}^B\big)$ of linear representations
of $\bm{\CG}^B$ is isomorphic to the dg-category
of $\dgcat{dgComod}_R(B)$ of right dg-comodules over $B$
as dg-tensor categories.
Explicitly, 
we have an isomorphism of dg-tensor categories 
$$\xymatrix{{\functor{X}}:\dgcat{Rep}\big(\bm{\CG}^B\big)\ar@{~>}@/^/[r]
& \ar@{~>}@/^/[l] \dgcat{dgComod}_R(B): {\functor{Y}}}$$
defined as follows.
\begin{itemize}

\item The functor $\functor{X}$ sends each linear representation $\big(M,\bm{\r}^M\big)$ to the right dg-comodule 
$\big(M,\widebreve\g^{M}\big)$ over $B$, where
\eqnalign{funcx}{
\widebreve\g^{M}:=&{\mq}\big(\bm{\r}^{M}_B(\I_B)\big)= \bm{\r}^M_B(\I_{B})\circ (\I_M\otimes u_B)\circ \jmath^{-1}_M: M\rightarrow M\otimes B
}
and each morphism $\p:\big(M,\bm{\r}^{M}\big)\rightarrow \big(M^\pr,\bm{\r}^{M^\pr}\big)$ of linear representations
to the morphism 
$\p:\big(M,\widebreve{\g}^{M}\big)\rightarrow \big(M^\pr,\widebreve{\g}^{M^\pr}\big)$ of right dg-comodules over $B$.

\item 
The functor $\functor{Y}$ sends each right dg-comodule 
$\big(M,\g^{M}\big)$ over $B$ to the linear representation $\big(M,\widebreve{\bm{\r}}^{M}\big)$,
where the component  $\widebreve{\bm{\r}}^M_A$ of $\widebreve{\bm{\r}}^M$ at a cdg-algebra
$A$ is defined by, $\forall g \in \HOM_{\cdga}(B, A)$,
\eqnalign{funcy}{
\widebreve{\bm{\r}}^{M}_{\!A}(g):=
&{\mp}\Big((\I_M\otimes g)\circ \g^{M}\Big)
\cr
=
& (\I_M\otimes m_A)\circ  (\I_M\otimes g\otimes \I_A)\circ (\g^{M}\otimes \I_A)
:M\otimes A\rightarrow M\otimes A,
}
and each morphism $\p:\big(M,\g^{M}\big)\rightarrow \big(M^\pr,\g^{M^\pr}\big)$ of right dg-comodules over $B$
to the morphism  $\p:\big(M,\widebreve{\bm{\r}}^{M}\big)\rightarrow \big(M^\pr,\widebreve{\bm{\r}}^{\!M^\pr}\big)$ 
of linear representations.
\end{itemize}

\end{theorem}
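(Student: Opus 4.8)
The plan is to show that $\functor{X}$ and $\functor{Y}$ are well defined on objects and morphisms, are mutually inverse, and respect the differentials and the tensor structures; the bijection $\mp,\mq$ of the corollary following Lemma~\ref{basicl} (with $\mq\circ\mp=\I$ on $\Hom(M,N\otimes A)$ and $\mp\circ\mq=\I$ on $\End_{m_A}=\Ker\mr$) will mediate every passage between the representation side and the comodule side. I would begin with $\functor{Y}$, the cleaner direction. Fix a right dg-comodule $(M,\g^M)$ and set $\widebreve{\bm{\r}}^M_A(g):=\mp\big((\I_M\otimes g)\circ\g^M\big)$. Each $\widebreve{\bm{\r}}^M_A(g)$ lies in $\End_{m_A}(M\otimes A)$ since $\mr\circ\mp=0$, is of degree $0$, and is a cocycle because $\mp$, $g$ and $\g^M$ are cochain maps. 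Using the composition rule $\mq\big(\mp(\a)\circ\mp(\b)\big)=(\I_M\otimes m_A)\circ(\a\otimes\I_A)\circ\b$ (immediate from $\mq\circ\mp=\I$), the multiplicativity $\widebreve{\bm{\r}}^M_A(g_1\star_{B,A}g_2)=\widebreve{\bm{\r}}^M_A(g_1)\circ\widebreve{\bm{\r}}^M_A(g_2)$ becomes, after applying $\mq$, exactly the coassociativity $(\g^M\otimes\I_B)\circ\g^M=(\I_M\otimes\cp_B)\circ\g^M$, while unit-preservation $\widebreve{\bm{\r}}^M_A(u_A\circ\ep_B)=\I_{M\otimes A}$ is the counit axiom $(\I_M\otimes\ep_B)\circ\g^M=\jmath^{-1}_M$. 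Invertibility of $\widebreve{\bm{\r}}^M_A(g)$ is then automatic: $g$ has inverse $g\circ\vs_B$ in the group $\bm{\CG}^B(A)$, and a monoid homomorphism sends it to a two-sided inverse. Naturalness in $A$ is \eq{univnat} read off directly, so $\functor{Y}(M,\g^M)=(M,\widebreve{\bm{\r}}^M)$ is a linear representation.

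Next I would treat $\functor{X}$ by reading these equivalences backwards. Given $(M,\bm{\r}^M)$, the element $\bm{\r}^M_B(\I_B)\in\operatorname{Z^0Aut}_{m_B}(M\otimes B)$ is a degree-$0$ cocycle, so $\widebreve\g^M:=\mq(\bm{\r}^M_B(\I_B))$ is a degree-$0$ cochain map. The counit axiom follows from naturality of $\bm{\r}^M$ at the cdg-algebra map $\ep_B:B\to\Bbbk$: since $\ep_B=u_\Bbbk\circ\ep_B$ is the identity element of $\bm{\CG}^B(\Bbbk)$, unit-preservation forces $\bm{\CE}^M(\ep_B)\big(\bm{\r}^M_B(\I_B)\big)=\I_{M\otimes\Bbbk}$, which upon applying $\mq$ (and the formula for $\bm{\CE}^M$ in Lemma~\ref{dullpain}) reads $(\I_M\otimes\ep_B)\circ\widebreve\g^M=\jmath^{-1}_M$. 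Coassociativity follows from multiplicativity of $\bm{\r}^M$ evaluated at the universal instance $A=B\otimes B$, $g_1=i_B$, $g_2=i_B^\pr$ (the two canonical inclusions, for which $i_B\star_{B,B\otimes B}i_B^\pr=\cp_B$ and $m_{B\otimes B}\circ(i_B\otimes i_B^\pr)=\I_{B\otimes B}$): the composition rule and \eq{univnat} then collapse the multiplicativity identity to $(\widebreve\g^M\otimes\I_B)\circ\widebreve\g^M=(\I_M\otimes\cp_B)\circ\widebreve\g^M$. For morphisms, a linear map $\p$ intertwines the representations, i.e. $(\p\otimes\I_A)\circ\bm{\r}^M_A(g)=\bm{\r}^{M^\pr}_A(g)\circ(\p\otimes\I_A)$ for all $(A,g)$, iff---specializing to $A=B$, $g=\I_B$ and applying $\mq$, with the converse supplied by \eq{univnat}---it is a comodule map $\g^{M^\pr}\circ\p=(\p\otimes\I_B)\circ\g^M$; thus $\functor{X}$ and $\functor{Y}$ agree with, and act as the identity on, the underlying linear maps of morphisms.

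Mutual inverseness on objects is then formal: $\functor{X}\functor{Y}(M,\g^M)=(M,\mq(\mp(\g^M)))=(M,\g^M)$ by $\mq\circ\mp=\I$, and $\functor{Y}\functor{X}(M,\bm{\r}^M)=(M,\bm{\r}^M)$ because \eq{univnat} reconstructs $\bm{\r}^M_A(g)=\mp\big((\I_M\otimes g)\circ\widebreve\g^M\big)=\widebreve{\bm{\r}}^M_A(g)$; on morphisms both functors are the identity. Since the Hom-complexes on both sides are subcomplexes of $\Hom(M,M^\pr)$ carrying the same differential $d_{M,M^\pr}$ and $\functor{X},\functor{Y}$ act as the identity on them, they induce cochain maps on Hom-complexes, so they are inverse dg-functors.

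Finally I would verify tensor compatibility. Units correspond by inspection, $\functor{X}(\Bbbk,\bm{\r}^\Bbbk)=(\Bbbk,\g^\Bbbk)$. For the tensor product, evaluating the identity $\mq(\w\otimes_{m_A}\w^\pr)=(\I\otimes m_A)\circ(\I\otimes\t\otimes\I)\circ(\mq(\w)\otimes\mq(\w^\pr))$ of Lemma~\ref{tensorfrmod} at $A=B$, $\w=\bm{\r}^M_B(\I_B)$, $\w^\pr=\bm{\r}^{M^\pr}_B(\I_B)$, together with $\bm{\r}^{M\otimes M^\pr}_B(\I_B)=\bm{\r}^M_B(\I_B)\otimes_{m_B}\bm{\r}^{M^\pr}_B(\I_B)$ from Definition~\ref{repofpshgdcat}, shows $\widebreve\g^{M\otimes M^\pr}=\widebreve\g^M\otimes_{m_B}\widebreve\g^{M^\pr}$, the comodule tensor coaction; hence $\functor{X}$ (and symmetrically $\functor{Y}$) is a tensor dg-functor, and the isomorphism is one of dg-tensor categories. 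The main obstacle throughout is the object-level dictionary between the \emph{multiplicativity} of $\bm{\r}^M$ and the \emph{coassociativity} of $\widebreve\g^M$: this is the single place where the coalgebra structure of $B$ is genuinely used, and it is handled most cleanly by proving the coassociativity $\Rightarrow$ multiplicativity direction head-on for $\functor{Y}$ and extracting the converse for $\functor{X}$ from the universal instance above.
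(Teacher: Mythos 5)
Your proposal is correct and takes essentially the same route as the paper's own proof: the same $\mp$/$\mq$ dictionary, the same universal instance $A=B\otimes B$ with the two inclusions satisfying $m_{B\otimes B}\circ(i_1\otimes i_2)=\I_{B\otimes B}$ to convert multiplicativity of $\bm{\r}^M$ into coassociativity of $\widebreve\g^{M}$, invertibility of $\widebreve{\bm{\r}}^{M}_{\!A}(g)$ via the monoid-homomorphism argument, and tensor compatibility via the formula for $\mq(\w\otimes_{m_A}\w^\pr)$. The only cosmetic deviations (handling $\functor{Y}$ before $\functor{X}$, getting the counit axiom from naturality at $\ep_B:B\to\Bbbk$ rather than from $\bm{\r}^{M}_{\!A}(u_A\circ\ep_B)=\I_{M\otimes A}$ specialized to $A=\Bbbk$, and deducing the tensor property of $\functor{Y}$ from that of $\functor{X}$ instead of checking it directly) do not alter the argument.
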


\begin{proof}
We need to check that both $\functor{X}$
and $\functor{Y}$ are dg-tensor functors and show that they are inverse to each other.

1. We check that  
$\big(M,\widebreve\g^{\!M}\big)=\functor{X}\big(M,\bm{\r}^{\!M}\big)$ is a right dg-comodule over $B$ as follows.

From \eq{univnat}  in Remark  \ref{repyoneda} and the definition \eq{funcx},
we have the following relation for every morphism $g:B\to A$ of cdg-algebras:
\eqnalign{Yoneda lemma implies}{
\check{\mq}\Big(\bm{\r}_A^{\!M}\big(g\big)\Big)=(\I_M\otimes g)\circ\widebreve\g^{\!M}:M\to M\otimes A.
}
The component $\bm{\r}_A^{\!M}$ of  $\bm{\r}^{\!M}$ at  every cdg-algebra $A$, by definition,
is a morphism
$\bm{\r}_A^{\!M}:\bm{\CG}^{\!\!B}(A)\rightarrow \bm{\CG\!\ell}^{\!\!M}(A)$ of groups, i.e.,
for every pair of morphisms $g_1,g_2:B\to A$ of cdg-algebras, we have
\eqnalign{group homomorphism condition}{
\bm{\r}_A^{\!M}\big(u_A\circ\ep_B\big)=\I_{M\otimes A},
\qquad
\bm{\r}_A^{\!M}\big(g_1\star_{B,A}g_2\big)=\bm{\r}_A^{\!M}(g_1)\circ \bm{\r}_A^{\!M}(g_2).
}
\begin{itemize}
\item
Applying $\mq$ on the $1$st equality of \eq{group homomorphism condition}
and using  \eq{Yoneda lemma implies}, we have
\eqnalign{unital action actiom}{
(\I_M\otimes u_A)\circ(\I_M\otimes \ep_B)\circ\widebreve\g^{\!M}=\I_M\otimes u_A.
}
By putting $A=\Bbbk$, we obtain that  $(\I_M\otimes \ep_B)\circ\widebreve\g^{\!M}=\jmath^{-1}_M$.

\item
Applying $\mq$ on the $2$nd equality of \eq{group homomorphism condition}, we have
\eqnalign{showing the action axiom}{
(\I_M\otimes m_A)\circ&(\I_M\otimes g_1\otimes g_2)\circ(\I_M\otimes \cp_B)\circ\widebreve\g^{\!M}
\cr
&
=(\I_M\otimes m_A)\circ(\I_M\otimes g_1\otimes g_2)\circ(\widebreve\g^{\!M}\otimes\I_B)\circ\widebreve\g^{\!M}.
}
Consider the cdg-algebra $B\otimes B$ and the inclusion maps $i_1,i_2:B\to B\otimes B$
\[
i_1:=
\xymatrix{
B \ar[r]^-{\jmath^{-1}_B}&B\otimes \Bbbk\ar[r]^-{\I_M\otimes u_B}& B\otimes B
,
}
\quad\quad
i_2:=
\xymatrix{
B \ar[r]^-{\imath^{-1}_B}&\Bbbk\otimes B\ar[r]^-{u_B\otimes \I_B}& B\otimes B
,
}
\]
which are morphisms of cdg-algebras. 
We can check that $m_{B\otimes B}\circ(i_1\otimes i_2)=\I_{B\otimes B}$ from an elementary calculation.
By substituting $A=B\otimes B$, $g_1=i_1$ and $g_2=i_2$ in \eq{showing the action axiom} we obtain that
$(\I_M\otimes \cp_B)\circ\widebreve\g^{\!M}=(\widebreve\g^{\!M}\otimes\I_B)\circ\widebreve\g^{\!M}$.

\item
Finally we check that  $\widebreve\g^{\!M}:M\to M\otimes B$ is a cochain map:
\eqalign{
d_{M,M\otimes\!B}\widebreve\g^{\!M}=
d_{M,M\otimes B}\mq\big(\bm{\r}^{\!M}_B(\I_B)\big)
=\mq\big(d_{M\otimes B,M\otimes B}\bm{\r}^{\!M}_B(\I_B)\big)
=0,
}
where we have used the facts that $\mq$ is a cochain map defined in Lemma \ref{basicl}
and  $\bm{\r}^{\!M}_{\!B}(\I_B) \in \operatorname{Z^0Aut}_{m_B}(M\otimes B)$.

\end{itemize}

2. We show that $\functor{X}$ is a dg-tensor functor.
Let  $\p:\big(M,\bm{\r}^{\!M}\big)\to \big(M',\bm{\r}^{M'}\big)$ be a morphism of representations of $\bm{\CG}^{\!\!B}$.
Then
$\functor{X}(\p)
=\p:\big(M,\widebreve{\g}^{\!M}\big)\rightarrow \big(M^\pr,\widebreve{\g}^{\!M^\pr}\big)$ 
is a morphism of right dg-comodules over $B$,
since the following diagram commutes
\[
\xymatrixrowsep{2.4pc}
\xymatrixcolsep{5pc}
\xymatrix{
\ar@/^1pc/[rrr]^-{\widebreve\g^M}
M \ar@{..>}[r]_-{\jmath^{-1}_M} \ar[d]_-{\p}&
M\otimes \Bbbk \ar@{..>}[r]_-{\I_M\otimes u_B} \ar@{..>}[d]^-{\p\otimes\I_\Bbbk}&
M\otimes B \ar@{..>}[r]_-{\bm{\r}^{M}_{\!B}(\I_B)} \ar@{..>}[d]^-{\p\otimes \I_B}&
M\otimes B \ar[d]^-{\p\otimes\I_B}
\\
\ar@/_1pc/[rrr]_-{\widebreve\g^{M^\pr}}
M^\pr \ar@{..>}[r]^-{\jmath^{-1}_{M^\pr}}&
M^\pr\otimes \Bbbk \ar@{..>}[r]^-{\I_{M^\pr}\otimes u_B}&
M^\pr\otimes B \ar@{..>}[r]^-{\bm{\r}^{M^\pr}_{\!B}(\I_B)}&
M^\pr\otimes B
}.
\]
Note that the very right square commutes since $\p$ is a morphism of representations
and the commutativity of the other squares are obvious.
It is obvious that 
$\functor{X}(d_{M,M^\pr}\p)=d_{M,M^\pr}\p=d_{M,M^\pr}\big(\functor{X}(\p)\big)$.
Therefore $\functor{X}$ is a dg-functor.
The tensor property of $\functor{X}$ is checked as follows:
\begin{itemize}
\item
From Example \ref{trivalrepsh} we have $\functor{X}\big(\Bbbk,\bm{\r}^\Bbbk\big)=\big(\Bbbk,\g^\Bbbk\big)$.

\item
For two representations $\big(M,\bm{\r}^{\!M}\big)$ and $\big(M^\pr,\bm{\r}^{M^\pr}\big)$ of $\bm{\CG}^{\!\!B}$, we have
\eqalign{
\g^{\functor{X}(M,\bm{\r}^{\!M})\otimes_{m_B}\functor{X}(M^\pr,\bm{\r}^{M^\pr})}
&=
(\I_{M\otimes M^\pr}\otimes m_B)\circ(\I_M\otimes\t\otimes\I_B)\circ
\big(
\g^{\functor{X}(M,\bm{\r}^{\!M})}\otimes\g^{\functor{X}(M^\pr,\bm{\r}^{M^\pr})}
\big)\\
&=
\g^{\functor{X}\left(
(M,\bm{\r}^{\!M})\otimes(M^\pr,\bm{\r}^{M^\pr})
\right)}
.
}
The $1$st equality is from the definition of tensor product of right dg-comodules over $B$, and the $2$nd equality is from the definition of tensor products of representations of $\bm{\CG}^B$.
We conclude that 
$\functor{X}\Big(\big(M,\bm{\r}^{\!M}\big)\otimes\big(M^\pr,\bm{\r}^{M^\pr}\big)\Big)
=\functor{X}\big(M,\bm{\r}^{\!M}\big)\otimes_{m_B}\! \functor{X}\big(M^\pr,\bm{\r}^{M^\pr}\big).$
\end{itemize}

3. 
We show  that 
$\big(M,\widebreve{\bm{\r}}^{\!M}\big)=\functor{Y}\big(M,\g^{\!M}\big)$ 
is a representation of $\bm{\CG}^{\!\!B}$ as follows.
We first show that $\widebreve{\bm{\r}}^{\!M}_A:\bm{\CG}^{\!\!B}(A)\rightarrow \bm{\CG\!\ell}^{\!\!M}(A)$ 
is a morphism of groups for every cdg-algebra $A$:
\begin{itemize}

\item
We have $\widebreve{\bm{\r}}^{\!M}_A(u_A\circ \ep_B)
=  
(\I_M\otimes m_A)\circ\big(\I_M\otimes(u_A\circ\ep_B)\otimes\I_A\big)\circ(\g^M\otimes \I_A)
=
\I_{M\otimes A}$
where we have used the counity of $m_A$ and  the property $(\I_M\otimes\ep_B)\circ{\g}^M=\jmath^{-1}_M$.

\item
For all $g_1,g_2 \in \HOM_{\category{cdgA}(\Bbbk)}(B,A)$:
\eqalign{
\widebreve{\bm{\r}}_A^{\!M}\big(g_1 &\star_{B,A} g_2\big)
:=
(\I_M\otimes m_A)\circ
\Big(
\I_M\otimes\big(m_A\circ(g_1\otimes g_2)\circ\cp_B\big)\otimes \I_A
\Big)
\circ(\g^M\otimes\I_A)
\cr
=
&
(\I_M\otimes m_A)\circ (\I_M\otimes g_1\otimes \I_A)\circ (\g^M\otimes \I_A)
\circ (\I_M\otimes m_A)\circ (\I_M\otimes g_2\otimes \I_A)\circ (\g^M\otimes \I_A)
\cr
=
&\widebreve{\bm{\r}}_A^{\!M}(g_1) \circ \widebreve{\bm{\r}}_A^{\!M}(g_2),
}
where
the $2$nd equality is due to associativity of $m_A$ and the property 
$(\I_M\otimes \cp_B)\circ{\g}^M=(\g^M\otimes \I_B)\circ{\g}^M$.

\item We have  $d_{M\otimes A,M\otimes A}\widebreve{\bm{\r}}_A^{\!M}(g)
=\mp\big((\I_M\otimes d_{B,A}g)\circ\g^{\!M}\big)=0$
for  all $g \in \HOM_{\category{cdgA}(\Bbbk)}(B,A)$,
since both $\mp$ and $\g^{\!M}$ are cochain maps.
\end{itemize}
Combining all the above, we conclude that
$\widebreve{\bm{\r}}_A^M\big(g\big)\in Z^0\Aut_{m_A}(M\otimes A)$ for all $g \in \HOM_{\category{cdgA}(\Bbbk)}(B,A)$
and $\widebreve{\bm{\r}}_A^{\!M}$ is a morphism of groups.
We check the naturalness of $\widebreve{\bm{\r}}^{\!M}$ 
that for every morphism 
$f:A\rightarrow A^\pr$ of cdg-algebras we have  
$\widebreve{\bm{\r}}_{A^\pr}^M\circ\bm{\CG}^{\!\!B}(f) =\bm{\CG\!\ell}^{\!\!M}(f)\circ \widebreve{\bm{\r}}_A^M$
as follows: for all $g \in \HOM_{\category{cdgA}(\Bbbk)}(B,A)$ we have
\eqalign{
\widebreve{\bm{\r}}_{A^\pr}^M\left(\bm{\CG}^{\!\!B}(f)(g)\right)
=&\widebreve{\bm{\r}}_{A^\pr}^{\!M}(f\circ g)
=
\mp\Big((\I_M\otimes f\circ g)\circ\g^{\!M}\Big)
,\cr
\bm{\CG\!\ell}^{\!\!M}(f)\left(\bm{\r}_A^{\!M}(g)\right)
= &
\mp\Big(
(\I_M\otimes f) \circ \mq\left( \bm{\r}_A^{\!M}(g)\right)
\Big)
=
\mp\Big(
(\I_M\otimes f)\circ
\mq\left(\mp\Big(
(\I_M\otimes g)\circ\g^{\!M}
\Big)\right)
\Big)
\cr
=
&
\mp\Big(
(\I_M\otimes f)\circ
(\I_M\otimes g)\circ\g^{\!M}
\Big)
=
\mp\Big((\I_M\otimes f\circ g)\circ\g^{\!M}\Big)
,
}
where we have used  $\mq\circ\mp =\I_{\Hom(M,M\otimes A)}$.

4. We show that $\functor{Y}$ is a dg-tensor functor.
Given a morphism $\p:\big(M,\g^{\!M}\big)\to\big(M^\pr,\g^{\!M^\pr}\big)$ of right dg-comodules over $B$, 
$\functor{Y}(\p)=\p:\big(M,\widebreve{\bm{\r}}^{\!M}\big)\to\big(M^\pr,\widebreve{\bm{\r}}^{\!M^\pr}\big)$
is a morphism of representations, since the following diagram commutes  for every morphism $g:B\to A$ of cdg-algebras:
\[
\xymatrixcolsep{4pc}
\xymatrix{
\ar@/^1pc/[rrr]^-{\widebreve{\bm{\r}}^{M}_{\!A}(g)}
M\otimes A \ar@{..>}[r]_-{\g^M\otimes \I_A} \ar[d]_-{\p\otimes \I_A}&
M\otimes B\otimes A \ar@{..>}[r]_-{\I_M\otimes g\otimes \I_A} \ar@{..>}[d]^-{\p\otimes \I_{B\otimes A}}&
M\otimes A\otimes A \ar@{..>}[r]_-{\I_M\otimes m_A}\ar@{..>}[d]^-{\p\otimes \I_{A\otimes A}}&
M\otimes A\ar[d]^-{\p\otimes \I_A}
\\
\ar@/_1pc/[rrr]_-{\widebreve{\bm{\r}}^{M^\pr}_{\!A}(g)}
M^\pr\otimes A \ar@{..>}[r]^-{\g^{M^\pr}\otimes \I_A}&
M^\pr\otimes B\otimes A \ar@{..>}[r]^-{\I_{M^\pr}\otimes g\otimes \I_A}&
M^\pr\otimes A\otimes A \ar@{..>}[r]^-{\I_{M^\pr}\otimes m_A}&
M^\pr\otimes A
}
\]
It is obvious that $\functor{Y}(d_{M,M^\pr}\p)=d_{M,M^\pr}\p=d_{M,M^\pr}\big(\functor{Y}(\p)\big)$.
Therefore $\functor{Y}$ is a dg-functor.
The tensor property of $\functor{Y}$ is checked as follows:
\begin{itemize}
\item From Example \ref{trivalrepsh}, we have $\functor{Y}\big(\Bbbk,\g^\Bbbk\big)=\big(\Bbbk,\bm{\r}^{\!\Bbbk}\big)$.

\item
Let $\big(M,\g^M\big)$ and $\big(M^\pr,\g^{M^\pr}\big)$ be right dg-comodules over 
$B$, and $g:B\to A$ be a morphism of cdg-algebras. Then by the definition of tensor product of representations, we have
\eqalign{
\mq\left(
\bm{\r}_A^{\!\functor{Y}(M,\g^M)\otimes\functor{Y}(M^\pr,\g^{M^\pr})}(g)
\right)
&=
(\I_{M\!\otimes\! M^\pr}\!\otimes\! m_A)\circ(\I_{M\!\otimes\! M^\pr}\!\otimes\! g\!\otimes\! g)\circ
(\I_M\!\otimes\! \t\!\otimes\! \I_A)\circ\big(\g^M\!\otimes\! \g^{M^\pr}\big)\\
&=
(\I_{M\otimes M^\pr}\otimes g)\circ(\I_{M\otimes M^\pr}\otimes m_B)\circ
(\I_M\otimes \t\otimes \I_A)\circ\big(\g^M\!\otimes\! \g^{M^\pr}\big)\\
&=
(\I_{M\otimes M^\pr}\otimes g)\circ\g^{M\otimes_{m_B}M^\pr}
=
\mq\left(
\bm{\r}_A^{\!\functor{Y}(M\otimes M^\pr,\g^{M\otimes_{m_B}M^\pr})}(g)
\right).
}
Thus we conclude that
$\functor{Y}\left(\big(M,\g^M\big)\otimes_{m_B}\big(M^\pr,\g^{M^\pr}\big)\right)
=\functor{Y}(M,\g^M)\otimes \functor{Y}(M^\pr,\g^{M^\pr}\big)$.
\end{itemize}

5. It is immediate from the constructions that $\functor{X}$ and $\functor{Y}$ are inverse to each other.
\qed
\end{proof}

In the dg-tensor category
$\dgcat{dgComod}_R(B)\cong \dgcat{Rep}(\bm{\CG}^{\!\!B})$,
the structures of a cdg-Hopf algebra $B$ are reflected as the existance of
certain morphisms of right dg-comodules over $B$.
This phenomena will make our Tannakian reconstruction possible.
To begin with, the following lemma reflects the cdg-bialgebra properties of $B$.
Remind that $(B,\cp_B)$ is a right dg-comodule over $B$.

\begin{lemma} \label{First three comodule maps}
We have the following morphisms of right dg-comodules over $B$:
\begin{enumerate}[label=({\alph*})]
\item
The product $m_B:B\otimes B\rightarrow B$ induces a morphism 
$(B\otimes B,\g^{B\otimes_{m_B}B})\xrightarrow{m_B}(B,\cp_B)$.

\item
The unit  $u_B:\Bbbk\rightarrow B$ induces a morphism $(\Bbbk,\g^\Bbbk)\xrightarrow{u_B}(B,\cp_B)$.

\item 
The coaction $\g^M:M\to M\otimes B$ of each right dg-comodule $(M, \g^M)$ over $B$
induces a morphism
$(M,\g^M)\xrightarrow{\g^M}(M\otimes B,\I_M\otimes \cp_B)$.
\end{enumerate}
\end{lemma}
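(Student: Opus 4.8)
The plan is to treat (a), (b) and (c) uniformly. Each asserts that a specific linear map $\p$ is a morphism of right dg-comodules, so by the defining square \eq{cdgomor} it suffices to verify the single intertwining identity $\g^{\mathrm{target}}\circ\p=(\p\otimes\I_B)\circ\g^{\mathrm{source}}$ in each case; no differential-compatibility needs separate checking, since a morphism in $\dgcat{dgComod}_R(B)$ is by definition merely a linear map making that square commute. Thus the whole lemma reduces to three equalities of linear maps, each of which I expect to fall out of the cdg-bialgebra axioms \eq{bialgebra} together with the coaction axioms \eq{raction}, after unwinding the definition of the tensor-product coaction.

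For (a) the target coaction is $\cp_B$ and the source coaction is $\g^{B\otimes_{m_B}B}$, obtained by specializing the tensor-product coaction to $M=N=B$ with $\g^M=\g^N=\cp_B$, giving $\g^{B\otimes_{m_B}B}=(\I_B\otimes\I_B\otimes m_B)\circ(\I_B\otimes\t\otimes\I_B)\circ(\cp_B\otimes\cp_B)$. The identity to prove is $\cp_B\circ m_B=(m_B\otimes\I_B)\circ\g^{B\otimes_{m_B}B}$. First I would use the elementary leg-identity $(m_B\otimes\I_B)\circ(\I_B\otimes\I_B\otimes m_B)=m_B\otimes m_B$ to collapse the right-hand side to $(m_B\otimes m_B)\circ(\I_B\otimes\t\otimes\I_B)\circ(\cp_B\otimes\cp_B)$, which is exactly $m_{B\otimes B}\circ(\cp_B\otimes\cp_B)$ by the definition of the induced product on $B\otimes B$. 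The bialgebra axiom $\cp_B\circ m_B=m_{B\otimes B}\circ(\cp_B\otimes\cp_B)$ from \eq{bialgebra} then closes the case.

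Part (b) is the shortest. Here I would first rewrite $\g^\Bbbk=\imath^{-1}_B\circ u_B=(\I_\Bbbk\otimes u_B)\circ\cp_\Bbbk$, using $\cp_\Bbbk=\imath^{-1}_\Bbbk$; then $(u_B\otimes\I_B)\circ\g^\Bbbk=(u_B\otimes\I_B)\circ(\I_\Bbbk\otimes u_B)\circ\cp_\Bbbk=(u_B\otimes u_B)\circ\cp_\Bbbk=u_{B\otimes B}\circ\cp_\Bbbk$, which is $\cp_B\circ u_B$ by the remaining bialgebra axiom of \eq{bialgebra} expressing that $u_B$ is a coalgebra map. For (c) the target is the cofree comodule $(M\otimes B,\I_M\otimes\cp_B)$, so the required identity is $(\I_M\otimes\cp_B)\circ\g^M=(\g^M\otimes\I_B)\circ\g^M$, which is literally the coassociativity half of the coaction axiom \eq{raction}; hence (c) is immediate.

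The main obstacle is purely notational: there is no conceptual content beyond the axioms already established in the excerpt, and the only step demanding genuine care is the index bookkeeping in (a), namely correctly identifying $(m_B\otimes\I_B)\circ\g^{B\otimes_{m_B}B}$ with $m_{B\otimes B}\circ(\cp_B\otimes\cp_B)$. Once the four tensor legs are tracked and the swap $\t$ is placed correctly, all three statements reduce to the bialgebra compatibilities \eq{bialgebra} and the comodule coassociativity \eq{raction}.
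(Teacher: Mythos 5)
Your proposal is correct and takes essentially the same route as the paper's proof: each part is reduced to the single intertwining identity $\g^{\mathrm{target}}\circ\p=(\p\otimes\I_B)\circ\g^{\mathrm{source}}$, with (a) and (b) closed by the bialgebra compatibilities of \eq{bialgebra} (equivalently, $m_B$ and $u_B$ being morphisms of dg-coalgebras) and (c) being literally the coassociativity half of the coaction axiom \eq{raction}. Your intermediate leg-collapse $(m_B\otimes\I_B)\circ(\I_{B\otimes B}\otimes m_B)=m_B\otimes m_B$ in (a) is simply a more explicit rendering of the one-line computation the paper gives, and your observation that no separate differential-compatibility check is needed matches the paper's definition of morphisms in $\dgcat{dgComod}_R(B)$.
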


\begin{proof}
\emph{(a)} follows from the property of $m_B$ being a morphism of dg-coalgebras.
Indeed, we have
$(m_B\otimes\I_B)\circ\g^{B\otimes_{m_B}B}
=(m_B\otimes m_B)\circ(\I_B\otimes\tau\otimes\I_B)\circ(\cp_B\otimes\cp_B)
=\cp_B\circ m_B$.
\emph{(b)} follows from the property of $u_B$ being a morphism of dg-coalgebras.
Indeed, we have
$(u_B\otimes\I_B)\circ\g^\Bbbk=(u_B\otimes u_B)\circ\cp_\Bbbk=\cp_B\circ u_B$.
Finally,
\emph{(c)} follows from the coaction axiom that $\g^M$ satisfies.
Indeed, we have
$(\g^M\otimes\I_B)\circ\g^M=(\I_M\otimes\cp_B)\circ\g^M$.
\qed
\end{proof}

Now we examine the roles of the antipode $\vs_B:B\rightarrow B$ of $B$.
Since the antipode is an anti-morphism of dg-coalgebras,
we have a right dg-comodule $(B^*,\g^{B^*})$ over $B$
where $B^*=B$ as a cochain complex but with the alternative coaction
\[
\g^{B^*}:=\tau\circ(\vs_B\otimes\I_B)\circ\cp_B:B^*\to B^*\otimes B.
\]
This is indeed a right $B$-comodule, since
\[
\begin{aligned}
(\g^{B^*}\otimes\I_B)\circ\g^{B^*}
&=\sigma\circ(\tau\otimes\I_B)\circ(\vs_B\otimes\vs_B\otimes\I_B)\circ(\I_B\otimes\cp_B)\circ\cp_B\\
&=\sigma\circ(\tau\otimes\I_B)\circ(\vs_B\otimes\vs_B\otimes\I_B)\circ(\cp_B\otimes\I_B)\circ\cp_B\\
&=\sigma\circ(\cp_B\otimes\I_B)\circ(\vs_B\otimes \I_B)\circ\cp_B\\
&=(\I_B\otimes\cp_B)\circ\g^{B^*}.
\end{aligned}
\]
Here, $\sigma:B^{\otimes3}\to B^{\otimes3}$ is the permutation $\sigma(b_1\otimes b_2\otimes b_3):=(-1)^{|b_1||b_3|+|b_2||b_3|}b_3\otimes b_1\otimes b_2$. We used $\vs_B$ being an anti-morphism of dg-coalgebras on the $3$rd equality.

Using the unit $u_B:\Bbbk\rightarrow B$, every cochain complex $M$
becomes a right dg-comodule $(M_*,\g^{M_*})$ over $B$,
where $M_*=M$ as a cochain complex
and with the coaction $\g^{M_*}:=(\I_M\otimes u_B)\circ\jmath^{-1}_M:M_*\to M_*\otimes B$.

\begin{lemma} \label{Three comodule maps about B*}
We have the following morphisms of right dg-comodules over $B$:
\begin{enumerate}[label=({\alph*})]

\item
The product $m_B:B\otimes B\rightarrow B$ induces a morphism $(B^*\otimes B^*,\g^{B^*\otimes_{m_B}B^*})\xrightarrow{m_B}(B^*,\g^{B^*})$.

\item
The unit  $u_B:\Bbbk\rightarrow B$ induces a morphism   $u_B:(\Bbbk,\g^\Bbbk)\xrightarrow{u_B}(B^*,\g^{B^*})$;

\item 
The coaction $\g^M:M\to M\otimes B$ of each right dg-comodule $(M, \g^M)$ over $B$
induces a morphism
$(M_*,\g^{M_*})\xrightarrow{\g^M}(M\otimes B^*,\g^{M\otimes_{m_B}B^*})$.
\end{enumerate}
\end{lemma}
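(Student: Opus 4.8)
The uniform strategy is to check, in each of the three cases, the defining commutativity condition \eq{cdgomor} for a morphism of right dg-comodules: for a candidate map $\p$ I must show $\g^{\mathrm{target}}\circ\p=(\p\otimes\I_B)\circ\g^{\mathrm{source}}$, and then unwind both sides into compositions of the structure maps $m_B,u_B,\cp_B,\ep_B,\vs_B$ of $B$ until they agree by the cdg-Hopf algebra axioms. The difference from Lemma \ref{First three comodule maps} is that the target coactions now involve the twisted coaction $\g^{B^*}=\t\circ(\vs_B\otimes\I_B)\circ\cp_B$, so the antipode $\vs_B$ and its compatibility relations \eq{antialgandcoalg} must be brought in.

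For \emph{(a)} I would expand $\g^{B^*}\circ m_B=\t\circ(\vs_B\otimes\I_B)\circ\cp_B\circ m_B$ using the bialgebra axiom \eq{bialgebra} for $\cp_B\circ m_B$, and separately expand the tensor-coaction side $(m_B\otimes\I_B)\circ\g^{B^*\otimes_{m_B}B^*}$ from the definition of the tensor product of dg-comodules. The two resulting expressions are reconciled by the anti-algebra-morphism law $\vs_B\circ m_B=m_B\circ(\vs_B\otimes\vs_B)\circ\t$ of \eq{antialgandcoalg}, which trades an antipode of a product for a product of antipodes, together with the commutativity $m_B\circ\t=m_B$ of $B$ that absorbs the resulting transposition. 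Part \emph{(b)} is immediate: $\g^{B^*}\circ u_B=\t\circ(\vs_B\otimes\I_B)\circ\cp_B\circ u_B$ collapses, via $\cp_B\circ u_B=(u_B\otimes u_B)\circ\cp_\Bbbk$ from \eq{bialgebra}, $\vs_B\circ u_B=u_B$ from \eq{antialgandcoalg}, and $\cp_\Bbbk(1)=1\otimes1$, to $(u_B\otimes u_B)\circ\cp_\Bbbk=(u_B\otimes\I_B)\circ\g^\Bbbk$, which is exactly the required identity.

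The essential case is \emph{(c)}, where I must verify $\g^{M\otimes_{m_B}B^*}\circ\g^M=(\g^M\otimes\I_B)\circ\g^{M_*}$ with $\g^{M_*}=(\I_M\otimes u_B)\circ\jmath^{-1}_M$. Expanding the left-hand side by the definition of the tensor coaction and of $\g^{B^*}$, and then applying the comodule coassociativity $(\g^M\otimes\I_B)\circ\g^M=(\I_M\otimes\cp_B)\circ\g^M$ of \eq{raction} together with the coassociativity of $\cp_B$, I would rewrite the expression so that two adjacent tensor slots carry the composite $m_B\circ(\I_B\otimes\vs_B)\circ\cp_B$. The antipode axiom \eq{antipodeaxiom} replaces this composite by $u_B\circ\ep_B$, after which the comodule counit law $(\I_M\otimes\ep_B)\circ\g^M=\jmath^{-1}_M$ of \eq{raction} contracts the remaining $\ep_B$ and leaves precisely $(\g^M\otimes\I_B)\circ(\I_M\otimes u_B)\circ\jmath^{-1}_M$, the right-hand side. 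I expect the main obstacle to lie here: one must organise the iterated coactions by coassociativity so that the antipode axiom is applied to the correct pair of slots, while carefully carrying the Koszul signs produced by the transpositions $\t$ in the $\Z$-graded setting, as already seen in the permutation used to prove that $(B^*,\g^{B^*})$ is a comodule. Once the block $m_B\circ(\I_B\otimes\vs_B)\circ\cp_B$ is correctly isolated, the collapse to the trivial coaction $\g^{M_*}$ is automatic.
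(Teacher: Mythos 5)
Your proposal is correct and follows essentially the same route as the paper: parts \emph{(a)} and \emph{(b)} via the algebra-morphism properties $\vs_B\circ m_B=m_B\circ(\vs_B\otimes\vs_B)$ and $\vs_B\circ u_B=u_B$ (your cited form $m_B\circ(\vs_B\otimes\vs_B)\circ\t$ agrees with \eq{antialgandcoalg} after absorbing $\t$ by commutativity, exactly as you note), and part \emph{(c)} by using comodule coassociativity and the coassociativity of $\cp_B$ to isolate the block $m_B\circ(\I_B\otimes\vs_B)\circ\cp_B$, replacing it by $u_B\circ\ep_B$ via the antipode axiom \eq{antipodeaxiom}, and contracting with the counit law of \eq{raction}. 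This is precisely the paper's chain of equalities, including the permutation bookkeeping you flag as the main obstacle.
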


\begin{proof}
We shall see that \emph{(a)} and \emph{(b)} follows from the property of $\vs_B$ being a morphism of cdg-algebras,
and \emph{(c)} follows from the antipode axiom of $\vs_B$.

For \emph{(a)}, we check that $(m_B\otimes\I_B)\circ\g^{B^*\otimes_{m_B}B^*}
=\g^{B^*}\circ m_B$:
\eqalign{
(m_B\otimes\I_B)\circ\g^{B^*\otimes_{m_B}B^*}
=&(m_B\otimes m_B)\circ(\I_B\otimes\tau\otimes\I_B)\circ(\tau\otimes\tau)\circ(\vs_B\otimes\I_B\otimes\vs_B\otimes\I_B)\circ(\cp_B\otimes\cp_B)\\
=&(m_B\otimes m_B)\circ(\I_B\otimes\I_B\otimes\vs_B\otimes\vs_B)\circ\sigma^\pr\circ(\cp_B\otimes\cp_B),\\
\g^{B^*}\circ m_B
=&\tau\circ(\vs_B\otimes\I_B)\circ\cp_B\circ m_B\\
=&\tau\circ(\vs_B\otimes\I_B)\circ(m_B\otimes m_B)\circ(\I_B\otimes\tau\otimes\I_B)\circ(\cp_B\otimes\cp_B)\\
=&(\I_B\otimes\vs_B)\circ(m_B\otimes m_B)\circ\sigma^\pr\circ(\cp_B\otimes\cp_B),
}
where $\sigma^\pr:=(\I_B\otimes\t\otimes\I_B)\circ(\t\otimes\t):B^{\otimes4}\to B^{\otimes4}$.
From the property $m_B\circ(\vs_B\otimes\vs_B)=\vs_B\circ m_B$,
we have $(m_B\otimes\I_B)\circ\g^{B^*\otimes_{m_B}B^*}=\g^{B^*}\circ m_B$.

For \emph{(b)}, we check that $\g^{B^*}\circ u_B=(u_B\otimes\I_B)\circ\g^\Bbbk$:
\[
\begin{aligned}
\g^{B^*}\circ u_B=
&\tau\circ(\vs_B\otimes\I_B)\circ\cp_B\circ u_B=
\tau\circ(\vs_B\otimes\I_B)\circ(u_B\otimes u_B)\circ\cp_\Bbbk=
(u_B\otimes u_B)\circ\cp_\Bbbk
\cr
=
&(u_B\otimes\I_B)\circ\g^\Bbbk,
\end{aligned}
\]
where we used the property $\vs_B\circ u_B=u_B$ and the cocommutativity of $\cp_\Bbbk$.

For \emph{(c)}, we check that
$\g^{M\otimes_{m_B}B^*}\circ\g^M=(\g^M\otimes\I_B)\circ\g^{M_*}$:
\eqalign{
\g^{M\otimes_{m_B}B^*}&\circ\g^M\\
:=&
(\I_{M\otimes B}\otimes m_B)\circ(\I_M\otimes\tau\otimes\I_B)\circ(\I_{M\otimes B}\otimes \g^{B^*})\circ(\g^M\otimes\I_B)\circ\g^M\\
=&
(\I_M\otimes\tau)\circ(\I_M\otimes m_B\otimes\I_B)\circ(\I_{M\otimes B}\otimes\vs_B\otimes\I_B)\circ(\I_{M\otimes B}\otimes\cp_B)\circ(\g^M\otimes\I_B)\circ\g^M\\
=&
(\I_M\otimes\tau)\circ(\I_M\otimes m_B\otimes\I_B)\circ(\I_{M\otimes B}\otimes\vs_B\otimes\I_B)\circ(\I_{M\otimes B}\otimes\cp_B)\circ(\I_M\otimes \cp_B)\circ\g^M\\
=&
(\I_M\otimes\tau)\circ(\I_M\otimes m_B\otimes\I_B)\circ(\I_{M\otimes B}\otimes\vs_B\otimes\I_B)\circ
(\I_M\otimes\cp_B\otimes\I_B)\circ(\I_M\otimes \cp_B)\circ\g^M\\
=&
(\I_M\otimes\tau)\circ\big(\I_M\otimes(u_B\circ\ep_B)\otimes\I_B\big)\circ(\I_M\otimes\cp_B)\circ\g^M\\
=&
(\I_M\otimes\tau)\circ(\I_M\otimes u_B\otimes\I_B)\circ(\I_M\otimes\ep_B\otimes\I_B)\circ(\g^M\otimes\I_B)\circ\g^M\\
=&
(\I_M\otimes\tau)\circ(\I_M\otimes u_B\otimes \I_B)\circ(\jmath^{-1}_M\otimes\I_B)\circ\g^M\\
=&
(\g^M\otimes\I_B)\circ(\I_M\otimes u_B)\circ\jmath^{-1}_M\\
=&
(\g^M\otimes\I_B)\circ\g^{M_*}.
}
In the above, we used
$(\g^M\otimes\I_B)\circ\g^M=(\I_M\otimes\cp_B)\circ\g^M$ on the $3$rd and $6$th equality,
the coassociativity of $\cp_B$ on the $4$th equality and the antipode axiom
$m_B\circ(\I_B\otimes \vs_B)\circ\cp_B=u_B\circ\ep_B$
on the $5$th equality.
The rest equalities are straightforward.
\qed
\end{proof}

The existence of those  morphisms of right dg-comodules over $B$ in Lemma \ref{First three comodule maps}
and Lemma \ref{Three comodule maps about B*} will play the crucial roles in our Tannaka type reconstruction
theorem.

\section{Tannakian reconstruction theorem for affine group dg-schemes}
Throughout this section we fix a  cdg-Hopf algebra $B=(B, u_B, m_B, \ep_B, \cp_B, \vs_B,d_B)$.
Let   $\bm{\CG}^{\!\!B}:\category{cdgA}(\Bbbk) \rightsquigarrow \category{Grp}$ 
be the functor represented by the cdg-algebra $B$, which induces the functor  $\bm{\mG}^{\!\!B}:\mathit{ho}\category{cdgA}(\Bbbk) \rightsquigarrow \category{Grp}$
represented by $B$
on the homotopy category $\mathit{ho}\category{cdgA}(\Bbbk)$ ---an affine group dg-scheme.

In Sect.\ $5.1$,  we
consider the forgetful  functor $\bm{\o}: \dgcat{dgComod}_R(B)\rightsquigarrow \dgcat{CoCh}(\Bbbk)$
from the dg-category of right dg-comodules over $B$ to the dg-category of cochain complexes over $\Bbbk$
and, out of $\bm{\o}$,  construct two functors 
$$\bm{\CG}^{\bm{\o}}_{\!\otimes}:\category{cdgA}(\Bbbk) \rightsquigarrow \category{Grp}\hbox{ and }
\bm{\mG}^{\bm{\o}}_{\!\otimes}:\mathit{ho}\category{cdgA}(\Bbbk) \rightsquigarrow \category{Grp}
.
$$
 Then we shall establishes 
natural isomorphisms of functors
$\bm{\CG}^{\bm{\o}}_{\!\otimes}\cong \bm{\CG}^{\!\!B}$ and
$\bm{\mG}^{\bm{\o}}_{\!\otimes}\cong \bm{\mG}^{\!B}$,
which is our reconstruction theorem of  an affine group dg-scheme 
from the dg-tensor category of linear representations.

In Sect.\ $5.2$, we consider 
the dg-category $\dgcat{dgComod}_R(B)_{\!f}$ of finite dimensional right dg-comodules over $B$,
which is isomorphic as dg-tensor categories to the dg-category $\dgcat{Rep}(\bm{\CG}^B)_{\!f}$ of finite dimensional linear representations
of $\bm{\CG}^B$. 
From the forgetful  functor $\bm{\o}_{\!f}: \dgcat{dgComod}_R(B)_{\!f}\rightsquigarrow \dgcat{CoCh}(\Bbbk)_{\!f}$
to the dg-category of  finite dimensional cochain complexes, we  construct two functors
$$
\bm{\CG}^{\bm{\o}_f}_{\!\otimes}:\category{cdgA}(\Bbbk) \rightsquigarrow \category{Grp}
\hbox{ and }
\bm{\mG}^{\bm{\o}_f}_{\!\otimes}:\mathit{ho}\category{cdgA}(\Bbbk) \rightsquigarrow \category{Grp}
,
$$
and 
establishes 
natural isomorphisms of functors
$\bm{\CG}^{\bm{\o}_{\!f}}_{\!\otimes}\cong \bm{\CG}^{\!\!B}$ and
$\bm{\mG}^{\bm{\o}_{\!f}}_{\!\otimes}\cong \bm{\mG}^{\!B}$,
which is our reconstruction theorem of  an affine group dg-scheme 
from the dg-tensor category of finite dimensional linear representations.
Our proofs in Sect.\ $5.2$ are based on the reductions of the constructions in  Sect.\ $5.1$ to
the finite dimensional cases, using the fact that every right dg-comodule over $B$ is a filtered colimit of
its finite dimensional subcomodules over $B$.

In Sect.\ $5.3$, we give an independent proof of our $2$nd reconstruction theorem stated in Sect.\ $5.2$,
using the dg-version of rigidity of the dg-tensor category $\dgcat{dgComod}_R(B)_{\!f}$.

\subsection{Reconstruction from the dg-category of linear representations}

Consider the forgetful dg-functor $\bm{\o}: \dgcat{dgComod}_R(B)\rightsquigarrow \dgcat{CoCh}(\Bbbk)$,
which sends a right dg-comodule $(M, \g^M)$ over $B$ to its underlying cochain complex $M$,
and a morphism $\p: (M, \g^M)\rightarrow (M^\pr, \g^{M^\pr})$ of right dg-comodules over $B$
to its underlying $\Bbbk$-linear map $\p: M\rightarrow M^\pr$.
Out of $\bm{\o}$,  we shall construct three functors
$\bm{\CE}^{\bm{\o}}:\cdga\rightsquigarrow \category{dgA}(\Bbbk)$, $\bm{\CG}^{\bm{\o}}_{\!\otimes}:\category{cdgA}(\Bbbk) \rightsquigarrow \category{Grp}$ 
and $\bm{\mG}^{\bm{\o}}_{\!\otimes}:\mathit{ho}\category{cdgA}(\Bbbk) \rightsquigarrow \category{Grp}$ in turns.
Then we shall  establish natural isomorphisms $\bm{\CE}^{\bm{\o}}\cong \bm{\CE}^{B}$, $\bm{\CG}^{\bm{\o}}_{\!\otimes}\cong \bm{\CG}^{\!\!B}$
and $\bm{\mG}^{\bm{\o}}_{\!\otimes}\cong \bm{\mG}^{\!B}$, which constitute our reconstruction theorem.

Note that $\bm{\o}$ is a dg-tensor functor, since it sends
\begin{itemize}
\item
the unit object $(\Bbbk,\g^\Bbbk)$ to  the unit object $\Bbbk$;

\item
the tensor product 
$(M,\g^M)\otimes_{m_B}(M^\pr,\g^{M^\pr})$ of right dg-comodules over $B$
to the tensor product $M\otimes M^\pr$ of the underlying cochain complexes;

\item
the following isomorphisms
\eqalign{ 
&\big((M,\g^M)\otimes_{m_B}(M^\pr,\g^{M^\pr})\big)
\otimes_{m_B}(M^{\pr\pr},\g^{M^{\pr\pr}})
\cong (M,\g^M)\otimes_{m_B}\big((M^\pr,\g^{M^\pr})
\otimes_{m_B}(M^{\pr\pr},\g^{M^{\pr\pr}})\big)
,\cr
&(M,\g^M)\otimes_{m_B}(\Bbbk,\g^\Bbbk)
\cong(M,\g^M)\cong(\Bbbk,\g^\Bbbk)\otimes_{m_B}(M,\g^M)
}
of right dg-comodules over $B$ to the corresponding isomorphisms 
$(M\otimes M^\pr)\otimes M^{\pr\pr}\cong M\otimes(M^\pr\otimes M^{\pr\pr})$
and
$M\otimes \Bbbk\cong M\cong \Bbbk\otimes M$
of the underlying cochain complexes.
\end{itemize}

In Lemma \ref{atensoring}, we have defined a dg-tensor functor
$\otimes A:\dgcat{CoCh}(\Bbbk) \rightsquigarrow \dgcat{dgMod}^{\mathit{fr}}_R(A)$ for each cdg-algebra $A$.
By composing it with $\bm{\o}$, we get a dg-tensor functor
\[
\bm{\o}{\otimes}A: \dgcat{dgComod}_R(B)\rightsquigarrow \dgcat{CoCh}(\Bbbk) \rightsquigarrow \dgcat{dgMod}^{\mathit{fr}}_R(A)
\]
sending
\begin{itemize}
\item
a right dg-comodule $(M, \g^M)$ over $B$ to a free right dg-module $(M\otimes A, \I_M\otimes m_A)$ over $A$, and
\item
a morphism $\p:(M, \g^M)\rightarrow (M^\pr, \g^{M^\pr})$ of right dg-comodules over $B$ to a morphism
$\p\otimes\I_A:(M\otimes A,\I_M\otimes m_A)\to(M'\otimes A,\I_{M'}\otimes m_A)$ of right dg-modules over $A$.
\end{itemize}

Let $\mathsf{End}(\bm{\o}{\otimes}A):=\mathsf{Nat}(\bm{\o}{\otimes}A,\bm{\o}{\otimes}A)$
be the set of natural endomorphisms of the dg-functor $\bm{\o}{\otimes}A$.
We write an element in $\mathsf{End}(\bm{\o}{\otimes}A)$ as $\eta_A$, 
and denote $\eta_A^M$ as its component at a right dg-comodule $(M,\g^M)$ over $B$.
The component of $\eta_A$ at the tensor product $(M,\g^M)\otimes_{m_B\!}(M^\pr,\g^{M^\pr})$ 
is denoted by $\eta_A^{M\otimes_{m_B}M'}$.
Be aware that  the component of $\eta_A$ at the cofree right dg-comodule
$(M\otimes B,\I_M\otimes \cp_B)$ over $B$ is denoted by $\eta_A^{M\otimes B}$.
We have the following structure of dg-algebra on $\mathsf{End}(\bm{\o}{\otimes}A)$:
\eqn{costpd}{
\bm{\CE}^{\bm{\o}}(A):=
\big(\mathsf{End}(\bm{\o}{\otimes}A), \I_{\bm{\o}{\otimes}A}, \circ , \d_A\big)
}
where $\I_{\bm{\o}{\otimes}A}$ is the identity natural transformation, 
$\circ$ is the composition and $\d_A$ is 
the differential given by $(\d_A \eta_A)^M :=d_{M\otimes A, M\otimes A}\eta_A^M$.

\begin{lemma}\label{costpda}
We have a functor
$\bm{\CE}^{\bm{\o}}:\category{cdgA}(\Bbbk) \rightsquigarrow \category{dgA}(\Bbbk)$, sending
\begin{itemize}
\item each cdg-algebra $A$ to the dg-algebra 
$\bm{\CE}^{\bm{\o}}(A)$, and
\item
each morphism $f:A\rightarrow A^\pr$ of cdg-algebras
to a morphism
$\bm{\CE}^{\bm{\o}}(f):\bm{\CE}^{\bm{\o}}(A)\rightarrow \bm{\CE}^{\bm{\o}}(A^\pr)$ of dg-algebras,
where for each
$\eta_A\in\mathsf{End}(\bm{\o}\otimes A)$
the component of $\bm{\CE}^{\bm{\o}}(f)(\eta_A)$
at each right  dg-comodule $(M,\g^M)$ over $B$ is defined by
\eqalign{
\bm{\CE}^{\bm{\o}}(f)(\eta_A)^M:=
&
\mp\Big((f\otimes\I_{A^\pr})\circ\mq(\eta_A^M)\Big)
\cr
=
&
(\I_M\otimes m_{A^\pr})\circ(\I_M\otimes f\otimes\I_{A^\pr})\circ\big(\mq(\eta_A^M)\otimes\I_{A'}\big)
\cr
=
&
(\I_M\otimes m_{A^\pr})\circ(\I_M\otimes f\otimes\I_{A^\pr})\circ
\Big(\big(\eta_A^M\circ(\I_M\otimes A)\circ\jmath^{-1}_M\big)\otimes\I_{A'}\Big).
}
\end{itemize}
\end{lemma}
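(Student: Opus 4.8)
The plan is to reduce the entire statement to the componentwise results already recorded in Lemma~\ref{dullpain}, so that the only genuinely new point is naturality. First I note that the dg-algebra structure on $\bm{\CE}^{\bm{\o}}(A)=\mathsf{End}(\bm{\o}{\otimes}A)$ is the canonical one carried by the natural endomorphisms of any dg-functor, already fixed in \eq{costpd}; nothing needs to be verified there. The key observation is that if $\eta_A\in\mathsf{End}(\bm{\o}{\otimes}A)$, then its component $\eta_A^M$ at a right dg-comodule $(M,\g^M)$ over $B$ is by definition a morphism of free right dg-modules over $A$, hence $\eta_A^M\in\End_{m_A}(M\otimes A)$. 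Comparing the defining formula with Lemma~\ref{dullpain} then gives the identity $\bm{\CE}^{\bm{\o}}(f)(\eta_A)^M=\bm{\CE}^M(f)(\eta_A^M)$ for every $M$.

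Granting this identification, all \emph{pointwise} assertions follow at once by applying Lemma~\ref{dullpain} at each component $M$: that $\bm{\CE}^{\bm{\o}}(f)(\eta_A)^M\in\End_{m_{A^\pr}}(M\otimes A^\pr)$, that $\bm{\CE}^{\bm{\o}}(f)$ preserves the unit and composition (Lemma~\ref{dullpain}(a),(b)), that it commutes with the differentials $\d_A$ and $\d_{A^\pr}$ via $(\d_A\eta_A)^M=d_{M\otimes A,M\otimes A}\eta_A^M$ and Lemma~\ref{dullpain}(c), and the functoriality $\bm{\CE}^{\bm{\o}}(f^\pr)\circ\bm{\CE}^{\bm{\o}}(f)=\bm{\CE}^{\bm{\o}}(f^\pr\circ f)$ together with $\bm{\CE}^{\bm{\o}}(\I_A)=\I$ from Lemma~\ref{dullpain}(d). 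Linearity of $\bm{\CE}^{\bm{\o}}(f)$ is immediate.

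The ingredient \emph{not} supplied by Lemma~\ref{dullpain}, and the main obstacle, is to check that the family $\{\bm{\CE}^{\bm{\o}}(f)(\eta_A)^M\}_M$ really is a natural transformation $\bm{\o}{\otimes}A^\pr\Rightarrow\bm{\o}{\otimes}A^\pr$, that is, for every morphism $\p:(M,\g^M)\to(N,\g^N)$ of right dg-comodules over $B$ the square with vertical maps $\p\otimes\I_{A^\pr}$ commutes. The plan is to start from the naturality of $\eta_A$ itself, $(\p\otimes\I_A)\circ\eta_A^M=\eta_A^N\circ(\p\otimes\I_A)$, and transport it through $\mq$. Applying $\mq$ and using the elementary identity $(\p\otimes\I_A)\circ(\I_M\otimes u_A)\circ\jmath^{-1}_M=(\I_N\otimes u_A)\circ\jmath^{-1}_N\circ\p$ yields the key relation $(\p\otimes\I_A)\circ\mq(\eta_A^M)=\mq(\eta_A^N)\circ\p$. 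Post-composing with $\I\otimes f$ and commuting $\p$ past $f$ through $\p\otimes f=(\I\otimes f)\circ(\p\otimes\I)$ gives $(\p\otimes\I_{A^\pr})\circ(\I_M\otimes f)\circ\mq(\eta_A^M)=(\I_N\otimes f)\circ\mq(\eta_A^N)\circ\p$. Finally, applying $\mp$, whose defining formula $\mp(\xi)=(\I\otimes m_{A^\pr})\circ(\xi\otimes\I_{A^\pr})$ yields $(\p\otimes\I_{A^\pr})\circ\mp(\xi_M)=\mp(\xi_N)\circ(\p\otimes\I_{A^\pr})$ whenever $(\p\otimes\I_{A^\pr})\circ\xi_M=\xi_N\circ\p$, produces exactly the required commuting square. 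Throughout, the bijectivity and naturality of $\mp$ and $\mq$ from Lemma~\ref{basicl} and its corollary are what make the passage back and forth legitimate.

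With naturality in hand, $\bm{\CE}^{\bm{\o}}(f)(\eta_A)$ is a well-defined element of $\mathsf{End}(\bm{\o}{\otimes}A^\pr)$, and combining this with the pointwise dg-algebra and functoriality statements inherited from Lemma~\ref{dullpain} shows that $\bm{\CE}^{\bm{\o}}$ is the asserted functor $\cdga\rightsquigarrow\category{dgA}(\Bbbk)$.
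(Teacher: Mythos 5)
Your proof follows the paper's own route: the single substantive verification is that $\bm{\CE}^{\bm{\o}}(f)(\eta_A)$ is again a natural transformation, obtained by transporting the naturality of $\eta_A$ through $\mq$ and $\mp$, after which the componentwise identification $\bm{\CE}^{\bm{\o}}(f)(\eta_A)^M=\bm{\CE}^M(f)(\eta_A^M)$ reduces the dg-algebra morphism property, unitality and functoriality to Lemma \ref{dullpain}, exactly as the paper does. One correction: since $\eta_A$ and the morphisms $\p$ of $\dgcat{dgComod}_R(B)$ may have nonzero degree, naturality is the super-commuting condition $(\p\otimes \I_A)\circ\eta_A^M=(-1)^{|\p||\eta_A|}\eta_A^{M^\pr}\circ(\p\otimes \I_A)$, so your key relation $(\p\otimes\I_A)\circ\mq(\eta_A^M)=\mq(\eta_A^{M^\pr})\circ\p$ and the final commuting square each acquire the Koszul sign $(-1)^{|\p||\eta_A|}$ (which the paper's computation carries explicitly); with that sign inserted your argument goes through verbatim.
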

\begin{proof} 
We first show that
$\bm{\CE}^{\bm{\o}}(f)(\eta_A)$ is an element in $\mathsf{End}(\bm{\o}{\otimes}A^\pr)$ of degree $|\eta_A|$. 
Let $\p:\big(M,\g^M\big)\to \big(M^\pr,\g^{M^\pr}\big)$ be a morphism of right dg-comodules over $B$.
Since $\eta_A$ is a natural transformation, the following diagram commutes:
\[
\xymatrixrowsep{1.3pc}
\xymatrixcolsep{3pc}
\xymatrix{
M\otimes A \ar[r]^-{\p\otimes \I_A} \ar[d]_{\eta_A^M}&
M^\pr\otimes A \ar[d]^{\eta_A^{M^\pr}}\\
M\otimes A \ar[r]^-{\p\otimes \I_A}&
M^\pr\otimes A
}
,\qquad\text{i.e.},\qquad
(\p\otimes\I_A)\circ\eta_A^M=(-1)^{|\p||\eta_A|}\eta_A^{M^\pr}\circ(\p\otimes\I_A).
\]
It follows that
\eqalign{
(\p\otimes\I_{A^\pr})\circ\bm{\CE}^{\bm{\o}}(f)(\eta_A)^M
&=
(\I_{M^\pr}\otimes m_{A^\pr})\circ
\Big(
\big((\p\otimes f)\circ\eta_A^M\circ(\I_M\otimes u_A)\circ \jmath^{-1}_M\big)
\otimes \I_A
\Big)
\cr
&=
(-1)^{|\p||\eta_A|}(\I_{M^\pr}\otimes m_{A^\pr})\circ
\Big(
\big((\I_{M^\pr}\otimes f)\circ\eta_A^{M^\pr}\circ(\p\otimes u_A)\circ \jmath^{-1}_M\big)
\otimes \I_A
\Big)
\cr
&=(-1)^{|\p||\eta_A|}\bm{\CE}^{\bm{\o}}(f)(\eta_A)^{M^\pr}\circ(\p\otimes\I_A).
}
Therefore $\bm{\CE}^{\bm{\o}}(f)(\eta_A)$ is an element in $\mathsf{End}(\bm{\o}{\otimes}A^\pr)^{|\eta_A|}$. 
It remains to show that
\begin{itemize}
\item $\bm{\CE}^{\bm{\o}}(f)$ is a morphism of dg-algebras;
\item $\bm{\CE}^{\bm{\o}}(\I_A)=\I_{\bm{\o}\otimes A}$;
\item $\bm{\CE}^{\bm{\o}}(f^\pr\circ f)
=\bm{\CE}^{\bm{\o}}(f^\pr)\circ\bm{\CE}^{\bm{\o}}(f)$
for another morphism $f^\pr:A^\pr\to A^{\pr\pr}$ of cdg-algebras.
\end{itemize}
These are immediate from the analogous properties of $\bm{\CE}^{M}$
for cochain complexes $M=\bm{\o}(M,\g^M)$, as stated in Lemma \ref{dullpain}.
\qed
\end{proof}

Later in this section we shall construct an isomorphism
$\bm{\CE}^{\bm{\o}}\cong \bm{\CE}^B :\category{cdgA}(\Bbbk) \rightsquigarrow \category{dgA}(\Bbbk)$ of functors,
where  $\bm{\CE}^B$ is the functor defined in  Lemma \ref{grdgschemeone}. 

Now we construct the functor $\bm{\CG}^{\bm{\o}}_{\!\otimes} :\category{cdgA}(\Bbbk) \rightsquigarrow \category{Grp}$,
after some preparations.

\begin{definition}
We consider the following subsets  of $\mathsf{End}(\bm{\o}{\otimes}A)$:

\begin{enumerate}[label=$({\alph*})$,leftmargin=.6cm]

\item $Z^0\mathsf{End}(\bm{\o}{\otimes}A)$
consisting of every element  $\eta_A \in \mathsf{End}(\bm{\o}{\otimes}A)^0$ satisfying $\d_A\eta_A=0$.

\item  $\mathsf{End}_\otimes(\bm{\o}{\otimes}A)$  
consisting of every element  $\eta_A \in \mathsf{End}(\bm{\o}{\otimes}A)^0$ satisfying the conditions
\eqn{tensorial}{ 
\eta_A^{\Bbbk}=\I_{\Bbbk\otimes A}
,\qquad
\eta_A^{M\otimes_{m_B}M'}=\eta_A^M\otimes_{m_A} \eta_A^{M^\pr}
}
for all right dg-comodules $(M,\g^M)$ and $(M^\pr,\g^{M^\pr})$ over $B$.

\item $Z^0\mathsf{End}_\otimes(\bm{\o}{\otimes}A):=Z^0\mathsf{End}(\bm{\o}{\otimes}A)\cap\mathsf{End}_\otimes(\bm{\o}{\otimes}A)$.
\end{enumerate}
\end{definition}

%
%
%
We say an element $\eta_A$ in $\mathsf{End}_\otimes(\bm{\o}{\otimes}A)$ a \emph{tensor} natural transformation, and
an element $\eta_A$ in $Z^0\mathsf{End}_\otimes(\bm{\o}{\otimes}A)$ a \emph{dg-tensor} natural transformation.
We remind that 
\eqn{ctensorial}{
\eta_A^M\otimes_{m_A} \eta_A^{M^\pr}
=(\I_{M\otimes M'}\otimes m_A)\circ(\I_M\otimes\tau\otimes\I_A)\circ\big(\mq(\eta_A^M)\otimes\eta_A^{M'}\big).
}

\begin{lemma}\label{ctensorderi}
If $\eta_A \in \mathsf{End}_\otimes(\bm{\o}{\otimes}A)$ then for right dg-comodules $(M,\g^M)$ and $(M^\pr,\g^{M^\pr})$ over $B$, we have
$
(\d_A\eta_A)^{M\otimes_{m_B\!} M^\pr}=(\d_A\eta_A)^M\otimes_{m_A} \eta_A^{M^\pr}+\eta_A^M\otimes_{m_A} (\d_A\eta_A)^{M^\pr}
$.
\end{lemma}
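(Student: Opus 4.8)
The plan is to reduce the identity to the Leibniz rule for the tensor product $\otimes_{m_A}$ of morphisms of free right dg-modules over $A$, already recorded as property $3$ of Lemma~\ref{tensorfrmod}; the only extra ingredients are the tensoriality of $\eta_A$ and the fact that $\eta_A$ lives in degree $0$. In essence the claim asserts nothing more than that the componentwise differential $\d_A$ is an even derivation of $\otimes_{m_A}$ when restricted to tensorial natural transformations, a property that descends from $\dgcat{dgMod}^{\mathit{fr}}_R(A)$.

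First I would unwind the left-hand side. By the definition of $\d_A$, its component at the comodule $(M,\g^M)\otimes_{m_B}(M^\pr,\g^{M^\pr})$ is $d_{M\otimes M^\pr\otimes A,\,M\otimes M^\pr\otimes A}$ applied to $\eta_A^{M\otimes_{m_B}M^\pr}$. Because $\eta_A\in\mathsf{End}_\otimes(\bm{\o}{\otimes}A)$, the tensoriality condition \eq{tensorial} lets me substitute $\eta_A^{M\otimes_{m_B}M^\pr}=\eta_A^M\otimes_{m_A}\eta_A^{M^\pr}$, with $\eta_A^M\in\End_{m_A\!}(M\otimes A)$ and $\eta_A^{M^\pr}\in\End_{m_A\!}(M^\pr\otimes A)$. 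I would then apply Lemma~\ref{tensorfrmod}$(3)$ with $\w=\eta_A^M$ and $\w^\pr=\eta_A^{M^\pr}$, obtaining
\begin{align*}
d_{M\otimes M^\pr\otimes A,\,M\otimes M^\pr\otimes A}\big(\eta_A^M\otimes_{m_A}\eta_A^{M^\pr}\big)
&= \big(d_{M\otimes A,\,M\otimes A}\eta_A^M\big)\otimes_{m_A}\eta_A^{M^\pr} \\
&\quad +(-1)^{|\eta_A^M|}\,\eta_A^M\otimes_{m_A}\big(d_{M^\pr\otimes A,\,M^\pr\otimes A}\eta_A^{M^\pr}\big).
\end{align*}
Since $\eta_A$ belongs to $\mathsf{End}_\otimes(\bm{\o}{\otimes}A)\subset\mathsf{End}(\bm{\o}{\otimes}A)^0$, each component $\eta_A^M$ is of degree $0$, so $(-1)^{|\eta_A^M|}=1$; recognizing $d_{M\otimes A,\,M\otimes A}\eta_A^M=(\d_A\eta_A)^M$ and likewise for $M^\pr$ then yields exactly the asserted equality.

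Because every step is a direct substitution, I do not expect a genuine obstacle; the statement is inherited componentwise from the already-established Leibniz rule on $\dgcat{dgMod}^{\mathit{fr}}_R(A)$. The one point worth flagging is that the derivation sign collapses precisely because tensoriality is imposed only in degree $0$: had $\eta_A$ been allowed a nonzero degree, the Koszul sign $(-1)^{|\eta_A^M|}$ would survive and the identity would carry signs.
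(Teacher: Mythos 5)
Your proof is correct and follows essentially the same route as the paper's: both substitute $\eta_A^{M\otimes_{m_B}M^\pr}=\eta_A^M\otimes_{m_A}\eta_A^{M^\pr}$ via tensoriality and then invoke the Leibniz rule of Lemma~\ref{tensorfrmod}, with the Koszul sign collapsing because $\eta_A$ has degree $0$. Your closing observation that the sign would survive for nonzero degree is a nice touch but matches what the paper's own computation implicitly records via the factor $(-1)^{|\eta_A|}$.
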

\begin{proof}
Since $\eta_A$ is a tensor natural transformation, we have
\[
\begin{aligned}
(\d_A\eta_A)^{M\otimes_{m_B\!}M'}
&:=d_{M\otimes M'\otimes A,M\otimes M'\otimes A}\eta_A^{M\otimes_{m_B\!}M'}
=d_{M\otimes M'\otimes A,M\otimes M'\otimes A}\left(\eta_A^M\otimes_{m_A\!}\eta_A^{M'}\right)\\
&=\big(d_{M\otimes A,M\otimes A}\eta_A^M\big)\otimes\eta_A^{M'}+(-1)^{|\eta_A|}\eta_A^M\otimes\big(d_{M'\otimes A,M'\otimes A}\eta_A^{M'}\big)\\
&=(\d_A\eta_A)^M\otimes_{m_A} \eta_A^{M^\pr}+\eta_A^M\otimes_{m_A} (\d_A\eta_A)^{M^\pr}.
\end{aligned}
\]
\qed
\end{proof}
Clearly, the set $Z^0\mathsf{End}_\otimes(\bm{\o}{\otimes}A)$ is closed under composition and contains $\I_{\bm{\o}\otimes A}$.
Thus we have a monoid
\eqn{costgrp}{
\bm{\CG}^{\bm{\o}}_{\!\otimes}(A):=\big(Z^0\mathsf{End}_{\otimes}(\bm{\o}{\otimes}A), \I_{\bm{\o}{\otimes}A}, \circ\big)
}
for every cdg-algebra $A$.
We shall show that this is actually a group. We begin with a technical lemma.

\begin{lemma} \label{cofree comodules and eta}
For every $\eta_A\in\mathsf{End}(\bm{\o}{\otimes}A)$, its component $\eta_A^{M\otimes B}$ 
at the cofree right dg-comodule $(M\otimes B,\I_M\otimes \cp_B)$ over $B$ cogenerated 
by a cochain complex $M$ is $\eta_A^{M\otimes B}=\I_M\otimes\eta_A^B$.
\end{lemma}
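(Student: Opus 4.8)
The plan is to exploit the fact that the cofree comodules $(M\otimes B,\I_M\otimes\cp_B)$ depend functorially on the cochain complex $M$, so that naturality of $\eta_A$ pins down $\eta_A^{M\otimes B}$ completely in terms of its value $\eta_A^B$ at the comodule $(B,\cp_B)$.

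First I would record the elementary but crucial observation that for \emph{any} $\Bbbk$-linear map $\phi\colon M\to M'$ between cochain complexes (of arbitrary degree), the map $\phi\otimes\I_B\colon M\otimes B\to M'\otimes B$ is a morphism of right dg-comodules $(M\otimes B,\I_M\otimes\cp_B)\to(M'\otimes B,\I_{M'}\otimes\cp_B)$, since $(\I_{M'}\otimes\cp_B)\circ(\phi\otimes\I_B)=\phi\otimes\cp_B=((\phi\otimes\I_B)\otimes\I_B)\circ(\I_M\otimes\cp_B)$. In particular, under the canonical identification $(\Bbbk\otimes B,\I_\Bbbk\otimes\cp_B)\cong(B,\cp_B)$, the component of $\eta_A$ at the cofree comodule cogenerated by $\Bbbk$ is exactly $\eta_A^B$.

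Next I would fix a homogeneous basis $\{e_i\}_{i\in I}$ of $M$ and introduce $q_i\colon\Bbbk\to M$, $1\mapsto e_i$, together with the dual coordinate functionals $p_i\colon M\to\Bbbk$, so that $p_i\circ q_j=\delta_{ij}\I_\Bbbk$ and $\sum_i q_i\circ p_i=\I_M$ as a pointwise-finite sum. Applying the graded naturality of $\eta_A$ (in the form $\functor{G}(f)\circ\eta^X=(-1)^{|f||\eta|}\eta^Y\circ\functor{F}(f)$ from the Notations) to the comodule morphism $q_i\otimes\I_B\colon(B,\cp_B)\to(M\otimes B,\I_M\otimes\cp_B)$ yields $\eta_A^{M\otimes B}\circ(q_i\otimes\I_{B\otimes A})=(-1)^{|e_i||\eta_A|}(q_i\otimes\I_{B\otimes A})\circ\eta_A^B$. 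I would then insert the resolution of the identity $\I_{M\otimes B\otimes A}=\sum_i(q_i\circ p_i)\otimes\I_{B\otimes A}$ into $\eta_A^{M\otimes B}=\eta_A^{M\otimes B}\circ\I_{M\otimes B\otimes A}$ and substitute this relation termwise.

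The hard part will be the Koszul sign bookkeeping. Pushing the composite $(q_i\otimes\I_{B\otimes A})\circ\eta_A^B\circ(p_i\otimes\I_{B\otimes A})$ through the graded interchange law produces a second factor of $(-1)^{|e_i||\eta_A|}$, which cancels the one coming from naturality, the two signs multiplying to $(-1)^{2|e_i||\eta_A|}=1$. Hence each summand collapses to $(q_i\circ p_i)\otimes\eta_A^B$, and summing gives $\eta_A^{M\otimes B}=\big(\sum_i q_i\circ p_i\big)\otimes\eta_A^B=\I_M\otimes\eta_A^B$; since the sum is pointwise finite, no convergence issue arises and the conclusion holds for arbitrary, possibly infinite-dimensional, $M$. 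I expect this sign cancellation to be the only delicate point, everything else being formal.
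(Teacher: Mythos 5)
Your proof is correct and is essentially the paper's own argument: your maps $q_i\otimes\I_B$ (under $\Bbbk\otimes B\cong B$) are exactly the paper's insertion morphisms $f_z\colon (B,\cp_B)\to(M\otimes B,\I_M\otimes\cp_B)$, $b\mapsto z\otimes b$, of degree $|z|$, and both proofs rest on graded naturality of $\eta_A$ against these. The only difference is presentational: the paper evaluates on elementary tensors $z\otimes b\otimes a$ with $z$ ranging over all of $M$ (so no basis is needed), whereas you fix a basis and assemble the same sign cancellation $(-1)^{2|e_i||\eta_A|}=1$ at the level of maps via the pointwise-finite resolution of identity.
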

\begin{proof}
For each $z\in M$, define a linear map $f_z:B\to M\otimes B$ of degree $|z|$ 
by $f_z(b):=z\otimes b$ for all $b\in B$. 
Then $f_z:(B,\cp_B)\to(M\otimes B,\I_M\otimes \cp_B)$ is a morphism of right dg-comodules over $B$.
Since $\eta_A$ is a natural transformation, the following diagram commutes
\[
\xymatrixrowsep{1.3pc}
\xymatrix{
B\otimes A \ar[r]^-{f_z\otimes \I_A} \ar[d]_{\eta_A^B}&
M\otimes B\otimes A \ar[d]^{\eta_A^{M\otimes B}}\\
B\otimes A \ar[r]^-{f_z\otimes \I_A}&
M\otimes B\otimes A
}
,\qquad\text{i.e.},\qquad
\eta_A^{M\otimes B}\circ(f_z\otimes \I_A)=(-1)^{|\eta_A||z|}(f_z\otimes\I_A)\circ\eta_A^B.
\]
Then for every $b\in B$ and $a\in A$, we have
$\eta_A^{M\otimes B}\big(z\otimes b\otimes a\big)
=\eta_A^{M\otimes B}\circ(f_z\otimes \I_A)\big(b\otimes a\big)
=(-1)^{|\eta_A||z|}(f_z\otimes \I_A)\circ\eta_A^B\big(b\otimes a\big)
=(-1)^{|\eta_A||z|}z\otimes \eta_A^B\big(b\otimes a\big)
=(\I_M\otimes\eta_A^B)\big(z\otimes b\otimes a\big)$.
Since this equality holds for all $a$, $b$ and $z$,
we conclude that $\eta_A^{M\otimes B}=\I_M\otimes \eta_A^B$.
\qed
\end{proof}

\begin{proposition} \label{group valued}
For every $\eta_A\in\mathsf{End}(\bm{\o}{\otimes}A)$ 
we have another natural transformation  $\vs(\eta_A)\in\mathsf{End}(\bm{\o}{\otimes}A)$,
whose component $\vs(\eta_A)^M$ at each right dg-comodule $(M,\g^M)$ over $B$ is defined by
\eqalign{
\vs(\eta_A)^M
:=&(\jmath_M\otimes\I_A)\circ(\I_M\otimes\ep_B\otimes\I_A)\circ\big(\I_M\otimes\eta_A^{B^*}\big)\circ(\g^M\otimes\I_A)
:M\otimes A\to M\otimes A,
}
such that $\vs(\eta_A)\in\mathsf{Z^0End}_\otimes (\bm{\o}{\otimes}A)$ whenever  $\eta_A\in\mathsf{Z^0End}_\otimes (\bm{\o}{\otimes}A)$.
The monoid
$\bm{\CG}^{\bm{\o}}_{\!\otimes}(A)=\big(Z^0\mathsf{End}_{\otimes}(\bm{\o}{\otimes}A), \I_{\bm{\o}\otimes A}, \circ\big)$
is actually a group, where the inverse of $\eta_A\in Z^0\mathsf{End}_{\otimes}(\bm{\o}{\otimes}A)$ is $\vs(\eta_A)$.
\end{proposition}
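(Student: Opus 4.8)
The plan is to verify, in order, that $\vs(\eta_A)$ is a natural transformation, that it preserves the class of dg-tensor natural transformations, and that it is a two-sided inverse; the last point is where the antipode enters decisively, and is the main obstacle. Throughout I would lean on a \emph{reconstruction principle}: for any $\eta_A\in\mathsf{End}(\bm{\o}{\otimes}A)$ and any right dg-comodule $(M,\g^M)$, naturality of $\eta_A$ along the comodule morphism $\g^M\colon(M,\g^M)\to(M\otimes B,\I_M\otimes\cp_B)$ of Lemma~\ref{First three comodule maps}(c), combined with $\eta_A^{M\otimes B}=\I_M\otimes\eta_A^B$ from Lemma~\ref{cofree comodules and eta}, gives $(\g^M\otimes\I_A)\circ\eta_A^M=(\I_M\otimes\eta_A^B)\circ(\g^M\otimes\I_A)$; applying $(\jmath_M\otimes\I_A)\circ(\I_M\otimes\ep_B\otimes\I_A)$ and the counit axiom recovers $\eta_A^M$ from $\eta_A^B$. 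The defining formula for $\vs(\eta_A)^M$ is precisely this reconstruction formula with $\eta_A^{B^*}$ replacing $\eta_A^B$. That $\vs(\eta_A)^M\in\Hom_{m_A}(M\otimes A,M\otimes A)$ is immediate, since $\eta_A^{B^*}$ is an $A$-module map and every other factor has the form $(-)\otimes\I_A$; naturality in $M$ follows by pushing a comodule morphism $\p$ through the formula via $\g^{M^\pr}\circ\p=(\p\otimes\I_B)\circ\g^M$. This establishes $\vs(\eta_A)\in\mathsf{End}(\bm{\o}{\otimes}A)$.

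Next I would check that $\vs$ preserves $Z^0\mathsf{End}_\otimes(\bm{\o}{\otimes}A)$. Degree $0$ and $\d_A$-closedness are clear, as $\eta_A^{B^*}$ is closed and $\g^M,\ep_B,\jmath_M$ are cochain maps. For $\vs(\eta_A)^\Bbbk=\I_{\Bbbk\otimes A}$ I would use that $u_B\colon(\Bbbk,\g^\Bbbk)\to(B^*,\g^{B^*})$ is a comodule morphism (Lemma~\ref{Three comodule maps about B*}(b)), so naturality together with $\eta_A^\Bbbk=\I$ gives $\eta_A^{B^*}\circ(u_B\otimes\I_A)=u_B\otimes\I_A$; then $\g^\Bbbk=\imath^{-1}_B\circ u_B$ and $\ep_B\circ u_B=\I$ collapse the formula to the identity. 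The multiplicativity $\vs(\eta_A)^{M\otimes_{m_B}M^\pr}=\vs(\eta_A)^M\otimes_{m_A}\vs(\eta_A)^{M^\pr}$ is the one genuinely laborious check; it follows from the fact that $m_B\colon(B^*\otimes B^*,\g^{B^*\otimes_{m_B}B^*})\to(B^*,\g^{B^*})$ is a comodule morphism (Lemma~\ref{Three comodule maps about B*}(a), encoding that $\vs_B$ is multiplicative) together with the tensoriality of $\eta_A$, by a computation paralleling the verification that the regular representation is a representation in Example~\ref{regularrepsh}.

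The crux is the inverse property. For tensor $\eta_A$ I would first record $\eta_A^{M_*}=\I_{M\otimes A}$: each $z\in M$ gives a comodule morphism $(\Bbbk,\g^\Bbbk)\to(M_*,\g^{M_*})$, $1\mapsto z$, so naturality and $\eta_A^\Bbbk=\I$ force $\eta_A^{M_*}$ to be the identity. Feeding this into naturality along the comodule morphism $\g^M\colon(M_*,\g^{M_*})\to\big((M,\g^M)\otimes_{m_B}(B^*,\g^{B^*})\big)$ of Lemma~\ref{Three comodule maps about B*}(c)---whose proof is exactly where the antipode axiom $m_B\circ(\I_B\otimes\vs_B)\circ\cp_B=u_B\circ\ep_B$ is consumed---and using tensoriality $\eta_A^{(M,\g^M)\otimes_{m_B}(B^*,\g^{B^*})}=\eta_A^M\otimes_{m_A}\eta_A^{B^*}$, I obtain the key identity $(\eta_A^M\otimes_{m_A}\eta_A^{B^*})\circ(\g^M\otimes\I_A)=\g^M\otimes\I_A$. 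Post-composing with $(\jmath_M\otimes\I_A)\circ(\I_M\otimes\ep_B\otimes\I_A)$ and invoking the counit axiom reduces the right-hand side to $\I_{M\otimes A}$, while unwinding the definitions of $\otimes_{m_A}$ and of $\vs(\eta_A)^M$ identifies the left-hand side with $\eta_A^M\circ\vs(\eta_A)^M$. Hence $\eta_A^M\circ\vs(\eta_A)^M=\I_{M\otimes A}$ for all $M$, i.e. $\vs(\eta_A)$ is a right inverse of $\eta_A$.

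Finally, since every element of $Z^0\mathsf{End}_\otimes(\bm{\o}{\otimes}A)$ now admits a right inverse lying again in $Z^0\mathsf{End}_\otimes(\bm{\o}{\otimes}A)$, the standard monoid argument closes the proof: if $\xi\circ\xi'=\I$ and $\xi'\circ\xi''=\I$, then $\xi'\circ\xi=\xi'\circ\xi\circ(\xi'\circ\xi'')=\xi'\circ(\xi\circ\xi')\circ\xi''=\xi'\circ\xi''=\I$, so right inverses are automatically two-sided and $\bm{\CG}^{\bm{\o}}_{\!\otimes}(A)$ is a group with $\eta_A^{-1}=\vs(\eta_A)$. The essential difficulty is the key identity of the third paragraph, where the labour invested in Lemma~\ref{Three comodule maps about B*} to repackage the antipode axiom as a morphism of comodules is precisely what makes the inverse work; the multiplicativity check of the second paragraph is the most computation-heavy but is conceptually routine.
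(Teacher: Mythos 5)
Your proposal is correct and follows the paper's own proof in all essentials: the same verification of naturality, the same use of Lemma \ref{Three comodule maps about B*}(a),(b) for tensoriality and the unit condition, the same key identity $\big(\eta_A^M\otimes_{m_A}\eta_A^{B^*}\big)\circ(\g^M\otimes\I_A)=\g^M\otimes\I_A$ obtained from naturality along Lemma \ref{Three comodule maps about B*}(c) together with $\eta_A^{M_*}=\I_{M\otimes A}$, and the same closing argument that a monoid with all right inverses is a group. Your only deviation is the derivation of $\eta_A^{M_*}=\I_{M\otimes A}$ via the degree-$|z|$ comodule morphisms $1\mapsto z$ from $(\Bbbk,\g^\Bbbk)$ to $(M_*,\g^{M_*})$ and $\eta_A^\Bbbk=\I_{\Bbbk\otimes A}$, whereas the paper routes through the morphisms $u_B:(\Bbbk,\g^\Bbbk)\to(B,\cp_B)$ and $\g^{M_*}:(M_*,\g^{M_*})\to(M\otimes B,\I_M\otimes\cp_B)$ combined with Lemma \ref{cofree comodules and eta}; both are valid, and yours is if anything more direct.
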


\begin{proof}
1.
We first check that $\vs(\eta_A)\in\mathsf{End}(\bm{\o}{\otimes}A)$ whenever $\eta_A\in\mathsf{End}(\bm{\o}{\otimes}A)$.
For each morphism $\p: (M, \g^M)\to (M^\pr, \g^{M^\pr})$ of right dg-comodules over $B$ the following diagram commutes:
\[
\xymatrixrowsep{3pc}
\xymatrixcolsep{3.6pc}
\xymatrix{
\ar@/^1pc/[rrrr]^-{\vs(\eta_A)^M}
\ar[d]_{\p\otimes\I_A}
M{\!\otimes\!} A \ar@{..>}[r]_{\g^M\otimes\I_A} &
M{\!\otimes} B {\otimes\!} A \ar@{..>}[r]_-{\I_M\otimes\eta_A^{B^*}} \ar@{..>}[d]^{\p\otimes\I_{B\otimes A}}&
M{\!\otimes\!} B {\!\otimes\!} A \ar@{..>}[r]_-{\I_M\otimes\ep_B\otimes\I_A} \ar@{..>}[d]^{\p\otimes\I_{B\otimes A}}&
M{\!\otimes\!} \Bbbk {\!\otimes\!} A\ar@{..>}[r]_-{\jmath_M\otimes\I_A} \ar@{..>}[d]^{\p\otimes\I_{\Bbbk\otimes A}}&
M{\!\otimes\!} A 
\ar[d]^{\p\otimes\I_A}
\cr
\ar@/_1pc/[rrrr]_-{\vs(\eta_{A})^{M^\pr}}
M'{\!\otimes\!} A \ar@{..>}[r]^{\g^{M^\pr}\otimes\I_A}&
M'{\!\otimes} B {\otimes\!} A \ar@{..>}[r]^-{\I_{M'}\otimes\eta_A^{B^*}}&
M'{\!\otimes\!} B {\!\otimes\!} A \ar@{..>}[r]^-{\I_{M'}\otimes\ep_B\otimes\I_A}&
M'{\!\otimes\!} \Bbbk {\!\otimes\!} A\ar@{..>}[r]^-{\jmath_{M'}\otimes\I_A}&
M'{\!\otimes\!} A.
}
\]
This shows that $\vs(\eta_A)\in\mathsf{End}(\bm{\o}{\otimes}A)$.

2.  We check that $\vs(\eta_A)\in Z^0\mathsf{End}_{\otimes}(\bm{\o}{\otimes}A)$
whenever $\eta_A\in Z^0\mathsf{End}_{\otimes}(\bm{\o}{\otimes}A)$.

It is obvious that $\vs(\eta_A)$ is in $Z^0\mathsf{End}(\bm{\o}{\otimes}A)$
since for every right dg-comodule $(M,\g^M)$ over $B$, all the maps 
$\jmath_M\otimes\I_A$, $\I_M\otimes\ep_B\otimes\I_A$, $\I_M\otimes\eta_A^{B^*}$ and $\g^M\otimes\I_A$ 
are of degree $0$ and are in the kernels of differentials. 

It remains to show that $\vs(\eta_A)\in \mathsf{End}_{\otimes}(\bm{\o}{\otimes}A)$, i.e.,  we have
$\vs(\eta_A)^{M\otimes_{m_B}M'}=\vs(\eta_A)^M\otimes_{m_A}\vs(\eta_A)^{M^\pr}$ 
for all right dg-comodules $(M,\g^M)$ and $(M^\pr,\g^{M^\pr})$ over $B$
and $\vs(\eta_A)^\Bbbk=\I_{\Bbbk\otimes A}$.
From Lemma \ref{Three comodule maps about B*}\emph{(a)}, 
the product $m_B:B\otimes B \rightarrow B$ of $B$
is a morphism $m_B:(B^*\otimes B^*,\g^{B^*\otimes_{m_B}B^*})\rightarrow (B^*,\g^{B^*})$ 
of right dg-comodules over $B$.
Since $\eta_A$ is a tensor natural transformation, the following diagram commutes:
\[
\xymatrixrowsep{1.3pc}
\xymatrixcolsep{2.5pc}
\xymatrix{
B\otimes B\otimes A \ar[r]^-{m_B\otimes\I_A} \ar[d]_{\eta_A^{B^*\otimes_{m_B}B^*}
=\eta_A^{B^*}\otimes_{m_A}\eta_A^{B^*}}&
B\otimes A \ar[d]^{\eta_A^{B^*}}\\
B\otimes B\otimes A \ar[r]^-{m_B\otimes\I_A}&
B\otimes A
}
\quad\hbox{i.e.,}\quad
(m_B\otimes\I_A)\circ\big(\eta_A^{B^*}\otimes_{m_A}\eta_A^{B^*}\big)
=\eta_A^{B^*}\circ(m_B\otimes\I_A).
\]
Using the above property we obtain that
\[
\begin{aligned}
\vs(\eta_A)^M\otimes_{m_A}\vs(\eta_A)^{M^\pr}
=&(\I_M\otimes\jmath_{M^\pr}\otimes\I_A)\circ
\Big(
\I_{M\otimes M^\pr}\otimes
\big((\ep_B\otimes\I_A)\circ(m_B\otimes\I_A)\circ(\eta_A^{B^*}\otimes_{m_A}\eta_A^{B^*})\big)
\Big)\\
&\circ(\I_M\otimes\tau\otimes\I_{B\otimes A})\circ(\g^M\otimes\g^{M^\pr}\otimes\I_A)
\\
=&(\I_M\otimes\jmath_{M^\pr}\otimes\I_A)\circ
\Big(
\I_{M\otimes M^\pr}\otimes
\big((\ep_B\otimes\I_A)\circ\eta_A^{B^*}\circ(m_B\otimes\I_A)\big)
\Big)\\
&\circ(\I_M\otimes\tau\otimes\I_{B\otimes A})\circ(\g^M\otimes\g^{M^\pr}\otimes\I_A)
\\
=&(\I_M\otimes \jmath_{M^\pr}\otimes \I_A)\circ\Big(\I_{M\otimes M^\pr}\otimes
\big((\ep_B\otimes\I_A)\circ\eta_A^{B^*}\big)\Big)\circ(\g^{M\otimes_{m_B}M^\pr}\otimes\I_A)\\
=&\vs(\eta_A)^{M\otimes_{m_B}M'}.
\end{aligned}
\]
From Lemma \ref{Three comodule maps about B*}\emph{(b)}, the unit $u_B:\Bbbk\rightarrow B$ of $B$
 is also a morphism $u_B:(\Bbbk,\g^\Bbbk)\to(B^*,\g^{B^*})$ of right dg-comodules over $B$.
Thus the following diagram also commutes:
\[
\xymatrixrowsep{1.3pc}
\xymatrixcolsep{3pc}
\xymatrix{
\Bbbk\otimes A\ar[r]^{u_B\otimes\I_A} \ar[d]_{\eta_A^\Bbbk=\I_{\Bbbk\otimes A}}&
B\otimes A \ar[d]^{\eta_A^{B^*}}\\
\Bbbk\otimes A \ar[r]^{u_B\otimes\I_A}&
B\otimes A
}
\qquad\hbox{i.e.,}\qquad
\eta_A^{B^*}\circ(u_B\otimes\I_A)=u_B\otimes\I_A.
\]
It follows that $\vs(\eta_A)^\Bbbk=(\ep_B\otimes\I_A)\circ\eta_A^{B^*}\circ(u_B\otimes\I_A)
=(\ep_B\otimes\I_A)\circ(u_B\otimes\I_A)=\I_{\Bbbk\otimes A}$. 

Therefore we have shown that  $\vs(\eta_A)\in Z^0\mathsf{End}_{\otimes}(\bm{\o}{\otimes}A)$. 

3. Finally, we show that $\vs(\eta_A)$ is the inverse of $\eta_A$. 
It suffices to show that $\vs(\eta_A)$ is the right inverse 
of $\eta_A$---we have $\eta_A\circ\vs(\eta_A)=\I_{\bm{\o}\otimes A}$---since every monoid with all right inverses is a group.
Equivalently, we should have $\eta_A^M\circ\vs(\eta_A)^M=\I_{M\otimes A}$ for every  right dg-comodule $(M, \g^M)$ over $B$.
By the definition of $\vs(\eta_A)^M$ and  \eq{ctensorial}, we obtain that
\eqnalign{hlja}{
\eta_A^M\circ\vs(\eta_A)^M
=&\mp(\mq(\eta_A^M))\circ\vs(\eta_A)^M
=(\I_M\otimes m_A)\circ\big(\mq(\eta_A^M)\otimes\I_A\big)\circ\vs(\eta_A)^M
\cr
=&(\jmath_M\otimes\I_A)\circ(\I_M\otimes\ep_B\otimes\I_A)\circ
{\color{blue}\big(\eta_A^M\otimes_{m_A}\eta_A^{B^*}\big)\circ(\g^M\otimes\I_A)}.
}
We claim that 
\eqn{hljb}{
\big(\eta_A^M\otimes_{m_A}\eta_A^{B^*}\big)\circ(\g^M\otimes\I_A)= (\g^M\otimes\I_A)
}
which immediately implies that $\eta_A^M\circ\vs(\eta_A)^M=\I_{M\otimes A}$  since 
we have
$(\jmath_M\otimes\I_A)\circ(\I_M\otimes\ep_B\otimes\I_A)\circ(\g^M\otimes\I_A)=\I_{M\otimes A}$
by the counit property of coaction $\g^M$.
It remains to check the claim.
Lemma \ref{Three comodule maps about B*}\emph{(c)} states that for each right dg-comodule $(M, \g^M)$ over $B$, 
the coaction $\g^M$
is a morphism $\g^M:(M_*,\g^{M_*})\rightarrow (M\otimes B^*,\g^{M\otimes_{m_B} B^*})$ of right dg-comodules over $B$.
Since $\eta^A$ is a tensor natural transformation, the following diagram commutes
$$
\xymatrixrowsep{1.3pc}
\xymatrixcolsep{3.5pc}
\xymatrix{
M\otimes A \ar[r]^-{\g^M\otimes \I_A}
\ar[d]_-{\eta_A^{M_*}}&
M\otimes B\otimes A
\ar[d]^-{\eta_A^{M}\otimes_{m_A}\eta_A^{B^*}}
\cr
M\otimes A \ar[r]^-{\g^M\otimes\I_A} &
M\otimes B\otimes A
}
\quad\hbox{i.e.,}\quad
\big(\eta_A^{M}\otimes_{m_A}\eta_A^{B^*}\big)\circ(\g^M\otimes\I_A)
=(\g^M\otimes\I_A)\circ\eta_A^{M_*}.
$$
Then the claimed identity \eq{hljb} amounts to $\eta_A^{M_*}=\I_{M\otimes A}$. 
Indeed, by Lemma \ref{First three comodule maps}\emph{(b)} and \emph{(c)},
the unit $u_B:(\Bbbk,\g^\Bbbk)\to(B,m_B)$
and
the coaction $\g^{M_*}:(M_*,\g^{M_*})\to(M\otimes B,\I_M\otimes\cp_B)$ 
are morphisms of right dg-comodules over $B$.
Since $\eta_A$ is a tensor natural transformation, the following diagrams commute:
\[
\xymatrixrowsep{1.3pc}
\xymatrixcolsep{3.5pc}
\xymatrix{
\Bbbk\otimes A \ar[r]^-{u_B\otimes \I_A} \ar[d]_{\eta_A^\Bbbk=\I_{\Bbbk\otimes A}}&
B\otimes A \ar[d]^{\eta_A^B}\\
\Bbbk\otimes A \ar[r]^-{u_B\otimes\I_A}&
B\otimes A
}
,
\qquad\qquad
\xymatrix{
M\otimes A \ar[r]^-{\g^{M_*}\otimes\I_A} \ar[d]_{\eta_A^{M_*}}&
M\otimes B\otimes A \ar[d]^{\eta_A^{M\otimes B}=\I_M\otimes\eta_A^{B}}\\
M\otimes A \ar[r]^-{\g^{M_*}\otimes\I_A}&
M\otimes B\otimes A
}
.
\]
Equality on the right diagram is due to Lemma \ref{cofree comodules and eta}. Therefore, we have
\[
\begin{aligned}
\eta_A^{M_*}
=&\Big(\big(\jmath_M\circ(\I_M\otimes\ep_B)\circ\g^{M_*}\big)\otimes\I_A\Big)\circ\eta_A^{M_*}\\
=&(\jmath_M\otimes\I_A)\circ(\I_M\otimes\ep_B\otimes\I_A)\circ(\g^{M_*}\otimes \I_A)\circ\eta_A^{M_*}\\
=&(\jmath_M\otimes\I_A)\circ(\I_M\otimes\ep_B\otimes\I_A)\circ(\I_M\otimes \eta_A^B)\circ(\g^{M_*}\otimes \I_A)\\
=&(\jmath_M\otimes\I_A)\circ(\I_M\otimes\ep_B\otimes\I_A)\circ\big(\I_M\otimes\eta_A^B\big)\circ(\I_M\otimes u_B\otimes \I_A)\circ(\jmath^{-1}_M\otimes\I_A)\\
=&(\jmath_M\otimes\I_A)\circ(\I_M\otimes\ep_B\otimes\I_A)\circ(\I_M\otimes u_B\otimes \I_A)\circ(\jmath^{-1}_M\otimes\I_A)
=\I_{M\otimes A}.
\end{aligned}
\]
This finish our proof that  the monoid $\bm{\CG}^{\bm{\o}}_{\!\otimes}(A)$ is actually a group.
\qed
\end{proof}

The following lemma shows that the above construction is functorial.

\begin{proposition}\label{costpdx}
We have a functor
$\bm{\CG}^{\bm{\o}}_{\!\otimes}:\category{cdgA}(\Bbbk) \rightsquigarrow \category{Grp}$, sending
\begin{itemize}
\item
each cdg-algebra $A$ to the group $\bm{\CG}^{\bm{\o}}_{\!\otimes}(A)$, and
\item
each morphism $f:A\rightarrow A^\pr$ of cdg-algebras to a morphism
$\bm{\CG}^{\bm{\o}}_{\!\otimes}(f)
:\bm{\CG}^{\bm{\o}}_{\!\otimes}(A)\rightarrow \bm{\CG}^{\bm{\o}}_{\!\otimes}(A^\pr)$
of groups defined by $\bm{\CG}^{\bm{\o}}_{\!\otimes}(f):=\bm{\CE}^{\bm{\o}}(f)$.
\end{itemize}
\end{proposition}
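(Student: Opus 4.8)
The plan is to observe that nearly all of the required structure is already available, so that the proof reduces to a single genuinely new compatibility check. By Lemma \ref{costpda} the assignment $A\mapsto\bm{\CE}^{\bm{\o}}(A)$, $f\mapsto\bm{\CE}^{\bm{\o}}(f)$ is a functor $\category{cdgA}(\Bbbk)\rightsquigarrow\category{dgA}(\Bbbk)$; in particular each $\bm{\CE}^{\bm{\o}}(f)$ is a degree-preserving morphism of dg-algebras, hence commutes with the differential $\d_A$ and preserves the composition $\circ$ and the identity $\I_{\bm{\o}\otimes A}$, and it satisfies $\bm{\CE}^{\bm{\o}}(\I_A)=\I_{\bm{\o}\otimes A}$ and $\bm{\CE}^{\bm{\o}}(f^\pr\circ f)=\bm{\CE}^{\bm{\o}}(f^\pr)\circ\bm{\CE}^{\bm{\o}}(f)$. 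By Proposition \ref{group valued}, $\bm{\CG}^{\bm{\o}}_{\!\otimes}(A)$ is already a group with underlying set $Z^0\mathsf{End}_\otimes(\bm{\o}{\otimes}A)$, operation $\circ$ and unit $\I_{\bm{\o}{\otimes}A}$. Therefore it suffices to prove that $\bm{\CE}^{\bm{\o}}(f)$ maps $Z^0\mathsf{End}_\otimes(\bm{\o}{\otimes}A)$ into $Z^0\mathsf{End}_\otimes(\bm{\o}{\otimes}A^\pr)$; granting this, the restriction $\bm{\CG}^{\bm{\o}}_{\!\otimes}(f):=\bm{\CE}^{\bm{\o}}(f)$ is automatically a group homomorphism, since it preserves $\circ$ and $\I_{\bm{\o}{\otimes}A}$, and the two functoriality identities are inherited verbatim from $\bm{\CE}^{\bm{\o}}$.

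Because $\bm{\CE}^{\bm{\o}}(f)$ already commutes with $\d_A$ and preserves degree, it sends $Z^0\mathsf{End}(\bm{\o}{\otimes}A)$ into $Z^0\mathsf{End}(\bm{\o}{\otimes}A^\pr)$, so the \emph{only} new point is preservation of the tensor conditions in \eq{tensorial}. This is the main obstacle, and the efficient way to treat it is to apply the injective map $\mq$ of Lemma \ref{basicl} and compare images, using the identity $\mq\big(\bm{\CE}^{\bm{\o}}(f)(\eta_A)^M\big)=(\I_M\otimes f)\circ\mq(\eta_A^M)$ together with the formula of Lemma \ref{tensorfrmod} for $\mq$ of a tensor product of module morphisms. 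For $\eta_A\in\mathsf{End}_\otimes(\bm{\o}{\otimes}A)$ one then computes
\[
\begin{aligned}
\mq\big(\bm{\CE}^{\bm{\o}}(f)(\eta_A)^{M\otimes_{m_B}M^\pr}\big)
&=(\I_{M\otimes M^\pr}\otimes f)\circ(\I_{M\otimes M^\pr}\otimes m_A)\circ(\I_M\otimes\t\otimes\I_A)\circ\big(\mq(\eta_A^M)\otimes\mq(\eta_A^{M^\pr})\big),\\
\mq\big(\bm{\CE}^{\bm{\o}}(f)(\eta_A)^M\otimes_{m_{A^\pr}}\bm{\CE}^{\bm{\o}}(f)(\eta_A)^{M^\pr}\big)
&=(\I_{M\otimes M^\pr}\otimes m_{A^\pr})\circ(\I_M\otimes\t\otimes\I_{A^\pr})\circ\big((\I_M\otimes f)\mq(\eta_A^M)\otimes(\I_{M^\pr}\otimes f)\mq(\eta_A^{M^\pr})\big),
\end{aligned}
\]
where the first line uses the tensor identity for $\eta_A$ itself. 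These two expressions agree once the algebra-map property $f\circ m_A=m_{A^\pr}\circ(f\otimes f)$ is invoked and $f$ (being of degree $0$) is pushed through the permutation $\t$; injectivity of $\mq$ on $\Hom_{m_{A^\pr}}$ then yields $\bm{\CE}^{\bm{\o}}(f)(\eta_A)^{M\otimes_{m_B}M^\pr}=\bm{\CE}^{\bm{\o}}(f)(\eta_A)^M\otimes_{m_{A^\pr}}\bm{\CE}^{\bm{\o}}(f)(\eta_A)^{M^\pr}$. The unit condition $\bm{\CE}^{\bm{\o}}(f)(\eta_A)^{\Bbbk}=\I_{\Bbbk\otimes A^\pr}$ is handled the same way, reducing after $\mq$ to the identity $f\circ u_A=u_{A^\pr}$.

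With tensor-preservation established, $\bm{\CE}^{\bm{\o}}(f)$ restricts to a well-defined map $\bm{\CG}^{\bm{\o}}_{\!\otimes}(A)\rightarrow\bm{\CG}^{\bm{\o}}_{\!\otimes}(A^\pr)$, and since it preserves $\circ$ and $\I_{\bm{\o}{\otimes}A}$ it is a homomorphism of groups; the identities $\bm{\CG}^{\bm{\o}}_{\!\otimes}(\I_A)=\I$ and $\bm{\CG}^{\bm{\o}}_{\!\otimes}(f^\pr\circ f)=\bm{\CG}^{\bm{\o}}_{\!\otimes}(f^\pr)\circ\bm{\CG}^{\bm{\o}}_{\!\otimes}(f)$ are the restrictions of the corresponding statements for $\bm{\CE}^{\bm{\o}}$ proved in Lemma \ref{costpda}. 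Thus the entire difficulty is concentrated in the tensor-preservation step, where the bookkeeping of $\t$ and of the two products $m_A$, $m_{A^\pr}$ must be done with care; apart from that, everything is inherited from the already-established dg-algebra functor $\bm{\CE}^{\bm{\o}}$ and the group structure of Proposition \ref{group valued}.
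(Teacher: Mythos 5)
Your proposal is correct and follows essentially the same route as the paper: both reduce the statement to showing that $\bm{\CE}^{\bm{\o}}(f)$ preserves $Z^0\mathsf{End}_\otimes$, with the closedness and unit conditions handled by the cochain-map property and $f\circ u_A=u_{A^\pr}$, and the tensor condition verified by applying $\mq$, using $\mq\big(\bm{\CE}^{\bm{\o}}(f)(\eta_A)^M\big)=(\I_M\otimes f)\circ\mq(\eta_A^M)$ together with $f\circ m_A=m_{A^\pr}\circ(f\otimes f)$ and the tensor property of $\eta_A$, after which the homomorphism and functoriality statements are inherited from Lemma \ref{costpda} exactly as in the paper.
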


\begin{proof}
In Proposition \ref{costpdx},
we have already showed that $\bm{\CG}^{\bm{\o}}_{\!\otimes}(A)$ is a group for every $A$.
To show that $\bm{\CG}^{\bm{\o}}_{\!\otimes}(f)$ is a group homomorphism,
it suffices to check that for every morphism $f:A\to A^\pr$ of cdg-algebras, we have
$\bm{\CG}^{\bm{\o}}(f)(\eta_A)\in Z^0\mathsf{End}_\otimes (\bm{\o}{\otimes}A^\pr)$
whenever
$\eta_A\in Z^0\mathsf{End}_\otimes (\bm{\o}{\otimes}A)$, i.e.,
\begin{enumerate}[label=\hbox{ }(\arabic*),leftmargin=.6cm]

\item
$\bm{\CG}^{\bm{\o}}_{\!\otimes}(f)(\eta_A)\in Z^0\mathsf{End}_\otimes (\bm{\o}{\otimes}A^\pr)$;

\item
$\bm{\CG}^{\bm{\o}}_{\!\otimes}(f)(\eta_A)^\Bbbk=\I_{\Bbbk\otimes A^\pr}$;

\item
$\bm{\CG}^{\bm{\o}}_{\!\otimes}(f)(\eta_A)^{M\otimes_{m_B}M^\pr}
=\bm{\CG}^{\bm{\o}}_{\!\otimes} (f)(\eta_A)^M\otimes_{m_{A^\pr}}\bm{\CG}^{\bm{\o}}_{\!\otimes}(f)(\eta_A)^{M^\pr}$
for all right dg-comodules $(M,\g^M)$ and $(M^\pr,\g^{M^\pr})$ over $B$.
\end{enumerate}
Then, $\bm{\CG}^{\bm{\o}}_{\!\otimes}(f)=\bm{\CE}^{\bm{\o}}(f)$ is a group homomorphism due to Lemma \ref{costpda}, 
which also implies the functoriality of  $\bm{\CG}^{\bm{\o}}_{\!\otimes}(f)$.

Property $(1)$ is obvious since $\bm{\CG}^{\bm{\o}}_{\!\otimes}(f)$ is a cochain map. Property $(2)$ follows from
$\eta_A^\Bbbk=\I_{\Bbbk\otimes A}$ and $f\circ u_A=u_{A^\pr}$, since we have
\[
\begin{aligned}
\bm{\CG}^{\bm{\o}}_{\!\otimes}(f)(\eta_A)^\Bbbk
&=(\I_\Bbbk\otimes m_{A^\pr})\circ(\I_\Bbbk\otimes f\otimes \I_{A^\pr})
\circ(\eta_A^\Bbbk\otimes \I_{A^\pr})\circ(\I_\Bbbk\otimes u_A\otimes \I_{A^\pr})\circ(\cp_\Bbbk\otimes \I_{A^\pr})\\
&=(\I_\Bbbk\otimes m_{A^\pr})\circ(\I_\Bbbk\otimes u_{A^\pr}\otimes \I_{A^\pr})\circ(\cp_\Bbbk\otimes \I_{A^\pr})
=\I_{\Bbbk\otimes A^\pr}.
\end{aligned}
\]
Note that Property $(3)$ is equivalent to the condition
\[
\mq\left(\bm{\CG}^{\bm{\o}}_{\!\otimes}(f)(\eta_A)^{M\otimes_{m_B}M^\pr}\right)
=
\mq\big(
\bm{\CG}^{\bm{\o}}_{\!\otimes} (f)(\eta_A)^M \otimes_{m_{A^\pr}}\bm{\CG}^{\bm{\o}}_{\!\otimes}(f)(\eta_A)^{M^\pr}
\big),
\]
which can be checked as follows:
\eqalign{
\mq\big(
\bm{\CG}^{\bm{\o}}_{\!\otimes} (f)(\eta_A)^M & \otimes_{m_{A^\pr}}\bm{\CG}^{\bm{\o}}_{\!\otimes}(f)(\eta_A)^{M^\pr}
\big)\\
&=(\I_{M\otimes M^\pr}\otimes m_{A^\pr})\circ(\I_M\otimes\t\otimes\I_{A^\pr})\circ\left(
\mq\big(\bm{\CG}^{\bm{\o}}_{\!\otimes} (f)(\eta_A)^M\big)\otimes
\mq\big(\bm{\CG}^{\bm{\o}}_{\!\otimes}(f)(\eta_A)^{M^\pr}\big)
\right)\\
&=(\I_{M\otimes M^\pr}\otimes m_{A^\pr})\circ(\I_M\otimes\t\otimes\I_{A^\pr})\circ
(\I_M\otimes f\otimes \I_{M^\pr}\otimes f)\circ\big(
\mq(\eta_A^M)\otimes\mq(\eta_A^{M^\pr})
\big)\\
&=
(\I_{M\otimes M^\pr}\otimes f)\circ(\I_{M\otimes M^\pr}\otimes m_A)\circ(\I_M\otimes\t\otimes\I_A)\circ
\big(
\mq(\eta_A^M)\otimes\mq(\eta_A^{M^\pr})
\big)\\
&=
(\I_{M\otimes M^\pr}\otimes f)\circ\mq\left(\eta_A^M\otimes_{m_A}\eta_A^{M^\pr}\right)
=(\I_{M\otimes M^\pr}\otimes f)\circ\mq\left(\eta_A^{M\otimes_{m_B}M^\pr}\right)\\
&=\mq\left(\bm{\CG}^{\bm{\o}}_{\!\otimes}(f)(\eta_A)^{M\otimes_{m_B}M^\pr}\right)
,
}
where we have used $f\circ m_A=m_{A^\pr}\circ(f\otimes f)$ for the $3$rd equality
and $\eta_A^M\otimes_{m_A}\eta_A^{M^\pr}=\eta_A^{M\otimes_{m_B}M^\pr}$
for the $5$th equality.
\qed
\end{proof}

Later in this section we shall construct an isomorphism
$\bm{\CG}^{\bm{\o}}_{\!\otimes}\cong \bm{\CG}^B :\category{cdgA}(\Bbbk) \rightsquigarrow \category{dgA}(\Bbbk)$ of functors,
where  $\bm{\CG}^B$ is the functor represented by the cdg-Hopf algebra $B$ as defined in  Lemma \ref{grdgschemetwo}.

We remind that the group $\bm{\CG}^{B}(A)$ for each cdg-algebra $A$ is the
group formed by the set $\HOM_{\category{cdgA}(\Bbbk)}(B,A)$ of morphisms of cdg-algebras.
We also remind that the functor $\bm{\CG}^B :\category{cdgA}(\Bbbk) \rightsquigarrow \category{dgA}(\Bbbk)$
induces the functor $\bm{\mG}^B :\mathit{ho}\category{cdgA}(\Bbbk) \rightsquigarrow \category{dgA}(\Bbbk)$
on the homotopy category $\mathit{ho}\category{cdgA}(\Bbbk)$, where 
$\bm{\mG}^{\!B}(A)$ is the group formed 
by the set $\HOM_{\mathit{ho}\category{cdgA}(\Bbbk)}(B,A)$ of homotopy types of elements in 
$\HOM_{\category{cdgA}(\Bbbk)}(B,A)$.  
Likewise, we need to define homotopy types of elements in 
$Z^0\mathsf{End}_{\otimes}(\bm{\o}{\otimes}A)$.
Note that taking cohomology classes is not compatible with the tensor condition \eq{tensorial}.
Indeed, let
$\eta_A \in  Z^0\mathsf{End}_\otimes\big(\bm{\o}{\otimes}A)$ and  
$\tilde\eta_A = \eta_A +\d_A\l_A$ for some $\l_A \in \mathsf{End}(\bm{\o}{\otimes}A)$ of degree $-1$.
Then $\tilde\eta_A$ and $\eta_A$ belong to the same cohomology class but $\tilde\eta_A$, in general,
is not a tensor natural transformation.

\begin{definition}
A homotopy pair on $Z^0\mathsf{End}_{\otimes}(\bm{\o}{\otimes}A)$ is a pair of one parameter families
$\big(\eta(t)_A, \l(t)_A\big)\in\mathsf{End}(\bm{\o}{\otimes}A)^0[t]\oplus \mathsf{End}(\bm{\o}{\otimes}A)^{-1}[t]$,
parametrized by the time variable $t$ with polynomial dependence, satisfying the homotopy flow equation
$\Fr{d}{dt}\eta(t)_A= \d_A\l(t)_A$ generated by $\eta(t)_A$ subject to the following
conditions:
\eqalign{
\eta(0)_A\in  Z^0\mathsf{End}_\otimes(\bm{\o}{\otimes}A)
,\quad
\begin{cases}
\l(t)_A^{\Bbbk}=0,\cr
\l(t)_A^{M\otimes_{m_B\!} M^\pr} 
= \l(t)_A^M\otimes_{m_A} \eta(t)_A^{M^\pr} +\eta(t)_A^M\otimes_{m_A} \l(t)_A^{M^\pr}
.
\end{cases}
}
\end{definition}

Let $\big(\eta(t)_A, \l(t)_A\big)$ be a homotopy pair on $Z^0\mathsf{End}_{\otimes}(\bm{\o}{\otimes}A)$.
It follows from the homotopy flow equation that $\eta(t)_A$ is uniquely determined by
$\eta(t)_A= \eta(0)_A + \d_A\int^t_0\l(s)_A \mathit{ds}$, and we have $\d_A\eta(t)_A=0$ since $\d_A\eta(0)_A=0$.
From the condition $\eta_A(0)^{\Bbbk}=\I_{\Bbbk\otimes A}$ and $\l(t)_A^{\Bbbk}=0$, we have
$\eta_A^{\Bbbk}(t)=\I_{\Bbbk\otimes A}$.
Moreover, by applying  Lemma \ref{ctensorderi}, we can check that
\eqalign{
\Fr{d}{dt}\Big(\eta(t)_A^{M\otimes_{m_B}\! M^\pr}&-\eta(t)_A^M\otimes_{m_A}\! \eta(t)_A^{M^\pr}\Big)
\cr
&
=\d_A\left(\l(t)_A^{M\otimes_{m_B}\! M^\pr} 
- \l(t)_A^M\otimes_{m_A}\! \eta(t)_A^{M^\pr} -\eta(t)_A^M\otimes_{m_A}\! \l(t)_A^{M^\pr}\right)
\cr
&
=0.
}
It follows that we  have $\eta(t)_A^{M\otimes_{m_B\!} M^\pr} =\eta(t)_A^M\otimes_{m_A} \eta(t)_A^{M^\pr}$
for all $t$ since $\eta(0)_A^{M\otimes_{m_B\!} M^\pr} =\eta(0)_A^M\otimes_{m_A} \eta(0)_A^{M^\pr}$.
Therefore  $\eta(t)_A$ is a family of elements in $Z^0\mathsf{End}_{\otimes}(\bm{\o}\!\otimes \! A)$.
Then,  we declare that $\eta(1)_A$ is \emph{homotopic} to $\eta(0)_A$ by the homotopy $\int^1_0\l(t)_A dt$,
and denote $\eta(0)_A\sim \eta(1)_A$, which is clearly an equivalence relation.
In other words,  two elements $\eta_A$ and $\tilde\eta_A$ 
in the set $Z^0\mathsf{End}_{\otimes}(\bm{\o}\!\otimes \! A)$ 
are homotopic if there is a homotopy flow connecting  them (by the time $1$ map).  
Then,  we also say that $\eta_A$ and $\tilde\eta_A$ have the same homotopy type, 
and denote it as $[\eta_A]=[\tilde \eta_A]$.

Let $\mathit{ho}Z^0\mathsf{End}_{\otimes}(\bm{\o}\!\otimes \! A)$ be the set of homotopy types of elements in
$Z^0\mathsf{End}_{\otimes}(\bm{\o}\!\otimes \! A)$.
It is a routine check that 
$\eta^\pr_A\circ \eta_A \sim \tilde\eta^\pr_A\circ \tilde\eta_A \in Z^0\mathsf{End}_{\otimes}(\bm{\o}\!\otimes \! A)$
whenever $\eta^{\pr}_A\sim \tilde\eta^{\pr}_A, \eta_A \sim \tilde\eta_A 
\in Z^0\mathsf{End}_{\otimes}(\bm{\o}\!\otimes \! A)$
and the homotopy type of $\eta^{\pr}_A\circ \eta_A$ 
depends only on the homotopy types of $\eta^{\pr}_A$ and $\eta_A$.
Therefore we have a well-defined associative composition 
$[\eta^\pr_A]\diamond [\eta_A]:= [\eta^\pr_A\circ \eta_A]$. 
This shows that we have a group
\eqnalign{defining hoZEnd}{
\bm{\mG}^{\bm{\o}}_{\!\otimes}(A):=
\big( \mathit{ho}Z^0\mathsf{End}_{\otimes}(\bm{\o}\!\otimes \! A), [\I_{\bm{\o}\otimes A}], \diamond\big).
}
The following lemma shows that this construction is functorial.

\begin{proposition}\label{costpdy}
We have a functor
$\bm{\mG}^{\bm{\o}}_{\!\otimes}:\mathit{ho}\category{cdgA}(\Bbbk) \rightsquigarrow \category{Grp}$, sending
\begin{itemize}

\item
each cdg-algebra $A$ to the group 
$\bm{\mG}^{\bm{\o}}_{\!\otimes}(A)$, and

\item
each morphism $[f]\in \HOM_{\hcdga}\big(A, A^\pr\big)$ 
to a morphism 
$\bm{\mG}^{\bm{\o}}_{\!\otimes}([f]):\bm{\mG}^{\bm{\o}}_{\!\otimes}(A)\rightarrow \bm{\mG}^{\bm{\o}}_{\!\otimes}(A^\pr)$
of groups defined by
$\bm{\mG}^{\bm{\o}}_{\!\otimes}([f])\big([\eta_A]\big):=
\Big[\bm{\CG}^{\bm{\o}}_{\!\otimes}(f)\big(\eta_A\big)\Big]$
for all $[\eta_A]\in \mathit{ho}Z^0\mathsf{End}_{\otimes}(M\otimes A)$,
where $f \in \HOM_{\cdga}\big(A, A^\pr\big)$ and $\eta_A \in \mathsf{Z^0End}_\otimes (M\otimes A)$
are arbitrary representatives of $[f]$ and $[\eta_A]$, respectively.
\end{itemize}
\end{proposition}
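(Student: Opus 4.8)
The plan is to deduce everything from Proposition \ref{costpdx}, which already produces the functor $\bm{\CG}^{\bm{\o}}_{\!\otimes}\colon\cdga\rightsquigarrow\category{Grp}$ on the nose, together with a homotopy-pair lemma for $Z^0\mathsf{End}_\otimes(\bm{\o}{\otimes}A)$ playing the role that Lemma \ref{hprt} played in the proof of Theorem \ref{affinegrdgsch}. Three things must be checked: (i) that $\bm{\mG}^{\bm{\o}}_{\!\otimes}([f])\big([\eta_A]\big):=\big[\bm{\CG}^{\bm{\o}}_{\!\otimes}(f)(\eta_A)\big]$ is independent of the representatives $f$ of $[f]$ and $\eta_A$ of $[\eta_A]$; (ii) that each $\bm{\mG}^{\bm{\o}}_{\!\otimes}([f])$ is a group homomorphism; and (iii) that the assignment is functorial. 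Steps (ii) and (iii) are immediate from Proposition \ref{costpdx}: since $\bm{\CG}^{\bm{\o}}_{\!\otimes}(f)$ is a group homomorphism with $\bm{\CG}^{\bm{\o}}_{\!\otimes}(f)(\I_{\bm{\o}{\otimes}A})=\I_{\bm{\o}{\otimes}A^\pr}$ and $\bm{\CG}^{\bm{\o}}_{\!\otimes}(f)(\eta_A\circ\eta_A^\pr)=\bm{\CG}^{\bm{\o}}_{\!\otimes}(f)(\eta_A)\circ\bm{\CG}^{\bm{\o}}_{\!\otimes}(f)(\eta_A^\pr)$, passing to homotopy types gives $\bm{\mG}^{\bm{\o}}_{\!\otimes}([f])\big([\eta_A]\diamond[\eta_A^\pr]\big)=\bm{\mG}^{\bm{\o}}_{\!\otimes}([f])([\eta_A])\diamond\bm{\mG}^{\bm{\o}}_{\!\otimes}([f])([\eta_A^\pr])$, while $\bm{\CG}^{\bm{\o}}_{\!\otimes}(\I_A)=\I$ and $\bm{\CG}^{\bm{\o}}_{\!\otimes}(f^\pr\circ f)=\bm{\CG}^{\bm{\o}}_{\!\otimes}(f^\pr)\circ\bm{\CG}^{\bm{\o}}_{\!\otimes}(f)$ yield $\bm{\mG}^{\bm{\o}}_{\!\otimes}([\I_A])=\I$ and $\bm{\mG}^{\bm{\o}}_{\!\otimes}\big([f^\pr]\circ_h[f]\big)=\bm{\mG}^{\bm{\o}}_{\!\otimes}([f^\pr])\circ\bm{\mG}^{\bm{\o}}_{\!\otimes}([f])$.

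The heart of the argument is step (i), for which I would first record a lemma parallel to Lemma \ref{hprt}: given a homotopy pair $\big(f(t),\s(t)\big)$ on $\HOM_{\cdga}(A,A^\pr)$ and a homotopy pair $\big(\eta(t)_A,\l(t)_A\big)$ on $Z^0\mathsf{End}_\otimes(\bm{\o}{\otimes}A)$, the pair
\[
\Big(\bm{\CG}^{\bm{\o}}_{\!\otimes}\big(f(t)\big)\big(\eta(t)_A\big),\ \bm{\CE}^{\bm{\o}}\big(f(t)\big)\big(\l(t)_A\big)+\bm{\CE}^{\bm{\o}}\big(\s(t)\big)\big(\eta(t)_A\big)\Big)
\]
is a homotopy pair on $Z^0\mathsf{End}_\otimes(\bm{\o}{\otimes}A^\pr)$, where $\bm{\CE}^{\bm{\o}}\big(\s(t)\big)$ denotes the defining formula of Lemma \ref{costpda} applied to the degree $-1$ linear map $\s(t)$. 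Exactly as in Lemma \ref{hprt}(c) the generator combines the contribution of varying $\eta$ (through the cochain map $\bm{\CE}^{\bm{\o}}(f)$) with that of varying $f$. The flow equation holds because $\bm{\CE}^{\bm{\o}}(f)$ commutes with the differentials $\d_A,\d_{A^\pr}$, because $\mp$ and $\mq$ are cochain maps by Lemma \ref{basicl}, and because $\mq(\eta(t)_A^M)$ is a cocycle (as $\d_A\eta(t)_A=0$); the vanishing of the generator at the unit comodule uses $\s(t)\circ u_A=0$ together with $\l(t)_A^\Bbbk=0$. Granting this lemma, well-definedness follows by the now-standard chaining: if $f\sim\tilde f$ and $\eta_A\sim\tilde\eta_A$ then
\[
\bm{\CG}^{\bm{\o}}_{\!\otimes}(f)(\eta_A)\ \sim\ \bm{\CG}^{\bm{\o}}_{\!\otimes}(f)(\tilde\eta_A)\ \sim\ \bm{\CG}^{\bm{\o}}_{\!\otimes}(\tilde f)(\tilde\eta_A)
\]
in $Z^0\mathsf{End}_\otimes(\bm{\o}{\otimes}A^\pr)$, so $\big[\bm{\CG}^{\bm{\o}}_{\!\otimes}(f)(\eta_A)\big]$ depends only on $[f]$ and $[\eta_A]$.

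The step I expect to be the main obstacle is verifying that this generator genuinely satisfies the \emph{tensor} (Leibniz) condition required of a homotopy pair on $Z^0\mathsf{End}_\otimes(\bm{\o}{\otimes}A^\pr)$, i.e. that its component at $M\otimes_{m_B}M^\pr$ splits as the expected $\otimes_{m_{A^\pr}}$-Leibniz sum against the flowed first component $\bm{\CG}^{\bm{\o}}_{\!\otimes}(f(t))(\eta(t)_A)$. This is the degree $-1$ analogue of the multiplicativity computation in the proof of Proposition \ref{costpdx}, and it hinges on the infinitesimal algebra-map identity $\s(t)\circ m_A=m_{A^\pr}\circ\big(f(t){\otimes}\s(t)+\s(t){\otimes}f(t)\big)$ for the generator $\s(t)$, combined with $f\circ m_A=m_{A^\pr}\circ(f{\otimes}f)$ and the tensor identity $\eta_A^M\otimes_{m_A}\eta_A^{M^\pr}=\eta_A^{M\otimes_{m_B}M^\pr}$. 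The remaining verifications — the Koszul sign bookkeeping in the flow equation and the routine $\mp,\mq$ identities — are mechanical, as in Lemma \ref{hprt}, and I would relegate them to the same ``routine computation'' status.
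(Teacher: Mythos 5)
Your proposal is correct and takes essentially the same route as the paper: the paper's proof reduces everything to exactly your homotopy-pair lemma, with the identical generator $\bm{\CE}^{\bm{\o}}\big(f(t)\big)\big(\l(t)_A\big)+\bm{\CE}^{\bm{\o}}\big(s(t)\big)\big(\eta(t)_A\big)$, and verifies the same four points you single out — the flow equation via $\mp,\mq$ being cochain maps, vanishing at $\Bbbk$ via $s(t)\circ u_A=0$ and $\l(t)_A^{\Bbbk}=0$, and the tensor Leibniz condition via $s(t)\circ m_A=m_{A^\pr}\circ\big(f(t)\otimes s(t)+s(t)\otimes f(t)\big)$ together with the tensor identities for $\eta(t)_A$ and $\l(t)_A$. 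Your explicit steps (ii)--(iii) (homomorphism and functoriality) are left implicit in the paper but amount to the same immediate descent from Proposition \ref{costpdx}.
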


\begin{proof}
All we need to show is that 
$\bm{\CG}^{\bm{\o}}_{\!\otimes}(f)\big(\eta_A\big)
\sim
\bm{\CG}^{\bm{\o}}_{\!\otimes}(\tilde{f})\big(\tilde{\eta}_A\big)$
in $Z^0\mathsf{End}_{\otimes}(\bm{\o}{\otimes}A^\pr)$
whenever $f\sim \tilde{f}$ in $\HOM_{\category{cdgA}(\Bbbk)}(A,A^\pr)$ and $\eta_A\sim\tilde{\eta}_A$
in $Z^0\mathsf{End}_{\otimes}(\bm{\o}{\otimes}A)$.
It suffices to show the following statement: 
Let $\big(f(t),s(t)\big)$ be a homotopy pair on $\HOM_{\category{cdgA}(\Bbbk)}(A,A^\pr)$ and $\big(\eta(t)_A,\l(t)_A\big)$ 
be a homotopy pair on $Z^0\mathsf{End}_{\otimes}(\bm{\o}{\otimes}A)$. Then the pair
\[
\Big(
\nu(t)_{A^\pr}:=
\bm{\CE}^{\bm{\o}}\big(f(t)\big)\big(\eta(t)_A\big),
\quad
\chi(t)_{A^\pr}:=
\bm{\CE}^{\bm{\o}}\big(f(t)\big)\big(\l(t)_A\big)+
\bm{\CE}^{\bm{\o}}\big(s(t)\big)\big(\eta(t)_A\big)
\Big)
\]
is a homotopy pair on $Z^0\mathsf{End}_{\otimes}(\bm{\o}{\otimes}A^\pr)$, i.e.,
\begin{enumerate}[label=\hbox{ }(\arabic*),leftmargin=.6cm]
\item
$\frac{d}{dt}\nu(t)_{A^\pr}=\d_{A^\pr}\chi(t)_{A^\pr}$;

\item
$\nu(0)_A\in Z^0\mathsf{End}_{\otimes}(\bm{\o}{\otimes}A^\pr)$;

\item
$\chi(t)_{A^\pr}^\Bbbk=0$;

\item
$\chi(t)_{A^\pr}^{M\otimes_{m_B}M^\pr}
=\chi(t)_{A^\pr}^M\otimes_{m_{A^\pr}}\nu(t)_{A^\pr}^{M^\pr}
+\nu(t)_{A^\pr}^M\otimes_{m_{A^\pr}}\chi(t)_{A^\pr}^{M^\pr}$
holds for all right dg-comodules $(M,\g^M)$ and $(M^\pr,\g^{M^\pr})$ over $B$.
\end{enumerate}
For property $(1)$, let $(M,\g^M)$ be a right dg-comodule over $B$. Then we have
\eqalign{
\frac{d}{dt}\nu(t)_{A^\pr}^M
=&\frac{d}{dt}
\mp
\Big(
\big(\I_M\otimes f(t)\big)\circ\mq\big(\eta(t)_A^M\big)
\Big)\\
=&\mp
\Big(
\big(\I_M\otimes d_{A,A^\pr}s(t)\big)\circ\mq\big(\eta(t)_A^M\big)
+
\big(\I_M\otimes f(t)\big)\circ\mq\big((\d_A\l(t)_A)^M\big)
\Big)\\
=&\big(\d_{A^\pr}\chi(t)_{A^\pr}\big)^M,
}
where we used $d_{A,A^\pr}f(t)=0$ and $(\d_A\eta(t)_A)^M=0$ on the $3$rd equality.
Property $(2)$ is obvious since $\eta(0)_A$ is in $Z^0\mathsf{End}_{\otimes}(\bm{\o}{\otimes}A)$
and $f(0):A\to A^\pr$ is a morphism of cdg-algebras.
Property $(3)$ follows from $\l(t)_A^\Bbbk=0$, $\eta(t)_A^\Bbbk=\I_{\Bbbk\otimes A}$
and $s(t)\circ u_A=0$, since we have
\eqalign{
\chi(t)_{A^\pr}^\Bbbk
=&
\mp
\Big(
\big(\I_\Bbbk\otimes f(t)\big)\circ\mq\big(\l(t)_A^\Bbbk\big)
+
\big(\I_\Bbbk\otimes s(t)\big)\circ\mq\big(\eta(t)_A^\Bbbk\big)
\Big)
=
\mp
\Big(
\big(\I_\Bbbk\otimes s(t)\big)\circ(\I_\Bbbk\otimes u_A)\circ\cp_\Bbbk
\Big)
\cr
=&0.
}
Note that Property $(4)$ is equivalent to the condition
\eqn{modifying proof presentation 1}{
\mq\left(
\chi(t)_{A^\pr}^{M\otimes_{m_B}M^\pr}\right)
=
\mq\left(
\chi(t)_{A^\pr}^M\otimes_{m_{A^\pr}}\nu(t)_{A^\pr}^{M^\pr}
+
\nu(t)_{A^\pr}^M\otimes_{m_{A^\pr}}\chi(t)_{A^\pr}^{M^\pr}
\right),
}
which can be checked as follows. We consider the $1$st term in the RHS of \eq{modifying proof presentation 1}:
\eqalign{
\mq\Big(\chi(t)_{A^\pr}^M\otimes_{m_{A^\pr}}\nu(t)_{A^\pr}^{M^\pr}\Big)
=&(\I_{M\otimes M^\pr}\otimes m_{A^\pr})\circ(\I_M\otimes\t\otimes\I_{A^\pr})\circ
\Big(
\mq\big(\chi(t)_{A^\pr}^M\big)\otimes\mq\big(\nu(t)_{A^\pr}^{M^\pr}\big)
\Big)\\
=&
(\I_{M\otimes M^\pr}\otimes m_{A^\pr})\circ(\I_M\otimes\t\otimes\I_{A^\pr})\\
&\circ \big(\I_M\otimes s(t)\otimes \I_{M^\pr}\otimes f(t)\big)\circ
\Big(
\mq\big(\eta(t)_A^M\big)\otimes \mq\big(\eta(t)_A^{M^\pr}\big)
\Big)\\
+&(\I_{M\otimes M^\pr}\otimes m_{A^\pr})\circ(\I_M\otimes\t\otimes\I_{A^\pr})\\
&\circ \big(\I_M\otimes f(t)\otimes \I_{M^\pr}\otimes f(t)\big)\circ
\Big(
\mq\big(\l(t)_A^M\big)\otimes \mq\big(\eta(t)_A^{M^\pr}\big)
\Big).
}
Combining with the similar calculation for the $2$nd term in the RHS of \eq{modifying proof presentation 1}, we obtain that
\[
\begin{aligned}
\mq\Big(
\chi(t)_{A^\pr}^M & \otimes_{m_{A^\pr}}\nu(t)_{A^\pr}^{M^\pr}
+\nu(t)_{A^\pr}^M\otimes_{m_{A^\pr}}\chi(t)_{A^\pr}^{M^\pr}
\Big)\\
=&
(\I_{M\otimes M^\pr}\otimes m_{A^\pr})\circ(\I_M\otimes\t\otimes\I_{A^\pr})\\
&\circ \big(\I_M\otimes s(t)\otimes \I_{M^\pr}\otimes f(t)+\I_M\otimes f(t)\otimes \I_{M^\pr}\otimes s(t)\big)\circ
\Big(
\mq\big(\eta(t)_A^M\big)\otimes \mq\big(\eta(t)_A^{M^\pr}\big)
\Big)\\
+&(\I_{M\otimes M^\pr}\otimes m_{A^\pr})\circ(\I_M\otimes\t\otimes\I_{A^\pr})\\
&\circ \big(\I_M\otimes f(t)\otimes \I_{M^\pr}\otimes f(t)\big)\circ
\Big(
\mq\big(\l(t)_A^M\big)\otimes \mq\big(\eta(t)_A^{M^\pr}\big)
+
\mq\big(\eta(t)_A^M\big)\otimes \mq\big(\l(t)_A^{M^\pr}\big)
\Big)\\
=&
\big(\I_{M\otimes M^\pr}\otimes s(t)\big)\circ(\I_{M\otimes M^\pr}\otimes m_A)\circ(\I_M\otimes\t\otimes\I_A)
\circ
\Big(
\mq\big(\eta(t)_A^M\big)\otimes \mq\big(\eta(t)_A^{M^\pr}\big)
\Big)\\
+&
\big(\I_{M\otimes M^\pr}\otimes f(t)\big)\circ(\I_{M\otimes M^\pr}\otimes m_A)\circ(\I_M\otimes\t\otimes\I_A)\\
&\circ
\Big(
\mq\big(\l(t)_A^M\big)\otimes \mq\big(\eta(t)_A^{M^\pr}\big)
+
\mq\big(\eta(t)_A^M\big)\otimes \mq\big(\l(t)_A^{M^\pr}\big)
\Big)\\
=&
\big(\I_{M\otimes M^\pr}\otimes s(t)\big)\circ
\mq\big(
\eta(t)_A^M\otimes_{m_A}\eta(t)_A^{M^\pr}
\big)\\
+&
\big(\I_{M\otimes M^\pr}\otimes f(t)\big)\circ
\mq\big(
\l(t)_A^M\otimes_{m_A}\eta(t)_A^{M^\pr}+
\eta(t)_A^M\otimes_{m_A}\l(t)_A^{M^\pr}
\big)\\
=&
\big(\I_{M\otimes M^\pr}\otimes s(t)\big)\circ
\mq\big(
\eta(t)_A^{M\otimes_{m_B}M^\pr}
\big)
+
\big(\I_{M\otimes M^\pr}\otimes f(t)\big)\circ
\mq\big(
\l(t)_A^{M\otimes_{m_B}M^\pr}
\big)\\
=&
\mq\big(
\chi(t)_{A^\pr}^{M\otimes_{m_B}M^\pr}
\big).
\end{aligned}
\]
In the above, we used $f(t)\circ m_A=m_{A^\pr}\circ\big(f(t)\otimes f(t)\big)$
and $s(t)\circ m_A=m_{A^\pr}\circ\big(f(t)\otimes s(t)+s(t)\otimes f(t)\big)$
on the $2$nd equality, and used
$\eta(t)_A^M\otimes_{m_A}\eta(t)_A^{M^\pr}=\eta(t)_A^{M\otimes_{m_B}M^\pr}$ and
$\l(t)_A^M\otimes_{m_A}\eta(t)_A^{M^\pr}+\eta(t)_A^M\otimes_{m_A}\l(t)_A^{M^\pr}=\l(t)_A^{M\otimes_{m_B}M^\pr}$
on the $4$th equality.
\qed
\end{proof}

Now we are ready to state the main theorem of this subsection.

\begin{theorem}\label{homainth}
We have a natural isomorphism of functors
$$
\bm{\mG}^{\bm{\o}}_{\!\otimes}\cong \bm{\mG}^{\!B}:
\mathit{ho}\category{cdgA}(\Bbbk) \rightsquigarrow \category{Grp}
.
$$
Equivalently, the functor
$\bm{\mG}^{\bm{\o}}_{\!\otimes}$ is representable and  is represented by the cdg-Hopf algebra $B$.
\end{theorem}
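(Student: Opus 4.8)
The plan is to build the isomorphism first at the level of the endomorphism dg-algebras, then to cut it down to the group-valued functors, and finally to descend to the homotopy categories. First I would establish a natural isomorphism $\bm{\CE}^{\bm{\o}}\cong\bm{\CE}^B:\category{cdgA}(\Bbbk)\rightsquigarrow\category{dgA}(\Bbbk)$, where $\bm{\CE}^B$ is the functor of Lemma \ref{grdgschemeone}. The candidate map $\Phi_A:\bm{\CE}^{\bm{\o}}(A)\to\bm{\CE}^B(A)=\Hom(B,A)$ is evaluation on the regular dg-comodule $(B,\cp_B)$ followed by the counit,
\[
\Phi_A(\eta_A):=\imath_A\circ(\ep_B\otimes\I_A)\circ\mq(\eta_A^B),
\]
with proposed inverse sending $g\in\Hom(B,A)$ to the natural transformation whose component is $\widebreve{\bm{\r}}^M_{\!A}(g)=\mp\big((\I_M\otimes g)\circ\g^M\big)$ at each $(M,\g^M)$, as in \eq{funcy}. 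That this is a genuine natural transformation is exactly the naturality already verified for $\functor{Y}$ in Theorem \ref{repmod}, and $\Phi_A\circ\Psi_A=\I$ reduces to the short computation $\mq\big(\widebreve{\bm{\r}}^B_{\!A}(g)\big)=(\I_B\otimes g)\circ\cp_B$ together with $(\ep_B\otimes\I_B)\circ\cp_B=\imath^{-1}_B$.

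The essential step is the reverse composite $\Psi_A\circ\Phi_A=\I$, i.e.\ every $\eta_A$ is recovered from $g:=\Phi_A(\eta_A)$. Here I would use naturality of $\eta_A$ with respect to the comodule morphism $\g^M:(M,\g^M)\to(M\otimes B,\I_M\otimes\cp_B)$ of Lemma \ref{First three comodule maps}\emph{(c)}, combined with Lemma \ref{cofree comodules and eta} giving $\eta_A^{M\otimes B}=\I_M\otimes\eta_A^B$, to obtain $(\I_M\otimes\eta_A^B)\circ(\g^M\otimes\I_A)=(\g^M\otimes\I_A)\circ\eta_A^M$; applying $(\jmath_M\otimes\I_A)\circ(\I_M\otimes\ep_B\otimes\I_A)$ and the counit axiom $(\I_M\otimes\ep_B)\circ\g^M=\jmath^{-1}_M$ then pins down $\eta_A^M$ in terms of $\eta_A^B$, and a direct rewriting identifies it with $\widebreve{\bm{\r}}^M_{\!A}(g)$. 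This is the same reconstruction already run for $\eta_A^{M_*}$ inside the proof of Proposition \ref{group valued}. The dg-algebra structure matches because composition corresponds to convolution, via $\widebreve{\bm{\r}}^M_{\!A}(g)\circ\widebreve{\bm{\r}}^M_{\!A}(h)=\widebreve{\bm{\r}}^M_{\!A}(g\star_{B\!,A}h)$, which holds for \emph{all} $g,h\in\Hom(B,A)$ since it only uses coassociativity of the coaction and associativity of $m_A$; differentials and units correspond since $\mp,\mq$ are cochain maps and $\Phi_A(\I_{\bm{\o}\otimes A})=u_A\circ\ep_B$, while naturality of $\Phi$ in $A$ is the identity $\imath_{A^\pr}\circ(\ep_B\otimes f)=f\circ\imath_A\circ(\ep_B\otimes\I_A)$ comparing Lemma \ref{costpda} with $\bm{\CE}^B(f)(g)=f\circ g$.

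Next I would restrict $\Phi$ to the group-valued subfunctors. Under the dg-algebra isomorphism, $\eta_A$ lies in $Z^0\mathsf{End}_\otimes(\bm{\o}{\otimes}A)$ if and only if $g=\Phi_A(\eta_A)$ is a degree-zero cocycle that is an algebra map: the degree and cocycle conditions transport verbatim because $\Phi_A$ is a degree-zero cochain isomorphism, the unit condition $\eta_A^\Bbbk=\I_{\Bbbk\otimes A}$ becomes $g\circ u_B=u_A$, and the tensor condition $\eta_A^{M\otimes_{m_B}M^\pr}=\eta_A^M\otimes_{m_A}\eta_A^{M^\pr}$ becomes $g\circ m_B=m_A\circ(g\otimes g)$ — for the latter it suffices, via Lemma \ref{First three comodule maps}\emph{(a)}, to test the tensor condition on the regular comodule and its tensor square. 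Hence $\Phi_A$ restricts to a group isomorphism $\bm{\CG}^{\bm{\o}}_{\!\otimes}(A)\cong\HOM_{\cdga}(B,A)=\bm{\CG}^B(A)$ natural in $A$, giving $\bm{\CG}^{\bm{\o}}_{\!\otimes}\cong\bm{\CG}^B$ as functors $\category{cdgA}(\Bbbk)\rightsquigarrow\category{Grp}$.

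Finally I would descend to the homotopy category. Extending $\Phi_A$ over polynomial families in $t$, a homotopy pair $\big(\eta(t)_A,\l(t)_A\big)$ on $Z^0\mathsf{End}_\otimes(\bm{\o}{\otimes}A)$ corresponds to the pair $\big(\Phi_A(\eta(t)_A),\Phi_A(\l(t)_A)\big)$ on $\HOM_{\cdga}(B,A)$: the homotopy flow equation is preserved since $\Phi_A$ intertwines $\d_A$ with $d_{B\!,A}$ and commutes with $\tfrac{d}{dt}$, while $\l(t)_A^\Bbbk=0$ and the tensor-derivation rule for $\l(t)_A$ translate into $\xi(t)\circ u_B=0$ and the infinitesimal algebra map condition exactly as the tensor condition did above. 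Thus homotopic elements correspond in both directions, $\Phi$ induces $\mathit{ho}Z^0\mathsf{End}_\otimes(\bm{\o}{\otimes}A)\cong\HOM_{\hcdga}(B,A)$ natural in $[f]$, and therefore $\bm{\mG}^{\bm{\o}}_{\!\otimes}\cong\bm{\mG}^B$; representability by $B$ then follows from Theorem \ref{affinegrdgsch}. I expect the reconstruction identity $\Psi_A\circ\Phi_A=\I$ to be the main obstacle — verifying that naturality on cofree comodules together with the coaction-as-comodule-morphism property really determine $\eta_A$ on every comodule — whereas the remaining dg-algebra/group bookkeeping and the homotopy transport are routine once that is in place.
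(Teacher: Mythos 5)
Your proposal is correct and follows essentially the same route as the paper: your $\Phi_A$ and $\Psi_A$ are exactly the paper's $\bm{\widebreve{g}}_{\!A}$ and $\bm{\widebreve{\eta}}_{\!A}$ of Lemma \ref{ctanha}, your reconstruction identity $\Psi_A\circ\Phi_A=\I$ is proved there by the same combination of Lemma \ref{cofree comodules and eta} with naturality along the coaction $\g^M:(M,\g^M)\to(M\otimes B,\I_M\otimes\cp_B)$, and your group-level restriction and homotopy-pair transport are precisely Lemma \ref{ctanhc} and the paper's proof of Theorem \ref{homainth} (via Proposition \ref{homainpr}). The only cosmetic difference is that you compress the routine verification that an algebra map $g$ yields a tensor natural transformation (the paper's part 2 of Lemma \ref{ctanhc}), which is indeed a direct computation.
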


The remaining part of this subsection is devoted to the proof of the above theorem, which
is divided into several pieces.

\begin{proposition}\label{homainpr}
We have natural isomorphisms of functors
\eqalign{
\bm{\CE}^{\bm{\o}}\cong \bm{\CE}^{B}:\category{cdgA}(\Bbbk) \rightsquigarrow \category{dgA}(\Bbbk)
,\qquad\quad
\bm{\CG}^{\bm{\o}}_{\!\otimes}\cong \bm{\CG}^{\!\!B}
:\category{cdgA}(\Bbbk) \rightsquigarrow \category{Grp}
.
}
In particular, the functor $\bm{\CG}^{\bm{\o}}_{\!\otimes}$ is representable
and is represented by the cdg-Hopf algebra $B$.
\end{proposition}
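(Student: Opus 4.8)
The plan is to prove the dg-algebra isomorphism $\bm{\CE}^{\bm{\o}}\cong\bm{\CE}^{B}$ first and then obtain $\bm{\CG}^{\bm{\o}}_{\!\otimes}\cong\bm{\CG}^{\!\!B}$ by restricting it to the degree-$0$ cocycles cut out by the tensor (resp. algebra-map) conditions. For each cdg-algebra $A$ I would construct mutually inverse maps $\Psi_A:\bm{\CE}^{B}(A)\to\bm{\CE}^{\bm{\o}}(A)$ and $\Phi_A:\bm{\CE}^{\bm{\o}}(A)\to\bm{\CE}^{B}(A)$. The map $\Psi_A$ sends $\a\in\Hom(B,A)$ to the natural endomorphism of $\bm{\o}\otimes A$ whose component at a right dg-comodule $(M,\g^M)$ over $B$ is
$$\Psi_A(\a)^M:=\mp\big((\I_M\otimes\a)\circ\g^M\big),$$
which is exactly the formula \eqref{funcy} for $\widebreve{\bm{\r}}^{M}_{\!A}$ from Theorem \ref{repmod}, but now permitted for an \emph{arbitrary} linear map $\a$ rather than only for a morphism of cdg-algebras. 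The inverse $\Phi_A$ evaluates a natural endomorphism on the regular comodule $(B,\cp_B)$ and collapses it with the counit,
$$\Phi_A(\eta_A):=\imath_A\circ(\ep_B\otimes\I_A)\circ\mq(\eta_A^{B}).$$

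First I would verify that $\Psi_A(\a)$ really is a natural endomorphism of $\bm{\o}\otimes A$: the required super-naturality square at a comodule morphism $\p$ follows immediately from the coaction-compatibility $\g^{M^\pr}\circ\p=(\p\otimes\I_B)\circ\g^M$, the Koszul sign arising precisely when $\a$ is commuted past $\p$, exactly as in the verification that $\functor{Y}$ is a dg-functor. Next, $\Psi_A$ is a morphism of dg-algebras: it commutes with the differentials because $\mp$ and $\g^M$ are cochain maps, and it sends the unit $u_A\circ\ep_B$ to $\I_{\bm{\o}\otimes A}$ and $\a_1\star_{B\!,A}\a_2$ to $\Psi_A(\a_1)\circ\Psi_A(\a_2)$ by the same computations performed in part $3$ of Theorem \ref{repmod} --- these use only the counit and coassociativity of $\g^M$ together with the unit and associativity of $m_A$, and so remain valid for arbitrary $\a,\a_1,\a_2$.

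The heart of the argument is that $\Phi_A$ and $\Psi_A$ are mutually inverse. The identity $\Phi_A\circ\Psi_A=\I$ is immediate from the counit axiom of $\cp_B$ applied to $\Psi_A(\a)^{B}=\mp\big((\I_B\otimes\a)\circ\cp_B\big)$. For $\Psi_A\circ\Phi_A=\I$ I would put $\a:=\Phi_A(\eta_A)$ and show $\mq(\eta_A^M)=(\I_M\otimes\a)\circ\g^M$ for every comodule; since $\mq$ is a bijection (Lemma \ref{basicl}) this is equivalent to $\Psi_A(\a)^M=\eta_A^M$. Here the two structural facts enter: Lemma \ref{First three comodule maps}\emph{(c)} shows $\g^M$ is a comodule morphism $(M,\g^M)\to(M\otimes B,\I_M\otimes\cp_B)$, so naturality of $\eta_A$ gives $\eta_A^{M\otimes B}\circ(\g^M\otimes\I_A)=(\g^M\otimes\I_A)\circ\eta_A^M$, while Lemma \ref{cofree comodules and eta} identifies $\eta_A^{M\otimes B}=\I_M\otimes\eta_A^B$. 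Precomposing the resulting identity with $(\I_M\otimes u_A)\circ\jmath^{-1}_M$ and postcomposing with $(\I_M\otimes\ep_B\otimes\I_A)$ collapses the cofree $B$-factor through the counit and yields exactly $\mq(\eta_A^M)=(\I_M\otimes\a)\circ\g^M$. I expect this to be the main obstacle, since it is the only point where the entire natural transformation must be genuinely reconstructed from its single value on the regular comodule.

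It then remains to check naturality in $A$ and to restrict to groups. Naturality in $A$ is direct: by Lemma \ref{costpda} the functorial map $\bm{\CE}^{\bm{\o}}(f)$ acts componentwise by sandwiching $(\I\otimes f)$ between $\mq$ and $\mp$, just as $\bm{\CE}^{B}(f)$ does, so the square $\bm{\CE}^{\bm{\o}}(f)\circ\Psi_A=\Psi_{A^\pr}\circ\bm{\CE}^{B}(f)$ reduces to the computation already carried out for $\widebreve{\bm{\r}}$ in Theorem \ref{repmod}; this gives $\bm{\CE}^{\bm{\o}}\cong\bm{\CE}^{B}$. Because $\Psi_A$ is a cochain isomorphism it restricts to a bijection of degree-$0$ cocycles, and under it the algebra-map conditions on $g\in\HOM_{\cdga}(B,A)$ match the tensor conditions defining $Z^0\mathsf{End}_\otimes(\bm{\o}\otimes A)$: the condition $g\circ u_B=u_A$ corresponds to $\eta_A^{\Bbbk}=\I_{\Bbbk\otimes A}$ (evaluate on the trivial comodule), and $g\circ m_B=m_A\circ(g\otimes g)$ corresponds to $\eta_A^{M\otimes_{m_B}M^\pr}=\eta_A^M\otimes_{m_A}\eta_A^{M^\pr}$ --- comparing $\mq$ of both sides makes the forward implication immediate, and the converse follows by specializing to $M=M^\pr=(B,\cp_B)$ and collapsing the redundant factors with $\ep_B\otimes\ep_B$. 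Hence $\Psi_A$ restricts to a group isomorphism $\bm{\CG}^{B}(A)\cong\bm{\CG}^{\bm{\o}}_{\!\otimes}(A)$, natural in $A$, establishing $\bm{\CG}^{\bm{\o}}_{\!\otimes}\cong\bm{\CG}^{\!\!B}$; representability of $\bm{\CG}^{\bm{\o}}_{\!\otimes}$ by $B$ is then inherited from $\bm{\CG}^{\!\!B}$.
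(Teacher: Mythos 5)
Your proposal is correct and takes essentially the same route as the paper: your $\Psi_A$ and $\Phi_A$ are exactly the maps $\bm{\widebreve{\eta}}_{\!A}$ and $\bm{\widebreve{g}}_{\!A}$ of Lemma \ref{ctanha}, and you reconstruct $\eta_A$ from its value on the regular comodule via the same two ingredients, Lemma \ref{cofree comodules and eta} and Lemma \ref{First three comodule maps}\emph{(c)}, before cutting down to degree-zero cocycles with the tensor conditions as in Lemma \ref{ctanhc}. The only cosmetic difference is that your converse for multiplicativity (specializing to $M=M^\pr=(B,\cp_B)$ and collapsing with $\ep_B\otimes\ep_B$) implicitly uses that $m_B$ and $u_B$ are comodule morphisms, i.e.\ Lemma \ref{First three comodule maps}\emph{(a)},\emph{(b)}, which the paper invokes explicitly.
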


The proof of this proposition is based on the forthcoming two lemmas.
Remind that in Lemma \ref{grdgschemeone}, we defined the dg-algebra
$\bm{\CE}^{\!\!B}(A)=\big(\Hom(B,A), u_A\circ \ep_B, \star_{B,A},d_{B,A}\big)$
for every cdg-algebra $A$.

\begin{lemma}\label{ctanha}
We have an isomorphism
$\xymatrix{\bm{\widebreve{\eta}}_{\!A}: \bm{\CE}^{\!\!B}(A)
\ar@/^/[r] & \ar@/^/[l] \bm{\CE}^{\bm{\o}}(A):\bm{\widebreve{g}}_{\!A}}
$ 
of dg-algebras for every cdg-algebra $A$, where
\begin{itemize}
\item
for each $\a\in \Hom(B,A)$, the component of
$\bm{\widebreve{\eta}}_{\!A}(\a) \in \mathsf{End}(\bm{\o}{\otimes}A)$
at a right dg-comodule $(M,\g^M)$ over $B$ is defined by
\eqalign{
\bm{\widebreve{\eta}}_{\!A}(\a)^{M}
:=&\mp\big((\I_M\otimes \a)\circ\g^M\big)
=(\I_M\otimes m_A)\circ(\I_M \otimes \a \otimes \I_A) \circ (\g^M \otimes \I_A).
}

\item for each $\eta_{\!A} \in\mathsf{End}(\bm{\o}{\otimes}A)$,
the linear map $\bm{\widebreve{g}}_{\!A}(\eta_{\!A}) \in \Hom(B, A)$ is defined by
\eqalign{
\bm{\widebreve{g}}_{\!A}(\eta_A)
:=&\imath_A\circ(\ep_B\otimes\I_A)\circ\mq(\eta_{\!A}^B)
=\imath_A\circ(\ep_B\otimes\I_A)\circ\eta_A^B\circ(\I_B\otimes u_A)\circ\jmath^{-1}_B.
}
\end{itemize}
\end{lemma}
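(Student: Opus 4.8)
The plan is to establish the isomorphism by verifying four things in turn: that $\bm{\widebreve\eta}_{\!A}(\a)$ genuinely lies in $\mathsf{End}(\bm{\o}\otimes A)$, that $\bm{\widebreve\eta}_{\!A}$ is a morphism of dg-algebras, that $\bm{\widebreve g}_{\!A}$ is a well-defined (manifestly $\Bbbk$-linear, degree-preserving) map, and finally that $\bm{\widebreve\eta}_{\!A}$ and $\bm{\widebreve g}_{\!A}$ are mutually inverse. The routine half of this parallels computations already recorded for $\bm{\CE}^M$ (Lemma \ref{dullpain}) and for $\bm{\CE}^B$ (Lemma \ref{grdgschemeone}); the genuinely new content is the reconstruction identity $\bm{\widebreve\eta}_{\!A}\circ\bm{\widebreve g}_{\!A}=\I$. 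First I would check naturalness of $\bm{\widebreve\eta}_{\!A}(\a)=\mp\big((\I_M\otimes\a)\circ\g^M\big)$. Given a morphism $\p\colon(M,\g^M)\to(M^\pr,\g^{M^\pr})$ of right dg-comodules of degree $m$, one slides $\p$ leftward through the $A$-factor operations, picking up a Koszul sign $(-1)^{m|\a|}$ as it passes $\a$, and then rewrites $(\p\otimes\I_B)\circ\g^M=\g^{M^\pr}\circ\p$ using the comodule-intertwining property of $\p$; this produces exactly $(\p\otimes\I_A)\circ\bm{\widebreve\eta}_{\!A}(\a)^M=(-1)^{m|\a|}\bm{\widebreve\eta}_{\!A}(\a)^{M^\pr}\circ(\p\otimes\I_A)$, so $\bm{\widebreve\eta}_{\!A}(\a)\in\mathsf{End}(\bm{\o}\otimes A)^{|\a|}$.

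Next, the dg-algebra axioms for $\bm{\widebreve\eta}_{\!A}$ reduce componentwise to comodule identities. The unit relation $\bm{\widebreve\eta}_{\!A}(u_A\circ\ep_B)^M=\I_{M\otimes A}$ follows from the counit axiom $(\I_M\otimes\ep_B)\circ\g^M=\jmath^{-1}_M$ of the coaction; multiplicativity $\bm{\widebreve\eta}_{\!A}(\a_1\star_{B,A}\a_2)=\bm{\widebreve\eta}_{\!A}(\a_1)\circ\bm{\widebreve\eta}_{\!A}(\a_2)$ follows from the coassociativity identity $(\g^M\otimes\I_B)\circ\g^M=(\I_M\otimes\cp_B)\circ\g^M$ together with the associativity of $m_A$, in the same pattern as the regular-representation computation of Example \ref{regularrepsh}; and compatibility with differentials $\bm{\widebreve\eta}_{\!A}(d_{B,A}\a)=\d_A\bm{\widebreve\eta}_{\!A}(\a)$ is immediate because both $\mp$ and $\g^M$ are cochain maps (Lemma \ref{basicl}).

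The crux is the pair of inverse relations. The direction $\bm{\widebreve g}_{\!A}\circ\bm{\widebreve\eta}_{\!A}=\I$ is short: evaluating at the regular comodule $(B,\cp_B)$ and applying $\mq$ gives $\mq\big(\bm{\widebreve\eta}_{\!A}(\a)^B\big)=(\I_B\otimes\a)\circ\cp_B$, whence $\bm{\widebreve g}_{\!A}\big(\bm{\widebreve\eta}_{\!A}(\a)\big)=\imath_A\circ(\ep_B\otimes\a)\circ\cp_B=\a$ by the counit axiom of $\cp_B$. The opposite direction $\bm{\widebreve\eta}_{\!A}\circ\bm{\widebreve g}_{\!A}=\I$ is where the naturalness of a general $\eta_A$ must be used in full. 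Writing $g:=\bm{\widebreve g}_{\!A}(\eta_A)$, I would proceed in three moves. (i) Apply naturalness of $\eta_A$ to the comodule morphism $\g^M\colon(M,\g^M)\to(M\otimes B,\I_M\otimes\cp_B)$ of Lemma \ref{First three comodule maps}(c) and combine with Lemma \ref{cofree comodules and eta} (namely $\eta_A^{M\otimes B}=\I_M\otimes\eta_A^B$) to get $(\g^M\otimes\I_A)\circ\eta_A^M=(\I_M\otimes\eta_A^B)\circ(\g^M\otimes\I_A)$. (ii) Pre-compose with $(\jmath_M\otimes\I_A)\circ(\I_M\otimes\ep_B\otimes\I_A)$ and use the counit axiom to recover $\eta_A^M$ on the left, exactly as in the formula for $\vs(\eta_A)^M$ in Proposition \ref{group valued}. (iii) Exploit $\eta_A^B\in\Hom_{m_A}(B\otimes A,B\otimes A)$, i.e. $\eta_A^B=\mp\big(\mq(\eta_A^B)\big)$, to compute $(\ep_B\otimes\I_A)\circ\eta_A^B=\imath^{-1}_A\circ m_A\circ(g\otimes\I_A)$; substituting this into the right-hand side of (ii) collapses it to $(\I_M\otimes m_A)\circ(\I_M\otimes g\otimes\I_A)\circ(\g^M\otimes\I_A)=\bm{\widebreve\eta}_{\!A}(g)^M$, as desired.

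The main obstacle is precisely step (iii) together with the use of Lemma \ref{cofree comodules and eta} in step (i): one must recognize that a natural endomorphism $\eta_A$ is pinned down by its single component at the regular comodule $(B,\cp_B)$, and moreover that the right $A$-linearity encoded in $\Hom_{m_A}$ allows the whole component $\eta_A^B$ to be recovered from the scalar datum $g=\imath_A\circ(\ep_B\otimes\I_A)\circ\mq(\eta_A^B)$. Once moves (i)--(iii) are in place, $\bm{\widebreve\eta}_{\!A}$ and $\bm{\widebreve g}_{\!A}$ are mutually inverse cochain maps preserving units and products, hence constitute an isomorphism of dg-algebras, which finishes the proof.
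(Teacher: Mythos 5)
Your proposal is correct and follows essentially the same route as the paper's proof: the same naturalness and dg-algebra checks, the easy inverse direction evaluated at the regular comodule $(B,\cp_B)$ via the counit axiom, and the hard direction $\bm{\widebreve{\eta}}_{\!A}\circ\bm{\widebreve{g}}_{\!A}=\I$ assembled from Lemma \ref{First three comodule maps}\emph{(c)}, Lemma \ref{cofree comodules and eta}, and the identity $\mp\big(\mq(\eta_A^B)\big)=\eta_A^B$ coming from $\eta_A^B\in\Hom_{m_A}$ (Lemma \ref{basicl}). Your steps (i)--(iii) merely traverse the paper's chain of equalities in the reverse order, which is an inessential difference.
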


\begin{proof}
The map $\bm{\widebreve{g}}_{\!A}$ is well-defined, 
since $\bm{\widebreve{g}}_{\!A}(\eta_A)$ is obviously a $\Bbbk$-linear map.
The map $\bm{\widebreve{\eta}}_{\!A}$ is also well-defined.
This is because for every morphism $\p:(M,\g^M)\to(M^\pr,\g^{M^\pr})$ of right dg-comodules over $B$, 
we have the following commutative diagram:
\[
\xymatrixrowsep{3pc}
\xymatrixcolsep{3.6pc}
\xymatrix{
\ar@/^1pc/[rrr]^-{\bm{\widebreve{\eta}}_{\!A}(\a)^{M}}
\ar[d]_-{\p\otimes\I_A}
M\otimes A \ar@{..>}[r]_-{\g^M\otimes\I_A} &
M\otimes B\otimes A \ar@{..>}[r]_-{\I_M\otimes\a\otimes\I_A} \ar@{..>}[d]^-{\p\otimes\I_{B\otimes A}}&
M\otimes A\otimes A \ar@{..>}[r]_-{\I_M\otimes m_A} \ar@{..>}[d]^-{\p\otimes\I_{A\otimes A}}&
M\otimes A \ar[d]^-{\p\otimes\I_A}
\cr
\ar@/_1pc/[rrr]_-{\bm{\widebreve{\eta}}_{\!A}(\a)^{M^\pr}}
M^\pr\otimes A \ar@{..>}[r]^-{\g^{M^\pr}\otimes\I_A}&
M^\pr\otimes B\otimes A \ar@{..>}[r]^-{\I_{M^\pr}\otimes\a\otimes\I_A}&
M^\pr\otimes A\otimes A \ar@{..>}[r]^-{\I_{M^\pr}\otimes m_A}&
M^\pr\otimes A
}.
\]
This shows that $\bm{\widebreve{\eta}}_{\!A}(\alpha)$ is a natural transformation.

Next, we check that $\bm{\widebreve{g}}_{\!A}$ and $\bm{\widebreve{\eta}}_{\!A}$ are inverse to each other. 

\begin{itemize}
\item
$\bm{\widebreve{g}}_{\!A}\big(\bm{\widebreve{\eta}}_{\!A}(\a)\big)=\a$ holds for all $\a\in \Hom(B,A)$:
\[
\begin{aligned}
\bm{\widebreve{g}}_{\!A}\big(\bm{\widebreve{\eta}}_{\!A}(\a)\big)
=\imath_A\circ(\ep_B\otimes\I_A)\circ\mq\big(\bm{\widebreve{\eta}}_{\!A}(\a)^B\big)
=\imath_A\circ(\ep_B\otimes\I_A)\circ(\I_B\otimes\alpha)\circ\cp_B
=\alpha.
\end{aligned}
\]

\item
$\bm{\widebreve{\eta}}_{\!A}\big(\bm{\widebreve{g}}_{\!A}(\eta_A)\big)
=\eta_A$ holds for all $\eta_A\in\mathsf{End}(\bm{\o}{\otimes}A)$:
Let $(M,\g^M)$ be a right dg-comodule over $B$.  Lemma \ref{First three comodule maps}\emph{(a)} states that 
$\g^M:\big(\O\otimes M,m_\O\otimes\I_M\big)\to\big(M,\g^M\big)$
is a morphism of right dg-modules over $B$. Since $\eta_A$ is a natural transformation, the following diagram commutes
$$
\xymatrixrowsep{1.3pc}
\xymatrixcolsep{3pc}
\xymatrix{
M\otimes A \ar[r]^-{\g^M\otimes\I_A} \ar[d]^-{\eta_A^M}&
M\otimes B\otimes A \ar[d]^-{\eta_A^{M\otimes B}=\I_M\otimes\eta_A^B}\\
M\otimes A \ar[r]^-{\g^M\otimes\I_A}&
M\otimes B\otimes A
}\quad\hbox{i.e.,}\quad
(\g^M\otimes\I_A)\circ\eta_A^M=(\I_M\otimes \eta_A^B)\circ(\g^M\otimes\I_A).
$$
The equality on the diagram is due to Lemma \ref{cofree comodules and eta}. Thus we have
\eqalign{
\bm{\widebreve{\eta}}_{\!A}\big(\bm{\widebreve{g}}_{\!A}(\eta_A)\big)^M
=
&
(\I_M\otimes m_A)\circ\Big(\I_M \otimes
\big(\imath_A\circ(\ep_B\otimes\I_A)\circ\mq(\eta_A^B)\big)
\otimes \I_A\Big) \circ (\g^M \otimes \I_A)
\cr
=
&
(\I_M\otimes \imath_A)\circ(\I_M\otimes \ep_B\otimes \I_A)\circ
\big(\I_M\otimes\mp(\mq(\eta_A^B))\big)\circ(\g^M\otimes\I_A)
\cr
=
&
(\I_M\otimes \imath_A)\circ(\I_M\otimes \ep_B\otimes \I_A)\circ
(\I_M\otimes\eta_A^B)\circ(\g^M\otimes\I_A)
\cr
=
&
(\I_M\otimes \imath_A)\circ(\I_M\otimes \ep_B\otimes \I_A)\circ
(\g^M\otimes\I_A)\circ\eta_A^M
=\eta_A^M.
}
\end{itemize}

We are left to show that $\bm{\widebreve{\eta}}_{\!A}$ and $\bm{\widebreve{g}}_{\!A}$ are morphisms of dg-algebras.
Since they are inverse to each other, 
it suffices to show that $\bm{\widebreve{\eta}}_{\!A}$ is a morphism of dg-algebras. 
Clearly, $\bm{\widebreve{\eta}}_{\!A}$ is a $\Bbbk$-linear map of degree $0$.
Let $(M,\g^M)$ be a right dg-comodule over $B$.
\begin{itemize}
\item
$\bm{\widebreve{\eta}}_{\!A}$ is a cochain map, 
i.e. $\d_A\circ\bm{\widebreve{\eta}}_{\!A}=\bm{\widebreve{\eta}}_{\!A}\circ d_{B,A}$: For $\forall \a \in \Hom(B,A)$,
\[
\begin{aligned}
\d_A\big(\bm{\widebreve{\eta}}_{\!A}(\alpha)\big)^M
&=d_{M\otimes A,M\otimes A}\big((\I_M\otimes m_A)\circ(\I_M \otimes \a \otimes \I_A) \circ (\g^M \otimes \I_A)\big)\\
&=(\I_M\otimes m_A)\circ(\I_M \otimes d_{B,A}\a \otimes \I_A) \circ (\g^M \otimes \I_A)\\
&=\bm{\widebreve{\eta}}_{\!A}(d_{B,A}\alpha)^M.
\end{aligned}
\]

\item
$\bm{\widebreve{\eta}}_{\!A}$ sends the identity to the identity, i.e. $\bm{\widebreve{\eta}}_{\!A}(u_A\circ \ep_B)=\I_{\bm{\o}\otimes A}$:
$$
\bm{\widebreve{\eta}}_{\!A}(u_A\circ \ep_B)^M
:=(\I_M\otimes m_A)\circ\big(\I_M \otimes (u_A\circ \ep_B) \otimes \I_A\big) \circ (\g^M \otimes \I_A)
=\I_{M\otimes A}.
$$

\item
$\bm{\widebreve{\eta}}_{\!A}$ preserves the binary operations, 
i.e. $\bm{\widebreve{\eta}}_{\!A}(\a_1\star_{B,A} \a_2)
=\bm{\widebreve{\eta}}_{\!A}(\a_1)\circ \bm{\widebreve{\eta}}_{\!A}(\a_2)$
for all $\a_1,\a_2 \in \Hom(B,A)$:
\eqalign{
\bm{\widebreve{\eta}}_{\!A} (\a_1 &\star_{B,A} \a_2)^{M} 
:=
(\I_M\otimes m_A)\circ\big(\I_M \otimes (m_A\circ(\a_1\otimes \a_2)\circ\cp_B) \otimes \I_A\big) \circ (\g^M \otimes \I_A)
\cr
=
&
(\I_M\otimes m_A)\circ(\I_M \otimes \a_1 \otimes \I_A) \circ (\g^M \otimes \I_A)
\circ
(\I_M\otimes m_A)\circ(\I_M \otimes \a_2 \otimes \I_A) \circ (\g^M \otimes \I_A)
\cr
=
&\bm{\widebreve{\eta}}_{\!A}(\a_1)^{M} \circ \bm{\widebreve{\eta}}_{\!A}(\a_2)^{M}.
}
We used the associativity of $m_A$ and the coaction axiom of $\g^M$ on the $2$nd equality.
\end{itemize}
\qed
\end{proof}

In Lemma \ref{grdgschemetwo}, we showed that
$\bm{\CG}^{\!\!B}(A)=\big(\HOM_{\category{cdgA}(\Bbbk)}(B,A), u_A\circ\ep_B, \star_{B,A}\big)$
is a group for every cdg-algebra $A$. 
The inverse of $g \in \HOM_{\category{cdgA}(\Bbbk)}(B,A)$ is given by $g^{-1}:=g\circ\vs_B$.
Remind that 
$\HOM_{\category{cdgA}(\Bbbk)}(B,A)$ is the subset of $\Hom(B,A)$ consisting of morphisms of cdg-algebras:
$$
\HOM_{\category{cdgA}(\Bbbk)}(B,A)
=\Big\{ g \in \Hom(B,A)^0\Big|d_{B,A}g =0, 
\; g\circ m_B =m_A\circ (g\otimes g),
\;   g\circ u_B = u_A \Big\}.
$$

\begin{lemma}\label{ctanhc}
For  every cdg-algebra $A$, the isomorphism in Lemma \ref{ctanha} gives an isomorphism
$\xymatrix{\bm{\widebreve{\eta}}_{\!A}: \bm{\CG}^{\!\!B}(A)\ar@/^/[r] 
& \ar@/^/[l] \bm{\CG}^{\bm{\o}}_{\!\otimes}(A):\bm{\widebreve{g}}_{\!A}}$ 
of groups.
\end{lemma}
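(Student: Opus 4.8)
The plan is to leverage the fact, already established in Lemma \ref{ctanha}, that $\bm{\widebreve{\eta}}_{\!A}$ and $\bm{\widebreve{g}}_{\!A}$ are mutually inverse \emph{isomorphisms of dg-algebras} between $\bm{\CE}^{\!\!B}(A)$ and $\bm{\CE}^{\bm{\o}}(A)$. Since $\bm{\CG}^{\!\!B}(A)=\HOM_{\cdga}(B,A)$ sits inside the degree-$0$ cocycles of $\bm{\CE}^{\!\!B}(A)$, cut out by the algebra-map conditions $g\circ u_B=u_A$ and $g\circ m_B=m_A\circ(g\otimes g)$, while $\bm{\CG}^{\bm{\o}}_{\!\otimes}(A)=Z^0\mathsf{End}_\otimes(\bm{\o}{\otimes}A)$ sits inside the degree-$0$ cocycles of $\bm{\CE}^{\bm{\o}}(A)$, cut out by the tensor conditions $\eta_A^\Bbbk=\I_{\Bbbk\otimes A}$ and $\eta_A^{M\otimes_{m_B}M^\pr}=\eta_A^M\otimes_{m_A}\eta_A^{M^\pr}$, it suffices to prove that $\bm{\widebreve{\eta}}_{\!A}$ restricts to a bijection between these two distinguished subsets. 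Once this is shown, the restriction is automatically a group isomorphism: $\bm{\widebreve{\eta}}_{\!A}$ already carries the unit $u_A\circ\ep_B$ to $\I_{\bm{\o}\otimes A}$ and the convolution product $\star_{B\!,A}$ to the composition $\circ$, so it is a bijective monoid homomorphism between groups, hence a group isomorphism, with $\bm{\widebreve{g}}_{\!A}$ its inverse (automatically matching the inverse elements $g\mapsto g\circ\vs_B$ and $\eta_A\mapsto\vs(\eta_A)$).

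For the forward inclusion I would observe that, for $g\in\HOM_{\cdga}(B,A)$, the component $\bm{\widebreve{\eta}}_{\!A}(g)^M=\mp\big((\I_M\otimes g)\circ\g^M\big)$ is exactly the representation $\widebreve{\bm{\r}}^M_{\!A}(g)$ produced by the dg-tensor functor $\functor{Y}$ of Theorem \ref{repmod}. The tensor property of $\functor{Y}$ then gives $\bm{\widebreve{\eta}}_{\!A}(g)^{M\otimes_{m_B}M^\pr}=\bm{\widebreve{\eta}}_{\!A}(g)^M\otimes_{m_A}\bm{\widebreve{\eta}}_{\!A}(g)^{M^\pr}$ and, from $\functor{Y}(\Bbbk,\g^\Bbbk)=(\Bbbk,\bm{\r}^\Bbbk)$, that $\bm{\widebreve{\eta}}_{\!A}(g)^\Bbbk=\I_{\Bbbk\otimes A}$; being also of degree $0$ and a cocycle, $\bm{\widebreve{\eta}}_{\!A}(g)$ lands in $Z^0\mathsf{End}_\otimes(\bm{\o}{\otimes}A)$.

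For the reverse inclusion, given $\eta_A\in Z^0\mathsf{End}_\otimes(\bm{\o}{\otimes}A)$ I would set $g:=\bm{\widebreve{g}}_{\!A}(\eta_A)$ and use that $\eta_A=\bm{\widebreve{\eta}}_{\!A}(g)$, whence $\mq(\eta_A^M)=(\I_M\otimes g)\circ\g^M$ for every comodule. Applying $\mq$ to the tensor identity $\eta_A^{M\otimes_{m_B}M^\pr}=\eta_A^M\otimes_{m_A}\eta_A^{M^\pr}$ and unwinding the definitions of $\g^{M\otimes_{m_B}M^\pr}$ and of $\otimes_{m_A}$ (using $|g|=0$) turns the condition into the single equation
$$
\big(\I_{M\otimes M^\pr}\otimes(g\circ m_B)\big)\circ\Phi_{M,M^\pr}
=\big(\I_{M\otimes M^\pr}\otimes(m_A\circ(g\otimes g))\big)\circ\Phi_{M,M^\pr},
\qquad
\Phi_{M,M^\pr}:=(\I_M\otimes\t\otimes\I_B)\circ(\g^M\otimes\g^{M^\pr}).
$$
Specializing to the regular comodule $M=M^\pr=(B,\cp_B)$ and post-composing the two $B$-legs with $\ep_B\otimes\ep_B$ collapses $\Phi$ by the counit axiom (the Koszul signs drop out because $\ep_B$ is supported in degree $0$), leaving precisely $g\circ m_B=m_A\circ(g\otimes g)$. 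The unit condition $g\circ u_B=u_A$ comes out the same way, reading off $\eta_A^\Bbbk=\I_{\Bbbk\otimes A}$ through $\g^\Bbbk=\imath^{-1}_B\circ u_B$ together with $(\I_M\otimes\ep_B)\circ\g^M=\jmath^{-1}_M$. Hence $g\in\HOM_{\cdga}(B,A)$, completing the bijection.

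The main obstacle will be this reverse inclusion, namely extracting the \emph{multiplicativity} of $g$ from the purely categorical tensor-product condition on $\eta_A$. The forward direction is essentially a restatement of the already-proven tensor functoriality of $\functor{Y}$, whereas the reverse direction forces one to translate the tensor condition through $\mq$, choose the regular comodule and its tensor square as test objects, and cancel via the counit; everything else (degree, cocycle, unit-preservation, and the monoid-homomorphism bookkeeping) is routine given Lemma \ref{ctanha}.
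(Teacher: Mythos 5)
Your proposal is correct, and its skeleton coincides with the paper's: both arguments reduce the lemma to showing that the dg-algebra isomorphism of Lemma \ref{ctanha} restricts to mutually inverse bijections between $\HOM_{\cdga}(B,A)$ and $Z^0\mathsf{End}_\otimes(\bm{\o}{\otimes}A)$, after which the group statement is bookkeeping (both sides are already known to be groups, by Lemma \ref{grdgschemetwo} and Proposition \ref{group valued}). The differences lie in how the two inclusions are executed. For the forward inclusion you cite the tensor functoriality of $\functor{Y}$ from Theorem \ref{repmod}, via the identification $\bm{\widebreve{\eta}}_{\!A}(g)^M=\widebreve{\bm{\r}}^{M}_{\!A}(g)$; the paper instead redoes the same computation inline, so this is a repackaging rather than a new argument. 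The genuine divergence is in the reverse inclusion. The paper obtains $g\circ u_B=u_A$ and $g\circ m_B=m_A\circ(g\otimes g)$ by testing the naturality of $\eta_A$ against the structural comodule morphisms $u_B:(\Bbbk,\g^\Bbbk)\to(B,\cp_B)$ and $m_B:(B\otimes B,\g^{B\otimes_{m_B}B})\to(B,\cp_B)$ supplied by Lemma \ref{First three comodule maps}, combined with the tensor condition at $(B,\cp_B)$. You instead invoke the surjectivity half of Lemma \ref{ctanha} to write $\mq(\eta_A^M)=(\I_M\otimes g)\circ\g^M$ for every comodule, evaluate the tensor condition at the regular comodule paired with itself, and collapse with $\ep_B\otimes\ep_B$ using only the counit axiom; your sign remark is correct, since $\ep_B$ annihilates all nonzero-degree components, so the Koszul sign coming from $\t$ is $+1$ on the surviving terms, and the degree-$0$ hypothesis on $g$ lets you slide $g\otimes g$ past $\t$ to reach the displayed equation. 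Your route thus trades the Hopf-structural input of Lemma \ref{First three comodule maps} for the explicit inverse formula of Lemma \ref{ctanha}, which makes this particular lemma slightly more self-contained; the paper's choice of testing against comodule morphisms has the advantage of uniformity, since the same technique (with Lemma \ref{Three comodule maps about B*}) drives Proposition \ref{group valued} and the rest of the reconstruction. Both arguments are complete.
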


\begin{proof}
It suffices to check that $\bm{\widebreve{g}}_{\!A}\left(Z^0\mathsf{End}_\otimes(\bm{\o}{\otimes}A)\right)$
is contained in $\HOM_{\category{cdgA}(\Bbbk)}(B,A)$ 
and $\bm{\widebreve{\eta}}_{\!A}\left(\HOM_{\category{cdgA}(\Bbbk)}(B,A)\right)$
is contained in $Z^0\mathsf{End}_\otimes(\bm{\o}{\otimes}A)$.

1. For every $\eta_A \in Z^0\mathsf{End}_\otimes(\bm{\o}{\otimes}A)$
we have $\bm{\widebreve{g}}_{\!A}(\eta_A)\in \HOM_{\category{cdgA}(\Bbbk)}(B,A)$.
\begin{itemize}
\item
$\bm{\widebreve{g}}_{\!A}(\eta_A)$ is of degree $0$ and $d_{B,A}\bm{\widebreve{g}}_{\!A}(\eta_A)= 0$: 
This is immediate since $\eta_A$ is of degree $0$ with $d_A\eta_A=0$, and $\bm{\widebreve{g}}_{\!A}$ is a cochain map 
by Lemma \ref{ctanha}.

\item
$\bm{\widebreve{g}}_{\!A}(\eta_A)\circ u_B = u_A$: 
Lemma \ref{First three comodule maps}\emph{(b)} states that  the unit
$u_B:(\Bbbk,\g^\Bbbk)\to(B,\cp_B)$ 
is a morphism of right dg-comodules over $B$. Since $\eta_A$ is a tensor natural transformation, the following diagram commutes:
\[
\xymatrixrowsep{1.3pc}
\xymatrixcolsep{3.5pc}
\xymatrix{
\Bbbk\otimes A \ar[r]^-{u_B\otimes \I_A} \ar[d]_-{\eta_A^\Bbbk=\I_{\Bbbk\otimes A}}&
B\otimes A \ar[d]^-{\eta_A^B}\\
\Bbbk\otimes A \ar[r]^-{u_B\otimes \I_A}&
B\otimes A
,}
\quad\hbox{i.e.,}\quad
\eta_A^B\circ(u_B\otimes\I_A)=(u_B\otimes\I_A).
\]
Therefore we have
\eqalign{
\bm{\widebreve{g}}_{\!A}(\eta_A)\circ u_B
&=
\imath_A\circ(\ep_B\otimes\I_A)\circ\eta_A^B\circ(u_B\otimes \I_A)\circ(\I_\Bbbk\otimes u_A)\circ\cp_\Bbbk\\
&=
\imath_A\circ(\ep_B\otimes\I_A)\circ (u_B\otimes \I_A)\circ(\I_\Bbbk\otimes u_A)\circ\cp_\Bbbk=u_A
.
}

\item
$\bm{\widebreve{g}}_{\!A}(\eta_A)\circ m_B = m_A\circ\left(\bm{\widebreve{g}}_{\!A}(\eta_A)\otimes \bm{\widebreve{g}}_{\!A}(\eta_A)\right)$:
Lemma \ref{First three comodule maps}\emph{(c)} states that the product 
$m_B:(B\otimes B,\g^{B\otimes_{m_B}B}) \to (B,\cp_B)$
is a morphism of right dg-comodules over $B$.
Since $\eta_A$ is a tensor natural transformation, the following diagram commutes:
\[
\xymatrixrowsep{2pc}
\xymatrixcolsep{3pc}
\xymatrix{
B\otimes B\otimes A \ar[r]^-{m_B\otimes\I_A} \ar[d]_-{\eta_A^{B\otimes_{m_B}B}=\eta_A^B\otimes_{m_A}\eta_A^B}&
B\otimes A \ar[d]^-{\eta_A^B}\\
B\otimes B\otimes A \ar[r]^-{m_B\otimes\I_A}&
B\otimes A
,}
\quad\hbox{i.e.,}\quad
(m_B\otimes\I_A)\circ(\eta_A^B\otimes_{m_A}\eta_A^B)=\eta_A^B\circ(m_B\otimes \I_A).
\]
Therefore we obtain that
\eqalign{
\bm{\widebreve{g}}_{\!A}(\eta_A)\circ m_B
&=\imath_A\circ(\ep_B\otimes\I_A)\circ\eta_A^B\circ(m_B\otimes \I_A)\circ(\I_{B\otimes B}\otimes u_A)\circ\jmath^{-1}_{B\otimes B}\\
&=\imath_A\circ(\ep_B\otimes\I_A)\circ(m_B\otimes\I_A)\circ(\eta_A^B\otimes_{m_A}\eta_A^B)\circ(\I_{B\otimes B}\otimes u_A)\circ\jmath^{-1}_{B\otimes B}\\
&=m_A\circ\left(\bm{\widebreve{g}}_{\!A}(\eta_A)\otimes \bm{\widebreve{g}}_{\!A}(\eta_A)\right)
.
}
\end{itemize}

2. 
For  every $g \in \HOM_{\category{cdgA}(\Bbbk)}(B,A)$, we have 
$\bm{\widebreve{\eta}}_{\!A}(g) \in Z^0\mathsf{End}_\otimes(\bm{\o}{\otimes}A)$.

\begin{itemize}

\item $\bm{\widebreve{\eta}}_{\!A}(g)$ is of degree $0$ and satisfies $\d_A \bm{\widebreve{\eta}}_{\!A}(g)=0$: 
This is immediate, since $g$ is of degree $0$ with $d_{B,A}g=0$, 
and $\bm{\widebreve{\eta}}_{\!A}$ is a cochain map.

\item $\bm{\widebreve{\eta}}_{\!A}(g)^{\Bbbk}= \I_{\Bbbk\otimes A}$: Using $g\circ u_B=u_A$, we have
\[
\begin{aligned}
\bm{\widebreve{\eta}}_{\!A}(g)^{\Bbbk}
&=(\I_\Bbbk\otimes m_A)\circ(\I_\Bbbk\otimes g\otimes \I_A)\circ(\I_\Bbbk\otimes u_B\otimes \I_A)\circ(\cp_\Bbbk\otimes \I_A)\\
&=(\I_\Bbbk\otimes m_A)\circ(\I_\Bbbk\otimes u_A\otimes \I_A)\circ(\cp_\Bbbk\otimes \I_A)=\I_{\Bbbk\otimes A}.
\end{aligned}
\]

\item

$\bm{\widebreve{\eta}}_{\!A}(g)^{M\otimes_{m_B}\! M^\pr}
=\bm{\widebreve{\eta}}_{\!A}(g)^M\otimes_{m_A}\! \bm{\widebreve{\eta}}_{\!A}(g)^{M^\pr}$
holds for all right dg-comodules $(M,\g^M)$ and $(M^\pr,\g^{M^\pr})$ over $B$.
This is equivalent to the condition
$$
\mq\big(\bm{\widebreve{\eta}}_{\!A}(g)^{M\otimes_{m_B}\! M^\pr}\big)
=
\mq\big(\bm{\widebreve{\eta}}_{\!A}(g)^M\otimes_{m_A}\! \bm{\widebreve{\eta}}_{\!A}(g)^{M^\pr}\big)
,
$$
which can be checked as follows. Using $m_A\circ(g\otimes g)=g\circ m_B$, we have
\[
\begin{aligned}
\mq\big(\bm{\widebreve{\eta}}_{\!A}(g)^M\otimes_{m_A}\!& \bm{\widebreve{\eta}}_{\!A}(g)^{M^\pr}\big)
=
(\I_{M\otimes M^\pr}\otimes m_A)\circ(\I_M\otimes\t\otimes \I_A)\otimes
\Big(
\mq\big(\bm{\widebreve{\eta}}_{\!A}(g)^M\big)\otimes\mq\big(\bm{\widebreve{\eta}}_{\!A}(g)^{M^\pr}\big)
\Big)
\\
&=
(\I_{M\otimes M^\pr}\otimes m_A)\circ(\I_M\otimes\t\otimes\I_A)\circ(\I_M\otimes g\otimes \I_{M^\pr}\otimes g)\circ(\g^M\otimes \g^{M^\pr})
\\
&=
(\I_{M\otimes M^\pr}\otimes g)\circ(\I_{M\otimes M^\pr}\otimes m_B)\circ(\I_M\otimes\t\otimes\I_B)\circ(\g^M\otimes \g^{M^\pr})
\\
&=
(\I_{M\otimes M^\pr}\otimes g)\circ(\g^{M\otimes_{m_B}M^\pr})
=\mq\big(\bm{\widebreve{\eta}}_{\!A}(g)^{M\otimes_{m_B}\! M^\pr}\big).
\end{aligned}
\]
\end{itemize}
\qed
\end{proof}

Now we finish the proof of Proposition \ref{homainpr}.

\begin{proof}[Proposition \ref{homainpr}]
We claim that the isomorphism $\bm{\widebreve{\eta}}_{\!A}:\bm{\CE}^{\!\!B}(A)\to\bm{\CE}^{\bm{\o}}(A)$ 
is natural in $A\in\category{cdgA}(\Bbbk)$. This will give us a natural isomorphism
\[
\bm{\widebreve{\eta}}:\bm{\CE}^{\!\!B}\Longrightarrow\bm{\CE}^{\bm{\o}}
:\category{cdgA}(\Bbbk) \rightsquigarrow \category{dgA}(\Bbbk),
\]
whose component at a cdg-algebra $A$ is $\bm{\widebreve{\eta}}_{\!A}$. Then 
$\bm{\widebreve{g}}=\{\bm{\widebreve{g}}_{\!A}\}$ automatically becomes a natural transformation, 
which is the inverse of $\bm{\widebreve{\eta}}$. 
Moreover, $\bm{\widebreve{\eta}}$ will canonically induce a natural isomorphism
$$
\bm{\widebreve{\eta}}:\bm{\CG}^B\Longrightarrow\bm{\CG}^{\bm{\o}}_{\!\otimes}
:\category{cdgA}(\Bbbk) \rightsquigarrow \category{Grp},
$$
with its inverse, again, $\bm{\widebreve{g}}$.
We need to show that for every morphism $f:A\to A^\pr$  of cdg-algebras  the following diagram commutes:
\[
\xymatrixrowsep{1.3pc}
\xymatrix{
\bm{\CE}^{\!\!B}(A) \ar[r]^-{\bm{\widebreve{\eta}}_{\!A}} \ar[d]_-{\bm{\CE}^{\!\!B}(f)}&
\bm{\CE}^{\bm{\o}}(A) \ar[d]^-{\bm{\CE}^{\bm{\o}}(f)}\\
\bm{\CE}^{\!\!B}(A^\pr) \ar[r]^-{\bm{\widebreve{\eta}}_{A^\pr}}&
\bm{\CE}^{\bm{\o}}(A^\pr),
}
\quad\text{i.e.,}\quad
\bm{\CE}^{\bm{\o}}(f)\circ\bm{\widebreve{\eta}}_{\!A}=\bm{\widebreve{\eta}}_{A^\pr}\circ\bm{\CE}^{\!\!B}(f).
\]
It suffices to show that for every linear map $g:B\to A$ and every right dg-comodule $(M,\g^M)$ over $B$, we have
$\mq\Big(\bm{\CE}^{\bm{\o}}(f)\big(\bm{\widebreve{\eta}}_{\!A}(g)\big)^M\Big)
=
\mq\Big(\bm{\widebreve{\eta}}_{\!A}(f\circ g)^M\Big)$.
Indeed, we have
\eqalign{
\mq\Big(\bm{\CE}^{\bm{\o}}(f)\big(\bm{\widebreve{\eta}}_{\!A}(g)\big)^M\Big)
&=(\I_M\otimes f)\circ\mq\Big(\bm{\widebreve{\eta}}_{\!A}(g)^M\Big)\\
&=(\I_M\otimes f)\circ(\I_M\otimes g)\circ\g^M\\
&=\big(\I_M\otimes(f\circ g)\big)\circ\g^M
=\mq\Big(\bm{\widebreve{\eta}}_{\!A}(f\circ g)^M\Big).
}
\qed
\end{proof}

Finally we can finish the proof of Theorem \ref{homainth}.

\begin{proof}[Theorem \ref{homainth}]
By Proposition \ref{homainpr} and the definitions of the functors
$\bm{\mG}^B$ and $\bm{\mG}^{\bm{\o}}_\otimes$,
it suffices to show that for each cdg-algebra $A$,
\begin{enumerate}[label=$({\alph*})$,leftmargin=.8cm]
\item $\bm{\widebreve{\eta}}_{\!A}$ sends a homotopy pair 
$\big(g(t),\chi(t)\big)$ on $\HOM_{\category{cdgA}(\Bbbk)}(B,A)$
to a homotopy pair
$\Big(\bm{\widebreve{\eta}}_{\!A}\big(g(t)\big),\bm{\widebreve{\eta}}_{\!A}\big(\chi(t)\big)\Big)$
on $Z^0\mathsf{End}_\otimes(\bm{\o}{\otimes}A)$, and

\item $\bm{\widebreve{g}}_{\!A}$ sends a homotopy pair
$\big(\eta(t)_A,\l(t)_A\big)$ on $Z^0\mathsf{End}_\otimes(\bm{\o}{\otimes}A)$
to a homotopy pair $\Big(\bm{\widebreve{g}}_{\!A}\big(\eta(t)_A\big),\bm{\widebreve{g}}_{\!A}\big(\l(t)_A\big)\Big)$
on $\HOM_{\category{cdgA}(\Bbbk)}(B,A)$.

\end{enumerate}
Then $\bm{\widebreve{\eta}}_{\!A}$ and $\bm{\widebreve{g}}_{\!A}$ will give an isomorphism of groups 
$\bm{\mG}^{\bm{\o}}_{\!\otimes}(A)\cong \bm{\mG}^{\!B}(A)$ for every cdg-algebra $A$.
Moreover, this isomorphism is natural in $A\in\category{cdgA}(\Bbbk)$ by Proposition \ref{homainpr} and Lemma \ref{costpdy}
so that we have a natural isomorphism
\[
\bm{\mG}^{\bm{\o}}_{\!\otimes}\cong\bm{\mG}^{\!B}
:\hcdga\rightsquigarrow\category{Grp}.
\]

We will prove $(a)$ only since the proof of $(b)$ is similar. We need to check the following properties:
\begin{enumerate}[label=(\arabic*),leftmargin=.8cm]
\item
$\frac{d}{dt}\bm{\widebreve{\eta}}_{\!A}\big(g(t)\big)=\d_A\bm{\widebreve{\eta}}_{\!A}\big(\chi(t)\big)$;

\item
$\bm{\widebreve{\eta}}_{\!A}\big(g(0)\big)\in Z^0\mathsf{End}_\otimes(\bm{\o}{\otimes}A)$;

\item
$\bm{\widebreve{\eta}}_{\!A}\big(\chi(t)\big)^\Bbbk=0$;

\item
$\bm{\widebreve{\eta}}_{\!A}\big(\chi(t)\big)^{M\otimes_{m_B}M^\pr}
=\bm{\widebreve{\eta}}_{\!A} \big(\chi(t)\big)^M  \otimes_{m_A}\bm{\widebreve{\eta}}_{\!A}\big(g(t)\big)^{M^\pr}
+\bm{\widebreve{\eta}}_{\!A}\big(g(t)\big)^M\otimes_{m_A}\bm{\widebreve{\eta}}_{\!A}\big(\chi(t)\big)^{M^\pr}$
holds for all right dg-comodules $(M,\g^M)$ and $(M^\pr,\g^{M^\pr})$ over $B$.
\end{enumerate}
Property $(1)$ follows from the condition $\frac{d}{dt}g(t)=d_{B,A}\chi(t)$, since we have
$\frac{d}{dt}\bm{\widebreve{\eta}}_{\!A}\big(g(t)\big)
=\bm{\widebreve{\eta}}_{\!A}\big(\frac{d}{dt}g(t)\big)
=\bm{\widebreve{\eta}}_{\!A}\big(d_{B,A}\chi(t)\big)
=\d_A\bm{\widebreve{\eta}}_{\!A}\big(\chi(t)\big)$.
Property $(2)$ follows from the condition $g(0)\in \HOM_{\category{cdgA}(\Bbbk)}(B,A)$.
Property $(3)$ follows from the condition $\chi(t)\circ u_B=0$, since we have
$\bm{\widebreve{\eta}}_{\!A}\big(\chi(t)\big)^\Bbbk
=(\I_\Bbbk\otimes m_A)\circ\big(\I_\Bbbk\otimes(\chi(t)\circ u_B)\otimes\I_A\big)\circ(\cp_\Bbbk\otimes \I_A)
=0$.
Note that Property $(4)$ is equivalent to the identity
\eqnalign{modifying proof presentation 2}{
&
\mq\left(\bm{\widebreve{\eta}}_{\!A}\big(\chi(t)\big)^{M\otimes_{m_B}M^\pr}\right)
\cr
&\qquad
=
\mq\left(
\bm{\widebreve{\eta}}_{\!A} \big(\chi(t)\big)^M  \otimes_{m_A}\bm{\widebreve{\eta}}_{\!A}\big(g(t)\big)^{M^\pr}
+
\bm{\widebreve{\eta}}_{\!A}\big(g(t)\big)^M\otimes_{m_A}\bm{\widebreve{\eta}}_{\!A}\big(\chi(t)\big)^{M^\pr}
\right)
,
}
which can be checked as follows.
We begin with the $1$st term in the RHS of \eq{modifying proof presentation 2}:
\[
\begin{aligned}
\mq\Big(
\bm{\widebreve{\eta}}_{\!A} & \big(\chi(t)\big)^M  \otimes_{m_A}\bm{\widebreve{\eta}}_{\!A}\big(g(t)\big)^{M^\pr}
\Big)\\
&=(\I_{M\otimes M^\pr}\otimes m_A)\circ(\I_M\otimes\t\otimes\I_A)\circ\left(
\mq\Big(\bm{\widebreve{\eta}}_{\!A} \big(\chi(t)\big)^M\Big)\otimes
\mq\Big(\bm{\widebreve{\eta}}_{\!A}\big(g(t)\big)^{M^\pr}\Big)
\right)\\
&=(\I_{M\otimes M^\pr}\otimes m_A)\circ(\I_M\otimes\t\otimes\I_A)\circ
\big(\I_M\otimes \chi(t)\otimes \I_M\otimes g(t)\big)\circ(\g^M\otimes \g^{M^\pr})\\
&=(\I_{M\otimes M^\pr}\otimes m_A)\circ\big(\I_{M\otimes M^\pr}\otimes \chi(t)\otimes g(t)\big)
\circ(\I_M\otimes\t\otimes\I_B)\circ(\g^M\otimes \g^{M^\pr}).
\end{aligned}
\]
After the similar calculation for the $2$nd term in the RHS of \eq{modifying proof presentation 2}, we have
\[
\begin{aligned}
\mq\Big(
& \bm{\widebreve{\eta}}_{\!A} \big(\chi(t)\big)^M  \otimes_{m_A}\bm{\widebreve{\eta}}_{\!A}\big(g(t)\big)^{M^\pr}
+
\bm{\widebreve{\eta}}_{\!A}\big(g(t)\big)^M\otimes_{m_A}\bm{\widebreve{\eta}}_{\!A}\big(\chi(t)\big)^{M^\pr}
\Big)\\
&=(\I_{M\otimes M^\pr}\otimes m_A)\circ
\Big(
\I_{M\otimes M^\pr}\otimes \big(\chi(t)\otimes g(t)+g(t)\otimes \chi(t)\big)
\Big)
\circ(\I_M\otimes\t\otimes\I_B)\circ(\g^M\otimes \g^{M^\pr})\\
&=\big(\I_{M\otimes M^\pr}\otimes \chi(t)\big)\circ (\I_{M\otimes M^\pr}\otimes m_B)
\circ(\I_M\otimes\t\otimes\I_B)\circ(\g^M\otimes \g^{M^\pr})\\
&=\big(\I_{M\otimes M^\pr}\otimes \chi(t)\big)\circ\g^{M\otimes_{m_B}M^\pr}
=\mq\Big(
\bm{\widebreve{\eta}}_{\!A}\big(\chi(t)\big)^{M\otimes_{m_B}M^\pr}
\Big).
\end{aligned}
\]
We used $m_A\circ\big(\chi(t)\otimes g(t)+g(t)\otimes \chi(t)\big)=\chi(t)\circ m_B$ on the $2$nd equality.
\qed
\end{proof}

\subsection{Reduction to the dg-category of finite dimensional linear representations}

 A \emph{finite dimensional} linear representation of $\bm{\CG}^B$ is a representation $(M,\bm{\r}^M)$ of $\bm{\CG}^B$ whose
underlying cochain complex $M$ is finite dimensional over $\Bbbk$. Denote
\begin{itemize}
\item
$\dgcat{CoCh}(\Bbbk)_{\mathit{f}}$ as the full sub dg-category of $\dgcat{CoCh}(\Bbbk)$ consisting of finite dimensional
cochain complexes over $\Bbbk$;

\item
$\dgcat{Rep}(\bm{\CG}^B)_{\mathit{f}}$ as the full sub dg-category of $\dgcat{Rep}(\bm{\CG}^B)$
consisting of finite dimensional linear representations of $\CG^B$;

\item
$\dgcat{dgComod}_R(B)_{\mathit{f}}$ as the full sub dg-category of $\dgcat{dgComod}_R(B)$
consisting of finite dimensional right dg-comodules over $B$.
\end{itemize}
Then $\big(\dgcat{CoCh}(\Bbbk)_{\mathit{f}},\otimes,\Bbbk\big)$,
$\big(\dgcat{Rep}(\bm{\CG}^B)_{\mathit{f}},\bm{\otimes},(\Bbbk,\bm{\r}^\Bbbk)\big)$ and
$\big(\dgcat{dgComod}_R(B)_{\mathit{f}},\otimes_{m_B},(\Bbbk,\g^\Bbbk)\big)$
are dg-tensor categories, and there is an isomorphism of dg-tensor categories
\[
\left(\dgcat{Rep}(\bm{\CG}^B)_{\mathit{f}},\bm{\otimes},(\Bbbk,\bm{\r}^\Bbbk)\right)
\cong
\left(\dgcat{dgComod}_R(B)_{\mathit{f}},\otimes_{m_B},(\Bbbk,\g^\Bbbk)\right)
\]
by the arguments in Theorem \ref{repmod}. We denote
$\bm{\o}_\mathit{f}:\dgcat{dgComod}_R(B)_{\mathit{f}}\rightsquigarrow\dgcat{CoCh}(\Bbbk)_{\mathit{f}}$
as the forgetful functor, which is also a dg-tensor functor.
In this subsection, we define  three functors
\[
\begin{aligned}
\bm{\CE}^{\bm{\o}_\mathit{f}}:\category{cdgA}(\Bbbk)\rightsquigarrow\category{dgA}(\Bbbk),\qquad
\bm{\CG}^{\bm{\o}_\mathit{f}}:\category{cdgA}(\Bbbk)\rightsquigarrow\category{Grp}\quad\text{and}\quad
\bm{\mG}_\otimes^{\!\bm{\o}_\mathit{f}}:\mathit{ho}\category{cdgA}(\Bbbk)\rightsquigarrow\category{Grp},
\end{aligned}
\]
analogous to the three functors $\bm{\CE}^{\bm{\o}_\mathit{f}}$, $\bm{\CG}^{\bm{\o}}$ and $\bm{\CG}^{\bm{\o}}$
in the previous subsection.
We shall  show that there are natural isomorphisms of functors
\[
\bm{\CE}^{\bm{\o}_\mathit{f}}\cong\bm{\CE}^{\bm{\o}},\quad
\bm{\CG}_\otimes^{\bm{\o}_\mathit{f}}\cong \bm{\CG}^{\bm{\o}}_{\!\otimes}\quad\text{and}\quad
\bm{\mG}_\otimes^{\!\bm{\o}_\mathit{f}}\cong \bm{\mG}^{\bm{\o}}_{\!\otimes}.
\]
This will imply that the reconstructions of $\bm{\CG}^B$ and $\bm{\mG}^B$ 
are also valid if we work with their finite dimensional representations.

In this subsection we adapt the Einstein summation convention.
We begin with the  following well-known lemma.

\begin{lemma} \label{finite filteredness of comodules}
Let $B$ be a dg-coalgebra, and $(M,\g^M)$ be a right dg-comodule over $B$. Then $(M,\g^M)$ is the union of its finite dimensional subcomodules over $B$.
\end{lemma}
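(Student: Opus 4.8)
The plan is to reduce the statement to the assertion that every element $m\in M$ is contained in some \emph{finite dimensional} subcomodule of $(M,\g^M)$ over $B$, where by a subcomodule over $B$ I mean a subcochain complex $N\subseteq M$ (so $d_M(N)\subseteq N$) satisfying $\g^M(N)\subseteq N\otimes B$. Granting this, the family of finite dimensional subcomodules of $M$ is directed under inclusion, since the sum $N+N'$ of two such is again a finite dimensional $d_M$-stable subcomodule; hence $M$ is the (filtered) union of its members. Thus the entire content lies in producing, for a fixed $m$, a finite dimensional subspace stable both under the coaction $\g^M$ and under the differential $d_M$.

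First I would recall the classical ungraded finiteness argument, temporarily ignoring the differential. Write $\g^M(m)=\sum_{i=1}^n m_i\otimes c_i$ with the $c_i\in B$ linearly independent, and extend $\{c_i\}$ to a linear basis $\{c_\lambda\}$ of $B$. Expanding $\cp_B(c_i)=\sum_{\lambda,\mu}\theta^{\lambda\mu}_i\,c_\lambda\otimes c_\mu$ and writing $\g^M(m_i)=\sum_\lambda n_{i\lambda}\otimes c_\lambda$, the coassociativity axiom $(\g^M\otimes\I_B)\circ\g^M=(\I_M\otimes\cp_B)\circ\g^M$ applied to $m$ becomes an equality in $M\otimes B\otimes B$. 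Comparing the coefficients of the basis vectors $c_\lambda\otimes c_\mu$ for $\mu=i\le n$ yields $n_{i\lambda}=\sum_j\theta^{\lambda i}_j\,m_j$, so that $\g^M(m_i)\in N_0\otimes B$, where $N_0$ denotes the linear span of $m_1,\dots,m_n$. Hence $N_0$ is a finite dimensional subcomodule in the ungraded sense; and the counit axiom $(\I_M\otimes\ep_B)\circ\g^M=\jmath^{-1}_M$ from \eqref{raction} gives $m=\sum_i\ep_B(c_i)\,m_i\in N_0$.

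The only genuinely new feature compared with the classical case is the differential, and this is the step I expect to require the most care, although it turns out to be mild: the subspace $N_0$ constructed above need not be stable under $d_M$. To fix this I would enlarge it to $N:=N_0+d_M(N_0)$. Because $\g^M$ is a cochain map, $d_{M\otimes B}=d_M\otimes\I_B+\I_M\otimes d_B$ gives, for $n\in N_0$, the relation $\g^M(d_M n)=(d_M\otimes\I_B+\I_M\otimes d_B)\,\g^M(n)\in (N_0+d_M N_0)\otimes B=N\otimes B$, using $\g^M(n)\in N_0\otimes B$; together with $\g^M(N_0)\subseteq N_0\otimes B\subseteq N\otimes B$ this shows $\g^M(N)\subseteq N\otimes B$. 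Moreover $d_M(N)=d_M(N_0)\subseteq N$ since $d_M^2=0$, so $N$ is $d_M$-stable, remains finite dimensional, and contains $m\in N_0\subseteq N$.

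Finally I would assemble these observations: $N$ is a finite dimensional subcomodule of $(M,\g^M)$ over $B$ containing the arbitrary element $m$, and the directedness remark of the first paragraph then identifies $M$ with the union of all such $N$, completing the proof.
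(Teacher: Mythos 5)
Your proof is correct and follows essentially the same route as the paper's: the classical coefficient-comparison via coassociativity yields a finite dimensional coaction-stable span, which is then enlarged by its image under $d_M$ (using that $\g^M$ is a cochain map and $d_M^2=0$) to get a finite dimensional subcomodule containing $m$. The only cosmetic differences are that you recover $m\in N_0$ from the counit axiom whereas the paper simply adjoins $m$ and $d_M m$ to the spanning set, and that you spell out the directedness of the family of finite dimensional subcomodules.
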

\begin{proof}
It suffices to show that for each $m\in M$, there is a finite dimensional subcomodule of $M$ containing $m$.
Fix a basis $\{b_i\}$ of $B$ over $\Bbbk$ and write
\eqnalign{finite assumption eq1}{
\cp_B(b_i)=\cp^{jk}_i b_j\otimes b_k,
\qquad
\g^M(m)=m^i\otimes b_i
}
for some $\cp^{jk}_i\in \Bbbk$ and $m^i\in M$. 
Note that the indices of the sums in \eq{finite assumption eq1} run finite. 
Let $M_\mathit{f}^m$ be the $\Bbbk$-subspace of $M$ spanned by the elements $m$, $m^i$, $d_Mm$ and $d_Mm^i$. 
Clearly, $(M_\mathit{f}^m,d_M)$ is a finite dimensional subcomplex of $(M,d_M)$ containing $m$.
Moreover, $(M_\mathit{f}^m,\g^M)$ is a subcomodule of $(M,\g^M)$ over $B$ because
\begin{itemize}
\item
$\g^M(m)=m^i\otimes b_i\in M_\mathit{f}^m\otimes B$.

\item
$\g^M(d_Mm)=d_{M\otimes B}\big(\g^M(m)\big)
=d_Mm^i\otimes b_i+(-1)^{|m^i|}m^i\otimes d_Bb_i\in M_\mathit{f}^m\otimes B$.

\item
the comodule axiom of $\g^M$ implies that
\[
\begin{aligned}
\g^M(m^k)\otimes b_k
&=(\g^M\otimes\I_B)(m^k\otimes b_k)
=(\g^M\otimes\I_B)\circ\g^M\big(m\big)
=(\I_M\otimes \cp_B)\circ \g^M\big(m\big)\\
&=(\I_M\otimes \cp_B)(m^i\otimes b_i)
=\cp^{jk}_i\cdot m^i\otimes b_j\otimes b_k.
\end{aligned}
\]
Therefore we have 
$\g^M(m^k)=\cp^{jk}_i\cdot m^i\otimes b_j\in M_\mathit{f}^m\otimes B$.

\item
$\g^M(d_Mm^k)=d_{M\otimes B}\big(\g^M(m^k)\big)
=\cp^{jk}_i\cdot d_Mm^i\otimes b_j + (-1)^{|m^i|}\cp^{jk}_i\cdot m^i\otimes d_Bb_j
\in M_\mathit{f}^m\otimes B$.
\end{itemize}
Thus for each $m\in M$, there exists a finite dimensional subcomodule $M_\mathit{f}^m$ of $M$ over $B$ containing $m$.
We conclude that $M$ is the union of its finite dimensional subcomodules over $B$.
\qed
\end{proof}

We may restate the result of Lemma \ref{finite filteredness of comodules} as follows: Every right dg-comodule $(M,\g^M)$ over $B$ is a
filtered colimit of its finite dimensional subcomodules over $B$. Equivalently, we have $M\cong\displaystyle\varinjlim_\alpha M^\alpha_\mathit{f}$
where the index $\alpha$ runs over all finite dimensional subcomodules $M^\alpha_\mathit{f}$ of $M$ over $B$.

We will denote an element in $\mathsf{End}(\bm{\o}_{\mathit{f}}\!\otimes\!A)$ as $\xi_A$.
For every $\xi_A\in\mathsf{End}(\bm{\o}_{\mathit{f}}\!\otimes\!A)$, we also have
$M\otimes A\cong\displaystyle\varinjlim_{\a}(M_\mathit{f}^\a\otimes A)$
and there exists a unique morphism
\[
(\text{$\varinjlim$}_A\xi_A)^M:=\varinjlim_{\alpha}\big(\xi_A^{M^\alpha_\mathit{f}}\big)
:M\otimes A\to M\otimes A
\]
of right dg-modules over $A$, making the following diagram commutative for all $\a$:
\[
\xymatrixcolsep{3.5pc}
\xymatrix{
M\otimes A \ar[r]^-{(\varinjlim_A\xi_A)^M}&
M\otimes A\\
M_\mathit{f}^\a\otimes A \ar[r]^-{\xi_A^{M_\mathit{f}^\a}} \ar@{^{(}->}[u]&
M_\mathit{f}^\a\otimes A \ar@{^{(}->}[u]
}.
\]




\begin{lemma} \label{finite lemma 1}
For every cdg-algebra $A$, there is an isomorphism of dg-algebras
\[
\xymatrix{
\varinjlim_A:\mathsf{End}(\bm{\o}_{\mathit{f}}\!\otimes\!A)
\ar@<0.2pc>[r]&
\ar@<0.2pc>[l]
\mathsf{End}(\bm{\o}\!\otimes\!A): \operatorname{res}_A,
}
\]
which sends
\begin{itemize}
\item
each $\eta_A\in \mathsf{End}(\bm{\o}\!\otimes\!A)$ to its restriction
$\operatorname{res}_A\eta_A\in\mathsf{End}(\bm{\o}_{\mathit{f}}\!\otimes\!A)$
whose component at a finite dimensional right dg-comodule $(M_\mathit{f},\g^{M_\mathit{f}})$ over $B$ is given by
\[
(\operatorname{res}_A\eta_A)^{M_\mathit{f}}:=\eta_A^{M_\mathit{f}}:M_\mathit{f}\otimes A\to M_\mathit{f}\otimes A.
\]

\item
each $\xi_A\in\mathsf{End}(\bm{\o}_{\mathit{f}}\!\otimes\!A)$ to
$\varinjlim_A\xi_A\in\mathsf{End}(\bm{\o}\!\otimes\!A)$
whose component at a right dg-comodule $(M,\g^M)$ over $B$ is given by
\[
(\text{$\varinjlim$}_A\xi_A)^M=\varinjlim_\alpha\big(\xi_A^{M_\mathit{f}^\alpha}\big)
:M\otimes A\to M\otimes A,
\]
where the index $\alpha$ runs over all finite dimensional subcomodules $M_\mathit{f}^\alpha$ of $M$ over $B$.
\end{itemize}
Moreover, the above isomorphism is natural in $A\in\category{cdgA}(\Bbbk)$.
\end{lemma}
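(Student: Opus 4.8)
The plan is to exhibit $\operatorname{res}_A$ and $\varinjlim_A$ as mutually inverse morphisms of dg-algebras and then check naturality in $A$ as a separate, essentially formal, step. First I would establish that both assignments are well-defined. For $\operatorname{res}_A$ this is immediate: a morphism of finite dimensional right dg-comodules over $B$ is in particular a morphism of right dg-comodules, so the naturalness squares and the compatibility with the differential $\d_A$ satisfied by $\eta_A\in\mathsf{End}(\bm{\o}{\otimes}A)$ restrict verbatim to the full subcategory $\dgcat{dgComod}_R(B)_{\mathit{f}}$, yielding $\operatorname{res}_A\eta_A\in\mathsf{End}(\bm{\o}_{\mathit{f}}{\otimes}A)$.

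The substantive point is the well-definedness of $\varinjlim_A$, which is where Lemma \ref{finite filteredness of comodules} does the real work. Given $\xi_A\in\mathsf{End}(\bm{\o}_{\mathit{f}}{\otimes}A)$ and a right dg-comodule $(M,\g^M)$ over $B$, the inclusions between finite dimensional subcomodules $M_{\mathit{f}}^\alpha\hookrightarrow M_{\mathit{f}}^\beta$ are comodule morphisms, so the naturalness of $\xi_A$ makes the family $\{\xi_A^{M_{\mathit{f}}^\alpha}\}$ compatible with the transition maps; since $M\otimes A\cong\varinjlim_\alpha(M_{\mathit{f}}^\alpha\otimes A)$, the colimit $(\varinjlim_A\xi_A)^M$ exists and is a well-defined morphism of right dg-modules over $A$. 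I would then check that $\varinjlim_A\xi_A$ is genuinely a natural transformation, i.e. that its components are compatible with an \emph{arbitrary} comodule morphism $\p:(M,\g^M)\to(M^\pr,\g^{M^\pr})$, not merely with the inclusions. Here the key observation is that for each finite dimensional subcomodule $M_{\mathit{f}}^\alpha$ the image $\p(M_{\mathit{f}}^\alpha)$ is finite dimensional and hence, by Lemma \ref{finite filteredness of comodules}, contained in a finite dimensional subcomodule of $M^\pr$; thus $\p$ restricts to a morphism of finite dimensional comodules, and applying the naturalness of $\xi_A$ on each finite piece and passing to the colimit produces the (super-commuting) naturalness square for $\varinjlim_A\xi_A$.

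With both maps in hand I would verify that they are inverse. For $\operatorname{res}_A\circ\varinjlim_A=\I$, a finite dimensional comodule $M_{\mathit{f}}$ is maximal among its own finite dimensional subcomodules, so the filtered colimit defining $(\varinjlim_A\xi_A)^{M_{\mathit{f}}}$ collapses to $\xi_A^{M_{\mathit{f}}}$. For $\varinjlim_A\circ\operatorname{res}_A=\I$, the naturalness of $\eta_A$ along the inclusions $M_{\mathit{f}}^\alpha\hookrightarrow M$ shows that $\eta_A^M$ is already the colimit of its restrictions $\eta_A^{M_{\mathit{f}}^\alpha}$, which is exactly $(\varinjlim_A\operatorname{res}_A\eta_A)^M$. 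Since the two maps are inverse bijections, it then suffices to show that $\operatorname{res}_A$ is a morphism of dg-algebras, and this is trivial componentwise: restriction fixes the identity, intertwines composition because $(\eta_A\circ\eta_A^\pr)^{M_{\mathit{f}}}=\eta_A^{M_{\mathit{f}}}\circ(\eta_A^\pr)^{M_{\mathit{f}}}$, and commutes with $\d$ because $\d_A$ acts componentwise via $d_{M\otimes A,M\otimes A}$.

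Finally, for naturality in $A\in\category{cdgA}(\Bbbk)$ I would show $\operatorname{res}_{A^\pr}\circ\bm{\CE}^{\bm{\o}}(f)=\bm{\CE}^{\bm{\o}_{\mathit{f}}}(f)\circ\operatorname{res}_A$ for every morphism $f:A\to A^\pr$ of cdg-algebras. This is immediate from the componentwise formula for $\bm{\CE}^{\bm{\o}}(f)$ in Lemma \ref{costpda}: on a finite dimensional comodule $M_{\mathit{f}}$ both composites equal $\mp\big((f\otimes\I_{A^\pr})\circ\mq(\eta_A^{M_{\mathit{f}}})\big)$, since restriction merely selects the finite dimensional components while $\bm{\CE}^{\bm{\o}}(f)$ is defined by the same local formula on every component. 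The main obstacle, as flagged above, is purely the well-definedness and naturalness of $\varinjlim_A$; once the filtered-colimit description of dg-comodules is used carefully there, every remaining verification is formal.
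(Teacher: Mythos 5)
Your proposal is correct and follows essentially the same route as the paper: the substantive step in both is to use Lemma \ref{finite filteredness of comodules} to enlarge $\p(M_{\mathit{f}}^\alpha)$ into a finite dimensional subcomodule $M_{\mathit{f}}^{\pr\a}\subseteq M^\pr$, apply the naturalness of $\xi_A$ on each finite piece, and pass to the filtered colimit, with the remaining verifications (mutual inverses, dg-algebra structure, naturality in $A$) being formal. The paper compresses those remaining checks into ``it follows from the definitions,'' whereas you spell them out; the content is identical.
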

\begin{proof}
For every $\xi_A\in\mathsf{End}(\bm{\o}_{\mathit{f}}\!\otimes\!A)$, we claim that $\varinjlim_A\xi_A$ is a natural transformation.
Given a morphism $\p:(M,\g^M)\to(M^\pr,\g^{M^\pr})$ of right dg-comodules over $B$, we need to show
\eqn{finite lemma 1 eq 1}{
(\text{$\varinjlim$}_A\xi_A)^M\circ(\p\otimes\I_A)
=(-1)^{|\p||\xi_A|}(\p\otimes\I_A)\circ(\text{$\varinjlim$}_A\xi_A)^{M^\pr}
:M\otimes A\to M^\pr\otimes A.
}
Let $M^\alpha_\mathit{f}$ be a finite dimensional subcomodule of $M$ over $B$.
Then by Lemma \ref{finite filteredness of comodules}, there exists a finite dimensional subcomodule
$M_\mathit{f}^{\pr\a}$ of $M^\pr$ such that
$\p(M^\alpha_\mathit{f})\subseteq M_\mathit{f}^{\pr\a}$.
Since $\xi_A$ is a natural transformation, we have
\eqn{finite lemma 1 eq 2}{
\xi_A^{M_\mathit{f}^{\pr\a}}\circ(\p\otimes\I_A)
=(-1)^{|\p||\xi_A|}(\p\otimes\I_A)\circ\xi_A^{M_\mathit{f}^\a}
:M_\mathit{f}^\a\otimes A\to M_\mathit{f}^{\pr\a}\otimes A.
}
Let $\a$ run over all finite dimensional subcomodules $M^\a_\mathit{f}$ of $M$ over $B$.
By applying $\displaystyle\varinjlim_\a$ on \eq{finite lemma 1 eq 2}, and composing the inclusion
$\displaystyle\varinjlim_\a\big(M^{\pr\a}_\mathit{f}\otimes A\big)\hookrightarrow M^\pr\otimes A$,
we get \eq{finite lemma 1 eq 1}.

Clearly the map $\operatorname{res}_A$ is well-defined. It follows from the definitions that
both
$\operatorname{res}_A$ and $\varinjlim_A$ are morphisms of dg-algebras, are natural  in $A$ and are inverse to each other.
\qed
\end{proof}

Let us define $Z^0\mathsf{End}(\bm{\o}_{\mathit{f}}\!\otimes\!A)$,
$\mathsf{End}_\otimes(\bm{\o}_{\mathit{f}}\!\otimes\!A)$,
$Z^0\mathsf{End}_\otimes(\bm{\o}_{\mathit{f}}\!\otimes\!A)$ and
$\mathit{ho}Z^0\mathsf{End}_\otimes(\bm{\o}_{\mathit{f}}\!\otimes\!A)$
as we did in the previous subsection.

\begin{lemma}\label{finite lemma 2}
The isomorphism in Lemma \ref{finite lemma 1} gives an isomorphism
\[
\xymatrix{
\text{$\varinjlim$}_A:Z^0\mathsf{End}_\otimes(\bm{\o}_{\mathit{f}}\!\otimes\!A)
\ar@<0.2pc>[r]&
\ar@<0.2pc>[l]
Z^0\mathsf{End}_\otimes(\bm{\o}\!\otimes\!A): \operatorname{res}_A
}
\]
of monoids that are natural in $A\in \category{cdgA}(\Bbbk)$.
In particular, $Z^0\mathsf{End}_\otimes(\bm{\o}_{\mathit{f}}\!\otimes\!A)$ is a group.
\end{lemma}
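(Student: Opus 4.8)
The plan is to leverage Lemma \ref{finite lemma 1}, which already provides an isomorphism of dg-algebras $\varinjlim_A:\mathsf{End}(\bm{\o}_{\mathit{f}}{\otimes}A)\to\mathsf{End}(\bm{\o}{\otimes}A)$ natural in $A$. Since $Z^0\mathsf{End}_\otimes$ is carved out of $\mathsf{End}$ by the conditions of being degree $0$, $\d_A$-closed, and tensor-compatible (i.e.\ $\eta_A^\Bbbk=\I_{\Bbbk\otimes A}$ and $\eta_A^{M\otimes_{m_B}M'}=\eta_A^M\otimes_{m_A}\eta_A^{M'}$), the entire task reduces to showing that $\varinjlim_A$ and $\operatorname{res}_A$ preserve exactly these three conditions in both directions. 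The degree and $\d_A$-closedness conditions are immediate because $\varinjlim_A$ is already known to be a morphism of dg-algebras, hence degree-preserving and commuting with differentials. So the real content is matching up the tensor conditions.

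First I would handle the easy direction: if $\eta_A\in Z^0\mathsf{End}_\otimes(\bm{\o}{\otimes}A)$, then $\operatorname{res}_A\eta_A$ is tensorial because its components are literally a subcollection of the components of $\eta_A$ (the tensor product of two finite dimensional comodules is again finite dimensional, so the condition $\eta_A^{M_\mathit{f}\otimes_{m_B}M_\mathit{f}'}=\eta_A^{M_\mathit{f}}\otimes_{m_A}\eta_A^{M_\mathit{f}'}$ is just the restriction of the corresponding identity for $\eta_A$), and $(\operatorname{res}_A\eta_A)^\Bbbk=\eta_A^\Bbbk=\I_{\Bbbk\otimes A}$ since $\Bbbk$ is finite dimensional. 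The converse direction is where I expect the main obstacle: given $\xi_A\in Z^0\mathsf{End}_\otimes(\bm{\o}_{\mathit{f}}{\otimes}A)$, I must show $\varinjlim_A\xi_A$ is tensorial on \emph{all} comodules, including infinite dimensional ones. The unit condition $(\varinjlim_A\xi_A)^\Bbbk=\I_{\Bbbk\otimes A}$ is trivial since $\Bbbk$ is its own only finite dimensional subcomodule. For the multiplicativity condition $(\varinjlim_A\xi_A)^{M\otimes_{m_B}M'}=(\varinjlim_A\xi_A)^M\otimes_{m_A}(\varinjlim_A\xi_A)^{M'}$, the key point is that the colimit defining $(\varinjlim_A\xi_A)^{M\otimes_{m_B}M'}$ can be computed along the cofinal family of finite dimensional subcomodules of the form $M_\mathit{f}^\alpha\otimes_{m_B}M_\mathit{f}'^\beta$.

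The hard part will be justifying this cofinality and the compatibility of the tensor operation $\otimes_{m_A}$ with the filtered colimits. Concretely, I would argue that for any finite dimensional subcomodule $W\subseteq M\otimes_{m_B}M'$, Lemma \ref{finite filteredness of comodules} applied to $M$ and $M'$ produces finite dimensional subcomodules $M_\mathit{f}^\alpha\subseteq M$ and $M_\mathit{f}'^\beta\subseteq M'$ with $W\subseteq M_\mathit{f}^\alpha\otimes M_\mathit{f}'^\beta$, so that the subcomodules $M_\mathit{f}^\alpha\otimes_{m_B}M_\mathit{f}'^\beta$ form a cofinal system inside all finite dimensional subcomodules of $M\otimes_{m_B}M'$. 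On each such piece, the tensoriality of $\xi_A$ gives $\xi_A^{M_\mathit{f}^\alpha\otimes_{m_B}M_\mathit{f}'^\beta}=\xi_A^{M_\mathit{f}^\alpha}\otimes_{m_A}\xi_A^{M_\mathit{f}'^\beta}$; taking the filtered colimit over $\alpha$ and $\beta$ and using that $\otimes_{m_A}$ commutes with filtered colimits (which follows because $\otimes_{m_A}$ is built from $\mq$, $m_A$, and $\tau$, all of which are compatible with the inclusions) yields the desired identity. Finally, I would note that $\varinjlim_A$ and $\operatorname{res}_A$ remain mutually inverse and natural in $A$ when restricted to these sub-monoids, since they already are on the ambient dg-algebras. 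The last sentence, that $Z^0\mathsf{End}_\otimes(\bm{\o}_{\mathit{f}}{\otimes}A)$ is a group, then follows immediately: it is isomorphic as a monoid to $Z^0\mathsf{End}_\otimes(\bm{\o}{\otimes}A)$, which was shown to be a group in Proposition \ref{group valued}.
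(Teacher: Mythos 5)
Your proposal is correct and follows essentially the same route as the paper: the restriction direction is immediate, and for $\varinjlim_A$ the key step is that the subcomodules $M_\mathit{f}^\alpha\otimes_{m_B}M_\mathit{f}^{\prime\beta}$ cover $M\otimes_{m_B}M^\pr$, so applying $\varinjlim_{\alpha,\beta}$ to the identity $\xi_A^{M_\mathit{f}^\alpha\otimes_{m_B}M_\mathit{f}^{\prime\beta}}=\xi_A^{M_\mathit{f}^\alpha}\otimes_{m_A}\xi_A^{M_\mathit{f}^{\prime\beta}}$ yields tensoriality of $\varinjlim_A\xi_A$, with the group statement inherited from Proposition \ref{group valued} via the isomorphism. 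Your explicit cofinality argument (every finite dimensional subcomodule $W\subseteq M\otimes_{m_B}M^\pr$ lies in some $M_\mathit{f}^\alpha\otimes M_\mathit{f}^{\prime\beta}$) is a valid elaboration of what the paper compresses into the word ``covers.''
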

\begin{proof}
It suffices to check that $\varinjlim_A\left(Z^0\mathsf{End}_\otimes(\bm{\o}_{\mathit{f}}\!\otimes\!A)\right)$
is contained in $Z^0\mathsf{End}_\otimes(\bm{\o}\!\otimes\!A)$ and
$\operatorname{res}_A\left(Z^0\mathsf{End}_\otimes(\bm{\o}\!\otimes\!A)\right)$
is contained in $Z^0\mathsf{End}_\otimes(\bm{\o}_{\mathit{f}}\!\otimes\!A)$.
\begin{itemize}
\item
It is clear from the definitions that
$\operatorname{res}_A\left(Z^0\mathsf{End}_\otimes(\bm{\o}\!\otimes\!A)\right)
\subseteq Z^0\mathsf{End}_\otimes(\bm{\o}_{\mathit{f}}\!\otimes\!A)$.

\item
Let $\xi_A\in Z^0\mathsf{End}_\otimes(\bm{\o}_{\mathit{f}}\!\otimes\!A)$.
Clearly, we have $\varinjlim_A\xi_A\in Z^0\mathsf{End}(\bm{\o}_{\mathit{f}}\!\otimes\!A)$ and
$(\varinjlim_A\xi_A)^\Bbbk=\xi_A^\Bbbk=\I_{\Bbbk\otimes A}$.
Moreover, for right dg-comodules $(M,\g^M)$ and $(M^\pr,\g^{M^\pr})$ over $B$, we have
\eqn{finite lemma 2 eq 1}{
(\text{$\varinjlim$}_A\xi_A)^{M\otimes_{m_B}M^\pr}
=(\text{$\varinjlim$}_A\xi_A)^M\otimes_{m_A}(\text{$\varinjlim$}_A\xi_A)^{M^\pr}
:M\otimes M^\pr\otimes A\to M\otimes M^\pr\otimes A.
}
For the proof, let $M_\mathit{f}^\a$ and $M_\mathit{f}^{\pr\b}$
be finite dimensional subcomodules of $M$ and $M^\pr$ over $B$, respectively.
Then the tensor product $M_\mathit{f}^\a\otimes_{m_B} M_\mathit{f}^{\pr\b}$
is a finite dimensional subcomodule of $M\otimes_{m_B}M^\pr$ over $B$.
Since $\xi_A$ is a tensor natural transformation, we have
\eqn{finite lemma 2 eq 2}{
\xi_A^{M_\mathit{f}^\a\otimes_{m_B} M_\mathit{f}^{\pr\b}}
=\xi_A^{M_\mathit{f}^\a}\otimes_{m_A}\xi_A^{M_\mathit{f}^{\pr\b}}
:M_\mathit{f}^\a\otimes M_\mathit{f}^{\pr\b}\otimes A\to M_\mathit{f}^\a\otimes M_\mathit{f}^{\pr\b}\otimes A.
}
Note that $M_\mathit{f}^\a\otimes_{m_B} M_\mathit{f}^{\pr\b}$ covers $M\otimes_{m_B}M^\pr$
as the indices $\a$, $\b$ run all finite dimensional subcomodules of $M$, $M^\pr$ over $B$, respectively.
Thus we get \eq{finite lemma 2 eq 1} by applying the colimit
$\displaystyle\varinjlim_{\a,\b}$ on \eq{finite lemma 2 eq 2}.
\qed
\end{itemize}
\end{proof}

\begin{lemma}\label{finite lemma 3}
The isomorphism in Lemma \ref{finite lemma 2} induces an isomorphism
\[
\xymatrix{
\bm{\text{$\varinjlim$}}_A:\mathit{ho}Z^0\mathsf{End}_\otimes(\bm{\o}_{\mathit{f}}\!\otimes\!A)
\ar@<0.2pc>[r]&
\ar@<0.2pc>[l]
\mathit{ho}Z^0\mathsf{End}_\otimes(\bm{\o}\!\otimes\!A): \bm{\operatorname{res}}_A
}
\]
of monoids that is natural in $A\in \mathit{ho}\category{cdgA}(\Bbbk)$, where we define
\[
\bm{\text{$\varinjlim$}}_A[\xi_A]:=[\text{$\varinjlim$}_A\xi_A]
\quad\text{and}\quad
\bm{\operatorname{res}}_A[\eta_A]:=[\operatorname{res}_A\eta_A]
\]
for $[\xi_A]\in\mathit{ho}Z^0\mathsf{End}_\otimes(\bm{\o}_{\mathit{f}}\!\otimes\!A)$
and $[\eta_A]\in\mathit{ho}Z^0\mathsf{End}_\otimes(\bm{\o}\!\otimes\!A)$.
In particular, $\mathit{ho}Z^0\mathsf{End}_\otimes(\bm{\o}_{\mathit{f}}\!\otimes\!A)$ is a group.
\end{lemma}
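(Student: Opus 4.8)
The plan is to lift the mutually-inverse group isomorphisms $\varinjlim_A$ and $\operatorname{res}_A$ of Lemma \ref{finite lemma 2} from the level of $Z^0\mathsf{End}_\otimes$ to the level of homotopy classes. Concretely, I will show that $\varinjlim_A$ and $\operatorname{res}_A$ each carry homotopy pairs to homotopy pairs; once this is done, they descend to well-defined maps $\bm{\varinjlim}_A$ and $\bm{\operatorname{res}}_A$ on the sets of homotopy types, and all remaining claims (mutual inverseness, the monoid/group structure, and naturality) follow formally from the corresponding properties already established at the $Z^0\mathsf{End}_\otimes$ level.

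Recall from Lemma \ref{finite lemma 1} that $\varinjlim_A$ and $\operatorname{res}_A$ extend to mutually-inverse isomorphisms of the full dg-algebras $\mathsf{End}(\bm{\o}_{\mathit{f}}\!\otimes\!A)\cong\mathsf{End}(\bm{\o}\!\otimes\!A)$; in particular each is $\Bbbk$-linear, degree-preserving and commutes with the differential $\d_A$. Given a homotopy pair $\big(\xi(t)_A,\mu(t)_A\big)$ on $Z^0\mathsf{End}_\otimes(\bm{\o}_{\mathit{f}}\!\otimes\!A)$, I would verify that $\big(\varinjlim_A\xi(t)_A,\varinjlim_A\mu(t)_A\big)$ satisfies the four defining conditions of a homotopy pair on $Z^0\mathsf{End}_\otimes(\bm{\o}\!\otimes\!A)$. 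The flow equation $\Fr{d}{dt}\eta=\d_A\lambda$ is preserved because $\varinjlim_A$ is $t$-independent, $\Bbbk$-linear and commutes with $\d_A$; the initial condition $\varinjlim_A\xi(0)_A\in Z^0\mathsf{End}_\otimes(\bm{\o}\!\otimes\!A)$ is exactly Lemma \ref{finite lemma 2}; and $(\varinjlim_A\mu(t)_A)^\Bbbk=\mu(t)_A^\Bbbk=0$ since $\Bbbk$ is already finite dimensional, so its component is left unchanged.

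The one substantive point is the derivation (Leibniz) condition for $\varinjlim_A\mu(t)_A$. For this I would first record the bilinear colimit identity
$$
(\varinjlim_A\xi_A)^M\otimes_{m_A}(\varinjlim_A\mu_A)^{M^\pr}
=\varinjlim_{\a,\b}\Big(\xi_A^{M_{\mathit{f}}^\a}\otimes_{m_A}\mu_A^{M_{\mathit{f}}^{\pr\b}}\Big)
$$
for arbitrary $\xi_A,\mu_A\in\mathsf{End}(\bm{\o}_{\mathit{f}}\!\otimes\!A)$ and right dg-comodules $M,M^\pr$ over $B$, where $M_{\mathit{f}}^\a$ and $M_{\mathit{f}}^{\pr\b}$ range over the finite dimensional subcomodules of $M$ and $M^\pr$. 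This holds because $\otimes_{m_A}$ is assembled out of $\mq$, $m_A$ and $\t$ via \eqref{ctensorial}, all of which are compatible with the inclusions of finite dimensional subcomodules, and because $M_{\mathit{f}}^\a\otimes_{m_B}M_{\mathit{f}}^{\pr\b}$ exhausts $M\otimes_{m_B}M^\pr$ --- the same cofinality/covering argument used in the proof of Lemma \ref{finite lemma 2}. Combining this identity with the finite derivation relation $\mu(t)_A^{N\otimes_{m_B}N^\pr}=\mu(t)_A^N\otimes_{m_A}\xi(t)_A^{N^\pr}+\xi(t)_A^N\otimes_{m_A}\mu(t)_A^{N^\pr}$ (taken at $N=M_{\mathit{f}}^\a$, $N^\pr=M_{\mathit{f}}^{\pr\b}$) and passing to the colimit over $\a,\b$ yields the desired condition on $\bm{\o}\!\otimes\!A$. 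The corresponding statements for $\operatorname{res}_A$ are easier, since restriction is computed componentwise and directly preserves all four conditions. I expect this Leibniz/colimit compatibility to be the main obstacle, though it is essentially a re-run of the argument in Lemma \ref{finite lemma 2} with two distinct endomorphisms in place of one.

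Finally, once homotopy pairs are preserved in both directions, $\bm{\varinjlim}_A[\xi_A]:=[\varinjlim_A\xi_A]$ and $\bm{\operatorname{res}}_A[\eta_A]:=[\operatorname{res}_A\eta_A]$ are well-defined and mutually inverse bijections of the homotopy-type sets. Since $\varinjlim_A$ respects composition and sends $\I_{\bm{\o}_{\mathit{f}}\otimes A}$ to $\I_{\bm{\o}\otimes A}$, the descended maps respect $\diamond$ and the identity class, so $\bm{\varinjlim}_A$ is a monoid isomorphism; as the target $\mathit{ho}Z^0\mathsf{End}_\otimes(\bm{\o}\!\otimes\!A)=\bm{\mG}^{\bm{\o}}_{\!\otimes}(A)$ of \eqref{defining hoZEnd} is a group, the source $\mathit{ho}Z^0\mathsf{End}_\otimes(\bm{\o}_{\mathit{f}}\!\otimes\!A)$ is a group as well, with group structure transported along $\bm{\varinjlim}_A$, and $\bm{\varinjlim}_A$ is a group isomorphism. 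Naturality in $A\in\mathit{ho}\category{cdgA}(\Bbbk)$ is obtained by passing the naturality squares of Lemma \ref{finite lemma 2} (which hold for morphisms of $\category{cdgA}(\Bbbk)$) to homotopy classes, exactly as in the proof of Proposition \ref{costpdy}.
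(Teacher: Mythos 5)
Your proposal is correct and follows essentially the same route as the paper's proof: reduce everything to showing that $\varinjlim_A$ and $\operatorname{res}_A$ carry homotopy pairs to homotopy pairs, with the colimit direction handled by the same covering argument over finite dimensional subcomodules used in Lemma \ref{finite lemma 2}. The paper leaves the $\varinjlim_A$ case as ``analogous to Lemmas \ref{finite lemma 1} and \ref{finite lemma 2}''; your bilinear colimit identity making the Leibniz condition pass to the colimit is precisely the detail being alluded to, so you have merely (and correctly) filled in what the paper omits.
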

\begin{proof}
It suffices to show that the maps $\varinjlim_A$ and $\operatorname{res}_A$ sends homotopy pairs on each side to the other.
Clearly, $\operatorname{res}_A$ sends homotopy pairs on $Z^0\mathsf{End}_\otimes(\bm{\o}\!\otimes\!A)$
to homotopy pairs on $Z^0\mathsf{End}_\otimes(\bm{\o}_{\mathit{f}}\!\otimes\!A)$. Moreover,
$\varinjlim_A$ sends homotopy pairs on $Z^0\mathsf{End}_\otimes(\bm{\o}_{\mathit{f}}\!\otimes\!A)$
to homotopy pairs on $Z^0\mathsf{End}_\otimes(\bm{\o}\!\otimes\!A)$.
This can be checked by taking colimits, analogous to the arguments introduced in Lemma \ref{finite lemma 1} and Lemma \ref{finite lemma 2}.
\qed
\end{proof}

\begin{theorem}
We have functors
\[
\begin{aligned}
\bm{\CE}^{\bm{\o}_\mathit{f}}:\category{cdgA}(\Bbbk)\rightsquigarrow\category{dgA}(\Bbbk)
,\qquad
\bm{\CG}^{\bm{\o}_\mathit{f}}:\category{cdgA}(\Bbbk)\rightsquigarrow\category{Grp}
\quad\text{and}\quad
\bm{\mG}_\otimes^{\!\bm{\o}_\mathit{f}}:\mathit{ho}\category{cdgA}(\Bbbk)\rightsquigarrow\category{Grp},
\end{aligned}
\]
sending each cdg-algebra $A$ to
\[
\bm{\CE}^{\bm{\o}_\mathit{f}}(A):=\mathsf{End}(\bm{\o}_{\mathit{f}}\!\otimes\!A)
,\quad
\bm{\CG}^{\bm{\o}_\mathit{f}}(A):=Z^0\mathsf{End}_\otimes(\bm{\o}_{\mathit{f}}\!\otimes\!A)
\quad\text{and}\quad
\bm{\mG}_\otimes^{\!\bm{\o}_\mathit{f}}(A):=\mathit{ho}Z^0\mathsf{End}_\otimes(\bm{\o}_{\mathit{f}}\!\otimes\!A).
\]
Moreover, we have natural isomorphisms
$\bm{\CE}^{\bm{\o}_\mathit{f}}\cong\bm{\CE}^{\bm{\o}}$,
$\bm{\CG}_\otimes^{\bm{\o}_\mathit{f}}\cong \bm{\CG}^{\bm{\o}}_{\!\otimes}$ and
$\bm{\mG}_\otimes^{\!\bm{\o}_\mathit{f}}\cong \bm{\mG}^{\bm{\o}}_{\!\otimes}$.
\end{theorem}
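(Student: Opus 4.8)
The plan is to define the three finite dimensional functors by copying, \emph{verbatim}, the formulas used for $\bm{\CE}^{\bm{\o}}$, $\bm{\CG}^{\bm{\o}}_{\!\otimes}$ and $\bm{\mG}^{\bm{\o}}_{\!\otimes}$ in the previous subsection, with the forgetful dg-tensor functor $\bm{\o}_\mathit{f}$ in place of $\bm{\o}$. On objects they send a cdg-algebra $A$ to $\mathsf{End}(\bm{\o}_\mathit{f}\!\otimes\!A)$, to $Z^0\mathsf{End}_\otimes(\bm{\o}_\mathit{f}\!\otimes\!A)$ and to $\mathit{ho}Z^0\mathsf{End}_\otimes(\bm{\o}_\mathit{f}\!\otimes\!A)$; on a morphism $f:A\to A^\pr$ of cdg-algebras, $\bm{\CE}^{\bm{\o}_\mathit{f}}(f)$ acts on a finite dimensional right dg-comodule $(M_\mathit{f},\g^{M_\mathit{f}})$ by $\mp\big((f\otimes\I_{A^\pr})\circ\mq(\xi_A^{M_\mathit{f}})\big)$, and $\bm{\CG}^{\bm{\o}_\mathit{f}}$, $\bm{\mG}_\otimes^{\!\bm{\o}_\mathit{f}}$ are built from it exactly as in Proposition \ref{costpdx} and Proposition \ref{costpdy}. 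First I would observe that none of the arguments of Lemma \ref{costpda} and Propositions \ref{group valued}, \ref{costpdx}, \ref{costpdy} ever used infinite dimensionality of comodules: the only structural inputs are that $\dgcat{dgComod}_R(B)_{\mathit{f}}$ is a dg-tensor category (closed under $\otimes_{m_B}$ and containing the unit $(\Bbbk,\g^\Bbbk)$) and that $\bm{\o}_\mathit{f}$ is a dg-tensor functor. Hence those proofs apply word for word, establishing functoriality; that $Z^0\mathsf{End}_\otimes(\bm{\o}_\mathit{f}\!\otimes\!A)$ and $\mathit{ho}Z^0\mathsf{End}_\otimes(\bm{\o}_\mathit{f}\!\otimes\!A)$ are groups is, moreover, exactly Lemma \ref{finite lemma 2} and Lemma \ref{finite lemma 3}.

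Next I would produce the three natural isomorphisms as the families $\{\operatorname{res}_A\}_A$, with inverses $\{\varinjlim_A\}_A$. Each of Lemmas \ref{finite lemma 1}, \ref{finite lemma 2} and \ref{finite lemma 3} has been stated to deliver, simultaneously, a componentwise isomorphism at every cdg-algebra $A$ and its naturality in $A$; together these are precisely the data of a natural isomorphism of functors. Thus Lemma \ref{finite lemma 1} yields $\operatorname{res}:\bm{\CE}^{\bm{\o}}\Rightarrow\bm{\CE}^{\bm{\o}_\mathit{f}}$ of functors $\category{cdgA}(\Bbbk)\rightsquigarrow\category{dgA}(\Bbbk)$; restricting to degree-zero, $\d$-closed, tensor natural transformations, Lemma \ref{finite lemma 2} yields $\bm{\CG}_\otimes^{\bm{\o}_\mathit{f}}\cong\bm{\CG}^{\bm{\o}}_{\!\otimes}$ of functors to $\category{Grp}$; and passing to homotopy types, Lemma \ref{finite lemma 3} yields $\bm{\mG}_\otimes^{\!\bm{\o}_\mathit{f}}\cong\bm{\mG}^{\bm{\o}}_{\!\otimes}$ of functors $\hcdga\rightsquigarrow\category{Grp}$.

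Since the genuine work---the filtered-colimit comparison built on Lemma \ref{finite filteredness of comodules}---has already been carried out in Lemmas \ref{finite lemma 1}--\ref{finite lemma 3}, I do not expect a serious obstacle; the theorem is essentially the bookkeeping that packages their conclusions. The one point I would be careful about is the mild chicken-and-egg between functoriality and naturality: to speak of ``$\operatorname{res}$ natural in $A$'' one needs $\bm{\CE}^{\bm{\o}_\mathit{f}}$ already known to be a functor, so I would either establish functoriality first by the verbatim re-run above and then read off naturality from the commuting square $\operatorname{res}_{A^\pr}\circ\bm{\CE}^{\bm{\o}}(f)=\bm{\CE}^{\bm{\o}_\mathit{f}}(f)\circ\operatorname{res}_A$ of Lemma \ref{finite lemma 1}, or else simply \emph{transport} the morphism-assignment along the isomorphisms, defining $\bm{\CE}^{\bm{\o}_\mathit{f}}(f):=\operatorname{res}_{A^\pr}\circ\bm{\CE}^{\bm{\o}}(f)\circ\varinjlim_A$, whereupon functoriality is inherited from $\bm{\CE}^{\bm{\o}}$ for free and the corresponding squares for $\bm{\CG}$ and $\bm{\mG}$ follow by restriction and by passage to homotopy types. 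Finally, I would compose the three isomorphisms with those of Proposition \ref{homainpr} and Theorem \ref{homainth} to conclude $\bm{\CE}^{\bm{\o}_\mathit{f}}\cong\bm{\CE}^{B}$, $\bm{\CG}_\otimes^{\bm{\o}_\mathit{f}}\cong\bm{\CG}^{\!\!B}$ and $\bm{\mG}_\otimes^{\!\bm{\o}_\mathit{f}}\cong\bm{\mG}^{\!B}$, i.e.\ the reconstruction of the affine group dg-scheme from its finite dimensional linear representations.
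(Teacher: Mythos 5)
Your proposal is correct and follows essentially the same route as the paper: the paper's proof consists exactly of packaging Lemmas \ref{finite lemma 1}, \ref{finite lemma 2} and \ref{finite lemma 3}, taking $\varinjlim_A$ and $\operatorname{res}_A$ as the components of the natural isomorphisms, with the finite dimensional functors defined by the same formulas as in the previous subsection with $\bm{\o}_{\mathit{f}}$ in place of $\bm{\o}$, and your extra care about the functoriality-versus-naturality bookkeeping is a reasonable way to make explicit what the paper leaves implicit.

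One side-claim, however, needs correction, although it does not damage your argument because your fallback is the correct one. You assert that Proposition \ref{group valued} applies ``word for word'' in the finite dimensional setting. It does not: the inverse $\vs(\eta_A)$ there is built from the component $\eta_A^{B^*}$ at the right dg-comodule $(B^*,\g^{B^*})$, and its proof also invokes the components at the cofree comodules $(M\otimes B,\I_M\otimes\cp_B)$ via Lemma \ref{cofree comodules and eta}; since $B$ is in general infinite dimensional, an element of $\mathsf{End}(\bm{\o}_{\mathit{f}}\!\otimes\!A)$ simply has no such components. This is precisely why the paper obtains the group structure on $Z^0\mathsf{End}_\otimes(\bm{\o}_{\mathit{f}}\!\otimes\!A)$ by transport along $\varinjlim_A$ and $\operatorname{res}_A$ (Lemma \ref{finite lemma 2}), as you also do, and why Sect.\ 5.3 supplies the genuinely finite dimensional construction of inverses via rigidity (Lemma \ref{rigidity lemma 2}), using the duals $(M^\vee,\g^{M^\vee})$, which do remain in $\dgcat{dgComod}_R(B)_{\mathit{f}}$. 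By contrast, your verbatim claim is accurate for Lemma \ref{costpda} and Propositions \ref{costpdx} and \ref{costpdy}, whose arguments use only morphisms among the given comodules, their tensor products and the unit object; and your concluding composition with Proposition \ref{homainpr} and Theorem \ref{homainth} is exactly the intended payoff of the subsection.
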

\begin{proof}
By Lemma \ref{finite lemma 1}, we have a natural isomorphism
$\varinjlim:\bm{\CE}^{\bm{\o}_\mathit{f}}\Rightarrow\bm{\CE}^{\bm{\o}}$
whose component at a cdg-algebra $A$ is $\varinjlim_A:\bm{\CE}^{\bm{\o}_\mathit{f}}(A)\to\bm{\CE}^{\bm{\o}}(A)$,
with its inverse $\operatorname{res}:\bm{\CE}^{\bm{\o}}\Rightarrow\bm{\CE}^{\bm{\o}_\mathit{f}}$
defined by $\operatorname{res}:=\{\operatorname{res}_A:
\bm{\CE}^{\bm{\o}}(A)\to\bm{\CE}^{\bm{\o}_\mathit{f}}(A)\}$.
Lemma \ref{finite lemma 2} implies that
$\varinjlim$ and $\operatorname{res}$ induce a natural isomorphism
$\bm{\CG}_\otimes^{\bm{\o}_\mathit{f}}\cong \bm{\CG}^{\bm{\o}}_{\!\otimes}$, and
 Lemma \ref{finite lemma 3} implies that
$\bm{\varinjlim}$ and $\bm{\operatorname{res}}$ induce a natural isomorphism
$\bm{\mG}_\otimes^{\!\bm{\o}_\mathit{f}}\cong \bm{\mG}^{\bm{\o}}_{\!\otimes}$.
\qed
\end{proof}

\subsection{Remark on rigidity}

A more conventional way of recovering the antipode $\vs_B$ of a cdg-Hopf algebra $B$ from the category of finite dimensional dg-comodules over $B$
may be by considering dg-versions of the rigidity, as introduced in \cite{Deligne90,Rivano}---see also  \cite{DM}, \cite{EGNO} and \cite{Szamuely}.

In this subsection, we provide an independent proof that
$\bm{\CG}_\otimes^{\bm{\o}_\mathit{f}}(A)$ is a group for every cdg-algebra $A$
using the dg-version of rigidity in a dg-tensor category.
At the end of this subsection, we will directly show that the inverse $S(\xi_A)$ of an element
$\xi_A\in\bm{\CG}_\otimes^{\bm{\o}_\mathit{f}}(A)$
defined in this way agrees with our previous inverse $\vs(\eta_A)$ of $\eta_A\in\bm{\CG}^{\bm{\o}}_{\!\otimes}(A)$
introduced in Proposition \ref{group valued},
via the isomorphism in Lemma \ref{finite lemma 1}.
This implies that our Tannakian reconstruction, restricted to commutative Hopf algebras and associated affine
group schemes, an alternative to but agrees with the well-known reconstruction theorem \cite{Deligne90,Rivano} from the
category of finite dimensional linear representations. 

In our previous paper \cite{JLJSP}, we showed that our method of Tannakian reconstruction
also works in the categorial dual version to affine group dg-schemes,
where we do not have the analogous property of restricting to the finite dimensional linear representations.

In this subsection we adapt the Einstein summation convention.

Let $(M,\g^M)$ be a finite dimensional right dg-comodule over $B$. 
Using the antipode $\vs_B$, we can define a right dg-comodule structure on the dual cochain complex
$M^\vee:=\Hom(M,\Bbbk)$ as follows.
Consider the cochain isomorphism
\[
\Hom(M,B)\cong\Hom(M,\Bbbk)\otimes B=M^\vee\otimes B.
\]
We can explicitly describe the above isomorphism by taking a basis of $M$.
Fix a basis $\{x_i\}$ of $M$ over $\Bbbk$ and denote $\{e^i\}$ as the corresponding dual basis of $M^\vee$.
We will write $(-1)^{|i|}=(-1)^{|x_i|}=(-1)^{|e^i|}$.
Then the above isomorphism sends a linear map $l:M\to B$ to 
$(-1)^{|i|(|l|+|i|)}e^i\otimes l(x_i)$.
Moreover, the isomorphism is independent of the choice of the basis.

Define a cochain map
$\overline{\g}^{M^\vee}:M^\vee=\Hom(M,\Bbbk)\to\Hom(M,B)$,
which sends a linear map $h:M\to \Bbbk$ to
$\overline{\g}^{M^\vee}(h):=\imath_B\circ(h\otimes \vs_B)\circ\g^M:M\to B$.
Composing it with the above isomorphism, we get a cochain map
\[
\g^{M^\vee}:M^\vee\to M^\vee\otimes B.
\]
One can show that $(M^\vee,\g^{M^\vee})$ is a right dg-comodule over $B$,
using the fact that the antipode $\vs_B:B\to B$ is an anti-morphism of dg-coalgebras.
Consider the \emph{evaluation} map, denoted by $\operatorname{ev}_M$,
and the \emph{coevaluation} map, denoted by $\operatorname{cv}_M$,
of the underlying cochain complex $M$ defined as follows:
\[
\begin{aligned}
\operatorname{ev}_M:M^\vee\otimes M&\to \Bbbk,&
\operatorname{cv}_M:\Bbbk&\to M\otimes M^\vee,\qquad\\
h\otimes m&\mapsto h(m),&
1&\mapsto x_i\otimes e^i,
\end{aligned}
\]
which are cochain maps and 
make the following diagrams commute:
\[
\xymatrixcolsep{3.5pc}
\xymatrix{
M \ar[r]^-{\jmath^{-1}_M} \ar[d]_-{\imath^{-1}_M}&
M\otimes \Bbbk \\
\Bbbk\otimes M \ar[r]^-{\operatorname{cv}_M\otimes \I_M}&
M\otimes M^\vee\otimes M \ar[u]_-{\I_M\otimes \operatorname{ev}_M}
}
\qquad\qquad
\xymatrix{
M^\vee \ar[r]^-{\imath^{-1}_{M^\vee}} \ar[d]_-{\jmath^{-1}_{M^\vee}}&
\Bbbk \otimes M^\vee\\
M^\vee\otimes \Bbbk \ar[r]^-{\I_{M^\vee}\otimes\operatorname{cv}_M}&
M^\vee\otimes M \otimes M^\vee \ar[u]_-{\operatorname{ev}_M\otimes\I_{M^\vee}}
},
\]
i.e.
\eqn{ev and coev relation}{
(\I_M\otimes\operatorname{ev}_M)\circ(\operatorname{cv}_M\otimes\I_M)\circ\imath^{-1}_M
=\jmath^{-1}_M,
\quad\text{and}\quad
(\operatorname{ev}_M\otimes\I_{M^\vee})\circ(\I_{M^\vee}\otimes\operatorname{cv}_M)\circ\jmath^{-1}_{M^\vee}
=\imath^{-1}_{M^\vee}.
}

Using the antipode axiom of $\vs_B$, we can check that $\operatorname{ev}_M$ and $\operatorname{cv}_M$ are morphisms
of right dg-comodules over $B$:
\eqalign{
\xymatrixcolsep{3pc}
\xymatrix{(M^\vee,\g^{M^\vee})\otimes_{m_B}(M,\g^M)\ar[r]^-{\operatorname{ev}_M}& (\Bbbk,\g^\Bbbk),}
\qquad
\xymatrix{(\Bbbk,\g^\Bbbk) \ar[r]^-{\operatorname{cv}_M}& (M,\g^M)\otimes_{m_B}(M^\vee,\g^{M^\vee}).}
}
We call the triple $\big((M^\vee,\g^{M^\vee}),\operatorname{ev}_M,\operatorname{cv}_M\big)$ the \emph{dual} of $(M,\g^M)$
in the dg-tensor category $\big(\dgcat{dgComod}_R(B)_{\mathit{f}},\otimes_{m_B}, (\Bbbk,\g^{\Bbbk})\big)$.
For convenience, we will often write  the dual of $(M,\g^M)$ as $(M^\vee,\g^{M^\vee})$.
Let us fix the evaluation map $\operatorname{ev}_M$ for each right dg-comodule $(M,\g^M)$ over $B$.
Then the dual $(M^\vee,\g^{M^\vee})$ of $(M,\g^M)$ is unique up to a unique isomorphism, since it is the representing object of the dg-functor
\[
\HOM_{\dgcat{dgComod}_R(B)_{\mathit{f}}}(-\otimes M,\Bbbk):\mathring{\dgcat{dgComod}_R(B)_{\mathit{f}}}\rightsquigarrow \dgcat{CoCh}(\Bbbk)_{\!f}.
\]
This is analogous to \cite[Lemma 6.3.2]{Szamuely} for the non dg-version.
For every morphism $\p:(M,\g^M)\to (M^\pr,\g^{M^\pr})$ of right dg-comodules over $B$, we have a morphism $\p^\vee:(M^{\pr\vee},\g^{M^{\pr\vee}})\to(M^\vee,\g^{M^\vee})$ of right dg-comodules over $B$, called the \emph{dual} of $\p$, which is defined as follows:
\[
\p^\vee:=\imath_{M^\vee}
\circ(\operatorname{ev}_{M^\pr}\otimes\I_{M^\vee})
\circ(\I_{M^{\pr\vee}}\otimes \p\otimes \I_{M^\vee})
\circ(\I_{M^{\pr\vee}}\otimes\operatorname{cv}_M)
\circ\jmath^{-1}_{M^{\pr\vee}}
:M^{\pr\vee}\to M^\vee.
\]
Equivalently, the dual $\p^\vee$ of $\p$ is the unique morphism of right dg-comodules over $B$ making the following diagram commutative:
\eqn{characterization of dual morphism}{
\xymatrixrowsep{1.3pc}
\xymatrix{
M^{\pr\vee}\otimes M \ar[r]^-{\p^\vee\otimes \I_M} \ar[d]_-{\I_{M^{\pr\vee}}\otimes \p}&
M^\vee\otimes M \ar[d]^-{\operatorname{ev}_M}\\
M^{\pr\vee}\otimes M^\pr \ar[r]^-{\operatorname{ev}_{M^\pr}}&
\Bbbk
},
\quad\hbox{i.e.}\quad
\operatorname{ev}_{M^\pr}\circ(\I_{M^{\pr\vee}}\otimes \p)=\operatorname{ev}_M\circ(\p^\vee\otimes \I_M).
}
We can also check that 
$(\p^\pr\circ\p)^\vee=\p^\vee\circ\p^{\pr\vee}$
for every pair of  morphisms $\p:(M,\g^M)\to (M^\pr,\g^{M^\pr})$ and $\p^\pr:(M^\pr,\g^{M^\pr})\to (M^{\pr\pr},\g^{M^{\pr\pr}})$
of right dg-comodules over $B$.
%

Remind that the tensor product $\otimes_{m_B}$ is symmetric.
Indeed, for right dg-comodules $(M,\g^M)$ and $(M^\pr,\g^{M^\pr})$ over $B$, the usual isomorphism
$\t:M\otimes M^\pr\cong M^\pr\otimes M$
of the underlying cochain complexes becomes an isomorphism
\[
\t:(M,\g^M)\otimes_{m_B}(M^\pr,\g^{M^\pr})\cong(M^\pr,\g^{M^\pr})\otimes_{m_B}(M,\g^M)
\]
of right dg-comodules over $B$, since $m_B$ is commutative.
Then the dual of $(M^\vee,\g^{M^\vee})$ is $(M,\g^M)$, with the following evaluation and coevaluation maps:
\[
\begin{aligned}
\operatorname{ev}_{M^\vee}:=\operatorname{ev}_{M}\circ\tau:M\otimes M^\vee&\to \Bbbk,&
\operatorname{cv}_{M^\vee}:=\t\circ\operatorname{cv}_M:\Bbbk&\to M^\vee\otimes M,
\\
m\otimes h&\mapsto (-1)^{|m||h|}h(m),&
1&\mapsto (-1)^{|i|}e^i\otimes x_i.
\end{aligned}
\]
Moreover, for any morphism $\p:(M,\g^M)\to(M^\pr,\g^{M^\pr})$ of right dg-comodules over $B$, we have $\p^{\vee\vee}=\p$
which follows from applying $\t$ on \eq{characterization of dual morphism}.

We will adopt the following conventions and notations for the upcoming two lemmas to save spaces:
\begin{itemize}
\item We will omit the tensor product $\otimes$ over $\Bbbk$.
\item We will write $\otimes_{m_A}=\otimes$. So every unadorned tensor product is over $A$.
\item We will omit the isomorphisms $(-)\otimes_{m_A}A\cong(-)\cong A\otimes_{m_A}(-)$ over $A$.
\item We will write $\bm{\o}_\mathit{f}\otimes A=\bm{\o}_A$.
\end{itemize}

\begin{lemma} \label{rigidity lemma 1}
For every cdg-algebra $A$ and $\xi_A\in \mathsf{End}(\bm{\o}_A)$,
define the dual $(\xi_A^M)^\vee$ of the component $\xi_A^M$ of $\xi_A$
at a finite dimensional right dg-comodule $(M,\g^M)$ over $B$ as
\[
(\xi_A^M)^\vee:=
\big(\bm{\o}_A(\operatorname{ev}_M)\otimes\I_{M^\vee A}\big)
\circ\big(\I_{M^\vee A}\otimes\xi_A^M\otimes\I_{M^\vee A}\big)
\circ\big(\I_{M^\vee A}\otimes\bm{\o}_A(\operatorname{cv}_M)\big)
:M^\vee A\to M^\vee A.
\]
Then
\begin{enumerate}[label=({\alph*})]
\item
the following diagram commutes:
\[
\xymatrixrowsep{2pc}
\xymatrixcolsep{4pc}
\xymatrix{
M^\vee A\otimes MA \ar[r]^-{(\xi_A^M)^\vee\otimes\I_{MA}} \ar[d]_-{\I_{M^\vee A}\otimes\xi_A^M}&
M^\vee A\otimes MA \ar[d]^-{\bm{\o}_A(\operatorname{ev}_M)}\\
M^\vee A\otimes MA \ar[r]^-{\bm{\o}_A(\operatorname{ev}_M)}&
A
}.
\]

\item
for any morphism $\p:(M,\g^M)\to(M^\pr,\g^{M^\pr})$ of finite dimensional right dg-comodules over $B$, the following diagram commutes:
\[
\xymatrixrowsep{1.6pc}
\xymatrixcolsep{3.5pc}
\xymatrix{
M^{\pr\vee}A \ar[r]^-{(\xi_A^{M^\pr})^\vee} \ar[d]_-{\bm{\o}_A(\p^\vee)}&
M^{\pr\vee}A \ar[d]^-{\bm{\o}_A(\p^\vee)}\\
M^\vee A \ar[r]^-{(\xi_A^M)^\vee}&
M^\vee A
},
\quad\text{i.e.,}\quad
\bm{\o}_A(\p^\vee)\circ(\xi_A^{M^\pr})^\vee
=(-1)^{|\xi_A||\p|}(\xi_A^M)^\vee\circ\bm{\o}_A(\p^\vee).
\]
\end{enumerate}
\end{lemma}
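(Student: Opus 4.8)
The plan is to treat both parts as formal consequences of rigidity---that is, of the triangle (zig--zag) identities \eq{ev and coev relation}---transported from $\dgcat{dgComod}_R(B)_{\mathit f}$ to $\dgcat{dgMod}^{\mathit{fr}}_R(A)$. The key structural fact is that $\bm{\o}_A=\bm{\o}_{\mathit f}\otimes A$ is a composite of dg-tensor functors (the forgetful functor followed by the functor $\otimes A$ of Lemma \ref{atensoring}), and that $\operatorname{ev}_M$ and $\operatorname{cv}_M$ are morphisms of right dg-comodules over $B$. Hence applying $\bm{\o}_A$ to \eq{ev and coev relation} yields the corresponding triangle identities for the images $\bm{\o}_A(\operatorname{ev}_M)$ and $\bm{\o}_A(\operatorname{cv}_M)$, so that $M^\vee A$ is a dual of $MA$ inside $\dgcat{dgMod}^{\mathit{fr}}_R(A)$. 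I will use the shorthand conventions fixed just above the lemma throughout.

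For part \emph{(a)} I would substitute the definition of $(\xi_A^M)^\vee$ into $\bm{\o}_A(\operatorname{ev}_M)\circ\big((\xi_A^M)^\vee\otimes\I_{MA}\big)$ and collapse the result by one triangle identity: the coevaluation produced inside $(\xi_A^M)^\vee$ meets an evaluation, and the zig--zag relation cancels the spurious $MA\otimes M^\vee A$ pair, leaving exactly $\bm{\o}_A(\operatorname{ev}_M)\circ(\I_{M^\vee A}\otimes\xi_A^M)$. This is the standard transpose identity $\operatorname{ev}\circ(f^\vee\otimes\I)=\operatorname{ev}\circ(\I\otimes f)$ valid in any rigid monoidal category; here all the structure maps have degree $0$ and $\xi_A^M$ stays in a single tensor slot, so no residual sign appears, consistent with the unsigned diagram in \emph{(a)}.

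For part \emph{(b)} I would first record the non-degeneracy of the pairing: because $MA$ admits a dual, the map $\psi\mapsto\bm{\o}_A(\operatorname{ev}_M)\circ(\psi\otimes\I_{MA})$, from morphisms $M^{\pr\vee}A\to M^\vee A$ to morphisms $M^{\pr\vee}A\otimes MA\to A$, is injective, its left inverse being built from $\bm{\o}_A(\operatorname{cv}_M)$ by the second identity of \eq{ev and coev relation}. It therefore suffices to prove that the two composites in the asserted square agree after postcomposition with $\bm{\o}_A(\operatorname{ev}_M)\circ(-\otimes\I_{MA})$. I would then rewrite each composite using, in turn, part \emph{(a)}, the defining property \eq{characterization of dual morphism} of $\p^\vee$ in its $\bm{\o}_A$-image form
\[
\bm{\o}_A(\operatorname{ev}_M)\circ\big(\bm{\o}_A(\p^\vee)\otimes\I_{MA}\big)
=\bm{\o}_A(\operatorname{ev}_{M^\pr})\circ\big(\I_{M^{\pr\vee}A}\otimes\bm{\o}_A(\p)\big),
\]
the naturality of $\xi_A$ along $\p$, namely $\xi_A^{M^\pr}\circ\bm{\o}_A(\p)=(-1)^{|\xi_A||\p|}\bm{\o}_A(\p)\circ\xi_A^M$, and finally the graded interchange law to commute $\xi_A^M$ past $\bm{\o}_A(\p^\vee)$. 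Both the left-hand and the right-hand composites then reduce to the single expression $\bm{\o}_A(\operatorname{ev}_M)\circ\big(\bm{\o}_A(\p^\vee)\otimes\I_{MA}\big)\circ\big(\I_{M^{\pr\vee}A}\otimes\xi_A^M\big)$, which establishes the square.

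The hard part will be the sign bookkeeping in part \emph{(b)}: the factor $(-1)^{|\xi_A||\p|}$ in the statement must emerge as the net effect of two graded interchanges together with the naturality sign of $\xi_A$, and one has to verify that on the left-hand composite these signs cancel while on the right-hand composite they combine into precisely the asserted sign. Since the transport of the triangle identities through the tensor functor $\bm{\o}_A$ is automatic, once the signs are tracked faithfully the remainder is a routine diagram chase.
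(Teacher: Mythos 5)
Your proposal is correct. Part \emph{(a)} coincides with the paper's argument: substitute the definition of $(\xi_A^M)^\vee$, slide $\xi_A^M$ past the degree-zero evaluation, and cancel the coevaluation--evaluation pair via \eq{ev and coev relation} transported through the dg-tensor functor $\bm{\o}_A$. For part \emph{(b)} you take a genuinely different (though closely related) route. The paper never isolates injectivity of the pairing; instead it computes both $\bm{\o}_A(\p^\vee)\circ(\xi_A^{M^\pr})^\vee$ and $(\xi_A^M)^\vee\circ\bm{\o}_A(\p^\vee)$ explicitly, showing each equals the transpose
\[
\big(\bm{\o}_A(\operatorname{ev}_{M^\pr})\otimes\I_{M^\vee A}\big)\circ\big(\I_{M^{\pr\vee}A}\otimes\theta\otimes\I_{M^\vee A}\big)\circ\big(\I_{M^{\pr\vee}A}\otimes\bm{\o}_A(\operatorname{cv}_M)\big)
\]
with $\theta=\xi_A^{M^\pr}\circ\bm{\o}_A(\p)$ and $\theta=\bm{\o}_A(\p)\circ\xi_A^M$ respectively, and then concludes from the naturality relation \eq{rigidity lemma 1 eq 1}. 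You instead establish that $\psi\mapsto\bm{\o}_A(\operatorname{ev}_M)\circ(\psi\otimes\I_{MA})$ is injective---its left inverse $\Phi\mapsto(\Phi\otimes\I_{M^\vee A})\circ(\I_{M^{\pr\vee}A}\otimes\bm{\o}_A(\operatorname{cv}_M))$ is indeed the second identity of \eq{ev and coev relation} in disguise, with no Koszul sign since $\operatorname{cv}_M$ has degree zero---and compare the two composites after pairing. I verified your reduction: both sides collapse to $\bm{\o}_A(\operatorname{ev}_M)\circ\big(\bm{\o}_A(\p^\vee)\otimes\I_{MA}\big)\circ\big(\I_{M^{\pr\vee}A}\otimes\xi_A^M\big)$, the interchange sign $(-1)^{|\xi_A||\p|}$ being cancelled by naturality on the left composite and squared away against the asserted sign on the right one. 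Your organization buys a shorter chase, since you avoid computing the transpose of a composite twice; the paper's version buys an explicit formula for the dual of a composite, which makes the naturality square more visible. In a final write-up you should state explicitly the two facts your sign count rests on: the Koszul interchange $(\I\otimes g)\circ(f\otimes\I)=(-1)^{|f||g|}(f\otimes\I)\circ(\I\otimes g)$ in the dg-tensor category $\dgcat{dgMod}^{\mathit{fr}}_R(A)$ of Lemma \ref{tensorfrmod}, and $|\p^\vee|=|\p|$, which is immediate from the defining formula for $\p^\vee$ because all structure maps appearing there have degree zero.
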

\begin{proof}
We can show $(a)$ as follows:
\[
\begin{aligned}
\bm{\o}_A&(\operatorname{ev}_M)\circ\big((\xi_A^M)^\vee\otimes\I_{MA}\big)\\
&=
\big(\bm{\o}_A(\operatorname{ev}_M)\otimes\bm{\o}_A(\operatorname{ev}_M)\big)
\circ\big(\I_{M^\vee A}\otimes\xi_A^M\otimes\I_{M^\vee A\otimes MA}\big)
\circ\big(\I_{M^\vee A}\otimes\bm{\o}_A(\operatorname{cv}_M)\otimes\I_{MA}\big)\\
&=
\bm{\o}_A(\operatorname{ev}_M)
\circ\big(\I_{M^\vee A}\otimes \xi_A^M\big)
\circ\big(\I_{M^\vee A\otimes MA}\otimes\bm{\o}_A(\operatorname{ev}_M)\big)
\circ\big(\I_{M^\vee A}\otimes\bm{\o}_A(\operatorname{cv}_M)\otimes\I_{MA}\big)\\
&=
\bm{\o}_A(\operatorname{ev}_M)\circ\big(\I_{M^\vee A}\otimes\xi_A^M\big).
\end{aligned}
\]

Next, we show $(b)$.
Since $\p:(M,\g^M)\to(M^\pr,\g^{M^\pr})$
is a morphism of finite dimensional right dg-comodules over $B$ and $\xi_A$ is a natural transformation, we have
\eqn{rigidity lemma 1 eq 1}{
\xi_A^{M^\pr}\circ\bm{\o}_A(\p)
=(-1)^{|\p||\xi_A|}\bm{\o}_A(\p)\circ\xi_A^M.
}
We first show that
\[
\bm{\o}_A(\p^\vee)\circ(\xi_A^{M^\pr})^\vee
=
\big(\bm{\o}_A(\operatorname{ev}_{M^\pr})\otimes\I_{M^\vee A}\big)
\circ\big(\I_{M^{\pr\vee}A}\otimes(\xi_A^{M^\pr}\circ\bm{\o}_A(\p))\otimes \I_{M^\vee A}\big)
\circ(\I_{M^{\pr\vee} A}\otimes\bm{\o}_A(\operatorname{cv}_M)),
\]
which is essentially computing the dual of $\xi_A^{M^\pr}\circ\bm{\o}_A(\p)$:
\[
\begin{aligned}
\big(&\bm{\o}_A(\operatorname{ev}_{M^\pr})\otimes\I_{M^\vee A}\big)
\circ\big(\I_{M^{\pr\vee}A}\otimes(\xi_A^{M^\pr}\circ\bm{\o}_A(\p))\otimes \I_{M^\vee A}\big)
\circ(\I_{M^{\pr\vee} A}\otimes\bm{\o}_A(\operatorname{cv}_M))\\
&=
\big(\bm{\o}_A(\operatorname{ev}_{M^\pr})\otimes\I_{M^\vee A}\big)
\circ\big(\I_{M^{\pr\vee}A}\otimes\xi_A^{M^\pr}\otimes \I_{M^\vee A}\big)
\circ\big(\I_{M^{\pr\vee}A}\otimes\bm{\o}_A(\p)\otimes\I_{M^\vee A}\big)
\circ(\I_{M^{\pr\vee} A}\otimes\bm{\o}_A(\operatorname{cv}_M))\\
&=
\big(\bm{\o}_A(\operatorname{ev}_{M^\pr})\otimes\I_{M^\vee A}\big)
\circ\big((\xi_A^{M^\pr})^\vee\otimes\I_{M^\pr A\otimes M^\vee A}\big)
\circ\big(\I_{M^{\pr\vee}A}\otimes\bm{\o}_A(\p)\otimes\I_{M^\vee A}\big)
\circ(\I_{M^{\pr\vee} A}\otimes\bm{\o}_A(\operatorname{cv}_M))\\
&=
\big(\bm{\o}_A(\operatorname{ev}_{M^\pr})\otimes\I_{M^\vee A}\big)
\circ\big(\I_{M^{\pr\vee} A}\otimes\bm{\o}_A(\p)\otimes\I_{M^\vee A}\big)
\circ\big((\xi_A^{M^\pr})^\vee\otimes\I_{MA\otimes M^\vee A}\big)
\circ(\I_{M^{\pr\vee} A}\otimes\bm{\o}_A(\operatorname{cv}_M))\\
&=
\big(\bm{\o}_A(\operatorname{ev}_M)\otimes\I_{M^\vee A}\big)
\circ\big(\bm{\o}_A(\p^\vee)\otimes\I_{MA\otimes M^\vee A}\big)
\circ\big((\xi_A^{M^\pr})^\vee\otimes\I_{MA\otimes M^\vee A}\big)
\circ(\I_{M^{\pr\vee} A}\otimes\bm{\o}_A(\operatorname{cv}_M))\\
&=
\big(\bm{\o}_A(\operatorname{ev}_M)\otimes\I_{M^\vee A}\big)
\circ\big(\I_{M^\vee A}\otimes\bm{\o}_A(\operatorname{cv}_M)\big)
\circ\bm{\o}_A(\p^\vee)\circ(\xi_A^{M^\pr})^\vee\\
&=
\bm{\o}_A(\p^\vee)\circ(\xi_A^{M^\pr})^\vee.
\end{aligned}
\]
We used $(a)$ on the $2$nd equality, the relation \eq{characterization of dual morphism} on the $4$th equality and the relation
\eq{ev and coev relation} on the last equality. Note that $\bm{\o}_A$ is a dg-tensor functor. Similarly, we have
\[
(\xi_A^M)^\vee\circ\bm{\o}_A(\p^\vee)
=
\big(\bm{\o}_A(\operatorname{ev}_{M^\pr})\otimes\I_{M^\vee A}\big)
\circ\big(\I_{M^{\pr\vee}A}\otimes(\bm{\o}_A(\p)\circ\xi_A^M)\otimes \I_{M^\vee A}\big)
\circ(\I_{M^{\pr\vee} A}\otimes\bm{\o}_A(\operatorname{cv}_M)).
\]
From \eq{rigidity lemma 1 eq 1}, together with the above calculations, we conclude that
$\bm{\o}_A(\p^\vee)\circ(\xi_A^{M^\pr})^\vee
=(-1)^{|\xi_A||\p|}(\xi_A^M)^\vee\circ\bm{\o}_A(\p^\vee)$.
\qed
\end{proof}

\begin{lemma}\label{rigidity lemma 2}
For every cdg-algebra $A$ and $\xi_A\in \mathsf{End}(\bm{\o}_A)$
we have another natural transformation $S(\xi_A)\in\mathsf{End}(\bm{\o}_A)$,
whose component at a finite dimensional right dg-comodule $(M,\g^M)$ over $B$ is given by
\[
S(\xi_A)^M:=(\xi_A^{M^\vee})^\vee:MA\to MA,
\]
such that $S(\xi_A)\in Z^0\mathsf{End}_{\otimes}(\bm{\o}_A)$
whenever $\xi_A\in Z^0\mathsf{End}_{\otimes}(\bm{\o}_A)$.
Moreover, we have a group
$\bm{\CG}_\otimes^{\bm{\o}_\mathit{f}}(A)
=\big(Z^0\mathsf{End}_{\otimes}(\bm{\o}_A), \I_{\bm{\o}_A}, \circ\big)$,
where the inverse of an element $\xi_A\in Z^0\mathsf{End}_{\otimes}(\bm{\o}_A)$ 
is $S(\xi_A)$.  
\end{lemma}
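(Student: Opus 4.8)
The plan is to establish the statement in three stages: that $S(\xi_A)$ is a well-defined element of $\mathsf{End}(\bm{\o}_A)$, that its component $S(\xi_A)^M$ is a two-sided composition inverse of $\xi_A^M$ for every finite dimensional $(M,\g^M)$, and then to read off tensoriality of $S(\xi_A)$ and the group axioms formally from these facts.

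First I would check naturality of $S(\xi_A)=\{(\xi_A^{M^\vee})^\vee\}$. For a morphism $\p\colon(M,\g^M)\to(M^\pr,\g^{M^\pr})$ of finite dimensional right dg-comodules over $B$, the dual $\p^\vee\colon(M^{\pr\vee},\g^{M^{\pr\vee}})\to(M^\vee,\g^{M^\vee})$ is again such a morphism. Applying Lemma \ref{rigidity lemma 1}(b) with $\p^\vee$ in place of $\p$, and using $\p^{\vee\vee}=\p$ and $|\p^\vee|=|\p|$, produces precisely the naturality square for $S(\xi_A)$. Since $\xi_A$ has degree $0$ and $\operatorname{ev}_M,\operatorname{cv}_M$ are degree $0$ cochain maps, the composite defining $(\xi_A^{M^\vee})^\vee$ is again a degree $0$ cocycle, so $S(\xi_A)\in Z^0\mathsf{End}(\bm{\o}_A)$ whenever $\xi_A\in Z^0\mathsf{End}(\bm{\o}_A)$.

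The heart of the proof, and the step I expect to be the main obstacle, is to show $\xi_A^M\circ S(\xi_A)^M=\I_{M\otimes A}$ and $S(\xi_A)^M\circ\xi_A^M=\I_{M\otimes A}$ for every finite dimensional $(M,\g^M)$. Because $\operatorname{ev}_M$ and $\operatorname{cv}_M$ are morphisms of right dg-comodules over $B$ and $\xi_A$ is a degree $0$ tensor natural transformation with $\xi_A^\Bbbk=\I_{\Bbbk\otimes A}$, naturality of $\xi_A$ at these two morphisms combined with its tensoriality yields the mate relations
\[
\big(\xi_A^M\otimes_{m_A}\xi_A^{M^\vee}\big)\circ\bm{\o}_A(\operatorname{cv}_M)=\bm{\o}_A(\operatorname{cv}_M),\qquad
\bm{\o}_A(\operatorname{ev}_M)\circ\big(\xi_A^{M^\vee}\otimes_{m_A}\xi_A^M\big)=\bm{\o}_A(\operatorname{ev}_M).
\]
Substituting the explicit formula for the transpose dual $(\xi_A^{M^\vee})^\vee$, which is built from $\bm{\o}_A(\operatorname{ev}_{M^\vee})$, $\bm{\o}_A(\operatorname{cv}_{M^\vee})$ and $\xi_A^{M^\vee}$, into $\xi_A^M\circ S(\xi_A)^M$, and then repeatedly using Lemma \ref{rigidity lemma 1}(a), the mate relations above, and the zigzag identities \eqref{ev and coev relation}, one carries out the standard rigidity diagram chase that cancels the coevaluation against the evaluation and leaves $\I_{M\otimes A}$. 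The opposite composite is obtained symmetrically from the dual datum $\big((M,\g^M),\operatorname{ev}_{M^\vee},\operatorname{cv}_{M^\vee}\big)$ of $(M^\vee,\g^{M^\vee})$. The delicate points will be purely bookkeeping: tracking the $A$-linear tensor factors throughout and keeping the flips $\t$ coming from $\operatorname{ev}_{M^\vee}=\operatorname{ev}_M\circ\t$ and $\operatorname{cv}_{M^\vee}=\t\circ\operatorname{cv}_M$ in their correct places.

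Granting this, $S(\xi_A)^M$ is the unique two-sided inverse of $\xi_A^M$ in $\End_{m_A}(M\otimes A)$ for each $M$, and tensoriality of $S(\xi_A)$ follows formally. Indeed, $\otimes_{m_A}$ is a bifunctor on free right dg-modules (Lemma \ref{tensorfrmod}), so for degree $0$ maps $(\a_1\otimes_{m_A}\b_1)\circ(\a_2\otimes_{m_A}\b_2)=(\a_1\circ\a_2)\otimes_{m_A}(\b_1\circ\b_2)$; hence $S(\xi_A)^M\otimes_{m_A}S(\xi_A)^{M^\pr}$ is a composition inverse of $\xi_A^M\otimes_{m_A}\xi_A^{M^\pr}=\xi_A^{M\otimes_{m_B}M^\pr}$, and by uniqueness of inverses it equals $S(\xi_A)^{M\otimes_{m_B}M^\pr}$. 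Likewise $S(\xi_A)^\Bbbk$ is the inverse of $\xi_A^\Bbbk=\I_{\Bbbk\otimes A}$, so $S(\xi_A)^\Bbbk=\I_{\Bbbk\otimes A}$. Thus $S(\xi_A)\in Z^0\mathsf{End}_\otimes(\bm{\o}_A)$ whenever $\xi_A\in Z^0\mathsf{End}_\otimes(\bm{\o}_A)$. Finally, $Z^0\mathsf{End}_\otimes(\bm{\o}_A)$ is closed under composition and contains $\I_{\bm{\o}_A}$, as in the case of $\bm{\o}$ treated in the previous subsection, so $\bm{\CG}_\otimes^{\bm{\o}_\mathit{f}}(A)$ is a monoid in which $S(\xi_A)$ is a two-sided inverse of $\xi_A$; therefore it is a group, as claimed.
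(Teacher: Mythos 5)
Your proposal is correct and follows essentially the same route as the paper's proof: naturality of $S(\xi_A)$ via Lemma \ref{rigidity lemma 1}(b) applied to $\p^\vee$ together with $\p^{\vee\vee}=\p$; the two-sided inverse identities by the rigidity zigzag chase, where your ``mate relations'' are exactly what makes the outer squares of the paper's two commutative diagrams commute (naturality and tensoriality of $\xi_A$ at the comodule morphisms $\operatorname{ev}$, $\operatorname{cv}$, with $\xi_A^\Bbbk=\I_{\Bbbk\otimes A}$); and membership of $S(\xi_A)$ in $Z^0\mathsf{End}_\otimes(\bm{\o}_A)$ by uniqueness of inverses. If anything, you spell out more explicitly (via the interchange law of Lemma \ref{tensorfrmod}) the final step that the paper compresses into ``$S(\xi_A)\in Z^0\mathsf{End}_{\otimes}(\bm{\o}_A)$ since $S(\xi_A)$ is the inverse of $\xi_A$.''
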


Remark that the above lemma, when $A=\Bbbk$ and the degree of everything is concentrated to zero,
follows from \cite[Prop. 5.3.1]{EGNO}.

\begin{proof}
Given $\xi_A\in \mathsf{End}(\bm{\o}_A)$,
we first show that $S(\xi_A)$ is a natural transformation.
Let $\p:(M,\g^M)\to(M^\pr,\g^{M^\pr})$ be a morphism of finite dimensional right dg-comodules over $B$.
Substituting $\p$ in Lemma \ref{rigidity lemma 1}\emph{(b)}
by the dual $\p^\vee:(M^{\pr\vee},\g^{M^\pr\vee})\to(M^\vee,\g^{M^\vee})$
and using $\p^{\vee\vee}=\p$, we obtain that
$\bm{\o}_A(\p)\circ S(\xi_A)^M
=(-1)^{|\xi_A||\p|}S(\xi_A)^{M^\pr}\circ\bm{\o}_A(\p)$.
Therefore $S(\xi_A)$ is a natural transformation of degree $|\xi_A|$.

Suppose further that $\xi_A$ is in $Z^0\mathsf{End}_{\otimes}(\bm{\o}_A)$.
Then, for every finite dimensional right dg-comodule $(M,\g^M)$ over $B$ 
we have $S(\xi_A)^M\circ \xi_A^M=\xi_A^M\circ S(\xi_A)^M=\I_{MA}$,
which can be checked by the following two commutative diagrams:
\[
\xymatrixcolsep{4.2pc}
\xymatrix{
MA \ar[r]^-{\I_{MA}\otimes\bm{\o}_A(\operatorname{cv}_{M^\vee})} \ar@{=}[d]&
MA\!\otimes\! M^\vee A\!\otimes\! MA \ar[r]^-{\I_{MA}\otimes\xi_A^{M^\vee}\otimes\I_{MA}}
\ar[d]^-{\I_{MA}\otimes\xi_A^{M^\vee}\otimes\xi_A^M}&
MA\!\otimes\! M^\vee A\!\otimes\! MA \ar[r]^-{\bm{\o}(\operatorname{ev}_{M^\vee})\otimes\I_{MA}}
\ar[d]^-{\I_{MA\otimes M^\vee A}\otimes\xi_A^M}&
MA \ar[d]^-{\xi_A^M}\\
MA \ar[r]^-{\I_{MA}\!\otimes\!\bm{\o}_A(\operatorname{cv}_{M^\vee})}&
MA\otimes M^\vee A\!\otimes\! MA \ar@{=}[r]&
MA\otimes M^\vee A\!\otimes\! MA \ar[r]^-{\bm{\o}(\operatorname{ev}_{M^\vee})\otimes\I_{MA}}&
MA
}
\]
and
\[
\xymatrixcolsep{4.2pc}
\xymatrix{
MA \ar[r]^-{\I_{MA}\otimes\bm{\o}_A(\operatorname{cv}_{M^\vee})}
\ar[d]^-{\xi_A^M}&
MA\!\otimes\! M^\vee A\!\otimes\! MA \ar@{=}[r] \ar[d]^-{\xi_A^M\otimes\I_{M^\vee A\otimes MA}}&
MA\!\otimes\! M^\vee A\!\otimes\! MA \ar[r]^-{\bm{\o}(\operatorname{ev}_{M^\vee})\otimes\I_{MA}}
\ar[d]^-{\xi_A^M\otimes \xi_A^{M^\vee}\otimes\I_{MA}}&
MA \ar@{=}[d]\\
MA \ar[r]^-{\I_{MA}\!\otimes\!\bm{\o}_A(\operatorname{cv}_{M^\vee})}&
MA\otimes M^\vee A\!\otimes\! MA \ar[r]^-{\I_{MA}\otimes\xi_A^{M^\vee}\otimes\I_{MA}}&
MA\otimes M^\vee A\!\otimes\! MA \ar[r]^-{\bm{\o}(\operatorname{ev}_{M^\vee})\otimes\I_{MA}}&
MA
}.
\]
The top horizontal composition in the $1$st diagram and the bottom horizontal composition 
in the $2$nd diagram are equal to $S(\xi_A)^M$, and we used the relations in \eq{ev and coev relation} twice.
Since $\xi_A$ is a tensor natural transformation,
the left square on the $1$st diagram and the right square on the $2$nd diagram commute.
Moreover, we have $S(\xi_A)\in Z^0\mathsf{End}_{\otimes}(\bm{\o}_A)$ since $S(\xi_A)$ is the inverse of $\xi_A$.
\qed
\end{proof}

We return to our original notations. We can express the component $S(\xi_A)^M$ of $S(\xi_A)$ introduced in Lemma \ref{rigidity lemma 2} as follows:
\[
\begin{aligned}
S(\xi_A)^M=(\xi_A^{M^\vee})^\vee
=&\t\circ\imath_{A\otimes M}\circ(\operatorname{ev}_{M^\vee}\otimes\I_{A\otimes M})\circ(\I_M\otimes\xi^{M^\vee}_A\otimes\I_M)\\
&\circ(\I_{M\otimes M^\vee}\otimes\tau)\circ(\I_M\otimes\operatorname{cv}_{M^\vee}\otimes\I_A)\circ(\jmath^{-1}_M\otimes\I_A)
:M\otimes A\to M\otimes A.
\end{aligned}
\]

We end this subsection with the following lemma.

\begin{lemma}
For every cdg-algebra $A$ and $\eta_A\in\mathsf{End}(\bm{\o}{\otimes}A)$, we have
\[
\vs(\eta_A)=\text{$\varinjlim$}_AS(\operatorname{res}_A\eta_A).
\]
\end{lemma}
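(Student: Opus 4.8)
The plan is to compare the two natural transformations componentwise and to reduce the comparison to finite--dimensional right dg--comodules. First I would note that both $\vs(\eta_A)$ and $\varinjlim_A S(\operatorname{res}_A\eta_A)$ are elements of $\mathsf{End}(\bm{\o}{\otimes}A)$, so it suffices to show that their components agree at every right dg--comodule $(M,\g^M)$ over $B$. By Lemma~\ref{finite filteredness of comodules} we may write $M\cong\varinjlim_\a M^\a_{\mathit{f}}$ as a filtered colimit of its finite--dimensional subcomodules, with inclusions $\iota_\a\colon M^\a_{\mathit{f}}\hookrightarrow M$. On one hand, $\bigl(\varinjlim_A S(\operatorname{res}_A\eta_A)\bigr)^M=\varinjlim_\a\bigl(\eta_A^{(M^\a_{\mathit{f}})^\vee}\bigr)^\vee$ by the definition of $\varinjlim_A$ in Lemma~\ref{finite lemma 1} together with Lemma~\ref{rigidity lemma 2}. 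On the other hand, naturality of $\vs(\eta_A)$ with respect to the degree--zero comodule maps $\iota_\a$ gives $\vs(\eta_A)^M\circ(\iota_\a\otimes\I_A)=(\iota_\a\otimes\I_A)\circ\vs(\eta_A)^{M^\a_{\mathit{f}}}$, so that $\vs(\eta_A)^M=\varinjlim_\a\vs(\eta_A)^{M^\a_{\mathit{f}}}$. Hence the whole statement reduces to the identity $\vs(\eta_A)^M=(\eta_A^{M^\vee})^\vee$ for every finite--dimensional right dg--comodule $(M,\g^M)$ over $B$.

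For the finite--dimensional case I would fix a homogeneous basis $\{x_i\}$ of $M$ with dual basis $\{e^i\}$ of $M^\vee$ and write $\g^M(x_i)=x_j\otimes b^j_i$, so that coassociativity yields $\cp_B(b^j_i)=b^j_l\otimes b^l_i$ and the counit axiom yields $\ep_B(b^j_i)=\delta^j_i$. The crucial structural observation is that, for each index $i$, the linear map $\phi_i\colon M^\vee\to B^*$ determined by $e^j\mapsto b^j_i$ (up to a sign fixed by the bookkeeping below) is a morphism of right dg--comodules over $B$: this is exactly the matching between the coaction $\g^{M^\vee}(e^j)$, built from $\vs_B(b^j_l)$, and the antipode--twisted coaction $\g^{B^*}=\t\circ(\vs_B\otimes\I_B)\circ\cp_B$ evaluated on $b^j_i$. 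Applying naturality of $\eta_A$ to $\phi_i$ then converts the component $\eta_A^{B^*}$, which is the only way $\eta_A$ enters the formula for $\vs(\eta_A)^M$ in Proposition~\ref{group valued}, into the finite--dimensional component $\eta_A^{M^\vee}$; concretely, writing $\eta_A^{M^\vee}(e^j\otimes a)=e^l\otimes T^j_l(a)$, naturality gives $\eta_A^{B^*}(b^j_i\otimes a)=\pm\,b^l_i\otimes T^j_l(a)$.

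Substituting this into $\vs(\eta_A)^M=(\jmath_M\otimes\I_A)\circ(\I_M\otimes\ep_B\otimes\I_A)\circ(\I_M\otimes\eta_A^{B^*})\circ(\g^M\otimes\I_A)$ and contracting with $\ep_B(b^l_i)=\delta^l_i$ collapses the expression to $x_j\otimes T^j_i(a)$ up to sign. I would then expand $(\eta_A^{M^\vee})^\vee$ directly from the explicit formula for $S(\eta_A)^M$ recorded after Lemma~\ref{rigidity lemma 2}, feeding $x_i\otimes a$ through $\operatorname{cv}_{M^\vee}$, $\eta_A^{M^\vee}$ and $\operatorname{ev}_{M^\vee}$, and read off that it equals the same expression $x_j\otimes T^j_i(a)$ up to sign. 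Matching the two computations finishes the proof. The main obstacle is the Koszul sign bookkeeping: the signs coming from the symmetry $\t$ in $\operatorname{ev}_{M^\vee}$ and $\operatorname{cv}_{M^\vee}$, from the grading of the dual basis in the definition of $\g^{M^\vee}$, and from the naturality sign $(-1)^{|\phi_i||\eta_A|}$ must be shown to cancel, which also pins down the precise sign in the definition of $\phi_i$. Since every map in sight is $\Bbbk$--linear in $\eta_A$ and, after the use of $\phi_i$, both sides depend on $\eta_A$ only through the single component $\eta_A^{M^\vee}$, the verification is ultimately a finite sign computation in the chosen basis rather than anything structural.
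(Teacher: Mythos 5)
Your proposal is correct, and at the decisive step it takes a genuinely different route from the paper. Both arguments first reduce to showing $\vs(\eta_A)^{M}=\big(\eta_A^{M^\vee}\big)^{\vee}$ for a finite-dimensional $(M,\g^M)$ --- the paper states this reduction in one line, resting on Lemma \ref{finite lemma 1}, while you spell it out via naturality of $\vs(\eta_A)$ along the inclusions $M^\a_{\mathit{f}}\hookrightarrow M$; this is the same content. In the finite-dimensional computation the paper's bridge is Lemma \ref{ctanha}: it writes both $\eta_A^{B^*}$ and $\eta_A^{M^\vee}$ in the form $\mp\big((\I\otimes\a)\circ\g\big)$ for the single element $\a=\bm{\widebreve{g}}_{\!A}(\eta_A)$, so that both sides collapse onto the common quantity $\eta_A^B\big(\vs_B(\g^i_k)\otimes a\big)=b^{it}_k\otimes a_t$ (after also fixing a basis $\{a_t\}$ of $A$), and the rest is Koszul-sign bookkeeping. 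You instead exploit dg-naturality of $\eta_A$ along the coefficient maps $\phi_i:(M^\vee,\g^{M^\vee})\to(B^*,\g^{B^*})$, and your key structural claim is true: with the sign fixed as $\phi_i(e^j):=(-1)^{|i||j|}b^j_i$, one checks from $\cp_B(b^j_i)=b^j_l\otimes b^l_i$ and $\g^{M^\vee}(e^j)=(-1)^{|k|+|j||k|}\,e^k\otimes\vs_B(b^j_k)$ that $(\phi_i\otimes\I_B)\circ\g^{M^\vee}=\g^{B^*}\circ\phi_i$, so $\phi_i$ is a morphism of degree $|i|$ in the Hom complex and super-naturality applies, producing the factor $(-1)^{|i||\eta_A|}$ --- which is exactly the residual sign appearing at the end of the paper's computation. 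Your route is more local: it never invokes the reconstruction isomorphism, it only evaluates $\eta_A^{B^*}$ on the coaction coefficients (the only inputs the formula for $\vs(\eta_A)^M$ from Proposition \ref{group valued} ever feeds it), and both sides end up depending on $\eta_A$ only through the single matrix $T^j_l$ of $\eta_A^{M^\vee}$, basis-free in $A$; the cost is the extra sign-sensitive verification that $\phi_i$ is a comodule map. The paper's route buys uniformity instead: one universal datum $\eta_A^B$ controls every component, so the two expansions have identical shape. You stop short of executing the final sign cancellation, but you correctly identify every source of signs (the symmetry in $\operatorname{ev}_{M^\vee}$, $\operatorname{cv}_{M^\vee}$, the dual-basis grading, and the naturality sign), and each structural step you rely on is valid, so the plan closes.
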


\begin{proof}
It suffices to show that the components of $\vs(\eta_A)$ and $S(\operatorname{res}_A\eta_A)$ 
at a finite dimensional right dg-comodule $(M,\g^M)$ over $B$ agree.
From Proposition \ref{group valued},
the component of $\vs(\eta_A)\in\mathsf{End}(\bm{\o}\otimes A)$ at a right dg-comodule $(M,\g^M)$ over $B$ is
$\vs(\eta_A)^M=(\jmath_M\otimes\I_A)\circ(\I_M\otimes \ep_B\otimes \I_A)\circ(\I_M\otimes \eta^{B^*}_A)\circ(\g^M\otimes \I_A)$.

Fix a basis $\{a_t\}$ of $A$ and  a basis $\{x_i\}$ of $M$ as $\Z$-graded vector spaces over $\Bbbk$.
Let  $\{e^i\}$ be the basis of $M^\vee$ dual to $\{x_i\}$.
Again, we will write $(-1)^{|i|}=(-1)^{|x_i|}=(-1)^{|e^i|}$.
Note that the index $i$ runs finite.
For each $x_i$, we have $\g^M(x_i)=x_j\otimes \g_i^j$
for some $\g_i^j\in B$. Fix elements $a\in A$ and $m\in M$,
and denote $m^i:=e^i(m)\in \Bbbk$ which is zero if $|m|\neq|i|$.
We denote $\cdot$ for the product of an element in $\Bbbk$.

1.
We calculate the value of $\vs(\eta_A)^M\big(m\otimes a\big)$.
To do so, we need to describe $\eta_A^{B^*}$ first.
Using the isomorphism in Lemma \ref{ctanha}, we get
\[
\eta_A^{B^*}=(\jmath_B\otimes\I_A)\circ(\I_B\otimes \ep_B\otimes \I_A)\circ(\I_B\otimes \eta^{B}_A)\circ(\g^{B^*}\otimes\I_A)
\]
where we defined $\g^{B^*}:=\tau\circ(\vs_B\otimes \I_B)\circ\cp_B$. Therefore we obtain that
\[
\begin{aligned}
\vs(\eta_A)^M
:=&(\jmath_M\otimes\I_A)\circ(\I_M\otimes \ep_B\otimes \I_A)\circ(\I_M\otimes \eta^{B^*}_A)\circ(\g^M\otimes \I_A)\\
=&(\jmath_M\otimes \imath_A)\circ(\I_M\otimes \ep_B\otimes \ep_B\otimes A)\circ(\I_{M\otimes B}\otimes \eta^B_A)\\
&\circ(\I_M\otimes \t\otimes \I_A)\circ(\I_M\otimes \vs_B\otimes \I_{B\otimes A})\circ(\I_M\otimes \cp_B\otimes \I_A)\circ(\g^M\otimes \I_A)\\
=&(\jmath_M\otimes \I_A)\circ(\I_M\otimes \ep_B\otimes \I_A)\circ(\I_M\otimes \eta^B_A)\circ(\I_M\otimes \vs_B\otimes \I_A)\circ(\g^M\otimes \I_A).
\end{aligned}
\]
We are ready to calculate $\vs(\eta_A)^M\big(m\otimes a\big)$.
Let us write $m=m^k\cdot x_k$ for the indices $k$ with $|k|=|m|$. Then we have
\[
\begin{aligned}
\vs(\eta_A)^M & \big(m\otimes a\big)\\
&=
(\jmath_M\otimes \I_A)\circ(\I_M\otimes \ep_B\otimes \I_A)\circ(\I_M\otimes \eta^B_A)\circ(\I_M\otimes \vs_B\otimes \I_A)\circ(\g^M\otimes \I_A)
\big(m\otimes a\big)\\
&=
m^k\cdot
(\jmath_M\otimes \I_A)\circ(\I_M\otimes \ep_B\otimes \I_A)\circ(\I_M\otimes \eta^B_A)\circ(\I_M\otimes \vs_B\otimes \I_A)
\big(x_i\otimes \g^i_k\otimes a\big)\\
&=
m^k\cdot
(\jmath_M\otimes \I_A)\circ(\I_M\otimes \ep_B\otimes \I_A)\circ(\I_M\otimes \eta^B_A)
\big(x_i\otimes \vs_B(\g^i_k)\otimes a\big)\\
&=
(-1)^{|i||\eta_A|}m^k\cdot
(\jmath_M\otimes \I_A)\circ(\I_M\otimes \ep_B\otimes \I_A)
\Big(x_i\otimes \eta_A^B\big(\vs_B(\g^i_k)\otimes a\big)\Big)\\
&=
(-1)^{|i||\eta_A|}m^k\cdot
(\jmath_M\otimes \I_A)\circ(\I_M\otimes \ep_B\otimes \I_A)
\big(x_i\otimes b^{i t}_k\otimes a_t\big)\\
&=
(-1)^{|i||\eta_A|}m^k\ep_B(b^{i t}_k)\cdot \big(x_i\otimes a_t\big)
\end{aligned}
\]
where $b^{i t}_k\in B$ is defined by the relation $\eta^B_A\big(\vs_B(\g^i_k)\otimes a\big)= b^{i t}_k\otimes a_t$.

2.
We calculate the value of $S(\operatorname{res}_A\eta_A)^M\big(m\otimes a\big)$.
First, note that
$\overline{\g}^{M^\vee}(e^i)=\imath_B\circ(e^i\otimes \vs_B)\circ\g^M:M\to B$
is of degree $|i|$ and
\[
\overline{\g}^{M^\vee}(e^i)\big(x_k\big)
=\imath_B\circ(e^i\otimes \vs_B)\circ\g^M\big(x_k\big)
=\imath_B\circ(e^i\otimes \vs_B)\big(x_j\otimes \g^j_k\big)
=\vs_B(\g^i_k).
\]
Therefore we have
\[
\g^{M^\vee}(e^i)
=(-1)^{|k|(|k|+|i|)}\cdot
e^k\otimes \overline{\g}^{M^\vee}(e^i)\big(x_k\big)
=(-1)^{|k|+|i||k|}\cdot
e^k\otimes\vs_B(\g^i_k).
\]
Next, we calculate $\eta_A^{M^\vee}(e^i\otimes a)$:
\[
\begin{aligned}
\eta_A^{M^\vee}(e^i\otimes a)
&=
(\I_{M^\vee}\otimes \imath_A)\circ(\I_{M^\vee}\otimes\ep_B\otimes\I_A)\circ(\I_{M^\vee}\otimes\eta_A^B)\circ(\g^{M^\vee}\otimes\I_A)
\big(e^i\otimes a\big)\\
&=
(-1)^{|k|+|i||k|}\cdot
(\I_{M^\vee}\otimes \imath_A)\circ(\I_{M^\vee}\otimes\ep_B\otimes\I_A)\circ(\I_{M^\vee}\otimes\eta_A^B)
\big(e^k\otimes\vs_B(\g^i_k)\otimes a\big)\\
&=
(-1)^{|k|+|i||k|+|k||\eta_A|}\cdot
(\I_{M^\vee}\otimes \imath_A)\circ(\I_{M^\vee}\otimes\ep_B\otimes\I_A)
\Big(e^k\otimes
\eta_A^B\big(\vs_B(\g^i_k)\otimes a\big)
\Big)\\
&=
(-1)^{|k|+|i||k|+|k||\eta_A|}\cdot
(\I_{M^\vee}\otimes \imath_A)\circ(\I_{M^\vee}\otimes\ep_B\otimes\I_A)
\big(e^k\otimes b^{i t}_k\otimes a_t\big)\\
&=
(-1)^{|k|+|i||k|+|k||\eta_A|}\ep_B(b^{i t}_k)\cdot
\big(e^k\otimes a_t\big).
\end{aligned}
\]
Here, we used the defining relation
$\eta^B_A\big(\vs_B(\g^i_k)\otimes a\big)= b^{i t}_k\otimes a_t$.

Finally, we calculate $S(\operatorname{res}_A\eta_A)^M\big(m\otimes a\big)$:
\[
\begin{aligned}
S(\operatorname{res}_A & \eta_A)^M\big(m\otimes a\big)\\
=&
\t\circ\imath_{A\otimes M}\circ(\operatorname{ev}_{M^\vee}\otimes\I_{A\otimes M})\circ(\I_M\otimes\eta_A^{M^\vee}\otimes\I_M)\\
&\circ(\I_{M\otimes M^\vee}\otimes\t)\circ(\I_M\otimes \operatorname{cv}_{M^\vee}\otimes \I_A)\circ(\jmath^{-1}_M\otimes\I_A)
\big(m\otimes a\big)\\
=&
(-1)^{|i|}\cdot
\t\circ\imath_{A\otimes M}\circ(\operatorname{ev}_{M^\vee}\otimes\I_{A\otimes M})
\circ(\I_M\otimes\eta_A^{M^\vee}\otimes\I_M)\circ(\I_{M\otimes M^\vee}\otimes\t)
\big(m\otimes e^i\otimes x_i\otimes a\big)\\
=&
(-1)^{|i|+|i||a|}\cdot
\t\circ\imath_{A\otimes M}\circ(\operatorname{ev}_{M^\vee}\otimes\I_{A\otimes M})
\circ(\I_M\otimes\eta_A^{M^\vee}\otimes\I_M)
\big(m\otimes e^i\otimes a\otimes x_i\big)\\
=&
(-1)^{|i|+|i||a|+|m||\eta_A|}\cdot
\t\circ\imath_{A\otimes M}\circ(\operatorname{ev}_{M^\vee}\otimes\I_{A\otimes M})
\big(m\otimes \eta_A^{M^\vee}(e^i\otimes a)\otimes x_i\big)\\
=&
(-1)^{|i|+|i||a|+|m||\eta_A|+|k|+|i||k|+|k||\eta_A|}\ep_B(b^{i t}_k)\cdot
\t\circ\imath_{A\otimes M}\circ(\operatorname{ev}_{M^\vee}\otimes\I_{A\otimes M})
\big(m\otimes e^k\otimes a_t\otimes x_i\big)\\
=&
(-1)^{|i|+|i||a|+|m||\eta_A|+|k|+|i||k|+|k||\eta_A|+|k||m|}m^k\ep_B(b^{i t}_k)\cdot
\t\big(a_t\otimes x_i\big)\\
=&
(-1)^{|i|+|i||a|+|m||\eta_A|+|k|+|i||k|+|k||\eta_A|+|k||m|+|a_t||i|}m^k\ep_B(b^{i t}_k)\cdot
\big(x_i\otimes a_t\big).
\end{aligned}
\]

3.
We are left to check that the sign factors agree. Note that in the above calculation,
\begin{itemize}
\item $|m|=|k|$ since we calculated $e^k(m)=m^k$ on the $6$th equality.
\item $|b^{i t}_k|=0$ since other degree terms vanish when we calculate $\ep_B(b^{i t}_k)$
on the $5$th equality. Thus we have $|a_t|=|a|+|\eta_A|+|k|+|i|$ from the defining relation
$\eta^B_A\big(\vs_B(\g^i_k)\otimes a\big)= b^{i t}_k\otimes a_t$.
\end{itemize}
Therefore, in modulo $2$, we have
\[
\begin{aligned}
&|i|+|i||a|+|m||\eta_A|+|k|+|i||k|+|k||\eta_A|+|k||m|+|a_t||i|\\
\equiv&\text{ }
|i|+|i||a|+|m||\eta_A|+|m|+|i||m|+|m||\eta_A|+|m||m|+|a_t||i|\\
\equiv&\text{ }
|i|+|i||a|+|i||m|+|a_t||i|\\
\equiv&\text{ }
|i|+|i||a|+|i||m|+\big(|a|+|\eta_A|+|m|+|i|\big)|i|\\
\equiv&\text{ }
|i||\eta_A|.
\end{aligned}
\]
This shows that $\vs(\eta_A)^M\big(m\otimes a\big)=S(\operatorname{res}_A\eta_A)^M\big(m\otimes a\big)$
holds for all $m\in M$ and $a\in A$.
We conclude that $\vs(\eta_A)^M=S(\operatorname{res}_A\eta_A)^M$
holds for every finite dimensional right dg-comodule $(M,\g^M)$ over $B$.
\qed
\end{proof}

%
%
%
%
%

\renewcommand\refname{References}

\end{document}